\documentclass[11pt,reqno]{amsart}
\usepackage[T1]{fontenc}
\usepackage{lmodern}
\usepackage[expansion=false]{microtype}
\usepackage[margin=1in]{geometry}
\usepackage{amsmath,amssymb,amsthm,mathtools,bm,mathrsfs}
\usepackage{enumitem,booktabs,xcolor}
\usepackage[colorlinks=true,linkcolor=blue!40!black,citecolor=blue!40!black,urlcolor=blue!40!black]{hyperref}
\usepackage{zref-clever}
\zcsetup{cap=true}
\hypersetup{pdfauthor={P. M. Aronow and Patrick Lopatto}}
\hypersetup{pdftitle={Quantitative Parisi formulas and fluctuations in the Sherrington--Kirkpatrick model}}
\numberwithin{equation}{section}
\newtheorem{theorem}{Theorem}[section]
\newtheorem{proposition}[theorem]{Proposition}
\newtheorem{lemma}[theorem]{Lemma}
\newtheorem{corollary}[theorem]{Corollary}
\theoremstyle{remark}
\newtheorem{remark}[theorem]{Remark}
\AddToHook{env/theorem/begin}{\zcsetup{reftype=theorem}}
\AddToHook{env/lemma/begin}{\zcsetup{reftype=lemma}}
\AddToHook{env/proposition/begin}{\zcsetup{reftype=proposition}}
\AddToHook{env/corollary/begin}{\zcsetup{reftype=corollary}}
\AddToHook{env/remark/begin}{\zcsetup{reftype=remark}}
\newcommand{\E}{\mathbb E}
\newcommand{\Prob}{\mathbb P}
\newcommand{\R}{\mathbb R}
\DeclareMathOperator{\Var}{Var}
\DeclareMathOperator{\Cov}{Cov}
\DeclareMathOperator{\Tr}{Tr}
\DeclareMathOperator{\supp}{supp}

\newcommand{\dd}{\,\mathrm d}
\newcommand{\1}{\mathbf 1}
\newcommand{\cP}{\mathcal P}

\DeclareMathOperator*{\argmin}{argmin}
\setlist{itemsep=3pt,topsep=5pt}
\allowdisplaybreaks[2]
\title[Quantitative Parisi formulas and SK fluctuations]{Quantitative Parisi formulas and fluctuations in the Sherrington--Kirkpatrick model}
\author{P. M. Aronow}
\author{Patrick Lopatto}
\date{}
\begin{document}
\begin{abstract}
We establish quantitative Parisi formulas and fluctuation bounds for the Sherrington--Kirkpatrick model at zero external field. In particular, using  that the Parisi measure is supported on an interval, we show that for every fixed inverse temperature greater than
one, the variance of the logarithmic partition function lies between orders
$N^{4/15}$ and $N^{7/15}$. The same exponents hold for the ground-state
variance. We also obtain upper and lower bounds on the finite-size bias of the mean free energy and ground-state energy, together with two-sided upper-tail estimates with exponent $6/5$ at both positive and zero temperature.
\end{abstract}
\maketitle
\setcounter{tocdepth}{1}
\tableofcontents

\section{Introduction}\zlabel{sec:main}
The Sherrington--Kirkpatrick (SK) model \cite{SherringtonKirkpatrick1975}
is a mean-field spin glass with random pair interactions. Parisi's
replica-symmetry-breaking theory
\cite{Parisi1979,Parisi1980OrderParameter} describes its limiting free
energy through a variational problem over probability measures. At
finite system size, two further questions arise: how accurately does
the variational value approximate the expected free energy, and how
large are the fluctuations around that expectation? We study these
questions for the SK model at zero external field, both
at positive temperature and for the ground-state energy.

The limiting variational formulas rest on several developments.
Guerra and Toninelli established the thermodynamic limit by
interpolation \cite{GuerraToninelli2002Thermodynamic}. Guerra proved the
Parisi upper bound for the pressure \cite{Guerra2003}, and Aizenman,
Sims, and Starr developed an extended variational principle based on
random overlap structures \cite{AizenmanSimsStarr2003}. Talagrand then
proved the Parisi formula \cite[Theorem~1.1]{Talagrand}. Panchenko
extended the formula to general mixed $p$-spin models, including odd
interactions \cite{Panchenko2014Parisi}, and Auffinger and Chen
established its zero-temperature counterpart
\cite[Theorem~1]{AC}. These results identify the limiting pressure and
ground-state energy. Our aim is to control their finite-size
approximations with enough uniformity to obtain fluctuation bounds and
to pass to zero temperature.

The scale of the fluctuations depends strongly on temperature. In the
zero-field SK model, Aizenman, Lebowitz, and Ruelle proved a Gaussian
limit for the order-one fluctuations of the logarithm of the partition function
at fixed inverse temperature $0<\beta<1$
\cite{AizenmanLebowitzRuelle1987}. At the critical point $\beta=1$,
Chen and Lam obtained a variance upper bound of order $(\log N)^2$
\cite{ChenLam2019Critical}. More recently, Dey and Kang established
variance asymptotics and a central limit theorem on approaching
criticality from the high-temperature side, including at distance
of order $N^{-1/3}$ \cite{DeyKang2026Critical}. At the critical point
itself, Du and Huang proved that the variance is
$\frac16\log N+O(1)$ and established a Gaussian limit
\cite{DuHuang2026Critical}.

The low-temperature, zero-field regime has a different fluctuation
picture. Chatterjee proved superconcentration of the SK free energy,
with a variance bound of order $N/\log N$ for the logarithmic partition function
at each fixed inverse temperature, and related superconcentration to
disorder chaos \cite{Chatterjee2009Chaos}. Chen and Lam extended
superconcentration to non-Gaussian disorder under moment assumptions
\cite{ChenLam2024Universality}. At zero temperature, Chen, Handschy,
and Lerman proved that the ground-state variance is $o(N)$ in mixed
even $p$-spin models at zero field \cite{ChenHandschyLerman2018}.
In the other direction, Chatterjee obtained constant-scale
nonconcentration bounds for both the SK logarithmic partition function and
the ground-state energy \cite{Chatterjee2019LowerBounds}. Additionally, Chen, Dey, and Panchenko
proved variance of order $N$ in the presence of a nonzero external
field, together with a Gaussian limit for mixed even $p$-spin models
\cite{ChenDeyPanchenko2017}.

A complementary approach to fluctuations comes from positive moments
of the partition function. In the replica calculation, Kondor found a
fifth-order dependence of the fractional pressure on the replica
parameter near the critical temperature \cite{Kondor1983}.
Crisanti, Paladin, Sommers, and Vulpiani studied the relation between
this parameter and finite-size fluctuations through a tilted ensemble
of disorder realizations \cite{CrisantiPaladinSommersVulpiani1992}.
Parisi and Rizzo extended the fifth-order calculation throughout the
low-temperature phase and to zero temperature, obtaining the associated
six-fifths exponent for the large-deviation tail
\cite{ParisiRizzo2008,PR}. Aspelmeier developed a different connection
between fluctuations and disorder sensitivity, deriving an exact
relation between the free-energy variance and bond chaos and studying
its consequences through replica calculations
\cite{Aspelmeier2008Chaos}.

Finite-size corrections to the mean have also received substantial
attention. Using Guerra--Toninelli interpolation, Billoire found
numerical evidence for corrections of order $N^{-2/3}$ to the
low-temperature free-energy density \cite{Billoire2006FiniteSize}.
Aspelmeier, Billoire, Marinari, and Moore developed a finite-size
replica-symmetry-breaking analysis predicting corrections of order
$N^{-2/3}$ to the mean energy density and fluctuations of order
$N^{1/6}$ in the total internal energy \cite{AspelmeierEtAl2008FiniteSize}.
Ground-state large deviations and sample-to-sample fluctuations were
investigated numerically by Andreanov, Barbieri, and Martin
\cite{AndreanovBarbieriMartin2004} and by Palassini
\cite{Palassini2008}. The distinction between these questions matters, as
an exponent for a large-deviation tail does not by itself determine
the scale of typical fluctuations, and a bound on the mean correction
need not be sharp enough to recover either scale.

The rigorous study of fractional moments has received significant attention. 
Talagrand developed large-deviation and variational methods for these
quantities \cite{TalagrandLD}. For each fixed $0<s<1$, the limiting
normalized log fractional moment is covered by the one-level
specialization of Contucci and Mingione's multiscale formula
\cite[Theorem~2.1]{CM}. In that specialization, the moment parameter
becomes a lower bound on the cumulative variational parameter.
Chen, Guionnet, Ko, Lacroix-A-Chez-Toine, and Mourrat formulate the
fractional-moment limit as a Parisi minimum over measures with a
prescribed minimum mass at zero \cite[Theorem~2.1]{CGKLM}. Their
ground-state formula expresses the log-Laplace limit as an infimum
over nondecreasing integrable functions bounded below by the Laplace
parameter \cite[Theorem~3.1]{CGKLM}; they also establish a one-sided
ground-state large-deviation principle
\cite[Theorems~1.2 and~5.1]{CGKLM}.

At zero external field and the quadratic SK covariance, these are the
constrained variational problems considered here. Our contribution is a quantitative comparison. 
Two regimes require particular care: the moment parameter may tend to
zero with the number of spins, and the inverse temperature may tend to
infinity. The first regime probes fluctuations near the expected free
energy, while the second replaces the partition function by the
ground-state energy.

To implement our comparison argument, we add an independent centered Gaussian
variable of variance $1/2$ to every spin energy, using the same
variable for all configurations. We call this the completed model and
write $F^c_{N,\beta}$ for its logarithmic partition function. For
$0<s\le1/2$, we consider the normalized logarithmic fractional moment
\[
 \frac{1}{Ns}\log \E e^{sF^c_{N,\beta}},
\]
where $s$ is the fractional-moment parameter.  For the completed model,
this quantity differs from its constrained Parisi minimum by at most
\[
 C\min\{\beta N^{-1/2},
          \beta^{2/3}N^{-2/3}s^{-1/3}\}.
\]
Here $C$ is absolute, and the estimate holds simultaneously for all
system sizes $N$, inverse temperatures $\beta>0$, and moment parameters
$0<s\le1/2$. Division by $\beta$, with $s=\lambda/\beta$, gives a
comparison for the ground-state log-Laplace transform at every fixed
$\lambda>0$. The interpolation error is estimated at the
finite-dimensional functional's own minimizing measure. This comparison
does not require information about the support of the scalar Parisi
measure.

Sharper low-temperature estimates depend on the structure of that
measure. Auffinger and Chen proved uniqueness of the Parisi minimizer
through strict convexity \cite{AuffingerChen2015Uniqueness}. They also
established structural properties of Parisi measures, including the
presence of zero in the zero-field support and regularity on intervals
contained in the support \cite{AuffingerChen2015Properties}.
Variational representations and optimality conditions were developed
by Chen \cite{Chen2017Variational}, while Jagannath and Tobasco gave a
dynamic-programming treatment of the Parisi functional
\cite{JagannathTobasco2016}. These analytic descriptions make it
possible to relate perturbations of the variational problem to
properties of its minimizing profile.

The geometry of the support is a sharper question than the failure
of replica symmetry. Toninelli established the failure of the
replica-symmetric formula below the Almeida--Thouless line
\cite{Toninelli2002AT}. Auffinger, Chen, and Zeng proved that the SK
model has infinitely many levels of replica symmetry breaking at zero
temperature \cite{AuffingerChenZeng2020}. Zhou proved interval support
for inverse temperatures sufficiently close to and greater than one
\cite{Zhou2026FRSB}. For every $\beta>1$, a recent preprint identifies the support of the scalar Parisi measure as
$[0,q_\beta]$, with $0<q_\beta<1$
\cite[Theorem~1.1]{Lopatto}.

The fractional-moment variational problem imposes an artificial mass
constraint at zero. Using the interval support of the unconstrained
Parisi measure, we show that requiring mass at least $s$ at zero
raises the scalar minimum by order $s^5$ as $s\downarrow0$ for each
fixed $\beta>1$. Equivalently, under the rescaling
$\lambda=\beta s$, the rescaled scalar cost has order $\lambda^5$.
This fifth-order variational increase agrees with the order predicted
by Kondor and Parisi--Rizzo \cite{Kondor1983,ParisiRizzo2008,PR} and
is the mechanism that converts the fractional-moment formula into
fluctuation bounds. 
A refined finite-size comparison then yields, for every fixed
$\beta>1$, a variance lower bound of order $N^{4/15}$ and a variance
upper bound of order $N^{7/15}$ for the logarithmic partition function.
The difference of its expected value per spin from the limiting pressure
is bounded below by order $N^{-11/12}$ and above by order $N^{-2/3}$.
For the variance upper bound, we combine the mean-difference estimate
with Chen's convexity theorem for Gaussian logarithmic moments
\cite{ChenConvexity}. We also obtain bounds on tilted moments and
variances.

The same comparison gives upper and lower estimates for upper-tail
probabilities with exponent $6/5$. The deviations are measured from
the thermodynamic value, and the estimates hold when the deviation
per spin ranges from a sufficiently large multiple of $N^{-2/3}$ to
a sufficiently small fixed constant. They therefore make the
six-fifths prediction quantitative on an explicit shrinking deviation
range. For deviations of $F_{N,\beta}/(N\beta)$ from $P_\beta/\beta$, these
estimates are uniform for $\beta\ge\beta_0>1$, with constants depending
only on $\beta_0$. 
Taking $\beta_0=2$ and passing to zero temperature gives the same
variance exponents, mean-deficit bounds, and upper-tail estimates for
the ground-state energy. 

\subsection{Main results}
Let $(g_{ij})_{1\le i<j\le N}$ be independent standard Gaussian variables.
For $\sigma\in\{-1,1\}^N$, define
\begin{equation}\label{eq:model}
 \begin{gathered}
 H_N(\sigma)=\frac1{\sqrt N}\sum_{i<j}g_{ij}\sigma_i\sigma_j,
 \qquad F_{N,\beta}=\log\sum_\sigma e^{\beta H_N(\sigma)},\\
 V_{N,\beta}=\Var F_{N,\beta},\qquad
 p_{N,\beta}=\frac{\E F_{N,\beta}}N.
 \end{gathered}
\end{equation}
The variance is over the Gaussian disorder, and all logarithms are
natural logarithms. When $\beta$ is fixed, we suppress it and write $F_N,V_N,p_N$.

For a probability measure $\mu$ on $[0,1]$, write $a(q)=\mu([0,q])$.
The scalar Parisi functional and its constrained minimum are
\begin{equation}\label{eq:scalar-functional}
 \begin{gathered}
 \partial_q u_a=-\frac{\beta^2}{2}
       (u_{a,xx}+a(q)u_{a,x}^2),\qquad u_a(1,x)=\log(2\cosh x),\\
 \cP_\beta(a)=u_a(0,0)-\frac{\beta^2}{2}\int_0^1q a(q)\dd q,
 \qquad
 P_\beta(s)=\min_{\mu(\{0\})\ge s}\cP_\beta(a),\quad
 P_\beta=P_\beta(0).
 \end{gathered}
\end{equation}
We likewise write $\cP,P(s),P$ at fixed $\beta$. Existence and
continuity follow from the finite-dimensional construction in
\zcref{sec:finite-n} at $t=0$. 

Let $W$ be a Gaussian orthogonal ensemble (GOE) matrix with off-diagonal
entries $g_{ij}/\sqrt N$ and independent diagonal entries
$\sqrt{2/N}z_i$, where the $z_i$ are standard Gaussian. Set
\[
 D_N=\frac12\Tr W,\qquad
 F^c_{N,\beta}=\log\sum_\sigma e^{\beta\sigma^TW\sigma/2}
             =F_{N,\beta}+\beta D_N.
\]
The variable $D_N$ is independent of $F_{N,\beta}$ and has law
$N(0,1/2)$. For $s>0$, define
\begin{equation}\label{eq:completion}
 \phi^c_{N,\beta}(s)=\frac1{Ns}\log\E e^{sF^c_{N,\beta}}
  =p_{N,\beta}+\frac{K_N(s)}{Ns}+\frac{\beta^2s}{4N},\qquad K_N(s)=\log\E e^{s(F_{N,\beta}-\E F_{N,\beta})}.
\end{equation}
At $s=0$ set $\phi^c_{N,\beta}(0)=p_{N,\beta}$, and at fixed $\beta$
write $F_N^c$ and $\phi_N^c(s)$. The completed Hamiltonian
$\sigma^TW\sigma/2$ has covariance $NR(\sigma,\tau)^2/2$, where
$R(\sigma,\tau)=N^{-1}\sigma\cdot\tau$.

\begin{theorem}\zlabel{thm:uniform-parisi}
There exists a constant $C>0$ such that, for every
$N\ge1$, $\beta>0$, and $0< s\le1/2$,
\begin{equation}\label{eq:uniform-parisi}
 0\le P_\beta(s)-\phi^c_{N,\beta}(s)
 \le C\min\{\beta^2,\beta N^{-1/2},
                 \beta^{2/3}N^{-2/3}s^{-1/3}\}.
\end{equation}
For the expected pressure,
\begin{equation}\label{eq:quantitative-pressure}
 0\le P_\beta-p_{N,\beta}\le C\beta N^{-1/2}.
\end{equation}
\end{theorem}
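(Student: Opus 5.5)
The plan has three parts: prove the lower bound $P_\beta(s)\ge\phi^c_{N,\beta}(s)$ by a Guerra-type interpolation adapted to fractional moments, then prove the upper bound by comparison with a finite-dimensional functional, then read off \eqref{eq:quantitative-pressure} by letting $s\downarrow0$.

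\textbf{Lower bound.} Fix $a$ with $\mu(\{0\})\ge s$, first a discrete (finite-step) measure. Build Ruelle-cascade weights from $a$ and interpolate between the completed Hamiltonian $\sqrt t\,\beta\sigma^TW\sigma/2$ and the cavity field $\sqrt{1-t}\sum_i y_i\sigma_i$. The field $y$ has covariance $\beta^2 q$ along the cascade, so it reproduces $u_a(0,0)$.

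The point of the completion is that the covariance of the completed Hamiltonian is exactly $N\beta^2R^2/2$, with no diagonal defect. The Guerra derivative is then
\[
 -\tfrac{\beta^2}{4}\,\E\langle (R-q)^2\rangle\le0
\]
for the quadratic covariance, so no $O(1/N)$ error appears.

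The constraint $\mu(\{0\})\ge s$ enters at the root level. The outermost cascade level carries parameter $a(0)\ge s$, and we use the identity
\[
 \tfrac1s\log\E X^s\le \tfrac1{m}\log\E X^{m} \quad\text{for } m\ge s,
\]
together with the averaging over the Gaussian at overlap $0$. Applied with the exponent $s$ placed at level $0$, this turns the fractional moment $\E e^{sF^c}$ into the first step of the Parisi recursion. The derivative computation then gives $\phi^c_{N,\beta}(s)\le \cP_\beta(a)$. Taking the infimum, with continuity in $a$ extending from discrete to general measures, yields $0\le P_\beta(s)-\phi^c$.

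\textbf{Upper bound.} Here I invoke the finite-dimensional construction of \zcref{sec:finite-n}. It defines, for $t\in[0,1]$, functionals whose $t=0$ value is $P_\beta(s)$ and whose $t=1$ value is $\phi^c_{N,\beta}(s)$, or an object controlling it via an Aizenman--Sims--Starr/cavity representation. I estimate the interpolation error at the finite-dimensional minimizing measure $\mu_N$, as the introduction indicates.

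The error is an overlap-fluctuation term, bounded by $\beta^2\,\E\langle |R-q|^2\rangle$ under a perturbed Gibbs measure satisfying the Ghirlanda--Guerra identities. It admits three bounds:
\begin{itemize}
 \item The trivial bound $\beta^2$, since $|R|\le1$.
 \item The bound $\beta N^{-1/2}$, obtained by balancing a perturbation of size $\varepsilon$, which costs $\varepsilon/\beta$-type terms, against concentration of the perturbed pressure at rate $\beta^2/(\varepsilon N)$. This is the standard Panchenko-style quantitative synchronization, optimized at $\varepsilon\sim\beta\sqrt{N}$.
 \item The bound $\beta^{2/3}N^{-2/3}s^{-1/3}$. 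Here the fractional tilt $e^{sF}$ improves concentration. The tilted variance of $F/N$ is at most of order $1/(sN^2)$ times a log-moment derivative, because $\phi^c$ is monotone and convex in $s$ by Chen's convexity \cite{ChenConvexity} and lies in a bounded range. Optimizing the perturbation size against this gives $\beta^{2/3}N^{-2/3}s^{-1/3}$.
\end{itemize}
Taking the minimum of the three gives \eqref{eq:uniform-parisi}, with uniformity in $\beta$ and $N$ coming from tracking only absolute constants.

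\textbf{Pressure bound.} Let $s\downarrow0$. We have $\phi^c_{N,\beta}(s)\to p_{N,\beta}$ because $K_N(s)=O(s^2)$ and the term $\beta^2 s/(4N)$ vanishes. Also $P_\beta(s)\to P_\beta$ by continuity. The bound $\beta N^{-1/2}$ is uniform in $s$, so it passes to the limit and gives \eqref{eq:quantitative-pressure}.

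\textbf{Main obstacle.} The hardest step is the third upper bound. The perturbation must be done under the $s$-tilted disorder measure while keeping the mass constraint at $0$ compatible with the minimizer $\mu_N$, and extracting the $s^{-1/3}$ dependence needs a sharp tilted-variance estimate. My first attempt would be to bound the tilted variance by $\frac{d}{ds}\bigl(s\,\phi^c(s)\bigr)$ differences over a window $[s/2,s]$ using convexity.
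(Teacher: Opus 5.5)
Your lower bound and your passage to $s\downarrow0$ are fine and agree with the paper. The Guerra interpolation with the completed Hamiltonian is the fixed-profile identity $\partial_t\mathcal F_{t,s}(\mu)=-\frac{\beta^2}{4}D_{t,s}(\mu)\le0$, where the outer-tilt term removes the prescribed mass $s\delta_0$ from the overlap integral. The limit $s\downarrow0$ is handled with Gaussian domination and the trial $s\delta_0+(1-s)\mu$.

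The gap is the upper bound. With the optimized interpolation $f(t)=\min_\mu\mathcal F_{t,s}(\mu)$, the envelope argument gives the exact identity $P_\beta(s)-\phi^c_{N,\beta}(s)=\frac{\beta^2}{4}\int_0^1D_{t,s}(\mu_t)\,\dd t$. Here $\mu_t$ is a minimizer at each $t$, and the expectation is under that minimizer's unperturbed $s$-tilted joint law. A Ghirlanda--Guerra perturbation changes both the functional and its minimizer. The identities it yields only constrain the overlap distribution. They do not show that $R(\sigma^1,\sigma^2)$, with replicas sampled conditionally on the level-$q$ history, concentrates at $q$ for $\nu$-almost every $q$, and that is exactly what $D_{t,s}(\mu_t)$ measures.

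Your exponents are also not derived. As written, your two costs balance at $\varepsilon\sim\beta^{3/2}N^{-1/2}$, not at $\beta\sqrt N$, where the first cost is of order $\sqrt N$. Chen's convexity only bounds the slope of $s\mapsto\frac1s\log\E e^{sF}$ from below, so it gives no upper bound on the tilted variance. In any case that variance measures disorder fluctuations of $F$, not the overlap fluctuations in $D$.

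The missing idea is that optimality of $\mu_t$ itself supplies the synchronization. The first variation forces $\E|m_q|^2=Nq$ on $\supp\nu$. Its second-order consequence gives $\E\Tr H_q^2\le N/c$ with $c=\beta^2(1-t)$. Together with $a(q)C_q\preceq H_q\preceq C_q$, this bounds the trace part of $D$ at level $q$ by $(Nc\,a(q)^2)^{-1}$.

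Next, apply Brascamp--Lieb to the fixed-spin posterior, whose Hessian is $I+\int A_q\,\nu(\dd q)\succeq I$. This gives $\Var\bigl(\int kL_q\,\dd\nu\bigr)\le\frac1{Nc}\int k^2a^{-1}\,\dd\nu$. A quantile-interval comparison converts these averaged bounds into a bound on $\int\Var(L_q)\,\nu(\dd q)$. Dyadic summation in cumulative mass, started at $s$ or at $\max\{s,(Nc)^{-1/2}\}$, then gives $D\le C\min\{1,(Nc)^{-1/2},(Nc)^{-2/3}s^{-1/3}\}$.

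Integrating over $c\in[0,\beta^2]$ produces the three terms of the theorem. The term $\beta N^{-1/2}$ comes from $\int_0^{\beta^2}(Nc)^{-1/2}\,\dd c$; its singularity at $c=0$, where the auxiliary-field curvature disappears, is integrable. The factor $s^{-1/3}$ enters through the floor $a(q)\ge s$ in the posterior variance bound, not through concentration of the free energy. Without an argument of this kind, your upper bound is unsupported.
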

The uniformity in
$s$ also permits moment parameters that depend on $N$. The bound includes $C\beta N^{-1/2}$, so the error remains bounded after division by
$\beta$ and passage to zero temperature.

To state the zero-temperature result, let $\mathcal M_\lambda$ be the
nondecreasing, right-continuous, integrable functions
$\gamma:[0,1)\to[\lambda,\infty)$, for $\lambda\ge0$. Define
\begin{equation}\label{eq:zero-functional}
 \begin{gathered}
 \partial_q\Psi_\gamma=-\tfrac12
       (\Psi_{\gamma,xx}+\gamma(q)\Psi_{\gamma,x}^2),
 \qquad \Psi_\gamma(1,x)=|x|,\\
 \mathcal E(\lambda)=\inf_{\gamma\in\mathcal M_\lambda}
 \left[\Psi_\gamma(0,0)-\frac12\int_0^1q\gamma(q)\dd q\right].
 \end{gathered}
\end{equation}
For integrable, possibly unbounded $\gamma$, the solution is defined
by bounded approximation, equivalently by the control representation
in \zcref{subsec:sec-zero-temperature}. Put
\[
 G_N=\max_\sigma H_N(\sigma),\qquad G_N^c=G_N+D_N.
\]
\begin{theorem}\zlabel{thm:ground-state}
There exists a  constant $C<\infty$ such that, for every $N\ge1$,
\begin{equation}\label{eq:ground-mean}
 0\le\mathcal E(0)-\frac{\E G_N}{N}\le C N^{-1/2}.
\end{equation}
For every $\lambda>0$, the same constant satisfies
\begin{equation}\label{eq:ground-laplace}
 0\le\mathcal E(\lambda)
       -\frac1{N\lambda}\log\E e^{\lambda G_N^c}
 \le C\min\{N^{-1/2},N^{-2/3}\lambda^{-1/3}\}.
\end{equation}
In the original SK model, the exact correction is
\begin{equation}\label{eq:ground-offdiagonal}
 \frac{\lambda}{4N}\le\mathcal E(\lambda)
       -\frac1{N\lambda}\log\E e^{\lambda G_N}
 \le C\min\{N^{-1/2},N^{-2/3}\lambda^{-1/3}\}
                  +\frac{\lambda}{4N}.
\end{equation}
\end{theorem}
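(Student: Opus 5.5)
The plan is to deduce \zcref{thm:ground-state} from \zcref{thm:uniform-parisi} by taking $\beta\to\infty$ with $s=\lambda/\beta$, using the uniformity of the constant in $N$, $\beta$ and $s$.

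\textbf{Finite-size side.} Since
\[
\beta G_N\le F_{N,\beta}\le \beta G_N+N\log 2,
\]
we have $F^c_{N,\beta}/\beta\to G_N^c$ deterministically as $\beta\to\infty$. Take $s=\lambda/\beta$, which lies in $(0,1/2]$ once $\beta\ge 2\lambda$. Then
\[
\frac1{N\lambda}\log\E e^{\lambda G_N^c}\le \frac{\phi^c_{N,\beta}(\lambda/\beta)}{\beta}\le \frac1{N\lambda}\log\E e^{\lambda G_N^c}+\frac{\log 2}{\beta},
\]
so $\phi^c_{N,\beta}(\lambda/\beta)/\beta$ converges to the ground-state log-Laplace quantity. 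Dividing \eqref{eq:uniform-parisi} by $\beta$ gives the error
\[
C\min\{\beta,\,N^{-1/2},\,\beta^{-1/3}N^{-2/3}(\lambda/\beta)^{-1/3}\}=C\min\{\beta,\,N^{-1/2},\,N^{-2/3}\lambda^{-1/3}\},
\]
which is independent of $\beta$ for large $\beta$. Similarly, \eqref{eq:quantitative-pressure} divided by $\beta$, together with $p_{N,\beta}/\beta\to\E G_N/N$, gives \eqref{eq:ground-mean}, provided $P_\beta/\beta\to\mathcal E(0)$.

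\textbf{Variational side.} The remaining task is to show
\[
\lim_{\beta\to\infty}\frac{P_\beta(\lambda/\beta)}{\beta}=\mathcal E(\lambda).
\]
Substitute $\gamma(q)=\beta a(q)$ and $\Psi(q,x)=u_a(q,\beta x)/\beta$. Then $\Psi$ solves the zero-temperature PDE with terminal data $\beta^{-1}\log(2\cosh\beta x)$, and
\[
\frac{\beta^2}{2}\int_0^1 q\,a(q)\dd q\Big/\beta=\frac12\int_0^1 q\,\gamma(q)\dd q.
\]
The constraint $\mu(\{0\})\ge\lambda/\beta$ becomes $\gamma\ge\lambda$, but $\gamma$ is also forced to satisfy $\gamma\le\beta$, because $a\le 1$. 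The terminal data satisfy
\[
|x|\le \beta^{-1}\log(2\cosh\beta x)\le |x|+\beta^{-1}\log 2.
\]
Hence, by the comparison principle or the control representation of \zcref{subsec:sec-zero-temperature}, $P_\beta(\lambda/\beta)/\beta$ lies within $\log 2/\beta$ of
\[
\inf\Bigl\{\Psi_\gamma(0,0)-\tfrac12\int_0^1 q\gamma\,\dd q:\ \gamma\in\mathcal M_\lambda,\ \gamma\le\beta\Bigr\}.
\]
This truncated infimum decreases to $\mathcal E(\lambda)$ as $\beta\to\infty$, because any integrable $\gamma\in\mathcal M_\lambda$ is approximated by $\min(\gamma,\beta)$. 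Continuity of the functional under this monotone truncation follows from the definition of $\Psi_\gamma$ by bounded approximation. The bounds in \eqref{eq:ground-mean} and \eqref{eq:ground-laplace} then follow by letting $\beta\to\infty$ in the divided inequalities, since the right-hand sides do not depend on $\beta$. Lower bounds of $0$ pass to the limit as well.

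\textbf{The original model.} For \eqref{eq:ground-offdiagonal}, use that $D_N$ is independent of $G_N$ with law $N(0,1/2)$, so $\log\E e^{\lambda D_N}=\lambda^2/4$. Therefore
\[
\frac1{N\lambda}\log\E e^{\lambda G_N}=\frac1{N\lambda}\log\E e^{\lambda G_N^c}-\frac{\lambda}{4N},
\]
and adding $\lambda/(4N)$ to both sides of \eqref{eq:ground-laplace} gives the stated two-sided bound.

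\textbf{Main obstacle.} The step I expect to be delicate is the variational limit. One must justify that the cap $\gamma\le\beta$ is harmless, meaning $\Psi_{\min(\gamma,\beta)}(0,0)\to\Psi_\gamma(0,0)$ for unbounded integrable $\gamma$. I would obtain this from monotonicity of $\Psi_\gamma$ in $\gamma$ (the term $\gamma\Psi_x^2$ is nonnegative) together with monotone convergence in the control representation.
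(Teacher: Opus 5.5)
Your proposal is correct and follows the paper's proof essentially step for step: the soft-maximum sandwich, dividing \eqref{eq:uniform-parisi} and \eqref{eq:quantitative-pressure} by $\beta$ with $s=\lambda/\beta$, the rescaling $\gamma=\beta a$ with a uniform $\log 2/\beta$ terminal error and the cap $\gamma\le\beta$ removed as $\beta\to\infty$ (this is \zcref{lem:zero-limit}), and the exact $\lambda^2/4$ correction from the independent completion $D_N$. The only difference is in removing the cap: the paper uses the $L^1$-Lipschitz bound $\sup_{q,x}|\Psi_\gamma-\Psi_{\widetilde\gamma}|\le\frac32\|\gamma-\widetilde\gamma\|_{L^1}$ from the control representation, which is more direct than your monotonicity-plus-monotone-convergence argument, though yours also works.
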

Here $\mathcal E(0)$ is the thermodynamic ground-state value. The
log-Laplace estimate gives a finite-size comparison with the limiting
functional for every positive Laplace parameter. Its uniformity in
$\lambda$ concerns the shifted ground state $G^c_N$. The correction in \eqref{eq:ground-offdiagonal} cannot
be dropped when $\lambda$ grows with $N$.

The remaining fluctuation and tail bounds use the interval-support
property of the Parisi measure from \cite[Theorem~1.1]{Lopatto}. 
\begin{theorem}\zlabel{thm:ground-fluctuations}
Put $e_*=\mathcal E(0)$. There exist  constants $c,C>0$
and $N_0<\infty$ such that, for every integer $N\ge N_0$,
\begin{equation}\label{eq:ground-four-bounds}
 \begin{gathered}
 cN^{4/15}\le\Var G_N\le CN^{7/15},\\
 cN^{-11/12}\le e_*-\frac{\E G_N}{N}\le CN^{-2/3}.
 \end{gathered}
\end{equation}
\end{theorem}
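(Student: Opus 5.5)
The plan is to work through the log-Laplace transform $L_N(\lambda)=\frac1{N\lambda}\log\E e^{\lambda G^c_N}$ of the completed ground state. \zcref{thm:ground-state} compares it with $\mathcal E(\lambda)$ up to $C\min\{N^{-1/2},N^{-2/3}\lambda^{-1/3}\}$. Two further ingredients are needed:
\begin{enumerate}[label=(\roman*)]
\item a fifth-order estimate $c\lambda^5\le\mathcal E(\lambda)-\mathcal E(0)\le C\lambda^5$ for small $\lambda$;
\item a refined version of the mean comparison \eqref{eq:ground-mean} with error $O(N^{-2/3})$ rather than $O(N^{-1/2})$.
\end{enumerate}
For (i) I would pass to zero temperature from the positive-temperature statement $P_\beta(s)-P_\beta\asymp s^5$, which uses the interval support of \cite[Theorem~1.1]{Lopatto}. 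This requires constants uniform in $\beta\ge\beta_0=2$ after rescaling $\lambda=\beta s$ and dividing by $\beta$, so that $\beta^{-1}(P_\beta(\lambda/\beta)-P_\beta)\asymp\lambda^5$ with $\beta$-independent constants. I would then take $\beta\to\infty$ using $|\beta^{-1}F_{N,\beta}-G_N|\le N\log2/\beta$ and the convergence $\beta^{-1}P_\beta(\lambda/\beta)\to\mathcal E(\lambda)$. Alternatively, I would argue directly on $\mathcal E$: the zero-temperature minimizer $\gamma_0$ vanishes at $0$ and grows linearly near $0$. Forcing $\gamma\ge\lambda$ changes it on an interval of length about $\lambda$, and the functional is quadratic in the perturbation around the optimum, giving a cost of order $\lambda\cdot\lambda\cdot\lambda^{3}$.

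\textbf{Converting Laplace bounds to moments.} Write $X=G^c_N-\E G^c_N=G_N-\E G_N+D_N$ and $K(\lambda)=\log\E e^{\lambda X}$. Then
\[
\frac{K(\lambda)}{N\lambda}=L_N(\lambda)-\frac{\E G_N}N=\bigl(L_N(\lambda)-\mathcal E(\lambda)\bigr)+\bigl(\mathcal E(\lambda)-e_*\bigr)+\Bigl(e_*-\frac{\E G_N}N\Bigr).
\]

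For the \emph{lower bound on the deficit}: since $K\ge0$ is convex with $K(0)=0$, we get $K(\lambda)/(N\lambda)\ge0$. This gives $e_*-\E G_N/N\ge c\lambda^5-C\min\{N^{-1/2},N^{-2/3}\lambda^{-1/3}\}$. Choosing $\lambda^{16/3}\asymp N^{-2/3}$, i.e.\ $\lambda\asymp N^{-1/8}$, gives a deficit of order $N^{-5/8}$. That is better than the claimed $N^{-11/12}$, so presumably the Laplace bound is sharper in one direction, or the refinement in (ii) is only available for the mean. The $N^{-11/12}$ rate suggests instead balancing $\lambda^5$ against a different error, e.g.\ $N^{-1}\lambda^{-1}$ (giving $\lambda\asymp N^{-1/6}$ and cost $N^{-5/6}$). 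I would recheck which rate the refined comparison yields and tune the choice of $\lambda$ accordingly.

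For the \emph{upper bound on the deficit}: apply (ii).

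For the \emph{variance upper bound}: I would use Chen's convexity theorem \cite{ChenConvexity}, which ensures that $K(\lambda)/\lambda$ has controlled growth, or equivalently a lower bound on $K''$ relative to $K$. Combined with $K(\lambda)\le N\lambda(C\lambda^5+CN^{-2/3})$ at $\lambda=N^{-2/15}$, this gives $K(\lambda)\lesssim N^{1/5}$. Since $\tfrac12\lambda^2\Var X\lesssim K(\lambda)$, we get $\Var X\lesssim N^{1/5}\lambda^{-2}=N^{7/15}$.

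For the \emph{variance lower bound}: I would use the reverse inequality. At $\lambda$ with $\lambda^5\gg N^{-2/3}\lambda^{-1/3}$, we have $K(\lambda)\ge cN\lambda^6-CN\lambda\cdot\mathrm{deficit}$, so $K$ is large. Gaussian hypercontractivity-type bounds (or the concentration $K(\lambda)\le C\lambda^2\Var X$ for $\lambda$ below the superconcentration scale) then convert this to $\Var X\gtrsim N^{4/15}$, after subtracting $\Var D_N=1/2$. Taking $\lambda\asymp N^{-2/15}$ gives $N\lambda^6/\lambda^2=N^{1-8/15}$, and I would calibrate this against the available control to reach $N^{4/15}$.

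The main obstacle is the uniform-in-$\beta$ fifth-order estimate (i), and its transfer to $\mathcal E$. Interval support gives the qualitative structure, but the lower constant $c$ requires a quantitative nondegeneracy of the minimizer near $0$ (the slope $\gamma_0'(0)>0$, with a strict second variation). I would try to prove this first, at zero temperature directly, via the control representation.
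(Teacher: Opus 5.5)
Your outline has the same architecture as the paper: log-Laplace comparison, a fifth-order scalar gap, Chen's tangent inequality, and a variance-sensitive moment bound. Your variance upper bound via $K(\lambda)\ge\frac{\lambda^2}{2}\Var X$ is exactly the paper's variance--deficit mechanism. The gap is that all four stated exponents rest on an ingredient you neither supply nor formulate correctly. What is needed is a refined comparison for the log-Laplace transform \emph{at positive tilt},
$\mathcal E(\lambda)-L_N(\lambda)\le C[N^{-2/3}+N^{-3/4}\lambda^{-1/2}+\log(2+N)/(N\lambda)]$, which is $O(N^{-2/3})$ at $\lambda\asymp N^{-2/15}$. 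This is the content of \zcref{prop:abs-selfconsistent-comparison}, uniform in $\beta\ge2$.

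With only \zcref{thm:ground-state}, every exponent degrades:
\begin{itemize}
\item Convexity at $0,\lambda,2\lambda$ gives $e_*-\E G_N/N\le C\lambda^5+CN^{-2/3}\lambda^{-1/3}$. Optimizing at $\lambda\asymp N^{-1/8}$ gives $N^{-5/8}$, hence only $\Var G_N\lesssim N^{1/2}$.
\item The lower-bound argument needs the comparison error below $\frac{c}{2}\lambda^5$. This forces $\lambda\gtrsim N^{-1/8}$ and yields only $\Var G_N\gtrsim N^{1/4}$ and a deficit $\gtrsim N^{-15/16}$.
\end{itemize}
A refinement for the mean alone, as in your (ii), repairs the two upper bounds but not the lower ones. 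The lower bound on $K(\lambda)$ needs the comparison error at the tilt itself. Indeed, you require $\lambda^5\gg N^{-2/3}\lambda^{-1/3}$ and then take $\lambda\asymp N^{-2/15}$, where the reverse inequality holds.

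In the paper this refinement is the main technical work, in three steps:
\begin{itemize}
\item The diameter term in \zcref{lem:cr-refined-shell} is retained rather than bounded by one.
\item Low free-mass quantiles of every finite-$N$ minimizer are localized near $0$ (\zcref{lem:p2-quantile}). The localization is in terms of the excess $\eta=e+C\lambda^5$ of the scalar trial $\max\{\lambda,\gamma_\beta\}$, and uses reflected regularization plus the response/Fisher estimates.
\item The resulting self-consistent inequality for $e$ is closed by Young's inequality.
\end{itemize}

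Two further steps fail as written.

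\emph{Deficit lower bound.} You have a sign error. From $K\ge0$ and your decomposition you only get
$e_*-\E G_N/N\ge(\mathcal E(\lambda)-L_N(\lambda))-(\mathcal E(\lambda)-e_*)\ge-C\lambda^5$, which is vacuous. Your claimed $N^{-5/8}$ would even contradict the upper bound $CN^{-2/3}$. The correct route is the tangent inequality you already use:
$\frac{\lambda}{2N}\Var X\le L_N(\lambda)-\frac{\E G_N}{N}\le e_*-\frac{\E G_N}{N}+C\lambda^5$.
Optimizing at $\lambda\asymp(\Var X/N)^{1/4}$ gives a deficit $\gtrsim\bigl((\Var G_N+\tfrac12)/N\bigr)^{5/4}$, which is \zcref{lem:short-variance-mean}. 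So the deficit lower bound must come \emph{after} the variance lower bound, and $N^{4/15}$ then yields $N^{-11/12}$.

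\emph{Variance lower bound.} This needs $K(\lambda)\le B\lambda^2(\Var+O(1))$ for $0\le\lambda\le b(\Var+O(1))^{-1/2}$, a tilt range set by the variance rather than the Lipschitz constant. Gaussian concentration or hypercontractivity give only $K(\lambda)\le\lambda^2N/4$, which is useless against $N\lambda^6$. Such a bound is also false for general convex Lipschitz functionals. For example, take $L(g_1-t)_+$ with $t$ large and $L^2\asymp t^3e^{t^2/2}$: the variance is $O(1)$, yet $K(\lambda)\gtrsim(\lambda L)^2$ at $\lambda\asymp1$. The paper proves the bound for the SK structure (\zcref{prop:variance-sensitive-moments}):
\begin{itemize}
\item gauge transitivity lets one pin a single configuration without changing the law of $F$;
\item adjoining an independent exponential makes $F+E_1$ a marginal of a log-concave density, hence log-concave by Pr\'ekopa;
\item one-dimensional log-concave tails then give the bound.
\end{itemize}
The dichotomy $\lambda\sqrt V\gtrless b$ at $\lambda=AN^{-2/15}$ then gives $V\gtrsim N^{4/15}$.

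\emph{Lower constant in (i).} You flag this yourself, but note what it requires. It needs a second-variation lower bound along the whole segment to an arbitrary competitor $\gamma\ge\lambda$, not just a quadratic expansion near the optimum. The paper obtains it from the uniform linear growth of $\gamma_\beta$ near $0$ and from response estimates taken under each intermediate profile's own law.
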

The variance is over all off-diagonal Gaussian couplings. The proof,
in \zcref{sec:ground-fluctuations}, first establishes estimates
uniformly for $\beta\ge2$ and then takes $\beta\to\infty$ at fixed
$N$. 

\begin{theorem}\zlabel{thm:main}
For every fixed $\beta>1$, there exist $c_\beta,C_\beta>0$ and an integer $N_0(\beta)\ge 1$ such that, for every $N\ge N_0(\beta)$,
\begin{equation}\label{eq:fixed-beta-four-bounds}
 \begin{gathered}
 c_\beta N^{4/15}\le V_{N,\beta}\le C_\beta N^{7/15},\\
 c_\beta N^{-11/12}\le P_\beta-p_{N,\beta}
                       \le C_\beta N^{-2/3}.
 \end{gathered}
\end{equation}
\end{theorem}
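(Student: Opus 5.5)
The plan is to turn \zcref{thm:uniform-parisi}, together with a fifth-order estimate for the constrained scalar minimum, into two-sided bounds on the centered log-moment $K_N(s)$, and then to read off the mean deficit and the variance by choosing $s=s(N)$. The first and genuinely new step is the variational estimate: for fixed $\beta>1$ there are $c,C>0$ and $s_0>0$ with
\[
 c s^5\le P_\beta(s)-P_\beta\le C s^5,\qquad 0<s\le s_0 .
\]
For the upper bound I would perturb the unconstrained minimizer $a_*$. By \cite[Theorem~1.1]{Lopatto} its support is $[0,q_\beta]$, and $a_*$ is continuous and increasing there by \cite{AuffingerChen2015Properties}. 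I take $a_s=\max\{a_*,s\}$, which modifies $a_*$ on an interval $[0,q_s]$ of length $q_s\asymp s$, using the linear growth of $a_*$ at $0$. A second-order expansion of $\cP$ around its minimizer, with the first variation vanishing on the support, then leaves a cost of order $\int_0^{q_s}(s-a_*)^2\,\omega(q)\dd q$. Here $\omega(q)$ is the second-variation weight, which vanishes to second order at $q=0$ by the optimality condition and the zero-field symmetry $x\mapsto-x$. This gives $s^2\cdot s\cdot s^2=s^5$. For the matching lower bound I would apply the strict convexity of \cite{AuffingerChen2015Uniqueness} in a quantitative form near $q=0$. The main technical point is computing $\omega$ via the control representation of \cite{Chen2017Variational,JagannathTobasco2016}; I expect this step to be the hardest.

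Next I combine this with \eqref{eq:completion} and \eqref{eq:uniform-parisi}. Since $\phi^c_N(s)\le P(s)\le P+Cs^5$, we get
\[
 (P-p_N)+\frac{K_N(s)}{Ns}\ge -Cs^5-\frac{\beta^2s}{4N}.
\]
In the other direction, $\phi^c_N(s)\ge P(s)-C N^{-2/3}s^{-1/3}\ge P+cs^5-CN^{-2/3}s^{-1/3}$ gives
\[
 \frac{K_N(s)}{Ns}\ge (P-p_N)+cs^5-CN^{-2/3}s^{-1/3}-\frac{\beta^2 s}{4N}.
\]
Taking $s=AN^{-1/8}$ with $A$ large makes $cs^5$ dominate the error $N^{-2/3}s^{-1/3}$. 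Since $P-p_N\ge0$, this yields $K_N(s)\gtrsim Ns^6$ at that scale. I will also use two structural facts: $K_N(s)/s$ is nondecreasing, by convexity of $K_N$ with $K_N(0)=0$, and $K_N'(0)=0$.

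To convert these estimates into variance and mean bounds, I will use Chen's convexity theorem \cite{ChenConvexity}. I expect it to give a two-sided comparison between $K_N(s)$ and $\tfrac{s^2}{2}V_N$ in a window $s\le s_1(N)$, up to factors controlled by the Gaussian Poincar\'e and log-Sobolev bounds $V_N\le CN\beta^2$. The lower bound on $K_N$ at small $s$, transferred to $V_N$ through this comparison and optimized over $s$, should produce $V_N\gtrsim N^{4/15}$. The exponent comes from balancing $s^5$ against $N^{-2/3}s^{-1/3}$ after passing from $K_N$ to $V_N$; I would determine the exact choice $s\asymp N^{-\alpha}$ by this balance.

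For the mean deficit upper bound I rewrite the second display as $P-p_N\le K_N(s)/(Ns)-cs^5+CN^{-2/3}s^{-1/3}$. Here $K_N(s)/(Ns)\lesssim sV_N/N$, and I take $s$ of constant order, so that $cs^5$ absorbs the moment term once a crude bound $V_N\lesssim N$ is fed in iteratively. This gives $P-p_N\le C_\beta N^{-2/3}$. Feeding the improved deficit back into the comparison of $K_N$ with $s^2V_N$ then gives $V_N\lesssim N^{7/15}$. For the deficit lower bound $N^{-11/12}$, I will use the first display together with $K_N(s)\ge cs^2V_N$ and the variance lower bound, optimizing over $s$. Throughout I track only the exponents and leave the constants to depend on $\beta$.
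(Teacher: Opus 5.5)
Your outline matches the paper's: a fifth-order scalar gap, comparison with the fractional moment, Gaussian convexity, and a variance--deficit relation. The gap is that the finite-size input you use is too weak to produce the stated exponents. With the error $C\beta^{2/3}N^{-2/3}s^{-1/3}$ from \eqref{eq:uniform-parisi}, the gap $cs^5$ dominates only for $s\gg N^{-1/8}$, which is why you chose $s=AN^{-1/8}$. Even with correct moment conversions, that scale gives at best $V_{N,\beta}\gtrsim N^{1/4}$ and $P_\beta-p_{N,\beta}\lesssim N^{-5/8}$. These in turn give only $V_{N,\beta}\lesssim N^{1/2}$ and $P_\beta-p_{N,\beta}\gtrsim N^{-15/16}$, so the balance you describe yields $1/4$, not $4/15$.

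The theorem's exponents come from working at $s\asymp N^{-2/15}$, where $s^5\asymp N^{-2/3}$. That requires a comparison error of order $N^{-2/3}$ at this scale, namely the refined bound $P_\beta(s)-\phi^c_{N,\beta}(s)\le C_\beta[N^{-2/3}+N^{-3/4}s^{-1/2}+\log(2+N)/(Ns)]$ of \zcref{prop:sharp-comparison}. The paper proves it in three steps:
\begin{itemize}
\item reflected regularization of the low free mass of each interpolating minimizer;
\item localization of that minimizer's low quantiles, using the scalar trial's excess $e_{N,\beta}(\lambda)+C\lambda^5$ as a variational budget (\zcref{lem:p2-quantile}), and inserting the resulting small quantile diameters into the quantile-interval estimate;
\item a self-consistent closure by Young's inequality.
\end{itemize}
This refined comparison is the main missing idea. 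Your scalar-gap sketch also assumes two-sided linear control $kq\le a_*(q)\le Kq$ near $0$, and curvature lower bounds under each competitor's own law. The paper has to prove both itself (\zcref{lem:gs-scalar-root,prop:scalar-lower-via-fisher}).

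The conversions from moments to variance also break down.
\begin{itemize}
\item Chen's convexity is one-sided. For $s>0$ the tangent at $0$ gives only $K_N(s)\ge s^2V_{N,\beta}/2$, and log-Sobolev gives $K_N(s)\lesssim s^2N$, not $s^2V_{N,\beta}$.
\item That lower bound is what the deficit lower bound and the variance upper bound need, through $(P_\beta-p_{N,\beta})-K_N(s)/(Ns)\ge\beta^2s/(4N)-Cs^5$. Your first display has the sign of $K_N$ reversed and is vacuous as written.
\item The variance lower bound instead needs the reverse inequality $K_N(s)\le Bs^2(V_{N,\beta}+1)$ on the window $s\le b(V_{N,\beta}+1)^{-1/2}$. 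The paper obtains it from gauge pinning, an adjoined exponential variable, and Pr\'ekopa's theorem (\zcref{prop:variance-sensitive-moments}). It then uses a dichotomy: either $s^2(V_{N,\beta}+1)>b^2$, or $1\le K_N(s)\le Bs^2(V_{N,\beta}+1)$.
\item Your mean-deficit upper bound fails. At constant $s$, your own second display forces $K_N(s)\gtrsim Ns^6$, so $K_N(s)\lesssim s^2V_{N,\beta}$ is false there once $V_{N,\beta}\ll N$. The crude bound $V_{N,\beta}\lesssim N$ only bounds $K_N(s)/(Ns)$ by $Cs$, which $cs^5$ cannot absorb.
\end{itemize}
The paper needs no variance input for the deficit upper bound. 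Convexity of $\phi^c_{N,\beta}$ at $0,s,2s$ gives $P_\beta-p_{N,\beta}\le[P_\beta(2s)-P_\beta]+2[P_\beta(s)-\phi^c_{N,\beta}(s)]\le Cs^5+2[P_\beta(s)-\phi^c_{N,\beta}(s)]$. This is evaluated at $s\asymp N^{-2/15}$ using the refined comparison.
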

The scalar gap and finite-size comparison also
control rare upper deviations without the variance lower bound as an input.

\begin{theorem}\zlabel{thm:upper-tails}
For every fixed $\beta>1$, there exist constants
$A_\beta,x_\beta,c_\beta,C_\beta>0$ and $N_0(\beta)<\infty$ such that
\begin{equation}\label{eq:six-fifths-tail}
 e^{-C_\beta Nx^{6/5}}
 \le\Prob\{F_{N,\beta}\ge N(P_\beta+x)\}
 \le e^{-c_\beta Nx^{6/5}}
\end{equation}
whenever $N\ge N_0(\beta)$ and
$A_\beta N^{-2/3}\le x\le x_\beta$.
\end{theorem}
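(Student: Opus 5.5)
The plan is an exponential Chebyshev bound for the upper estimate and a tilting (Paley--Zygmund type) argument for the lower estimate. Both rest on the fractional-moment comparison of \zcref{thm:uniform-parisi} together with the fifth-order scalar gap $P_\beta(s)-P_\beta\asymp s^5$ as $s\downarrow0$, which follows from the interval support of the Parisi measure \cite[Theorem~1.1]{Lopatto}. I would first pass from $F_N$ to $F_N^c=F_N+\beta D_N$. Since $D_N\sim N(0,1/2)$ is independent of $F_N$, we have
\[
\Prob\{F_N\ge N(P+x)\}\le 2\Prob\{F^c_N\ge N(P+x)\}.
\]
Conversely,
\[
\Prob\{F^c_N\ge N(P+x)+\beta\sqrt N\}\le \Prob\{F_N\ge N(P+x)\}+e^{-N/\beta^2},
\]
and the shift $\beta N^{-1/2}$ in $x$ is negligible against $x\ge A_\beta N^{-2/3}$ once it is absorbed into constants. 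So it suffices to work with $F^c_N$, for which $\phi^c_N(s)=P(s)-O(\min\{N^{-1/2},N^{-2/3}s^{-1/3}\})$.

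\emph{Upper bound.} Markov's inequality gives
\[
\Prob\{F^c_N\ge N(P+x)\}\le \exp\bigl(Ns(\phi^c_N(s)-P-x)\bigr)\le\exp\bigl(Ns(P(s)-P-x)\bigr).
\]
Here I use only the one-sided bound $\phi^c_N\le P(s)$ from \eqref{eq:uniform-parisi}. With $P(s)-P\le Cs^5$, choosing $s=\kappa x^{1/5}$ with $\kappa$ small gives the exponent $-Ns(x-C\kappa^5x)\le -cNx^{6/5}$. The constraint $s\le1/2$ is why we need $x\le x_\beta$. This direction does not even need the lower range on $x$.

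\emph{Lower bound.} Here we need $P(s)-P\ge c s^5$ together with the lower comparison $\phi^c_N(s)\ge P(s)-C N^{-2/3}s^{-1/3}$. I would use a tilted measure $d\Q_s=e^{sF^c_N}/\E e^{sF^c_N}\,d\Prob$. Under $\Q_s$, the mean of $F^c_N/N$ is $(d/ds)(s\phi^c_N(s))$. Convexity of $s\mapsto s\phi^c_N(s)$ (the log-moment generating function) lets me bracket this derivative by difference quotients at $s$ and $2s$, resp. $s/2$ and $s$. Using $P(s)-P\asymp s^5$ and the $N^{-2/3}s^{-1/3}$ error, both quotients equal $P+\Theta(s^5)$, provided $N^{-2/3}s^{-4/3}\ll s^5$, i.e.\ $s\gg N^{-2/19}$. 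Choosing $s=Kx^{1/5}$ with $K$ large, this condition becomes $x\gg N^{-10/19}$, which is implied when $x\ge A_\beta N^{-2/3}$\,? No: $N^{-2/3}<N^{-10/19}$, so this fails. I would therefore instead use the sharper form, in which the error is compared with the gap in the $s$-rescaled quantity $s(\phi-P)$: the error $sN^{-2/3}s^{-1/3}$ against $s\cdot s^5$, which gives the same condition. This suggests that the paper must use a refined comparison for differences $\phi(2s)-\phi(s)$, with errors cancelling at leading order. I would rely on that refined finite-size comparison, mentioned as yielding the $N^{4/15}$ bound. Then by Chebyshev under $\Q_s$, using the tilted variance bound also available from convexity, $\Q_s\{F^c_N\ge N(P+x)\}\ge1/2$. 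Changing measure back,
\[
\Prob\{F^c_N\ge N(P+x)\}\ge \tfrac12\exp\bigl(-Ns\,\phi^c_N(s)+\E_{\Q_s}[\cdot]\bigr)\ge\exp(-CNs^6)=\exp(-C'Nx^{6/5}),
\]
restricting to the event that $F^c_N$ is not much larger than its tilted mean. That restriction is controlled by a second Chebyshev step.

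\emph{Main obstacle.} The hard step is showing that the tilted mean exceeds $N(P+x)$ by a definite margin, i.e.\ extracting a derivative lower bound from value bounds with errors of size $N^{-2/3}s^{-1/3}$ down to $x\sim N^{-2/3}$. I would first try the plain convexity bracketing above with $K$ large. If the errors are too big, I would invoke the refined comparison of the type used for \zcref{thm:main}, where the error at the finite-dimensional minimizer is differentiated in $s$ directly.
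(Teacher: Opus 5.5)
Your upper bound is correct and matches the paper: Chernoff at $s\asymp x^{1/5}$, using $\phi^c_{N,\beta}(s)\le P_\beta(s)\le P_\beta+C_\beta s^5$. The detour through $F^c_N$ is unnecessary, because exactly $\log\E e^{s(F_{N,\beta}-NP_\beta)}=Ns\bigl(\phi^c_{N,\beta}(s)-P_\beta\bigr)-\beta^2s^2/4$. Your reverse reduction is also wrong as stated. A shift of $\beta\sqrt N$ in the total deviation is $\beta N^{-1/2}$ per spin, and this dominates $A_\beta N^{-2/3}$ rather than being negligible.

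The lower bound has a genuine gap. As you noticed, the error $N^{-2/3}s^{-1/3}$ from \zcref{thm:uniform-parisi} bounds $\ell_N(s)=\log\E e^{s(F_{N,\beta}-NP_\beta)}$ below by $cNs^6$ only when $s\gg N^{-1/8}$, that is, only for $x\gg N^{-5/8}$. So the window $A_\beta N^{-2/3}\le x\lesssim N^{-5/8}$ is never reached. The missing ingredient is not a difference estimate with cancelling errors. It is the sharper absolute comparison of \zcref{prop:sharp-comparison},
\[
 P_\beta(s)-\phi^c_{N,\beta}(s)\le C_\beta\Bigl[N^{-2/3}+N^{-3/4}s^{-1/2}+\frac{\log(2+N)}{Ns}\Bigr],
\]
which is $o(s^5)$ once $s\ge\kappa_\beta N^{-2/15}$ with $\kappa_\beta$ large. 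Proving it is the paper's main new input, and your proposal does not supply it. The proof regularizes the free mass by reflection, localizes its low quantiles using the scalar lower slope $\gamma_\beta(q)\ge kq$ and the response/Fisher estimates, and then closes a self-consistent inequality via Young's inequality.

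Given that comparison and $P_\beta(s)-P_\beta\ge c_\beta s^5$, one gets $a_\beta Ns^6\le\ell_N(s)\le b_\beta Ns^6$ on $\kappa_\beta N^{-2/15}\le s\le s_\beta$ (\zcref{lem:sixth-moment}). From there no derivative bound is needed. Paley--Zygmund for $Z=e^{s(F_{N,\beta}-NP_\beta)}$ gives $\Prob\{Z\ge\E Z/2\}\ge\tfrac14e^{2\ell_N(s)-\ell_N(2s)}\ge\tfrac14e^{-64b_\beta Ns^6}$. On this event $F_{N,\beta}-NP_\beta\ge(\ell_N(s)-\log2)/s\ge\tfrac{a_\beta}{2}Ns^5=Nx$ for $s=(2x/a_\beta)^{1/5}$.

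Your tilted-measure route would also fail at its Chebyshev step. Convexity controls the tilted variance only from below. The only available upper bound, $\Var_s(F_{N,\beta})\le\beta^2(N-1)/2$ from \zcref{cor:fixed-beta-tilted}, allows fluctuations of order $\sqrt N$. That is far larger than the margin $Nx\asymp N^{1/3}$ at the bottom of the range. You would need an exponential Chebyshev bound under the tilted law instead. Likewise, bracketing the tilted mean from below needs a secant over $[s/d,s]$ with $d$ large in terms of $b_\beta/a_\beta$, not $d=2$.
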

These are rare upper tails centered at $NP_\beta$, not at
$\E F_{N,\beta}$. The lower end of the range corresponds to total
upward deviations of order $N^{1/3}$ and probabilities
$\exp(-\Theta_\beta(N^{1/5}))$. 
A similar argument gives the following uniform
bound and its zero-temperature counterpart.

\begin{theorem}\zlabel{thm:uniform-upper-tails}
There exist numerical constants $A,x_*,c,C>0$ and $N_0<\infty$
such that, for every $N\ge N_0$, $\beta\ge2$, and
$AN^{-2/3}\le x\le x_*$,
\begin{equation}\label{eq:uniform-six-fifths-tail}
 e^{-CNx^{6/5}}
 \le\Prob\left\{\frac{F_{N,\beta}}\beta
                  \ge N\left(\frac{P_\beta}\beta+x\right)\right\}
 \le e^{-cNx^{6/5}}.
\end{equation}
The same constants give
\begin{equation}\label{eq:ground-six-fifths-tail}
 e^{-CNx^{6/5}}
 \le\Prob\{G_N\ge N(e_*+x)\}
 \le e^{-cNx^{6/5}}
\end{equation}
for the same $N$ and $x$.
\end{theorem}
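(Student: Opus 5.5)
The statement to prove is \zcref{thm:uniform-upper-tails}. I plan a Chernoff/tilting argument built on the fractional-moment comparison of \zcref{thm:uniform-parisi}. The new inputs are a fifth-order scalar gap, uniform in $\beta\ge2$ after rescaling, and a passage to $\beta\to\infty$ at fixed $N$.

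Put $\lambda=\beta s$ and define the rescaled scalar cost
\[
\Delta_\beta(\lambda)=\frac{\lambda}{\beta}\bigl(P_\beta(\lambda/\beta)-P_\beta\bigr).
\]
The scalar input I need is the two-sided estimate
\[
c\lambda^5\le \frac{P_\beta(\lambda/\beta)-P_\beta}{\beta}\le C\lambda^5
\]
for $0<\lambda\le\lambda_*$, uniformly in $\beta\ge2$. At zero temperature the analogue is $c\lambda^5\le\mathcal E(\lambda)-\mathcal E(0)\le C\lambda^5$. I expect this to come from the interval-support analysis (\cite[Theorem~1.1]{Lopatto}) with constants uniform for $\beta\ge\beta_0=2$, as the paper announces. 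I will take it as the established scalar gap.

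\emph{Upper bound on the tail.} By Markov's inequality applied to $e^{sF^c}$, followed by \zcref{thm:uniform-parisi} (which gives $\phi^c\le P_\beta(s)$),
\[
\Prob\{F^c_{N,\beta}\ge N(P_\beta+\beta x)\}\le \exp\bigl(Ns(P_\beta(s)-P_\beta)-Ns\beta x\bigr).
\]
With $s=\lambda/\beta$ the exponent becomes $N\lambda\bigl(C\lambda^5-x\bigr)$. Choosing $\lambda\asymp x^{1/5}$ gives $e^{-cNx^{6/5}}$. To remove the completion, write $F_{N,\beta}=F^c_{N,\beta}-\beta D_N$ and use
\[
\Prob\{F\ge N(P+\beta x)\}\le\Prob\{F^c\ge N(P+\beta x/2)\}+\Prob\{-D_N\ge Nx/2\}.
\]
The Gaussian term is $e^{-cN^2x^2}$, which is negligible when $x\ge AN^{-2/3}$. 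Alternatively one can absorb the exact factor $e^{\beta^2s^2/4}$ directly.

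\emph{Lower bound on the tail.} I use the standard tilted-measure (Paley--Zygmund type) argument. Take $\lambda_1\asymp x^{1/5}$ with a large constant, and set $s_1=\lambda_1/\beta$. Then $\log\E e^{s_1F^c}\ge Ns_1(P_\beta(s_1)-\text{err})$, where by \zcref{thm:uniform-parisi}
\[
\text{err}\le C\beta^{2/3}N^{-2/3}s_1^{-1/3}=C\beta N^{-2/3}\lambda_1^{-1/3}.
\]
The lower scalar gap makes $P_\beta(s_1)-P_\beta\ge c\beta\lambda_1^5$, which dominates the error once $x\ge AN^{-2/3}$, because then $\lambda_1^5\gg N^{-2/3}\lambda_1^{-1/3}$. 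The contributions to $\E e^{s_1F^c}$ from $\{F^c< N(P+\beta x)\}$ are bounded by $e^{s_1N(P+\beta x)}$. Contributions from very large values are controlled by the same Chernoff bound at a second parameter $s_2=2s_1$, using the upper scalar gap. Balancing the three pieces yields
\[
\Prob\{F^c\ge N(P+\beta x)\}\ge e^{-CNx^{6/5}}.
\]
I then transfer from $F^c$ to $F$ by independence: $\Prob\{F\ge a\}\ge\Prob\{F^c\ge a+\beta\}\cdot\Prob\{D_N\le -1\}$. The shift changes $x$ by $N^{-1}$, which is negligible. Alternatively one can work directly with $\E e^{sF}=e^{-\beta^2s^2/4}\E e^{sF^c}$. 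All constants depend only on the uniform scalar constants, giving uniformity in $\beta\ge2$.

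\emph{Ground state.} Since $F_{N,\beta}/\beta\to G_N$ as $\beta\to\infty$ at fixed $N$, and $|F/\beta-G_N|\le N\log2/\beta$, and since $P_\beta/\beta\to e_*$ (by \zcref{thm:uniform-parisi} and \zcref{thm:ground-state}, or directly from the uniform bounds), the events converge. For the upper bound I apply Fatou's lemma and the portmanteau theorem with a slightly reduced $x$. For the lower bound I use a slightly enlarged $x$. The fixed numerical constants survive after adjusting $A$ and $x_*$ by factors of $2$. The same argument can also be run directly with \eqref{eq:ground-laplace} and the zero-temperature gap.

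The main obstacle is the uniform-in-$\beta$ two-sided $\lambda^5$ scalar gap, especially its lower bound, which requires quantitative control of the Parisi profile near $0$ uniformly for $\beta\ge2$. Given that gap, the lower-tail step is the delicate probabilistic part, because the finite-size error must be beaten precisely at the threshold $x\asymp N^{-2/3}$.
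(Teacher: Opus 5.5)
Your upper-tail bound is correct and is the paper's argument: Chernoff at $\lambda\asymp x^{1/5}$, the comparison $\phi^c_{N,\beta}\le P_\beta(s)$, and the uniform scalar upper cost of \zcref{lem:gs-scalar-root}. The completion is most simply removed via $\log\E e^{sF_{N,\beta}}=\log\E e^{sF^c_{N,\beta}}-\beta^2s^2/4$.

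The lower bound has a genuine gap, exactly where you locate the difficulty. After the rescaling $s=\lambda/\beta$, \zcref{thm:uniform-parisi} bounds the comparison error only by $E_\beta(\lambda)-\psi^c_{N,\beta}(\lambda)\le CN^{-2/3}\lambda^{-1/3}$; this is \eqref{eq:gs-full-fv}. To get $\log\E e^{\lambda(X_{N,\beta}-NE_\beta(0))}\ge aN\lambda^6$ from the scalar gap $c\lambda^5$, this error must be at most a small multiple of $\lambda^5$. That requires $\lambda^{16/3}\gg N^{-2/3}$, i.e.\ $\lambda\gg N^{-1/8}$ and $x\gg N^{-5/8}$. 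At $x=AN^{-2/3}$ you have $\lambda\asymp N^{-2/15}$, and the ratio of error to gap is of order $N^{-2/3}\lambda^{-16/3}\asymp N^{2/45}\to\infty$. No choice of the fixed constants $A$, or of the multiple in $\lambda_1\asymp x^{1/5}$, changes this. Taking an $N$-dependent larger tilt would destroy the exponent $Nx^{6/5}$. So your claim that $\lambda_1^5\gg N^{-2/3}\lambda_1^{-1/3}$ once $x\ge AN^{-2/3}$ is false, and your argument gives the matching lower bound only for $x\ge AN^{-5/8}$.

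The missing ingredient is the refined comparison \zcref{prop:abs-selfconsistent-comparison}: $e_{N,\beta}(\lambda)\le C[N^{-2/3}+N^{-3/4}\lambda^{-1/2}+\log(2+N)/(N\lambda)]$ uniformly in $\beta\ge\beta_0$. This is below $\frac{c_{\mathrm{fl}}}{4}\lambda^5$ once $\lambda\ge A_0N^{-2/15}$ with $A_0$ large. It cannot be read off from \zcref{thm:uniform-parisi}. The paper proves it in three steps:
\begin{itemize}
\item It keeps the image diameter $\ell$ in \zcref{lem:cr-refined-shell} instead of using $\ell\le1$.
\item It bounds that diameter by the quantile localization \zcref{lem:p2-quantile}. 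This uses reflected regularization of the low free mass, together with the response/Fisher lower bound on the second variation along the segment to the scalar trial $\max\{\lambda,\gamma_\beta\}$, whose excess is $\eta=e_{N,\beta}(\lambda)+L_s\lambda^5$.
\item It closes the resulting self-consistent inequality for $e_{N,\beta}(\lambda)$ with Young's inequality.
\end{itemize}
With this input the paper obtains the two-sided window \zcref{lem:uniform-energy-sixth-moment}, and then the tails from \zcref{prop:sixth-window-consequences}. Its Paley--Zygmund step at $s$ and $2s$ is equivalent to your three-piece split.

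A smaller point: your transfer $\Prob\{F\ge a\}\ge\Prob\{F^c\ge a+\beta\}\,\Prob\{D_N\le-1\}$ is not justified by independence, because $F^c=F+\beta D_N$ depends on $D_N$. Use the exact moment identity above instead, or $\Prob\{F\ge b-\beta t\}\ge\Prob\{F^c\ge b\}-\Prob\{D_N\ge t\}$ with $t$ a fixed fraction of $Nx$.
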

We also prove bounds for the tilted mean and variance, concentration
at the finite-system mean, and quantitative comparison for negative
moment parameters; see \zcref{cor:fixed-beta-concentration,cor:fixed-beta-tilted,cor:uniform-energy-tilted,cor:uniform-energy-concentration}.

Throughout, $C$ denotes a numerical constant, while $C_\beta$ denotes a
finite constant depending only on $\beta$. In the uniform energy
argument we fix $\beta_0>1$ and allow constants to depend only on
that threshold. Their values may increase from one occurrence to the next. Unless explicitly stated otherwise,
these constants are independent of $N$, the interpolation parameter,
and the tilt in the stated range.

\subsection{Ideas of the proof}
The proof connects positive fractional moments to fluctuations through the
constrained Parisi problem. The fractional-moment parameter $s$ requires
the Parisi measure to place mass at least $s$ at zero. After rescaling
the cumulative Parisi profile by $\gamma=\beta a$ and setting
$\lambda=\beta s$, this becomes the lower bound
$\gamma\ge\lambda$. Using interval support of the unconstrained Parisi
measure, we show that imposing this constraint raises the scalar minimum
by order $\lambda^5$. To turn this variational increase into a
finite-$N$ fluctuation estimate, we need a comparison between the
finite-system logarithmic moment and the constrained Parisi value that
is accurate on the same scale. We take
$\lambda\asymp N^{-2/15}$, so that
$\lambda^5\asymp N^{-2/3}$. At this choice of $\lambda$, the remaining
terms in the finite-size comparison are smaller than order $N^{-2/3}$.
Gaussian moment inequalities then convert the resulting logarithmic
moment bounds into the variance and mean-deficit estimates. All of
these estimates are uniform for $\beta\ge\beta_0>1$, which allows us
to pass to the ground-state limit.

The finite-size comparison uses the Guerra interpolation employed by
Talagrand for positive fractional moments \cite{TalagrandLD}. Talagrand's
interpolation already identifies the corresponding limiting constrained
Parisi problem. Our additional task is quantitative: at each interpolation
parameter we minimize the finite-$N$ variational functional and use the
resulting optimality conditions to bound the overlap error explicitly,
with estimates that remain useful as $s\downarrow0$ and, after the
rescaling above, as $\beta\to\infty$.

We use an interpolation parameter $t\in[0,1]$. The parameter $t$
controls the strength of the SK interaction: the Gaussian coupling
matrix in the completed Hamiltonian is multiplied by $\sqrt t$, while
an auxiliary Gaussian field supplies the complementary variance
\[
 c=\beta^2(1-t).
\]
At $t=0$, the matrix interaction is absent and the variational problem
is the constrained scalar Parisi problem. At $t=1$, the auxiliary field
is absent and the interpolating value is the completed finite-$N$
fractional moment. For each $t$, we minimize the corresponding
finite-$N$ variational functional over measures satisfying the mass
constraint at zero. Differentiating the optimized value in $t$
expresses the difference between the two endpoints as an integral of
mean-square overlap deviations.

Write a minimizing measure as
\[
 \mu=s\delta_0+\nu,
 \qquad
 a(q)=\mu([0,q])=s+\nu([0,q]),
\]
where $q\in[0,1]$ is the overlap parameter and $\nu$ contains the
remaining mass; it may also place additional mass at zero. The
comparison must remain useful as $s\downarrow0$. The posterior variance
estimate used below contains the factor $a(q)^{-1}$. If $\nu$
accumulates little mass near zero, then $a(q)$ remains small over an
initial range of overlap values, and this estimate becomes weak there.

There is a second difficulty. The posterior estimate controls the
variance of an average over $\nu$, whereas the interpolation error
contains an average of the variances at individual overlap values.
We address both problems by choosing averaging intervals through the
quantiles of $\nu$. This fixes the amount of $\nu$-mass in each
interval, so the quantile level supplies a lower bound on $a(q)$.
Comparing conditional spin means at nearby overlap values then converts
the variance bound for an average into the pointwise variance bounds
needed in the interpolation.

\medskip
\emph{Step 1: Contact conditions for the interpolating minimizer.}
Fix $0<s\le1/2$ and $0\le t<1$, and set
\[
 c=\beta^2(1-t)>0.
\]
Let $\mu=s\delta_0+\nu$ be a minimizing measure for the finite-$N$
variational functional at interpolation parameter $t$, and continue
to write $a(q)=\mu([0,q])$. 

For this fixed $t$ and $\mu$, the variational recursion involves the
Gaussian SK coupling matrix and an $N$-dimensional auxiliary Gaussian
field. Conditional on the coupling matrix and the full auxiliary field,
sample $
 \sigma\in\{-1,1\}^N$ 
from the Gibbs weights appearing in the terminal log-sum-exp of the
recursion. We call this random configuration the terminal spin.

Let $X_q\in\mathbb R^N$ denote the auxiliary field revealed at overlap
level $q\in[0,1]$, and let $\mathcal G_q$ be the sigma-field generated
by the coupling matrix and the field history $(X_r)_{0\le r\le q}$.
The terminal spin itself is not included in $\mathcal G_q$. Define
\[
 m_q=\E[\sigma\mid\mathcal G_q],
 \qquad
 C_q=\Cov(\sigma\mid\mathcal G_q).
\]
Thus $m_q$ is the conditional mean spin, and
$(m_q)_{0\le q\le1}$ is a martingale.

Let $u_{t,\mu}(q,x)$ denote the $N$-dimensional Parisi solution for
this interpolating problem, and set
\[
 H_q=D_x^2u_{t,\mu}(q,X_q),
\]
where $D_x^2$ is the Hessian with respect to $x$. 
The recursion gives
\[
 a(q)C_q\preceq H_q\preceq C_q.
\]
Thus the Hessian of the Parisi solution controls the conditional spin
covariance, with the lower comparison deteriorating only through the
factor $a(q)$. This is the source of the factor $a(q)^{-1}$ that
appears in the posterior variance estimate of Step~2.

We next use optimality of $\mu$. The first variation with respect to
the remaining measure $\nu$ gives a contact condition on
$\supp\nu$. At every $q\in\supp\nu$, this condition and its
second-order consequence yield
\[
 \E|m_q|^2=Nq,\qquad
 \E\Tr H_q^2\le \frac{N}{c}.
\]
The first identity says that, on the support of $\nu$, the expected
squared norm of the conditional mean spin is exactly the overlap scale
$Nq$. The second provides a uniform bound on the Hessian at those same
overlap levels. Together with
$a(q)C_q\preceq H_q$, these estimates give the covariance control used
in the quantitative interpolation. These are finite-$N$ properties of the interpolating minimizer; see
\zcref{sec:finite-n}.

\medskip
\emph{Step 2: From posterior concentration to pointwise variance bounds.}
We now derive the averaged posterior variance estimate and convert it into the pointwise bounds required for the interpolation error. Fix the terminal spin $\sigma$ and condition on its value. Under this
conditional law, the Gaussian SK couplings and auxiliary Gaussian
fields from Step~1 have a density, relative to Lebesgue measure in
the underlying Gaussian coordinates, whose negative logarithm is
uniformly convex. The Hessian of this
negative log density contains an additional positive term obtained by
integrating the Hessians of the finite-$N$ Parisi recursion against
$\nu$. To exploit this curvature, define
\[
 L_q=\frac{\sigma\cdot m_q-|m_q|^2/2}{N}.
\]
The subtraction of $|m_q|^2/2$ is chosen so that differentiation of
$L_q$ with respect to the underlying Gaussian coordinates produces
the residual $\sigma-m_q$. Conditional on $\mathcal G_q$, this
residual has mean zero and covariance
\[
 C_q=\Cov(\sigma\mid\mathcal G_q).
\]
Applying the Brascamp--Lieb variance inequality
\cite{BrascampLieb1976}  under the fixed-spin conditional law, followed
by matrix Cauchy--Schwarz and the comparison
$a(q)C_q\preceq H_q$ from Step~1, gives
\[
 \Var\left(\int k(q)L_q\,\nu(\dd q)\right)
 \le\frac1{Nc}\int\frac{k(q)^2}{a(q)}\,\nu(\dd q)
\]
for every deterministic $k\in L^2(\nu)$.

The displayed inequality also holds under the full joint law from
Step~1. Indeed, gauge symmetry identifies the conditional law of the
observable $\int k(q)L_q\,\nu(\dd q)$ for every fixed value of
$\sigma$. Consequently its conditional mean and variance do not
depend on the chosen terminal spin, so averaging over $\sigma$ adds
no extra variance.

We must convert this estimate for averages of $L_q$ into a bound on
\[
 \int\Var(L_q)\,\nu(\dd q).
\]
For $q<r$, the martingale property of $m_q$ gives
\[
 L_r-L_q
 =
 \frac{(\sigma-m_r)\cdot(m_r-m_q)}{N}
 +\frac{|m_r-m_q|^2}{2N}.
\]
Conditional on $\mathcal G_r$, the first term has mean zero, while the
second is nonnegative. This allows us to bound $L_q$ by averages of
$L_r$ over overlap levels on either side of $q$. The centered term is
then controlled by the averaged posterior variance estimate above.

We choose the averaging intervals by quantiles of $\nu$ so that their
$\nu$-mass is fixed in advance. Let
\[
 Q(v)=\inf\{q\in[0,1]:\nu([0,q])\ge v\},
 \qquad 0<v<1-s,
\]
be the quantile function of $\nu$. Let $I$ be an interval in the
quantile variable with length at most $u$, and suppose
$a(Q(v))\ge u$ for $v\in I$. If
\[
 \ell=\operatorname*{ess\,sup}_{v,w\in I}|Q(v)-Q(w)|,
\]
then the interval estimate proved in \zcref{sec:residual} gives
\[
 \int_I \Var(L_{Q(v)})\,\dd v
 \le
 C\left[(Nc)^{-3/4}u^{-1/2}
       +(Nc)^{-2/3}u^{-1/3}\ell^{2/3}
       +(Ncu)^{-1}\right].
\]
The parameter $u$ controls the amount of mass available for averaging,
while $\ell$ measures how far the corresponding overlap values are
separated. Quantiles also allow atoms of $\nu$ to be divided without
changing the prescribed mass of an interval. Summing these estimates
over quantile intervals gives the bound needed for the interpolation
error.

\medskip
\emph{Step 3: The finite-size comparison and its zero-temperature limit.}
We now sum the quantile estimates from Step~2. The only difficulty is
the region where the cumulative mass $a(q)$ is very small. Suppose
first that $Nc>1$, and set
\[
 u_0=(Nc)^{-1/2}.
\]
We separate an initial quantile interval having $\nu$-mass at most
$u_0$. On this interval we use only the elementary bound
$\Var(L_q)\le1$, so its contribution is at most $u_0$.

On the remaining mass, divide the quantile variable into dyadic
intervals whose lower cumulative-mass levels are
$u_j=2^ju_0$. Then $a(q)\ge u_j$ on the interval at level $u_j$.
Using the estimate from Step~2 with the trivial diameter bound
$\ell\le1$ gives the three contributions
\[
 (Nc)^{-3/4}u_j^{-1/2},\qquad
 (Nc)^{-2/3}u_j^{-1/3},\qquad
 (Ncu_j)^{-1}.
\]
At the smallest level $u_0=(Nc)^{-1/2}$, each is at most a constant
times $(Nc)^{-1/2}$:
\[
 \begin{aligned}
 (Nc)^{-3/4}u_0^{-1/2}&=(Nc)^{-1/2},\\
 (Nc)^{-2/3}u_0^{-1/3}&=(Nc)^{-1/2},\\
 (Ncu_0)^{-1}&=(Nc)^{-1/2}.
 \end{aligned}
\]
At successive dyadic levels these terms decrease geometrically, so
their sum has the same order and no logarithmic factor appears. When
$Nc\le1$, the trivial overlap bound suffices. The trace contribution
to the interpolation error obeys the same estimate.

To state the resulting bound, write
\[
 R(\sigma^1,\sigma^2)=\frac1N\sigma^1\cdot\sigma^2
\]
for the overlap. For $c=\beta^2(1-t)$, let
\[
 D(c)=\int
 \E\left[(R(\sigma^1,\sigma^2)-q)^2\right]\,\nu(\dd q),
\]
where $\mu=s\delta_0+\nu$ minimizes the interpolating functional at
$t=1-c/\beta^2$, and, conditional on the disorder and auxiliary field
revealed through overlap level $q$, the spins $\sigma^1,\sigma^2$ are
independent samples from the same conditional Gibbs law. The preceding
estimates give
\[
 D(c)\le C\min\{1,(Nc)^{-1/2}\}.
\]
The derivative formula for the optimized interpolation, followed by
the change of variables $c=\beta^2(1-t)$, yields
\[
 P_\beta(s)-\phi^c_{N,\beta}(s)
 =\frac14\int_0^{\beta^2}D(c)\,\dd c.
\]
Since $c^{-1/2}$ is integrable at zero,
\[
 \int_0^{\beta^2}\min\{1,(Nc)^{-1/2}\}\,\dd c
 \le C\min\{\beta^2,\beta N^{-1/2}\}.
\]
This gives the temperature-uniform comparison in
\zcref{thm:uniform-parisi}; the sharper dependence on $s$ is obtained
by starting the dyadic decomposition at cumulative mass $s$ and
combining the resulting estimate with the moving-cutoff bound.

To pass to zero temperature, fix a Laplace parameter $\lambda>0$ and
set
\[
 s=\lambda/\beta,\qquad \gamma=\beta a.
\]
Then the constraint $a\ge s$ becomes $\gamma\ge\lambda$. The
rescaled terminal function satisfies
\[
 \sup_{x\in\mathbb R}
 \left|
 \frac1\beta\log(2\cosh(\beta x))-|x|
 \right|
 \le\frac{\log2}{\beta},
\]
and the rescaled positive-temperature variational problems converge
to the corresponding zero-temperature problem. On the finite-system
side,
\[
 \frac1\beta\phi^c_{N,\beta}(\lambda/\beta)
 =
 \frac1{N\lambda}
 \log\E\exp\left\{\lambda\frac{F^c_{N,\beta}}{\beta}\right\},
\]
and $F^c_{N,\beta}/\beta$ converges, for fixed $N$, to the completed
ground-state energy. Dividing the finite-temperature comparison by
$\beta$ and letting $\beta\to\infty$ therefore gives the
ground-state log-Laplace comparison.  See \zcref{subsec:sec-zero-temperature} for details.

\medskip
\emph{Step 4: The cost of imposing the mass constraint at zero.}
Fix $\beta_0>1$ and consider $\beta\ge\beta_0$. Rescale the scalar
Parisi profile by
\[
 \gamma=\beta a,\qquad \lambda=\beta s,
\]
and write
\[
 E_\beta(\lambda)=\beta^{-1}P_\beta(\lambda/\beta).
\]
Let $\gamma_\beta$ be an unconstrained minimizer of the rescaled scalar
Parisi functional, whose minimum value is $E_\beta(0)$. In these
variables, requiring the original Parisi measure to have mass at least
$s$ at zero is equivalent to requiring
\[
 \gamma(q)\ge\lambda,\qquad 0\le q<1.
\]
Thus $E_\beta(\lambda)-E_\beta(0)$ is the increase in the scalar
minimum caused by this artificial constraint.

The interval-support theorem gives
$\supp(\beta^{-1}\dd\gamma_\beta)=[0,q_\beta]$. Let $u_\beta$ denote
the rescaled scalar Parisi PDE solution associated with
$\gamma_\beta$, with terminal condition
\[
 u_\beta(1,x)=\frac1\beta\log(2\cosh(\beta x)).
\]
Let $Y_q^\beta$ denote the corresponding one-dimensional Parisi
diffusion, and write $\E_\beta$ for expectation under this diffusion.
On $[0,q_\beta]$, the first-variation contact condition is
\[
 S(q):=\E_\beta u_{\beta,x}(q,Y_q^\beta)^2=q.
\]
If
\[
 b(q)=\E_\beta u_{\beta,xx}(q,Y_q^\beta),
\]
then the scalar diffusion identities give
\[
 b'=-\gamma_\beta S',\qquad
 b(0)=1,\qquad
 b(1)=\beta(1-S(1)).
\]
Since $S'=1$ on $[0,q_\beta]$ and
$\gamma_\beta=\beta$ on $(q_\beta,1]$, integration yields
\[
 \int_0^1\gamma_\beta(q)\,\dd q=1.
\]
Consequently
\[
 \beta(1-q_\beta)\le1,
 \qquad
 q_\beta\ge1-\beta^{-1}\ge1-\beta_0^{-1}>0.
\]
Moreover, on an interval $[0,T_*]$ whose length depends only on
$\beta_0$, parabolic smoothing and a boundary comparison give
\[
 kq\le\gamma_\beta(q)\le Cq,
 \qquad 0\le q\le T_*,
\]
with $k,C>0$ depending only on $\beta_0$.

These linear bounds already identify the scale of the perturbation.
For the upper bound, use the admissible trial profile
\[
 \gamma^{\mathrm{tr}}(q)=\max\{\lambda,\gamma_\beta(q)\}.
\]
Because $\gamma_\beta(q)\ge kq$, this profile differs from
$\gamma_\beta$ only for $q\le C\lambda$, and the pointwise difference
is at most $\lambda$. On the support interval, the first-variation
density of the unconstrained minimizer is constant, so the linear
term in this perturbation vanishes. The second variation along the
segment joining $\gamma_\beta$ to $\gamma^{\mathrm{tr}}$ is bounded
above by
\[
 C\lambda^2T^3
\]
when the perturbation is supported on an interval of length $T$.
Taking $T\asymp\lambda$ therefore gives
\[
 E_\beta(\lambda)-E_\beta(0)\le C\lambda^5.
\]

For the lower bound, the upper estimate
$\gamma_\beta(q)\le Cq$ implies that every admissible profile
$\gamma\ge\lambda$ satisfies
\[
 \gamma(q)-\gamma_\beta(q)\ge\lambda/2
\]
throughout an interval $[0,T]$ with $T\asymp\lambda$. It is enough to
consider competitors whose scalar value is at most one above
$E_\beta(0)$, since all others already satisfy the desired lower
bound for sufficiently small $\lambda$. For such competitors, the
control representation gives a uniform bound on the profile on a
fixed initial interval.

Along the segment from $\gamma_\beta$ to a competitor $\gamma$, we
test the linearized scalar PDE with a function supported in
$[0,T]$. The perturbation has size at least $\lambda/2$ there, while
the response estimate in \zcref{app:response-estimates} gives a
positive lower bound, uniform in $\beta\ge\beta_0$, for the averaged
spatial curvature under the normalized law associated with each
profile on the segment. These estimates yield the lower second-variation bound
\[
 c_0\lambda^2T^3,
\]
where $c_0>0$ depends only on $\beta_0$. Since
$T\asymp\lambda$, we obtain
\begin{equation}\label{eq:gap-outline}
 c_1\lambda^5
 \le E_\beta(\lambda)-E_\beta(0)
 \le C_1\lambda^5.
\end{equation}
The constants $c_1,C_1>0$ and the range of admissible $\lambda$
depend only on $\beta_0$; see
\zcref{lem:gs-scalar-root,prop:scalar-lower-via-fisher}.

The curvature estimate needed for the lower bound must hold under
each profile's own normalized law, not merely under the law of the
unconstrained minimizer. In \zcref{app:response-estimates}, we prove
this directly by differentiating the finite-spin recursion with
respect to the terminal spin energies and then applying a conditional
Gaussian estimate. This avoids any comparison between the normalized
densities associated with different profiles.

\medskip
\emph{Step 5: Refining the comparison by local regularization.}
At the fluctuation scale $\lambda\asymp N^{-2/15}$, the scalar
increase from Step~4 has size
\[
 \lambda^5\asymp N^{-2/3}.
\]
The comparison from Step~3 is not accurate enough by itself to
distinguish a contribution on this scale. We therefore return to the
quantile estimate of Step~2 and retain the diameter of each quantile
interval instead of replacing it by the trivial bound $\ell\le1$.

Fix an interpolation parameter $r=1-t>0$. After the rescaling
$\gamma=\beta a$, write a minimizing profile as
\[
 \dd\gamma=\lambda\delta_0+\rho,
 \qquad
 \rho([0,1])=\beta-\lambda.
\]
Here $\rho$ is the rescaled measure remaining after the prescribed
mass $\lambda$ at zero is separated. To control the location of a low
quantile of $\rho$, we regularize only the mass below that quantile.
For each location $q$ in this part of the measure, replace its mass by
the average of its images under
\[
 q\mapsto |q+U|,
 \qquad U\sim\operatorname{Unif}[-h,h].
\]
This defines a deterministic smoothed measure. The prescribed atom at
zero and all mass above the chosen quantile are left unchanged.

If $m$ is the amount of mass moved, the smoothing raises the
interpolating variational value by at most a constant times $rmh^2$.
The smoothed part of the measure has density at most $m/h$ on the
modified interval.

These two properties allow us to use the response estimate from
Step~4. Comparing the smoothed profile with the scalar trial
$\max\{\lambda,\gamma_\beta\}$ bounds the overlap location of the
quantile in terms of the amount by which the scalar trial lies above
the interpolating minimum. Thus small variational excess forces low
quantiles of $\rho$ to lie close to zero, which in
turn makes the diameter term in the covariance estimate small.

To close this estimate, define the completed comparison error by
\[
 e_{N,\beta}(\lambda)
 =E_\beta(\lambda)-\frac1{N\lambda}
   \log\E e^{\lambda(F_{N,\beta}/\beta+D_N)}.
\]
The scalar trial from Step~4 lies at most
$C\lambda^5$ above $E_\beta(\lambda)$, so its excess above the
interpolating minimum is bounded by
\[
 \eta=e_{N,\beta}(\lambda)+C\lambda^5.
\]
Substituting the resulting quantile bounds into the interpolation
error and integrating in $r$ produces terms involving fractional
powers of $\eta$, in addition to explicit finite-size terms. Since
all of these powers are strictly smaller than one, Young's inequality
bounds the terms containing $e_{N,\beta}(\lambda)$ by a fixed fraction
of the left-hand side plus explicit functions of $N$ and $\lambda$.
Moving that fraction to the left gives
\begin{equation}\label{eq:error-outline}
 0\le e_{N,\beta}(\lambda)
 \le C\left[N^{-2/3}+N^{-3/4}\lambda^{-1/2}
                    +\frac{\log(2+N)}{N\lambda}\right].
\end{equation}
The quantile decomposition is performed in the cumulative-mass
variable, so the constants in the sums do not grow with the total mass
$\beta$. The estimate is therefore uniform for
$\beta\ge\beta_0>1$; see
\zcref{lem:p2-quantile,prop:abs-selfconsistent-comparison}.

\medskip
\emph{Step 6: From logarithmic moments to fluctuations and tails.}
We now convert the variational estimates into probabilistic bounds for
the original off-diagonal model. Set
\[
 X_{N,\beta}=\frac{F_{N,\beta}}{\beta},
\]
and define its centered logarithmic moment generating function by
\[
 K_{N,\beta}(\lambda)
 =\log\E\exp\{\lambda(X_{N,\beta}-\E X_{N,\beta})\}
 =K_N(\lambda/\beta).
\]

We first need an upper bound on this logarithmic moment in terms of the
variance. Gauge symmetry of the SK Hamiltonian allows us to change the
Gaussian disorder measure without changing the distribution of the
logarithmic partition function. Under the resulting measure, adjoining
one independent exponential random variable produces a one-dimensional
log-concave density. The corresponding tail estimate implies that there
are numerical constants $B,b>0$ such that
\begin{equation}\label{eq:mgf-outline}
 K_{N,\beta}(\lambda)
 \le
 B\lambda^2\bigl(\Var X_{N,\beta}+\beta^{-2}\bigr)
\end{equation}
whenever
\[
 0\le
 \lambda\sqrt{\Var X_{N,\beta}+\beta^{-2}}
 \le b.
\]
Thus, on its natural small-tilt range, a large centered logarithmic
moment forces a large variance; see \zcref{sec:moments}.

We apply this at
\[
 \lambda=A N^{-2/15},
\]
where $A$ is a sufficiently large constant depending only on
$\beta_0$. By Step~4, the constrained scalar minimum exceeds the
unconstrained minimum by at least $c_1\lambda^5$.  Multiplying by the factor $N\lambda$ appearing
in the logarithmic moment gives a contribution of order
\[
 N\lambda^6
 =A^6N^{1-12/15}
 =A^6N^{1/5}.
\]
The refined comparison from Step~5 has smaller size at this tilt once
$A$ is chosen sufficiently large. In particular, for all sufficiently
large $N$,
\[
 K_{N,\beta}(\lambda)\ge1.
\]

There are now two cases. If
\[
 \lambda\sqrt{\Var X_{N,\beta}+\beta^{-2}}>b,
\]
then
\[
 \Var X_{N,\beta}+\beta^{-2}
 >b^2\lambda^{-2}
 =b^2A^{-2}N^{4/15}.
\]
Otherwise the small-tilt condition in \eqref{eq:mgf-outline} holds,
and hence
\[
 1
 \le K_{N,\beta}(\lambda)
 \le B\lambda^2
       \bigl(\Var X_{N,\beta}+\beta^{-2}\bigr).
\]
This again gives
\[
 \Var X_{N,\beta}+\beta^{-2}\ge c_2N^{4/15}.
\]
Since $\beta\ge\beta_0>1$, the term $\beta^{-2}$ is uniformly bounded.
After increasing the threshold in $N$, we conclude that
\[
 \Var X_{N,\beta}\ge c_3N^{4/15}.
\]

We next control the mean. Let
\[
 M_{N,\beta}
 =E_\beta(0)-\frac{\E X_{N,\beta}}{N}
\]
be the finite-size deficit from the limiting energy density. Convexity
of the normalized Gaussian logarithmic moment, applied at two positive
tilts, together with the scalar upper bound from Step~4 and the
comparison from Step~5, gives
\[
 M_{N,\beta}\le CN^{-2/3}.
\]
A second use of convexity, now comparing the tangent at zero with the
scalar upper cost, gives
\[
 M_{N,\beta}
 \ge
 c_3\left(
 \frac{\Var X_{N,\beta}+1/2}{N}
 \right)^{5/4}.
\]
The term $1/2$ is the variance of the independent Gaussian diagonal
completion. Combining the last two displays gives
\[
 \Var X_{N,\beta}\le CN^{7/15}.
\]
Combining the same lower bound on $M_{N,\beta}$ with
$\Var X_{N,\beta}\ge c_3N^{4/15}$ gives
\[
 M_{N,\beta}\ge c_4N^{-11/12}.
\]

The same moment comparison gives the upper-tail exponent. For Laplace
parameters satisfying
\[
 A N^{-2/15}\le\lambda\le\lambda_0,
\]
with $A$ sufficiently large and $\lambda_0>0$ sufficiently small, we
obtain two-sided bounds of the form
\[
 c_5N\lambda^6
 \le
 \log\E
 \exp\{\lambda(X_{N,\beta}-NE_\beta(0))\}
 \le
 CN\lambda^6.
\]
The sixth power comes from the fifth-order scalar increase multiplied
by the prefactor $N\lambda$ in the logarithmic moment.

For an upward deviation of size $Nx$, choose
\[
 \lambda\asymp x^{1/5}.
\]
Then
\[
 N\lambda x\asymp N\lambda^6\asymp Nx^{6/5}.
\]
Chernoff's inequality therefore gives the upper-tail bound with exponent
$6/5$, while Paley--Zygmund applied to the exponential tilt at
$\lambda$ and $2\lambda$ gives the matching lower bound. The lower
restriction $\lambda\gtrsim N^{-2/15}$ becomes
\[
 x\gtrsim N^{-2/3}.
\]
This proves the two-sided tail estimates on the shrinking deviation
range described in \zcref{sec:tails}.

For a fixed inverse temperature $\beta>1$, the corresponding bounds for
$F_{N,\beta}$ follow from $F_{N,\beta}=\beta X_{N,\beta}$.
For the ground state, fix $\beta_0=2$ and let $\beta\to\infty$ at fixed
$N$. The soft maximum and the scalar variational values converge to
their zero-temperature limits, so the same variance, mean-deficit,
and tail estimates pass to the ground state.

\subsection{Acknowledgments}
P.L. was partially supported by NSF grant DMS-2450004. This paper was written
by the authors with the assistance of large language models, which included
suggesting arguments, contributing to drafting and revision, and writing code
for computational checks.

\section{Finite-dimensional interpolation and covariance estimates}
\zlabel{sec:finite-n}

This section supplies three inputs for the quantitative comparison
between $P_\beta(s)$ and $\phi^c_{N,\beta}(s)$.
\zcref{prop:fn-contact} gives contact identities and covariance
bounds at a minimizing measure.
\zcref{prop:fn-integrated-variance} controls the variance of an
average over overlap levels.
\zcref{prop:fn-envelope} expresses the difference between the
variational minimum and the finite-$N$ normalized logarithmic
moment as an integral of mean-square overlap deviations.

We first derive the fixed-profile identities for step profiles.
\zcref{subsec:fn-approximation} extends them to arbitrary admissible
measures, after which the limiting first variation is applied directly
to a minimizer.

Fix $\beta>0$, $N\ge1$, $0<s\le1/2$, and use the completed GOE
matrix $W$ and pressure $\phi_N^c(s)$ from \zcref{sec:main}.

For a probability measure $\mu$ on $[0,1]$, write
$a(q)=\mu([0,q])$. For $0\le t\le1$, set
$c=\beta^2(1-t)$ and define
\begin{align*}
 U_\mu(1,x;W)&=\log\sum_\sigma
  \exp\left(\frac{\beta\sqrt t}{2}\sigma^TW\sigma+x\cdot\sigma\right),\\
 \partial_q U_\mu&=-\frac c2
   \left(\Delta U_\mu+a(q)|\nabla U_\mu|^2\right),\\
 \mathcal F_{t,s}(\mu)&=\frac1{Ns}\log\mathbb E_W
  e^{sU_\mu(0,0;W)}-\frac c2\int_0^1qa(q)\dd q .
\end{align*}
We minimize only over deterministic measures of the form
\[
 \mu=s\delta_0+\nu,\qquad
 \nu\ge0,\qquad
 \nu([0,1])=1-s.
\]
Let
\[
 f(t)=\min_\mu\mathcal F_{t,s}(\mu).
\]
At $t=0$, the matrix interaction vanishes and the vector equation
separates into scalar equations, giving $f(0)=P(s)$. At $t=1$, the
auxiliary Gaussian field vanishes and $f(1)=\phi_N^c(s)$. The
interpolation identity and the passage from a fixed measure to the
minimized value are proved below.

\subsection{The normalized joint law and its posterior Hessian}

We first construct the probability law under which the conditional spin
means and covariances will be estimated. For the moment, assume that the
cumulative profile $a$ is a step function:
\[
 0=q_0<q_1<\cdots<q_L=1,
 \qquad
 a(q)=a_j\quad\text{for }q\in[q_j,q_{j+1}).
\]
Let $z_1,\ldots,z_L$ be independent standard Gaussian vectors in
$\mathbb R^N$, and define the auxiliary field at the grid points by
\[
 X_{q_j}
 =
 x_0+\sum_{k=1}^j\sqrt{c(q_k-q_{k-1})}\,z_k.
\]
Write
\[
 U_j=U_\mu(q_j,X_{q_j};W).
\]
If $\gamma_N$ denotes standard Gaussian measure on $\mathbb R^N$, the
Parisi recursion on the interval $[q_j,q_{j+1}]$ is
\[
 U_j
 =
 \frac1{a_j}
 \log\E_{z_{j+1}}e^{a_jU_{j+1}}.
\]
Consequently, conditional on the variables already revealed through
$q_j$, the normalized law of $z_{j+1}$ has density
\[
 e^{a_j(U_{j+1}-U_j)}
\]
with respect to $\gamma_N$. When $a_j=0$, the recursion is ordinary
Gaussian expectation and this conditional law is simply $\gamma_N$.

The factor $e^{sU_0}$ in the finite-$N$ fractional moment tilts the
Gaussian law of the coupling matrix. Accordingly, sample $W$ with
density
\[
 \frac{e^{sU_0}}{\E_W e^{sU_0}}
\]
relative to its original Gaussian law. Conditional on $W$, sample the
successive auxiliary Gaussian field increments using the normalized
transition laws above. After the full auxiliary field has been
revealed, sample
\[
 \sigma\in\{-1,1\}^N
\]
from the terminal Gibbs weights appearing in $U_L$. Denote the
resulting joint probability law by $\mathbb P$ and expectation under
this law by $\mathbb E$.

For $q\in[0,1]$, let $\mathcal G_q$ be the sigma-field generated by
$W$ and the auxiliary field revealed through overlap level $q$. The
terminal spin $\sigma$ is not included in $\mathcal G_q$.

To apply Brascamp--Lieb, we need a common set of Gaussian coordinates
with respect to which all conditional means can be differentiated.
Introduce an independent standard Gaussian vector $z_0$ and replace
the initial field by
\[
 x_0=\sqrt\varepsilon\,z_0.
\]
Include this variable in the outer normalization:
\[
 Z_{s,\varepsilon}
 =
 \E_{W,z_0}
 e^{sU_\mu(0,\sqrt\varepsilon z_0;W)}.
\]
Let $Z$ denote the collection of all independent standard Gaussian
coordinates used to generate $W$, $z_0$, and the field increments
$z_1,\ldots,z_L$.

We now compute the joint density of $(Z,\sigma)$. Multiplying the outer
tilt, the normalized transition densities, and the terminal Gibbs
weight causes the intermediate normalization factors to telescope.
The result is
\begin{equation}
 \frac1{Z_{s,\varepsilon}}
 \exp\left(
 H_\sigma(Z)
 -
 \int_0^1 U_\mu(q,X_q;W)\,\nu(\dd q)
 \right),
 \label{eq:fn-joint-law}
\end{equation}
relative to the product standard Gaussian law of $Z$ and counting
measure on $\{-1,1\}^N$. Here $H_\sigma(Z)$ is the terminal exponent
corresponding to the spin configuration $\sigma$.

For a step profile, the coefficients of
$U_0,U_1,\ldots,U_L$ in the integral against $\nu$ are
\[
 a_0-s,\quad
 a_1-a_0,\quad\ldots,\quad
 a_{L-1}-a_{L-2},\quad
 1-a_{L-1}.
\]
They are nonnegative and sum to $1-s=\nu([0,1])$. Thus every
intermediate value $U_j$ enters the logarithm of the density with a
nonpositive coefficient.

The joint law also has the usual spin-gauge symmetry. For
$\tau\in\{-1,1\}^N$, simultaneously transform
\[
 \sigma_i\mapsto\tau_i\sigma_i,\qquad
 W_{ij}\mapsto\tau_i\tau_jW_{ij},\qquad
 z_{k,i}\mapsto\tau_i z_{k,i}.
\]
These transformations preserve the underlying Gaussian laws and all
values $U_j$, while acting transitively on spin configurations.
Therefore $
 \mathbb P(\sigma)=2^{-N}$. 
Moreover, any observable invariant under this simultaneous gauge
transformation has the same conditional law for every fixed value of
$\sigma$. This is the symmetry used below to pass between fixed-spin
posteriors and the full joint law.

We next identify the curvature of a fixed-spin posterior. For each
$q$, regard
\[
 U_q(Z)=U_\mu(q,X_q;W)
\]
as a function of the Gaussian coordinates revealed through $q$, and
extend it constantly in coordinates revealed later. At the grid points
this agrees with $U_j$. Each $U_q$ is convex in $Z$, because the terminal value
is a log-sum-exp of affine functions, and the recursion preserves
convexity.

Fix $\sigma$. Including the Gaussian reference density, the negative
logarithm of the conditional density of $Z$ given $\sigma$, relative
to Lebesgue measure on the coordinate space, is, up to a constant,
\begin{equation}
 V(Z)
 =
 \frac{|Z|^2}{2}
 -H_\sigma(Z)
 +\int_0^1U_q(Z)\,\nu(\dd q).
 \label{eq:fn-posterior-potential}
\end{equation}
The terminal exponent $H_\sigma$ is affine in $Z$, so it contributes
nothing to the Hessian. Hence
\begin{equation}
 \begin{aligned}
 Q(Z)
 :=
 \nabla_Z^2V(Z)
 &=
 I+\int_0^1A_q(Z)\,\nu(\dd q),\\
 A_q(Z)
 &:=
 \nabla_Z^2U_q(Z)\succeq0,
 \end{aligned}
 \label{eq:fn-posterior-metric}
\end{equation}
and therefore
\[
 Q(Z)\succeq I.
\]
Thus the fixed-spin posterior is uniformly log-concave in the original
Gaussian coordinates. The Hessian in
\eqref{eq:fn-posterior-metric} is taken with respect to the independent
Gaussian coordinates used to generate both $W$ and the auxiliary
fields, with $X_q$ understood as their accumulated field at level $q$.
This distinction is important because, after the outer tilt, the
marginal distribution of $W$ under $\mathbb P$ is generally not
Gaussian.

\subsection{Martingale identities and contact conditions at a minimizing measure}

We now derive the identities that relate the conditional spin covariance
to the Hessian of the finite-$N$ Parisi solution, and then use
variational optimality to strengthen these bounds on the support of a
minimizing measure.

Work under the joint law constructed in the preceding subsection, with
$x_0=0$. Conditional on the coupling matrix $W$, the auxiliary field
process satisfies
\begin{equation}
 \dd X_q
 =c\,a(q)m_q\,\dd q+\sqrt c\,\dd B_q,
 \qquad
 m_q=\nabla_xU_\mu(q,X_q;W),
 \qquad
 H_q=\nabla_x^2U_\mu(q,X_q;W),
 \label{eq:fn-ancestral-sde}
\end{equation}
where $B$ is standard $N$-dimensional Brownian motion under the
conditional law given $W$. The tilt used to define the joint law
changes the distribution of $W$, but not this conditional Brownian
law.

Set
\[
 C_q=\Cov(\sigma\mid\mathcal G_q),
 \qquad
 \mathbb E_q[\cdot]=\mathbb E[\cdot\mid\mathcal G_q].
\]

\begin{proposition}
\zlabel{prop:fn-contact}
Fix $N\ge1$, $\beta>0$, $0<s\le1/2$, and $0\le t<1$.
Let $\mu=s\delta_0+\nu$ be a deterministic admissible measure, and
write
\[
 c=\beta^2(1-t),\qquad a(q)=\mu([0,q]).
\]
Under the normalized joint law with $x_0=0$, let $m_q$, $C_q$,
and $H_q$ be the conditional mean spin, conditional spin covariance,
and spatial Hessian defined above. Then, for every $q\in[0,1]$,
\[
 a(q)C_q\preceq H_q\preceq C_q
 \qquad\text{almost surely}.
\]
If $\mu$ minimizes $\mathcal F_{t,s}$, then
$K=\supp\nu\subset[0,1)$ and, for every $q\in K$,
\[
 \E|m_q|^2=Nq,\qquad
 \E\Tr H_q^2\le\frac{N}{c}.
\]
These conclusions include $q=0$ whenever $0\in K$.
\end{proposition}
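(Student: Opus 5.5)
First I prove the covariance sandwich $a(q)C_q\preceq H_q\preceq C_q$ for a fixed step profile. I work backwards along the recursion $U_j=a_j^{-1}\log\E_{z_{j+1}}e^{a_jU_{j+1}}$, using continuous time on each interval. On $[q_j,q_{j+1})$ the profile equals the constant $a_j$, and the Hopf--Cole transform $e^{a_jU}$ solves a backward heat equation. Differentiating twice in $x$ gives
\[
 \nabla^2U(q,x)=\E_q[\nabla^2U(q_{j+1},\cdot)]+a_j\Cov_q(\nabla U(q_{j+1},\cdot)),
\]
where $\E_q$ and $\Cov_q$ are taken under the normalized transition law. Iterating this up to $q=1$, where $\nabla^2U_\mu(1,\cdot)=\Cov(\sigma\mid\mathcal G_1)$, I obtain
\[
 H_q=C_q-\int_q^1 (1-a(r))\,\E_q[\ldots]\,\dd r .
\]
More directly, the law of total covariance along the martingale $m_r$ gives
\[
 C_q=\E_q C_1+\E_q\big[(m_1-m_q)(m_1-m_q)^T\big].
\]
The identity above shows that $H_q$ equals $\E_qC_1$ plus the martingale increments weighted by $a(r)\in[a(q),1]$. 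Since $a$ is nondecreasing, $a(q)C_q\preceq H_q\preceq C_q$. I would make this precise via Itô's formula for $H_r$ along the SDE \eqref{eq:fn-ancestral-sde}: $\dd\E_q H_r=-c\,a(r)\E_q H_r^2\,\dd r$ and $\dd\E_q[m_rm_r^T]=c\,\E_q H_r^2\,\dd r$. The general measure then follows by the approximation of \zcref{subsec:fn-approximation}.

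Next come the contact conditions. I compute the first variation of $\mathcal F_{t,s}$ in $\nu$. The standard Parisi computation, perturbing $a\mapsto a+\epsilon\,\1_{[q,1]}$ and using the Hopf--Cole and Girsanov structure of the joint law, gives
\[
 \frac{\dd}{\dd\epsilon}\mathcal F_{t,s}=\frac c2\int_q^1\Big(\frac1N\E|m_r|^2-r\Big)\dd r .
\]
Set $\Phi(q)=\frac c2\int_q^1(N^{-1}\E|m_r|^2-r)\,\dd r$. Optimality over measures with fixed total mass $1-s$, with the mass at $0$ constrained below by $s$, forces $\Phi$ to attain its minimum over $[0,1]$ at every point of $K=\supp\nu$. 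I must also exclude $1\in K$. There $\Phi(1)=0$ and $\Phi'(1)=\frac c2(1-N^{-1}\E|m_1|^2)>0$, since $|m_1|^2<N$ strictly whenever $c>0$ smooths the Gibbs measure. So $\Phi$ is smaller slightly to the left of $1$, $1$ is not a minimum, and $K\subset[0,1)$.

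At an interior minimum, or at $q=0$ with the one-sided derivative handled using $m_0$, $\Phi'(q)=0$ gives $\E|m_q|^2=Nq$. At $q=0\in K$ I need the one-sided condition together with the symmetry $\E m_0=0$; by gauge invariance $m_0=0$, so the identity reads $0=0$.

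The second-order condition at a minimum is $\Phi''(q)\ge0$, i.e.
\[
 \frac{\dd}{\dd q}\E|m_q|^2\le N
\]
on $K$, one-sided where needed. By Itô's formula $\frac{\dd}{\dd q}\E|m_q|^2=c\,\E\Tr H_q^2$, so $\E\Tr H_q^2\le N/c$.

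The main obstacle is rigor at general, non-step minimizers. I need the differentiability of $q\mapsto\E|m_q|^2$ (the second-order condition requires the one-sided derivative to exist and equal $c\,\E\Tr H_q^2$), and I need to justify $\Phi''\ge0$ at points of $K$ that are atoms or accumulation points. I would use continuity of $H_q$ in $q$, guaranteed by $c>0$ parabolic smoothing, and show $\E\Tr H_q^2$ is continuous so that $\Phi\in C^2$. Then minimality at $q\in K$ gives $\Phi'(q)=0$ and $\Phi''(q)\ge0$ from both sides, and only the right side at $q=0$.
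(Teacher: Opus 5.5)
Your proposal is correct and follows essentially the same route as the paper: the It\^o representations $C_q=\E_qH_1+c\int_q^1\E_qH_u^2\,\dd u$ and $H_q=\E_qH_1+c\int_q^1a(u)\E_qH_u^2\,\dd u$ give the sandwich, with $H_1\succeq0$ used for the lower bound. The contact statements then come, as in the paper, from the first variation $\frac c2\int_q^1(\Gamma(u)-u)\,\dd u$, support in its argmin, first- and second-order conditions, spin-flip symmetry at $q=0$, and $\Gamma(1)<1$ at $q=1$.

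The only slips are in attribution. $|m_1|^2<N$ holds because the finite-temperature terminal Gibbs measure charges every configuration, not because $c>0$; the role of $c>0$ is only to make $\Phi'(1)=\frac c2(1-\Gamma(1))$ strictly positive. Similarly, continuity of $q\mapsto\E\Tr H_q^2$ comes from the fixed-$N$ bounds on spatial derivatives of the terminal log-sum-exp rather than from parabolic smoothing.
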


\begin{proof}
Differentiating the Parisi PDE in the spatial variable and applying
It\^o's formula along $X_q$ gives
\begin{equation}
 \dd m_q=\sqrt c\,H_q\,\dd B_q,
 \qquad
 \dd H_q=-c\,a(q)H_q^2\,\dd q
           +\sqrt c\,\nabla_xH_q\,\dd B_q.
 \label{eq:fn-martingale-sdes}
\end{equation}
Thus $m_q$ is a martingale. Since $m_1$ is the terminal Gibbs mean,
the tower property gives
\[
 m_q=\mathbb E[\sigma\mid\mathcal G_q].
\]
At $q=1$, differentiation of the terminal log-sum-exp gives
$C_1=H_1$. The martingale isometry applied to $m_q$, together with
\eqref{eq:fn-martingale-sdes}, therefore gives 
\[
 C_q
 =\mathbb E_qH_1
   +c\int_q^1\mathbb E_qH_u^2\,\dd u.
\]
Taking conditional expectation in the equation for $H_q$ gives
similarly
\[
 H_q
 =\mathbb E_qH_1
   +c\int_q^1a(u)\mathbb E_qH_u^2\,\dd u.
\]
Since $a$ is nondecreasing and $a(q)\le a(u)\le1$ for $u\ge q$,
all matrices in these integrals are positive semidefinite, and hence
\begin{equation}
 a(q)C_q\preceq H_q\preceq C_q.
 \label{eq:fn-covariance-order}
\end{equation}

We also need the evolution of the conditional mean spin. Define
\[
 \Gamma(q)=\frac1N\mathbb E|m_q|^2.
\]
The first equation in \eqref{eq:fn-martingale-sdes} and It\^o's
isometry give
\begin{equation}
 \Gamma'(q)=\frac cN\mathbb E\Tr H_q^2.
 \label{eq:fn-gamma-derivative}
\end{equation}

We next use the fact that $\mu=s\delta_0+\nu$ minimizes the
interpolating functional. Assume $c>0$. Let $\nu'$ be any
nonnegative measure on $[0,1]$ with
$\nu'([0,1])=1-s$, and let
\[
 \delta a(q)=(\nu'-\nu)([0,q])
\]
be the corresponding variation of the cumulative profile.
Linearizing the PDE in this direction gives
\[
 \delta U_\mu(0,0;W)
 =
 \frac c2\,
 \mathbb E\left[
   \int_0^1\delta a(q)|m_q|^2\,\dd q
   \,\middle|\,W
 \right].
\]
Differentiating the logarithmic outer moment averages this identity
under the joint law. After exchanging the order of integration, the
first variation of $\mathcal F_{t,s}$ is
\begin{equation}
 D\mathcal F_{t,s}(\mu)[\nu'-\nu]
 =
 \frac c2\int_0^1 g(q)\,(\nu'-\nu)(\dd q),
 \qquad
 g(q)=\int_q^1(\Gamma(u)-u)\,\dd u.
 \label{eq:fn-first-variation}
\end{equation}

Because $\mu$ is a minimizer, the expression in
\eqref{eq:fn-first-variation} is nonnegative for every admissible
$\nu'$. In particular, taking
$\nu'=(1-s)\delta_x$ gives
\[
 (1-s)g(x)\ge\int g(q)\,\nu(\dd q)
 \qquad\text{for every }x\in[0,1].
\]
The right-hand side is itself at least
$(1-s)\min_{[0,1]}g$. Hence equality must hold, and
\[
 \supp\nu\subset\argmin_{[0,1]}g.
\]

Let $K=\supp\nu$. The identities above and the fixed-$N$ regularity
proved below imply that $\Gamma'$ is continuous and
\[
 g'(q)=q-\Gamma(q),
 \qquad
 g''(q)=1-\Gamma'(q).
\]
If $q\in K\cap(0,1)$, then $q$ is a minimum point of $g$, so
\[
 g'(q)=0,\qquad g''(q)\ge0.
\]
Therefore
\[
 \Gamma(q)=q,\qquad \Gamma'(q)\le1.
\]

The endpoints require separate checks. At $q=0$, global spin-flip
symmetry gives $m_0(W)=0$ for every $W$, and hence
$\Gamma(0)=0$ and $g'(0)=0$. If $0\in K$, the one-sided minimum
condition together with Taylor's formula gives $g''(0)\ge0$.
At $q=1$, the finite-temperature terminal Gibbs distribution assigns
positive probability to every spin configuration. Thus
$|m_1|^2<N$, so
\[
 \Gamma(1)<1,\qquad g'(1)=1-\Gamma(1)>0.
\]
A minimum at the right endpoint would require the left derivative to
be nonpositive, so $1\notin K$.

Combining $\Gamma(q)=q$ and $\Gamma'(q)\le1$ on $K$ with
\eqref{eq:fn-gamma-derivative}, we obtain, for every $q\in K$,
\begin{equation}
 \mathbb E|m_q|^2=Nq,
 \qquad
 \mathbb E\Tr H_q^2\le\frac Nc.
 \label{eq:fn-contact}
\end{equation}
These conclusions also apply when $\nu$ has an atom at zero. If
$0\notin K$, then only the prescribed mass $s\delta_0$ is present
there, and minimization with respect to the remaining measure $\nu$
does not provide the second-order bound at $q=0$.
\end{proof}

\subsection{The fixed-profile interpolation identity}

Fix a deterministic admissible measure $\mu$. We compute the derivative
of $\mathcal F_{t,s}(\mu)$ without assuming that $\mu$ minimizes the
functional. The matrix interaction, auxiliary-field variance, and
penalty contribute separately; their sum is a mean-square overlap
deviation.

We first record a differentiation identity for the finite-step
recursion. Let $b(\sigma)$ be any function of the terminal spin, add
$yb(\sigma)$ to the terminal exponent, and differentiate with respect
to $y$ at $y=0$. Define
\[
 b_q=\mathbb E[b(\sigma)\mid\mathcal G_q].
\]
At a grid point $q_j$, write $\mathbb E_j$ and $\Var_j$ for
conditional expectation and variance given $\mathcal G_{q_j}$ under
the normalized law. If primes denote derivatives with respect to $y$,
then one recursion step gives
\[
 U_j'=\mathbb E_jU_{j+1}',
 \qquad
 U_j''=\mathbb E_jU_{j+1}''
       +a_j\Var_j(U_{j+1}').
\]
Iterating these identities from the terminal level to $q=0$ yields
\begin{equation}
 U_0''
 =
 \mathbb E[b(\sigma)^2\mid\mathcal G_0]
 -
 \int_0^1
 \mathbb E[b_q^2\mid\mathcal G_0]\,\mu(\dd q).
 \label{eq:fn-terminal-variation}
\end{equation}

We first differentiate the matrix interaction while keeping the
auxiliary-field variance $c$ fixed. Write the completed SK contribution
to the terminal exponent as
\[
 \sqrt t\sum_e g_ev_e(\sigma),
\]
where
\[
 v_{ij}(\sigma)=\frac{\beta}{\sqrt N}\sigma_i\sigma_j
 \quad(i<j),
 \qquad
 v_{ii}(\sigma)=\frac{\beta}{\sqrt{2N}}.
\]
These coefficients satisfy
\[
 \sum_ev_e(\sigma)v_e(\tau)
 =
 \frac{\beta^2N}{2}R(\sigma,\tau)^2,
 \qquad
 R(\sigma,\tau)=\frac{\sigma\cdot\tau}{N}.
\]

For each overlap level $q$, condition on $\mathcal G_q$ and sample
$\sigma^1,\sigma^2$ independently from the conditional Gibbs law of
the terminal spin. Define
\[
 B(q)=
 \mathbb E\bigl[R(\sigma^1,\sigma^2)^2\bigr].
\]
To differentiate in $t$, regard
\[
 y_e=\sqrt t\,g_e
\]
as Gaussian heat coordinates, and write $U_{0,e}$ and $U_{0,ee}$ for
the corresponding first and second derivatives of $U_0$ before
substituting $y_e=\sqrt t\,g_e$. Gaussian differentiation of the outer
logarithmic moment gives the matrix contribution
\[
 \frac1{2N}\mathbb E\sum_e
 \bigl(U_{0,ee}+sU_{0,e}^2\bigr).
\]
Applying \eqref{eq:fn-terminal-variation} with
$b(\sigma)=v_e(\sigma)$ and summing over $e$ gives
\begin{equation}
 \frac1{2N}\mathbb E\sum_e
 \bigl(U_{0,ee}+sU_{0,e}^2\bigr)
 =
 \frac{\beta^2}{4}
 \left(1-\int_0^1B(q)\,\nu(\dd q)\right).
 \label{eq:fn-matrix-derivative}
\end{equation}
The term $sU_{0,e}^2$ accounts exactly for the prescribed mass
$s\delta_0$: it removes that mass from the integral against
$\mu=s\delta_0+\nu$, leaving the integral against $\nu$.
The same heat-semigroup interpretation defines the derivative at
$t=0$.

We next differentiate the auxiliary-field variance $c$. Set
\[
 h(q)=\frac1N\mathbb E\Tr H_q,
 \qquad
 \Gamma(q)=\frac1N\mathbb E|m_q|^2.
\]
Differentiating the Gaussian field increments in the finite-step
recursion gives
\[
 \partial_c
 \left(
 \frac1{Ns}\log\mathbb E_W e^{sU_0}
 \right)
 =
 \frac12\int_0^1
 \bigl(h(q)+a(q)\Gamma(q)\bigr)\,\dd q.
\]
To simplify this expression, use the martingale identities from the
preceding subsection. They imply
\[
 h'=-a\Gamma',
 \qquad
 h(1)=1-\Gamma(1).
\]
Stieltjes integration then gives
\[
 h(q)+a(q)\Gamma(q)
 =
 1-\int_{(q,1]}\Gamma(u)\,\mu(\dd u).
\]
Integrating in $q$ and using
$\partial_t c=-\beta^2$, the auxiliary-field contribution to
$\partial_t\mathcal F_{t,s}$ is therefore
\[
 -\frac{\beta^2}{2}
 \left(
 1-\int_0^1q\Gamma(q)\,\mu(\dd q)
 \right).
\]

Finally, differentiating the penalty
\[
 -\frac c2\int_0^1qa(q)\,\dd q
\]
and using $\partial_t c=-\beta^2$ gives
\[
 \frac{\beta^2}{4}
 \left(
 1-\int_0^1q^2\,\mu(\dd q)
 \right).
\]
Because $q$ and $q^2$ vanish at zero, the last two integrals are
unchanged if $\mu$ is replaced by
$\nu=\mu-s\delta_0$.

Combining the matrix, auxiliary-field, and penalty contributions gives
\[
 \begin{aligned}
 \partial_t\mathcal F_{t,s}(\mu)
 &=
 -\frac{\beta^2}{4}
 \int_0^1
 \bigl(B(q)-2q\Gamma(q)+q^2\bigr)\,\nu(\dd q)\\
 &=
 -\frac{\beta^2}{4}
 \int_0^1
 \mathbb E\left[
   \bigl(R(\sigma^1,\sigma^2)-q\bigr)^2
 \right]\nu(\dd q).
 \end{aligned}
\]
Indeed, conditional independence of $\sigma^1,\sigma^2$ given
$\mathcal G_q$ gives
\[
 \mathbb E R(\sigma^1,\sigma^2)
 =
 \frac1N\mathbb E|m_q|^2
 =
 \Gamma(q).
\]
Thus, with
\[
 D_{t,s}(\mu)
 =
 \int_0^1
 \mathbb E\left[
   \bigl(R(\sigma^1,\sigma^2)-q\bigr)^2
 \right]\nu(\dd q),
\]
we obtain
\begin{equation}
 \partial_t\mathcal F_{t,s}(\mu)
 =
 -\frac{\beta^2}{4}D_{t,s}(\mu),
 \qquad
 0\le D_{t,s}(\mu)\le4(1-s).
 \label{eq:fn-fixed-interpolation}
\end{equation}
The fixed-profile derivative is therefore nonpositive. We justify
integration of this identity and passage to the minimum in
\zcref{subsec:fn-approximation}.

\subsection{The integrated variance bound}

We now apply Brascamp--Lieb to an average over overlap levels, retaining
the integrated curvature in \eqref{eq:fn-posterior-metric}. Using the
same terminal spin at every level, define
\[
 S_q=\frac{|m_q|^2}{N},\qquad
 L_q=\frac{\sigma\cdot m_q-|m_q|^2/2}{N}.
\]
The following proposition gives an averaged variance bound for every
admissible measure and a sharper bound when the measure minimizes
$\mathcal F_{t,s}$.

\begin{proposition}
\zlabel{prop:fn-integrated-variance}
Fix $N\ge1$, $\beta>0$, $0<s\le1/2$, and $0\le t<1$.
Let $\mu=s\delta_0+\nu$ be a deterministic admissible measure, with
$c=\beta^2(1-t)$ and $a(q)=\mu([0,q])$. All expectations and
variances below are under its normalized joint law with $x_0=0$.

For every deterministic $k\in L^2(\nu)$, the observable
\[
 Z_k=\int_0^1 k(q)L_q\,\nu(\dd q)
\]
satisfies
\[
 \Var Z_k
 \le
 \frac1{N^2}\int_0^1 k(q)^2
 \E\Tr(H_qC_q)\,\nu(\dd q).
\]
If $\mu$ minimizes $\mathcal F_{t,s}$, then
\[
 \E\Tr C_q^2\le\frac{N}{c\,a(q)^2}
 \qquad(q\in\supp\nu),
\]
and
\[
 \Var Z_k
 \le
 \frac1{Nc}\int_0^1\frac{k(q)^2}{a(q)}\,\nu(\dd q).
\]
\end{proposition}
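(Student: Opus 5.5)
The plan is to apply Brascamp--Lieb to the fixed-spin posterior with potential \eqref{eq:fn-posterior-potential}, and then remove the conditioning on $\sigma$ by gauge symmetry. We work first for step profiles with the regularizing initial field $x_0=\sqrt\varepsilon z_0$, and pass to general $\mu$ and to $\varepsilon\downarrow0$ by the approximation of \zcref{subsec:fn-approximation}.

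Fix $\sigma$. Since $\nabla_Z^2V=Q\succeq I+\int A_q\,\nu(\dd q)$, Brascamp--Lieb gives $\Var(Z_k\mid\sigma)\le\E[\langle\nabla_Z Z_k,Q^{-1}\nabla_Z Z_k\rangle\mid\sigma]$.

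\emph{Step 1: the gradient.} The key structural fact is that $m_q=\nabla_xU_q$, so $\nabla_Z m_q$ is $H_q$ composed with the Jacobian $J_q=\partial X_q/\partial Z$, plus the dependence through $W$. Treating $U_q$ as a function of $Z$ via $(W,X_q)$, we have $\nabla_Z L_q=N^{-1}(\nabla_Z m_q)^T(\sigma-m_q)$. The precise claim to verify is that this can be written as $N^{-1}G_q^T(\sigma-m_q)$ with $G_q^TG_q$ dominated by $\nabla_Z^2U_q=A_q$ in the sense that $\langle v,G_q^T\cdot\rangle$ is controlled by $A_q$ and $H_q$. In the pure field directions, $A_q=J_q^TH_qJ_q$ and $\nabla_Z m_q=H_qJ_q$.

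\emph{Step 2: matrix Cauchy--Schwarz.} Write $\nabla_Z Z_k=N^{-1}\int k(q)J_q^TH_q(\sigma-m_q)\,\nu(\dd q)$. Cauchy--Schwarz against the metric $Q\succeq\int A_q\,\nu$ (the standard bound $\langle\int w_q,Q^{-1}\int w_q\rangle\le\int\langle y_q,H_q y_q\rangle\nu$ when $w_q=J_q^TH_q y_q$ and $Q\succeq\int J_q^TH_qJ_q\,\nu$) gives
\[
\Var(Z_k\mid\sigma)\le\frac1{N^2}\,\E\Bigl[\int k(q)^2\langle\sigma-m_q,H_q(\sigma-m_q)\rangle\,\nu(\dd q)\Bigm|\sigma\Bigr].
\]
The main obstacle is making this rigorous for the $W$-directions and the constant extensions of $U_q$; I expect the convexity of $U_q$ jointly in $Z$ to handle it uniformly.

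\emph{Step 3: averaging.} Averaging over $\sigma$ and using $\E_q(\sigma-m_q)(\sigma-m_q)^T=C_q$ gives $\E\Tr(H_qC_q)$. Since $Z_k$ is gauge invariant, its conditional law given $\sigma$ does not depend on $\sigma$. Hence $\Var Z_k=\Var(Z_k\mid\sigma)$, which proves the first inequality.

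\emph{Step 4: the minimizer bounds.} For a minimizing measure, \zcref{prop:fn-contact} gives $a(q)C_q\preceq H_q\preceq C_q$ and $\E\Tr H_q^2\le N/c$ on $K$. Then $\Tr C_q^2\le a(q)^{-2}\Tr H_q^2$, since $0\preceq a C_q\preceq H_q$ implies $\Tr(aC_q)^2\le\Tr H_q^2$ by monotonicity of $\Tr X^2$ on PSD matrices; this gives the bound $\E\Tr C_q^2\le N/(c\,a(q)^2)$. Finally,
\[
\Tr(H_qC_q)\le\Tr(H_q\cdot a^{-1}H_q)=a^{-1}\Tr H_q^2,
\]
using $C_q\preceq a^{-1}H_q$ and $H_q\succeq0$. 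Taking expectations gives $\E\Tr(H_qC_q)\le N/(c\,a(q))$, and substituting this into the first bound yields the second.
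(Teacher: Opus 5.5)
Your Steps 3 and 4 are fine and match the paper: gauge symmetry plus total variance, then $\Tr C_q^2\le a^{-2}\Tr H_q^2$ and $\Tr(H_qC_q)\le a^{-1}\Tr H_q^2$. The gap is in Steps 1--2, which carry the whole argument, and you leave them as an expectation rather than a proof.

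Your displayed gradient $\nabla_Z Z_k=N^{-1}\int k(q)J_q^TH_q(\sigma-m_q)\,\nu(\dd q)$ is wrong. The field $X_q$ does not depend on the coupling coordinates, but $m_q=\nabla_xU_\mu(q,X_q;W)$ depends on $W$ directly. So $\nabla_ZL_q$ has nonzero $W$-components $N^{-1}(\partial_{g_e}\nabla_xU_\mu)\cdot(\sigma-m_q)$, and Brascamp--Lieb needs the full gradient.

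The comparison $Q\succeq\int J_q^TH_qJ_q\,\nu(\dd q)$ used in your Cauchy--Schwarz step also does not follow from $Q=I+\int A_q\,\nu(\dd q)$. The difference $A_q-J_q^TH_qJ_q$ has zero field--field block but generically nonzero $W$--field cross blocks, so it is not positive semidefinite. Joint convexity of $U_q$ gives only $A_q\succeq0$; it does not produce the representation you need.

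The missing idea is to write $\nabla_ZL_q$ exactly as $A_q$ applied to a vector. This is the real purpose of the initial field $x_0=\sqrt\varepsilon\,z_0$ in the paper, beyond regularization. The block $z_0$ enters every $X_q$, including $q=0$, with coefficient $\sqrt\varepsilon$. Hence $m_q=\varepsilon^{-1/2}\nabla_{z_0}U_q$, and so $\nabla_Zm_q=\varepsilon^{-1/2}(A_q)_{\cdot,z_0}$ in all coordinates at once: the $W$ coordinates, the later increments, and the constant extensions.

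Take $v_q$ supported on the $z_0$ block with $(v_q)_{z_0}=(\sigma-m_q)/\sqrt\varepsilon$. Then
\[
 \nabla_ZZ_k=\frac1N\int_0^1k(q)A_qv_q\,\nu(\dd q),
 \qquad
 v_q^TA_qv_q=(\sigma-m_q)^TH_q(\sigma-m_q),
\]
the second identity because $(A_q)_{z_0z_0}=\varepsilon H_q$. Apply matrix Cauchy--Schwarz, using $A_q\succeq0$ and $\int A_q\,\nu(\dd q)\preceq Q$, to $b^TQ^{-1}b$ with $b=\int kA_qv_q\,\nu$. This gives exactly the conditional bound you claimed, and dominated convergence removes $\varepsilon$. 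Once this representation is in place, the rest of your plan goes through as written.
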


\begin{proof}

First take a step profile and use the regularized joint law
\eqref{eq:fn-joint-law}, with $x_0=\sqrt\varepsilon\,z_0$ and
$\varepsilon>0$. Differentiating $L_q$ with the spin fixed produces
$\sigma-m_q$. The initial-field coordinates $z_0$ occur in every
$X_q$ and let us express this derivative using the matrices $A_q$.
For this coordinate block,
\[
 \nabla_{z_0}U_q=\sqrt\varepsilon\,m_q,
 \qquad
 (A_q)_{z_0,z_0}=\varepsilon H_q.
\]
Define a vector $v_q$ in the full Gaussian coordinate space by setting
all blocks except the $z_0$ block equal to zero and taking
\[
 (v_q)_{z_0}=\frac{\sigma-m_q}{\sqrt\varepsilon}.
\]
With the terminal spin held fixed, differentiation gives
\begin{equation}
 \nabla_Z Z_k
 =
 \frac1N\int_0^1 k(q)A_qv_q\,\nu(\dd q).
 \label{eq:fn-posterior-gradient}
\end{equation}
Thus the same matrices $A_q$ that appear in the posterior Hessian also
appear in the gradient of $Z_k$. This is the structural reason that
the integrated posterior curvature can control $\Var Z_k$.

Fix the terminal spin $\sigma$, and write $\Var_\sigma$ and
$\mathbb E_\sigma$ for variance and expectation under its posterior
law. If a probability density has negative log density $V$ with
\[
 Q=\nabla^2V\succ0,
\]
the Brascamp--Lieb inequality gives
\begin{equation}
 \Var_\sigma(F)
 \le
 \mathbb E_\sigma\!\left[
   \nabla F^TQ^{-1}\nabla F
 \right].
 \label{eq:fn-brascamp-lieb}
\end{equation}
For completeness, this follows by solving
$-\mathcal L u=F-\mathbb E_\sigma F$ for
$\mathcal L=\Delta-\nabla V\cdot\nabla$. Integration by parts and
weighted Cauchy--Schwarz give
\[
 \Var_\sigma(F)^2
 \le
 \mathbb E_\sigma[\nabla F^TQ^{-1}\nabla F]\,
 \mathbb E_\sigma[\nabla u^TQ\nabla u],
\]
while the integrated Bochner identity gives
\[
 \mathbb E_\sigma[\nabla u^TQ\nabla u]
 \le
 \mathbb E_\sigma[(\mathcal Lu)^2]
 =
 \Var_\sigma(F).
\]
Smooth cutoff and $L^2$ approximation extend the inequality to the
observables used below. The finite-step posterior potentials are smooth,
uniformly convex, and have Gaussian tails.

We apply \eqref{eq:fn-brascamp-lieb} to $Z_k$. Set
\[
 b=\int_0^1k(q)A_qv_q\,\nu(\dd q),
 \qquad
 w=Q^{-1}b.
\]
Because
\[
 \int_0^1A_q\,\nu(\dd q)=Q-I\preceq Q,
\]
matrix Cauchy--Schwarz gives
\[
 \begin{aligned}
 b^TQ^{-1}b
 &=\int_0^1 k(q)w^TA_qv_q\,\nu(\dd q)\\
 &\le
 \left(
   \int_0^1w^TA_qw\,\nu(\dd q)
 \right)^{1/2}
 \left(
   \int_0^1k(q)^2v_q^TA_qv_q\,\nu(\dd q)
 \right)^{1/2}\\
 &\le
 (b^TQ^{-1}b)^{1/2}
 \left(
   \int_0^1k(q)^2v_q^TA_qv_q\,\nu(\dd q)
 \right)^{1/2}.
 \end{aligned}
\]
Hence
\[
 b^TQ^{-1}b
 \le
 \int_0^1k(q)^2v_q^TA_qv_q\,\nu(\dd q).
\]
Using \eqref{eq:fn-posterior-gradient} and the identity
\[
 v_q^TA_qv_q
 =
 (\sigma-m_q)^TH_q(\sigma-m_q),
\]
we obtain
\[
 \Var_\sigma Z_k
 \le
 \frac1{N^2}
 \int_0^1k(q)^2
 \mathbb E_\sigma[
   (\sigma-m_q)^TH_q(\sigma-m_q)
 ]\,\nu(\dd q).
\]

Gauge symmetry shows that the conditional law of $Z_k$, and hence its
conditional mean and variance, does not depend on the fixed value of
$\sigma$. The law of total variance therefore gives the same estimate
under the full joint law. Conditioning on $\mathcal G_q$ then yields
\begin{equation}
 \begin{aligned}
 \Var Z_k
 &\le
 \frac1{N^2}\int_0^1k(q)^2
 \mathbb E[
   (\sigma-m_q)^TH_q(\sigma-m_q)
 ]\,\nu(\dd q)\\
 &=
 \frac1{N^2}\int_0^1k(q)^2
 \mathbb E\Tr(H_qC_q)\,\nu(\dd q).
 \end{aligned}
 \label{eq:fn-integrated-covariance}
\end{equation}

We next remove the auxiliary initial variance $\varepsilon$. At fixed
$N,t,s,\mu$, the recursion and transition laws depend continuously on
the initial field. Moreover,
\[
 |L_q|\le\frac32,\qquad
 0\preceq H_q\preceq C_q,\qquad
 \Tr C_q\le N,
\]
so the trace integrand in
\eqref{eq:fn-integrated-covariance} is bounded by $N^2$.
The outer value $U_0$ has at most linear Gaussian growth uniformly for
$0\le\varepsilon\le1$. Dominated convergence therefore gives
\eqref{eq:fn-integrated-covariance} at $\varepsilon=0$.

Finally suppose that $\mu=s\delta_0+\nu$ minimizes the interpolating
functional, and put
\[
 K=\supp\nu,\qquad M=Nc.
\]
For $q\in K$, the contact estimate
\eqref{eq:fn-contact} gives
\[
 \mathbb E\Tr H_q^2\le\frac{N}{c}=\frac{N^2}{M}.
\]
Together with
$a(q)C_q\preceq H_q$, this converts
\eqref{eq:fn-integrated-covariance} into a bound depending only on the
cumulative mass $a(q)$. Indeed,
\[
 H_q=a(q)C_q+D_q,\qquad D_q\succeq0,
\]
implies
\[
 \Tr H_q^2\ge a(q)^2\Tr C_q^2
\]
and
\[
 \Tr(H_qC_q)
 \le\frac1{a(q)}\Tr H_q^2.
\]
For the second inequality, take the trace against $H_q$ in
$H_q-a(q)C_q\succeq0$:
\[
 0\le\Tr\bigl(H_q(H_q-a(q)C_q)\bigr)
   =\Tr H_q^2-a(q)\Tr(H_qC_q).
\]
Consequently,
for $q\in K$,
\[
 \mathbb E\Tr C_q^2
 \le
 \frac{N^2}{M\,a(q)^2},
\]
and, for every deterministic $k\in L^2(\nu)$,
\begin{equation}
 \Var\left(\int_0^1k(q)L_q\,\nu(\dd q)\right)
 \le
 \frac1M
 \int_0^1\frac{k(q)^2}{a(q)}\,\nu(\dd q).
 \label{eq:fn-contact-covariance}
\end{equation}
This is the integrated posterior variance estimate used in the next
section. Its dependence on $a(q)^{-1}$ is the reason that the later
argument works with quantile intervals of $\nu$ when $s$ is small.
\end{proof}

\subsection{Decomposing the overlap error and passing to quantiles}

Fix a minimizing measure $\mu=s\delta_0+\nu$ and write
$D=D_{t,s}(\mu)$. At each $q$, let $\sigma^1,\sigma^2$ be
independent samples from the conditional law of the terminal spin
given $\mathcal G_q$. Using the observables $S_q,L_q$ defined above,
we separate $D$ into a trace term and a term involving $\Var(L_q)$.
The preceding proposition controls variances of averages of $L_q$, and
\zcref{sec:residual} will convert that control into the pointwise
variance estimate needed here.

Set
\[
 j_q=\frac1{N^2}\mathbb E[m_q^TC_qm_q],
 \qquad
 r_q=\Var(L_q).
\]
Since $\mathbb E[L_q\mid\mathcal G_q]=S_q/2$ and
$\Var(L_q\mid\mathcal G_q)=N^{-2}m_q^TC_qm_q$, the law of
total variance gives
\[
 r_q=j_q+\frac14\Var(S_q).
\]
Similarly, conditional independence of $\sigma^1$ and $\sigma^2$
given $\mathcal G_q$ yields
\[
 \mathbb E\bigl[(R(\sigma^1,\sigma^2)-q)^2\bigr]
 =
 \frac1{N^2}\mathbb E\Tr C_q^2
 +2j_q
 +\mathbb E(S_q-q)^2.
\]
On the support of $\nu$, the contact identity
$\mathbb ES_q=q$ from \eqref{eq:fn-contact} gives
\[
 \mathbb E(S_q-q)^2=\Var(S_q).
\]
Since
\[
 2j_q+\Var(S_q)
 \le
 4\left(j_q+\frac14\Var(S_q)\right)
 =4r_q,
\]
we obtain
\begin{equation}
 D\le A+4\mathcal W,
 \qquad
 A=\frac1{N^2}\int\mathbb E\Tr C_q^2\,\nu(\dd q),
 \qquad
 \mathcal W=\int r_q\,\nu(\dd q).
 \label{eq:fn-defect}
\end{equation}
The term $A$ will be controlled directly by the covariance estimate,
while $\mathcal W$ is the integrated pointwise variance that remains
to be estimated.

We first bound $A$. Let
\[
 Q_\nu(v)=\inf\{q\in[0,1]:\nu([0,q])\ge v\},
 \qquad 0<v<1-s,
\]
be the generalized quantile function of $\nu$. The pushforward of
Lebesgue measure on $(0,1-s)$ under $Q_\nu$ is $\nu$, and
\[
 a(Q_\nu(v))
 =
 s+\nu([0,Q_\nu(v)])
 \ge s+v.
\]
Moreover, $Q_\nu(v)\in K=\supp\nu$ for Lebesgue-almost every $v$.
Using \zcref{prop:fn-integrated-variance} therefore gives
\[
 \begin{aligned}
 A
 &\le
 \frac1M
 \int_0^{1-s}\frac{\dd v}{(s+v)^2}\\
 &=
 \frac1M\left(\frac1s-1\right)
 \le\frac1{Ms},
 \end{aligned}
\]
where $M=Nc$. Since also
$\Tr C_q^2\le(\Tr C_q)^2\le N^2$, we have the trivial bound
$A\le1$. Hence
\begin{equation}
 A\le\min\{1,(Ms)^{-1}\}.
 \label{eq:fn-trace-error}
\end{equation}

The remaining term $\mathcal W$ is treated in \zcref{sec:residual}.
There we apply \eqref{eq:fn-contact-covariance} to averages over
quantile intervals and compare those averages with the individual
variables $L_q$.

\subsection{Approximation and the optimized envelope}
\zlabel{subsec:fn-approximation}

We now extend the fixed-profile identities to arbitrary admissible
measures at fixed $N$. This completes the contact and variance
propositions and supplies the continuity needed to minimize the
interpolation and differentiate its value.

\subsubsection{Approximation for a fixed profile}
We first record estimates that are uniform over step approximations.
At the terminal time, every spatial derivative of fixed order is
uniformly bounded in $W$ and $x$, because these derivatives are
cumulants of spins taking values in $\{-1,1\}^N$. The maximum
principle gives
\[
 |\nabla_xU_\mu|\le\sqrt N.
\]
Successive differentiation of the PDE then bounds every further fixed
spatial derivative by a constant depending only on its order, $N$,
and $\beta$, uniformly over all step profiles with $0\le a\le1$.
Indeed, after the lower-order derivatives have been bounded, the
equation for a derivative of order $k$ is linear with bounded drift
and coefficients, and its forcing contains only derivatives of lower
order. We will use these bounds only at fixed $N$. In particular,
\[
 0\preceq H_q\preceq C_q\preceq NI
\]
gives a uniform Hessian bound.

We next quantify continuity in the cumulative profile. Let $a$ and
$\widetilde a$ be two cumulative functions, and let $U$ and
$\widetilde U$ be the corresponding solutions with the same terminal
condition. Set $w=U-\widetilde U$. Subtracting the two PDEs gives
\[
 \partial_q w+\frac c2\Delta w
 +\frac{ca}{2}(\nabla U+\nabla\widetilde U)\cdot\nabla w
 =-\frac c2(a-\widetilde a)|\nabla\widetilde U|^2,
 \qquad w(1,\cdot)=0.
\]
Since $|\nabla\widetilde U|^2\le N$, the backward representation gives
\begin{equation}
 \sup_{W,x}|U(q,x;W)-\widetilde U(q,x;W)|
 \le
 \frac{cN}{2}\int_q^1|a(u)-\widetilde a(u)|\,\dd u.
 \label{eq:fn-coefficient-continuity}
\end{equation}
Thus $L^1$ convergence of the cumulative functions implies uniform
convergence of the corresponding solutions. Applying the same argument
to the differentiated equations, together with the fixed-order bounds
above and Gronwall's inequality, gives uniform convergence of the first
and second spatial derivatives needed below.

To compare the stochastic processes for two profiles, place them on
one probability space using the same Gaussian matrix $W$ under its
original Gaussian reference law and the same Brownian motion $B$.
The drifts in \eqref{eq:fn-ancestral-sde} are uniformly bounded and
spatially Lipschitz at fixed $N$. Hence, if $a_n\to a$ in $L^1$, then
\[
 X^{(n)}\to X,\qquad
 m^{(n)}\to m,\qquad
 H^{(n)}\to H
\]
in probability, uniformly in $q\in[0,1]$. The field differences are
uniformly bounded at fixed $N$, and $m^{(n)}$ and $H^{(n)}$ are also
uniformly bounded. The convergence therefore holds in every finite
$L^p$ norm required below.

The normalized law also depends on the profile through the outer tilt.
Set
\[
 d_n=\frac{cN}{2}\|a_n-a\|_{L^1}.
\]
By \eqref{eq:fn-coefficient-continuity},
\[
 |U_n(0,0;W)-U(0,0;W)|\le d_n.
\]
Therefore the ratio of the two normalized matrix densities lies between
\[
 e^{-2sd_n}\quad\text{and}\quad e^{2sd_n}.
\]
Since $d_n\to0$, convergence proved under the common Gaussian reference
coupling also passes to the normalized matrix laws.

It remains to couple the terminal spins. Fix an ordering of
$\{-1,1\}^N$ and use the same independent uniform random variable to
sample from the terminal Gibbs probabilities for every profile. These
probabilities depend continuously on the terminal field, so the coupled
terminal spins agree with probability tending to one. Consequently the
observables involving the same terminal spin at several overlap levels,
including those in \eqref{eq:fn-integrated-covariance}, converge as
well. The first and second conditional moments of the terminal spin
solve linear backward equations with drift $ca\nabla U$ and bounded
smooth terminal data. Their solutions therefore converge under the same
approximation, and so does $C_q$.

\subsubsection{Passage from step profiles to arbitrary measures}
Let
\[
 \mu=s\delta_0+\nu
\]
be any admissible measure. Approximate $\nu$ by pushforwards onto
successively finer finite meshes, preserving its total mass and the
point $0$, and write
\[
 \mu_n=s\delta_0+\nu_n.
\]
Then $\mu_n\Rightarrow\mu$, and the corresponding cumulative functions
$a_n$ converge to $a$ in $L^1([0,1])$. The estimates above, together
with Gaussian domination of the outer tilts, allow us to pass
\eqref{eq:fn-martingale-sdes}--\eqref{eq:fn-first-variation},
\eqref{eq:fn-terminal-variation}--\eqref{eq:fn-fixed-interpolation},
and \eqref{eq:fn-integrated-covariance} to the limit.

If an identity is needed at a specified time $q$, we include $q$ in
every approximating mesh. The conditional moment functions and the
integrated second moments are continuous in $q$, uniformly along the
approximation, so the corresponding Stieltjes integrals converge as
well. For \eqref{eq:fn-integrated-covariance}, first take continuous
deterministic $k$. Since
\[
 |L_q|\le\frac32,
 \qquad
 \mathbb E\Tr(H_qC_q)\le N^2,
\]
density then extends the result to every $k\in L^2(\nu)$, including
the functions used to divide the mass of an atom between quantile
intervals.

If $\mu$ minimizes the functional, apply the limiting first variation
\eqref{eq:fn-first-variation} directly to $\mu$. Continuity of
$\Gamma'$ permits the same endpoint and support argument used in
\zcref{prop:fn-contact}. Together with the limiting integrated
variance bound, this yields the stated extensions of
\zcref{prop:fn-contact,prop:fn-integrated-variance}.

\subsubsection{The optimized interpolation}
The continuity estimates above allow us to minimize over all
admissible measures and integrate the fixed-profile derivative.
\begin{proposition}
\zlabel{prop:fn-envelope}
Fix $N\ge1$, $\beta>0$, and $0<s\le1/2$. For every deterministic
admissible measure $\mu$, the function
$t\mapsto\mathcal F_{t,s}(\mu)$ is Lipschitz on $[0,1]$ and
continuously differentiable on $(0,1)$, with
\[
 \partial_t\mathcal F_{t,s}(\mu)
 =-\frac{\beta^2}{4}D_{t,s}(\mu),
 \qquad
 0\le D_{t,s}(\mu)\le4(1-s).
\]

A minimizing measure exists for every $t\in[0,1]$. The optimized
value
\[
 f(t)=\min_\mu\mathcal F_{t,s}(\mu)
\]
is Lipschitz, with
\[
 f(0)=P(s),\qquad f(1)=\phi_N^c(s).
\]
At every differentiability point $t\in(0,1)$ of $f$, every
minimizer $\mu_t$ satisfies
\[
 f'(t)=-\frac{\beta^2}{4}D_{t,s}(\mu_t).
\]
Consequently,
\[
 P(s)-\phi_N^c(s)
 =\frac{\beta^2}{4}\int_0^1D_{t,s}(\mu_t)\,\dd t.
\]
The integrand is defined almost everywhere by $-4f'(t)/\beta^2$, and 
at every differentiability point this agrees with
$D_{t,s}(\mu_t)$ for every minimizer $\mu_t$.
\end{proposition}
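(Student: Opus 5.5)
The plan is to prove the fixed-profile statement first, then obtain existence of minimizers by compactness, and finally apply an envelope (Danskin-type) argument to the optimized value.

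\emph{Fixed-profile regularity.} For a step profile, $t\mapsto\mathcal F_{t,s}(\mu)$ is differentiable on $(0,1)$, and the derivative computed in \eqref{eq:fn-fixed-interpolation} is $-\frac{\beta^2}{4}D_{t,s}(\mu)$. The outer moment is a Gaussian integral with integrand smooth in $(t,c)$ at fixed $N$, and the heat-coordinate interpretation $y_e=\sqrt t\,g_e$ removes the apparent singularity at $t=0$. Hence the derivative is continuous in $t$ and bounded by $\beta^2(1-s)$, which gives the Lipschitz bound on $[0,1]$. For a general $\mu$, I approximate by step profiles $\mu_n$ with $a_n\to a$ in $L^1$. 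By \eqref{eq:fn-coefficient-continuity}, $\mathcal F_{t,s}(\mu_n)\to\mathcal F_{t,s}(\mu)$ uniformly in $t$, because the bound $cN\|a_n-a\|_{L^1}/2$ is uniform in $t\le1$. The coupling argument of \zcref{subsec:fn-approximation} gives $D_{t,s}(\mu_n)\to D_{t,s}(\mu)$ locally uniformly in $t\in(0,1)$. Integrating the derivative identity and passing to the limit yields the formula, its continuity, and the bound $0\le D\le4(1-s)$.

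\emph{Existence and endpoint values.} The admissible set $\{s\delta_0+\nu\}$ is weakly compact. Weak convergence of measures implies $L^1$ convergence of the cumulative functions, since they converge at every continuity point and are bounded. By \eqref{eq:fn-coefficient-continuity}, $\mathcal F_{t,s}$ is therefore continuous, indeed Lipschitz in $\|a-\tilde a\|_{L^1}$ uniformly in $t$, so a minimizer exists for every $t\in[0,1]$.

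At $t=1$ we have $c=0$, so the PDE is trivial and $U_\mu(0,0;W)=F^c_N$. This gives $f(1)=\phi^c_N(s)$. At $t=0$, the terminal function is $\sum_i\log(2\cosh x_i)$ and the field is isotropic, so the vector solution splits as $U_\mu(q,x)=\sum_iu_a(q,x_i)$ with $c=\beta^2$. Then $\frac1{Ns}\log e^{sU_\mu(0,0)}=u_a(0,0)$, which gives $f(0)=P(s)$.

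\emph{Envelope argument.} Since each $\mathcal F_{\cdot,s}(\mu)$ is $\beta^2$-Lipschitz uniformly in $\mu$, the minimum $f$ is Lipschitz, hence differentiable almost everywhere and absolutely continuous. Fix a differentiability point $t$ and a minimizer $\mu_t$. The function $h(\tau)=\mathcal F_{\tau,s}(\mu_t)-f(\tau)$ is nonnegative and vanishes at $\tau=t$, and both terms are differentiable there. So $h'(t)=0$, which gives $f'(t)=\partial_t\mathcal F_{t,s}(\mu_t)=-\frac{\beta^2}{4}D_{t,s}(\mu_t)$ for every minimizer. Integrating $f'$ over $[0,1]$ and using $f(0)=P(s)$, $f(1)=\phi^c_N(s)$ gives the final identity. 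Measurability of the integrand is automatic because it equals $-4f'/\beta^2$ almost everywhere.

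The main obstacle is the approximation step. One needs continuity of $D_{t,s}(\mu)$ in $t$ for a non-step $\mu$, and uniform control of the convergence $D_{t,s}(\mu_n)\to D_{t,s}(\mu)$ near $t=0$ and $t=1$. I plan to rely on the fixed-$N$ derivative bounds and on the tilt-ratio bounds $e^{\pm2sd_n}$ from \zcref{subsec:fn-approximation}, which are uniform in $t$. Near the endpoints, it suffices to use the integrated identity together with the uniform bound $|D|\le4$, so no pointwise control is needed there.
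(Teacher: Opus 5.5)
Your proposal is correct and follows essentially the same route as the paper. You pass the step-profile interpolation identity to general $\mu$ through the $L^1$ continuity estimate \eqref{eq:fn-coefficient-continuity} and the common Brownian coupling on compact subintervals of $(0,1)$, obtain minimizers by weak compactness, get the endpoint values from the separation at $t=0$ and the triviality at $t=1$, and close with an envelope argument at differentiability points of the Lipschitz function $f$. The only differences are cosmetic: your vanishing-derivative argument for the nonnegative function $\mathcal F_{\tau,s}(\mu_t)-f(\tau)$ repackages the paper's one-sided difference quotients, and taking a uniform limit of uniformly Lipschitz step-profile functionals replaces the paper's appeal to Gaussian domination for continuity at $t=0,1$.
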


\begin{proof}

For fixed $s$, the admissible measures
\[
 \mu=s\delta_0+\nu,\qquad
 \nu\ge0,\qquad
 \nu([0,1])=1-s,
\]
form a weakly compact set. On $[0,1]$, weak convergence of these
measures implies $L^1$ convergence of their cumulative functions.
Equation \eqref{eq:fn-coefficient-continuity} therefore makes
$\mathcal F_{t,s}$ continuous in $\mu$, so a deterministic minimizer
exists for every $t$.

Gaussian domination gives continuity at $t=0$ and $t=1$, uniformly
over admissible measures. On compact subintervals of $(0,1)$, the
common Brownian coupling above gives joint continuity of
\[
 (t,\mu)\longmapsto D_{t,s}(\mu).
\]
Passing the finite-step interpolation identity to the limit therefore
shows that, for every fixed admissible $\mu$,
$\mathcal F_{t,s}(\mu)$ is $C^1$ for $0<t<1$, with
\[
 \partial_t\mathcal F_{t,s}(\mu)
 =
 -\frac{\beta^2}{4}D_{t,s}(\mu).
\]
Since $0\le D_{t,s}(\mu)\le4(1-s)$, these derivatives are uniformly
bounded. Hence both the fixed-profile functionals and their minimum
$f$ are Lipschitz in $t$.

Let $t$ be a differentiability point of $f$, and let $\mu_t$ be any
minimizer at that time. Since
\[
 f(t+h)\le\mathcal F_{t+h,s}(\mu_t),
 \qquad
 f(t)=\mathcal F_{t,s}(\mu_t),
\]
the quotient for $h>0$ gives
\[
 f'(t)\le\partial_t\mathcal F_{t,s}(\mu_t),
\]
while the quotient for $h<0$ gives the reverse inequality. Therefore
\begin{equation}
 f'(t)
 =
 \partial_t\mathcal F_{t,s}(\mu_t)
 =
 -\frac{\beta^2}{4}D_{t,s}(\mu_t)
 \qquad\text{for almost every }t.
 \label{eq:fn-envelope}
\end{equation}
In particular, the derivative has the same value for every minimizer
at every differentiability point of $f$.

At $t=0$, the vector PDE separates into $N$ scalar equations, giving
$f(0)=P(s)$. At $t=1$, $c=0$ and the value is independent of
$\mu$, giving $f(1)=\phi_N^c(s)$. Integrating
\eqref{eq:fn-envelope} yields 
\begin{equation}
 P(s)-\phi_N^c(s)
 =
 \frac{\beta^2}{4}
 \int_0^1D_{t,s}(\mu_t)\,\dd t
 \ge0.
 \label{eq:fn-endpoints}
\end{equation}
At almost every $t$, the integrand is determined by
$D_{t,s}(\mu_t)=-4f'(t)/\beta^2$, independently of the minimizer.
\end{proof}
\subsubsection{The case $s=0$}
At $s=0$, replace the normalized logarithmic moment by
\[
 \frac1N\mathbb E_WU_\mu(0,0;W).
\]
The outer change of measure then disappears. At fixed $N$, Gaussian
domination allows the fixed-profile identities above to pass to the
limit $s\downarrow0$.

To obtain the ordinary finite-size Parisi bound, fix an arbitrary
probability measure $\mu$ with cumulative function $a$, and for
$s>0$ set
\[
 \mu_s=s\delta_0+(1-s)\mu.
\]
Its cumulative function is $s+(1-s)a$. Fixed-profile monotonicity gives
\[
 \phi_N^c(s)
 \le
 \mathcal P\bigl(s+(1-s)a\bigr).
\]
As $s\downarrow0$,
Gaussian domination gives
\[
 \phi_N^c(s)\longrightarrow p_N,
\]
while \eqref{eq:fn-coefficient-continuity} gives
\[
 \mathcal P\bigl(s+(1-s)a\bigr)\longrightarrow\mathcal P(a).
\]
Hence  $p_N\le\mathcal P(a)$ for every admissible $a$. Taking the minimum over $a$ yields $
 p_N\le P$.

\section{Covariance estimates on quantile intervals}\zlabel{sec:residual}

The decomposition \eqref{eq:fn-defect} reduces the interpolation error
to two terms. The trace term is bounded by
\eqref{eq:fn-trace-error}. It remains to bound
\[
 \int \Var(L_q)\,\nu(\dd q).
\]
The estimate \eqref{eq:fn-contact-covariance} controls the variance of
an average of the variables $L_q$, rather than the average of their
individual variances. In this section we bridge that gap by comparing
$L_q$ at nearby overlap levels and averaging over intervals containing
a prescribed amount of $\nu$-mass. Parametrizing these intervals by
quantiles of $\nu$ makes their mass deterministic and also records their
diameter in the overlap variable $q$. Using only the bound that this
diameter is at most one will give the comparison in
\zcref{sec:quantitative}; later, \zcref{lem:p2-quantile} will give a
smaller diameter and hence a sharper low-temperature estimate.

Fix $\beta>0$, $0<s\le1/2$, and $0\le t<1$, and set
\[
 M=N\beta^2(1-t)>0.
\]
Let
\[
 \mu=s\delta_0+\nu,
 \qquad
 a(q)=\mu([0,q]),
\]
be a deterministic minimizer of $\mathcal F_{t,s}$. Expectations and
variances in this section are taken under the normalized joint law
constructed in \zcref{sec:finite-n}, with initial field $x_0=0$.
Thus $W$ denotes the coupling matrix under this normalized law,
$(X_q)_{0\le q\le1}$ is the auxiliary field process, and $\sigma$ is
the single terminal spin sampled from the terminal Gibbs law. For
\[
 \mathcal G_q=\sigma\!\left(W,(X_r)_{0\le r\le q}\right),
\]
define
\[
 m_q=\E[\sigma\mid\mathcal G_q],
 \qquad
 C_q=\Cov(\sigma\mid\mathcal G_q),
 \qquad
 S_q=\frac{|m_q|^2}{N},
 \qquad
 L_q=\frac{\sigma\cdot m_q-|m_q|^2/2}{N}.
\]

By \zcref{prop:fn-contact,prop:fn-integrated-variance}, there is a
deterministic set $\mathcal K\subset[0,1]$ of full $\nu$-measure
such that, for every $q\in\mathcal K$,
\begin{equation}\label{eq:cr-contact-inputs}
 \E S_q=q,\qquad
 \E\Tr C_q^2\le\frac{N^2}{M a(q)^2}.
\end{equation}
Moreover, for every deterministic $k\in L^2(\nu)$,
\begin{equation}\label{eq:cr-averaging-input}
 \Var\left(\int k(q)L_q\,\nu(\dd q)\right)
 \le
 \frac1M\int\frac{k(q)^2}{a(q)}\,\nu(\dd q).
\end{equation}
If $\nu$ has an atom at zero, then $0\in\mathcal K$ and the contact
estimate applies there. By contrast, the prescribed mass
$s\delta_0$ alone gives no variational condition at zero. The point
$q=1$ is not in the support of $\nu$.

We next identify the pointwise variance that must be estimated. Set
\[
 r_q=\Var(L_q),
 \qquad
 j_q=\frac1{N^2}\E[m_q^TC_qm_q].
\]
Since $|m_{q,i}|\le1$ and $\sigma_i\in\{-1,1\}$,
\[
 -\frac32
 \le
 \sigma_i m_{q,i}-\frac12m_{q,i}^2
 \le
 \frac12.
\]
Averaging over $i$ gives
\[
 -\frac32\le L_q\le\frac12,
\]
and hence
\[
 0\le r_q\le1
\]
by the elementary variance bound
$\Var(Y)\le(b-a)^2/4$ for $Y\in[a,b]$.

Conditional on $\mathcal G_q$,
\[
 \E[L_q\mid\mathcal G_q]=\frac{S_q}{2},
\]
because $\E[\sigma\mid\mathcal G_q]=m_q$, while
\[
 \Var(L_q\mid\mathcal G_q)
 =
 \frac1{N^2}m_q^TC_qm_q.
\]
The law of total variance therefore gives
\begin{equation}\label{eq:cr-profile-decomposition}
 r_q
 =
 j_q+\frac14\Var(S_q).
\end{equation}
At every contact level $q\in\mathcal K$,
\eqref{eq:cr-contact-inputs} also gives
\[
 \E L_q=\frac12\E S_q=\frac q2.
\]

Estimate \eqref{eq:cr-averaging-input} controls averages of $L_q$ over
sets of overlap levels, but our target is the integral of the
pointwise quantities $r_q$. To relate the two, we compare $L_q$ and
$L_r$ for $q<r$. Their increment consists of a centered term and a
nonnegative quadratic term. The sign of the latter gives one-sided
comparisons with averages over earlier and later overlap levels, while
the centered term can again be bounded using
\eqref{eq:cr-averaging-input}. We carry out these comparisons on
quantile intervals of $\nu$, so that the amount of mass available for
averaging is fixed in advance.

\begin{lemma}\zlabel{lem:cr-oriented-residual}
Let $p,q\in\mathcal K$ with $p\le q$, and set
\[
 \Delta_{p,q}=m_q-m_p.
\]
Then
\[
 L_q-L_p
 =
 \eta_{p,q}+\frac{|\Delta_{p,q}|^2}{2N},
 \qquad
 \eta_{p,q}
 =
 \frac{(\sigma-m_q)\cdot\Delta_{p,q}}{N}.
\]
The first term is centered,
\[
 \E\eta_{p,q}=0,
\]
while the second is nonnegative. Moreover,
\begin{equation}\label{eq:cr-residual-refinement}
 \E\eta_{p,q}^2
 \le
 \frac1{a(q)\sqrt M}
 \sqrt{(q-p)^2+8r_q+32r_p}.
\end{equation}
\end{lemma}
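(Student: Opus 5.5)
The plan is to verify the algebraic identity, obtain centering by conditioning on $\mathcal G_q$, and bound $\E\eta_{p,q}^2$ by Cauchy--Schwarz with the contact covariance bound \eqref{eq:cr-contact-inputs}. What remains after that is the second moment of $X:=|\Delta_{p,q}|^2/N$, which I would control through the variances $r_p,r_q$.

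\emph{Identity and centering.} Since $m_q=m_p+\Delta_{p,q}$, expanding gives
\[
 N(L_q-L_p)=\sigma\cdot\Delta_{p,q}-m_p\cdot\Delta_{p,q}-\tfrac12|\Delta_{p,q}|^2
 =(\sigma-m_q)\cdot\Delta_{p,q}+\tfrac12|\Delta_{p,q}|^2 .
\]
This is the claimed decomposition. Because $p\le q$, the vector $\Delta_{p,q}$ is $\mathcal G_q$-measurable, and $\E[\sigma-m_q\mid\mathcal G_q]=0$. Hence $\E[\eta_{p,q}\mid\mathcal G_q]=0$, so $\E\eta_{p,q}=0$.

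\emph{Second moment of $\eta$.} Conditioning on $\mathcal G_q$ gives
\[
 \E[\eta_{p,q}^2\mid\mathcal G_q]=N^{-2}\Delta_{p,q}^TC_q\Delta_{p,q}.
\]
Bound the quadratic form by $\|C_q\|_{\mathrm{op}}|\Delta_{p,q}|^2\le(\Tr C_q^2)^{1/2}|\Delta_{p,q}|^2$ and apply Cauchy--Schwarz in expectation. Using $q\in\mathcal K$ and \eqref{eq:cr-contact-inputs}, this gives
\[
 \E\eta_{p,q}^2\le N^{-2}\bigl(\E\Tr C_q^2\bigr)^{1/2}\bigl(\E|\Delta_{p,q}|^4\bigr)^{1/2}
 \le\frac{1}{a(q)\sqrt M}\bigl(\E X^2\bigr)^{1/2}.
\]
It therefore suffices to prove
\[
 \E X^2\le (q-p)^2+8r_q+32r_p .
\]

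\emph{Mean and variance of $X$.} By the martingale property of $m$, the increments are orthogonal, so $\E X=\E S_q-\E S_p=q-p$, using the contact identity at both $p,q\in\mathcal K$. For the variance, the identity gives
\[
 2(L_q-L_p)=2\eta_{p,q}+X .
\]
Here $X$ is $\mathcal G_q$-measurable and $\eta_{p,q}$ is conditionally centered given $\mathcal G_q$, so the cross covariance vanishes. Hence
\[
 \Var\bigl(2(L_q-L_p)\bigr)=4\E\eta_{p,q}^2+\Var X\ge\Var X .
\]
Consequently
\[
 \Var X\le 4\Var(L_q-L_p)\le 8r_q+8r_p\le 8r_q+32r_p,
\]
and $\E X^2=(q-p)^2+\Var X$ gives the claim.

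The step I expect to need the most care is this last one: identifying that the conditional orthogonality of $\eta_{p,q}$ and $X$ given $\mathcal G_q$ lets $\Var X$ be controlled by the pointwise variances $r_p,r_q$. The rest is Cauchy--Schwarz and the contact inputs. One should also check that $p$ and $q$ both lie in $\mathcal K$, so that the contact identities $\E S_p=p$, $\E S_q=q$ and the trace bound at $q$ apply.
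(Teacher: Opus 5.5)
Your proof is correct. The identity, the centering, and the reduction of $\E\eta_{p,q}^2$ to $(\E X^2)^{1/2}/(a(q)\sqrt M)$ through the contact trace bound at $q$ coincide with the paper's argument. The step you flagged, the bound on $\E X^2$ with $X=|\Delta_{p,q}|^2/N$, is where your route genuinely differs.

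\textbf{The paper's route.} It writes $S_q-S_p=X+B$ with the martingale cross term $B=2m_p\cdot\Delta_{p,q}/N$. It bounds $\E B^2\le4j_p\le4r_p$ using the increment comparison $\E[\Delta_{p,q}\Delta_{p,q}^T\mid\mathcal G_p]\preceq C_p$. It then uses $\Var(S_u)\le4r_u$ from the splitting $r_u=j_u+\tfrac14\Var(S_u)$, which gives $\E(X-(q-p))^2\le2\Var S_q+4\Var S_p+4\E B^2\le8r_q+32r_p$.

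\textbf{Your route.} You instead reuse the lemma's own decomposition $2(L_q-L_p)=2\eta_{p,q}+X$. Since $X$ is $\mathcal G_q$-measurable and $\E[\eta_{p,q}\mid\mathcal G_q]=0$, the two terms are uncorrelated. Hence $\Var X\le4\Var(L_q-L_p)\le8(r_q+r_p)$.

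\textbf{Comparison.} Your argument is shorter and needs neither the martingale-increment covariance comparison nor the $j_u$/$\Var(S_u)$ splitting. It also gives the slightly sharper, symmetric bound $\E X^2\le(q-p)^2+8r_q+8r_p$, which implies the stated estimate. The paper's route loses nothing essential either, since only $r_p$ and $r_q$ enter the subsequent quantile-shell estimate. With your version, the numerical constants there would simply shrink.
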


\begin{proof}
The purpose of the estimate is to control the centered part of the
increment by the pointwise variances $r_p$ and $r_q$. Set
\[
 d=q-p,\qquad
 A=\frac{|\Delta_{p,q}|^2}{N},\qquad
 B=\frac{2m_p\cdot\Delta_{p,q}}{N}.
\]
Since $(m_q)$ is a martingale,
\[
 \E[B\mid\mathcal G_p]=0,
 \qquad\text{and hence}\qquad
 \E B=0.
\]
Also
\[
 S_q-S_p=A+B.
\]
Because $p,q\in\mathcal K$, the contact identities give
$\E S_p=p$ and $\E S_q=q$. Therefore
\[
 \E A
 =
 \E(S_q-S_p)-\E B
 =
 q-p=d.
\]

We first bound the fluctuation of $B$. Conditional covariance of the
martingale increment gives
\[
 C_p
 =
 \E[C_q\mid\mathcal G_p]
 +
 \E[\Delta_{p,q}\Delta_{p,q}^T\mid\mathcal G_p].
\]
Hence
\[
 \begin{aligned}
 \E B^2
 &=
 \frac4{N^2}
 \E\!\left[
   m_p^T
   \E[\Delta_{p,q}\Delta_{p,q}^T\mid\mathcal G_p]
   m_p
 \right]\\
 &\le
 \frac4{N^2}\E[m_p^TC_pm_p]
 =4j_p
 \le4r_p.
 \end{aligned}
\]

Next, from $S_q-S_p=A+B$,
\[
 A-d
 =
 (S_q-q)-(S_p-p)-B.
\]
Using
\[
 (x-y-z)^2\le2x^2+4y^2+4z^2,
\]
together with
\[
 \Var(S_u)\le4r_u
 \qquad (u=p,q)
\]
from \eqref{eq:cr-profile-decomposition}, we obtain
\[
 \begin{aligned}
 \E(A-d)^2
 &\le
 2\Var(S_q)+4\Var(S_p)+4\E B^2\\
 &\le
 8r_q+16r_p+16r_p\\
 &=8r_q+32r_p.
 \end{aligned}
\]
Since $\E A=d$,
\begin{equation}\label{eq:cr-increment-fourth}
 \E A^2
 =
 d^2+\E(A-d)^2
 \le
 d^2+8r_q+32r_p.
\end{equation}

We now identify the increment of $L_q$. Since
$m_q=m_p+\Delta_{p,q}$,
\[
 \begin{aligned}
 L_q-L_p
 &=
 \frac{\sigma\cdot\Delta_{p,q}}{N}
 -\frac{|m_q|^2-|m_p|^2}{2N}\\
 &=
 \frac{(\sigma-m_q)\cdot\Delta_{p,q}}{N}
 +\frac{|\Delta_{p,q}|^2}{2N},
 \end{aligned}
\]
which is the asserted decomposition.

Because $\Delta_{p,q}$ is $\mathcal G_q$-measurable and
$\E[\sigma-m_q\mid\mathcal G_q]=0$,
\[
 \E\eta_{p,q}=0
\]
and
\[
 \E\eta_{p,q}^2
 =
 \frac1{N^2}
 \E[\Delta_{p,q}^TC_q\Delta_{p,q}].
\]
Frobenius Cauchy--Schwarz and then scalar Cauchy--Schwarz give
\[
 \E\eta_{p,q}^2
 \le
 \frac1{N^2}
 \bigl(\E\Tr C_q^2\bigr)^{1/2}
 \bigl(\E|\Delta_{p,q}|^4\bigr)^{1/2}.
\]
Since $q\in\mathcal K$, \eqref{eq:cr-contact-inputs} gives
\[
 \E\Tr C_q^2
 \le
 \frac{N^2}{M a(q)^2},
\]
while
\[
 |\Delta_{p,q}|^4=N^2A^2.
\]
Combining these estimates with
\eqref{eq:cr-increment-fourth} yields
\[
 \E\eta_{p,q}^2
 \le
 \frac1{a(q)\sqrt M}
 \sqrt{(q-p)^2+8r_q+32r_p},
\]
as claimed.
\end{proof}

Quantiles let us average over a prescribed amount of $\nu$-mass even
when $\nu$ has atoms. Let
\[
 Q(v)=\inf\{q\in[0,1]:\nu([0,q])\ge v\},
 \qquad 0<v<1-s,
\]
be the generalized quantile function of $\nu$. Then $Q$ pushes
Lebesgue measure on $(0,1-s)$ forward to $\nu$.

Let $I\subset(0,1-s)$ be an interval of length
\[
 x=|I|>0.
\]
The pushforward of Lebesgue measure restricted to $I$ is the measure
\[
 \nu_I(B)
 =
 \bigl|\{v\in I:Q(v)\in B\}\bigr|,
 \qquad B\subset[0,1]\ \text{Borel}.
\]
Since Lebesgue measure restricted to $I$ is dominated by Lebesgue
measure on $(0,1-s)$, we have $\nu_I\le\nu$. Hence
\[
 \nu_I=f_I\nu
\]
for some measurable function $0\le f_I\le1$, with
\[
 \int f_I\,\dd\nu=x.
\]
This description remains valid when $I$ contains only part of the
quantile interval corresponding to an atom of $\nu$; in that case
$f_I$ takes a value strictly between zero and one on that atom.

Suppose now that
\[
 a(Q(v))\ge u
 \qquad\text{for almost every }v\in I.
\]
Then
\[
 \frac1x\int_I L_{Q(v)}\,\dd v
 =
 \frac1x\int L_q f_I(q)\,\nu(\dd q).
\]
Applying \eqref{eq:cr-averaging-input} with
$k=f_I/x$ gives
\begin{equation}\label{eq:cr-quantile-average}
 \begin{aligned}
 \Var\left(\frac1x\int_I L_{Q(v)}\,\dd v\right)
 &\le
 \frac1{Mx^2}\int\frac{f_I(q)^2}{a(q)}\,\nu(\dd q)\\
 &\le
 \frac1{Mux^2}\int f_I(q)\,\nu(\dd q)
 =\frac1{Mux}.
 \end{aligned}
\end{equation}
Here we used $f_I^2\le f_I$ and the fact that
$f_I=0$ for $\nu$-almost every $q$ with $a(q)<u$.

Because $\nu([0,1]\setminus\mathcal K)=0$ and $Q$ pushes Lebesgue
measure on $(0,1-s)$ to $\nu$,
we also have
\[
 Q(v)\in\mathcal K
\]
for Lebesgue-almost every $v\in(0,1-s)$. We may therefore apply the
contact identities at $Q(v)$ throughout the quantile calculations
below, outside a null set.

We next convert the averaged variance bound
\eqref{eq:cr-quantile-average} into a pointwise variance estimate.
For a quantile interval $I$, define the diameter of its image in the
overlap variable by
\[
 \ell(I)
 =
 \operatorname*{ess\,sup}_{v,w\in I}|Q(v)-Q(w)|.
\]
Retaining $\ell(I)$ in the estimate will be important later: the
trivial bound $\ell(I)\le1$ is enough for the comparison in
\zcref{sec:quantitative}, while the sharper control supplied by
\zcref{lem:p2-quantile} yields the refined low-temperature comparison
in \zcref{prop:abs-selfconsistent-comparison}. Before deriving that
quantile estimate, we prove an auxiliary interval bound for the
square-root variance terms that arise from
\zcref{lem:cr-oriented-residual}.

\begin{lemma}
\zlabel{lem:cr-fractional-interval}
Let $J\subset\mathbb R$ be an interval of length $m>0$, and let
$f:J\to[0,\infty)$ be integrable, with
\[
 F=\int_J f(w)\,\dd w.
\]
Let $E\subset J$ be measurable. Suppose that, for almost every
$v\in E$, we are given measurably an interval $I_v\subset J$ of
positive length such that $v\in\overline{I_v}$. Then
\[
 \int_E
 \left(
   \frac1{|I_v|}\int_{I_v}f(w)\,\dd w
 \right)^{1/2}\dd v
 \le 2\sqrt{3mF}.
\]
\end{lemma}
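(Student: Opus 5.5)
The plan is to dominate the averages by the uncentered Hardy--Littlewood maximal function of $f$ on $J$, and then integrate the square root of that bound via the layer-cake formula, using the weak-type $(1,1)$ estimate together with the trivial bound $|E|\le m$.

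\emph{Step 1: comparison with the maximal function.} Put
\[
 g(v)=\frac1{|I_v|}\int_{I_v}f(w)\,\dd w ,
\]
which is measurable in $v$ by the measurable choice of $I_v$. Extend $f$ by zero outside $J$ and define the uncentered maximal function
\[
 \mathcal Mf(v)=\sup\Bigl\{\frac1{|I|}\int_I f:\ I\subset\mathbb R \text{ an interval},\ v\in\overline I,\ |I|>0\Bigr\}.
\]
Since $v\in\overline{I_v}$ and $I_v\subset J$, we have $g(v)\le\mathcal Mf(v)$ for almost every $v\in E$.

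\emph{Step 2: weak-type bound.} For $\lambda>0$, every point of $\{\mathcal Mf>\lambda\}$ lies in the closure of an interval $I$ with $\int_I f>\lambda|I|$. By the Vitali covering lemma for intervals in $\mathbb R$, applied to a compact subset of the open set $\{\mathcal Mf>\lambda\}$, one obtains
\[
 \bigl|\{v: \mathcal Mf(v)>\lambda\}\bigr|\le\frac{3F}{\lambda}.
\]
Boundary points are harmless: each interval may be slightly enlarged, or one can note that endpoints form a null set. In addition, $|\{v\in E:g(v)>\lambda\}|\le|E|\le m$. Combining the two bounds,
\[
 |\{v\in E: g(v)^{1/2}>t\}|\le\min\{m,\,3F/t^2\}.
\]

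\emph{Step 3: layer cake.} By the layer-cake formula,
\[
 \int_E g^{1/2}\,\dd v=\int_0^\infty|\{v\in E:g^{1/2}>t\}|\,\dd t
 \le\int_0^{t_0} m\,\dd t+\int_{t_0}^\infty\frac{3F}{t^2}\,\dd t
 = mt_0+\frac{3F}{t_0}.
\]
Choosing $t_0=\sqrt{3F/m}$ gives exactly $2\sqrt{3mF}$. If $F=0$, then $g=0$ almost everywhere and the claim is trivial.

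The only step requiring care is Step 2: the weak-type constant $3$ for the uncentered maximal function with closed-interval membership. I will handle this with the standard Vitali selection of disjoint intervals from a finite subcover of a compact set, tripling each selected interval. The constant could even be improved to $2$ via the rising sun lemma, but $3$ is all the statement needs.
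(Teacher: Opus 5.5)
Your proposal is correct and follows essentially the same route as the paper: a Vitali covering argument gives the weak bound $3F/z$, and the layer-cake formula split at $\sqrt{3F/m}$ gives $2\sqrt{3mF}$. The covering argument enlarges the intervals slightly so that $v$ becomes interior, takes a finite subcover of a compact subset, and selects intervals greedily before tripling them. The only cosmetic difference is that you pass through the uncentered maximal function of $f$ extended by zero, whereas the paper runs the covering argument directly on the level set of the averages $A(v)$, with the enlarged intervals kept inside $J$.
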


\begin{proof}
Set
\[
 A(v)=\frac1{|I_v|}\int_{I_v}f(w)\,\dd w,
 \qquad v\in E.
\]
We first prove the weak estimate
\begin{equation}\label{eq:cr-fractional-weak}
 \bigl|\{v\in E:A(v)>z\}\bigr|
 \le \frac{3F}{z},
 \qquad z>0.
\end{equation}

Fix $z>0$. The endpoints of $J$ form a null set, so we may restrict
to points $v$ in the interior of $J$. If $A(v)>z$, then, because the
inequality is strict, $I_v$ may be enlarged slightly inside $J$ so
that $v$ lies in its interior and its average of $f$ remains greater
than $z$. These enlarged intervals form an open cover of the level set
$\{A>z\}$.

Let $K$ be a compact subset of this level set and choose a finite
subcover. From this finite collection, select an interval of maximal
length, delete every interval that intersects it, and repeat. The
selected intervals are pairwise disjoint. Moreover, every deleted
interval has length no greater than the selected interval that deleted
it. Since the two intervals intersect, the deleted interval is
contained in the threefold enlargement of the selected one. Hence
\[
 |K|
 \le 3\sum_{\text{selected }I}|I|.
\]
Each selected interval has average greater than $z$, so
\[
 z|I|<\int_I f.
\]
Because the selected intervals are disjoint,
\[
 z\sum_{\text{selected }I}|I|
 \le \sum_{\text{selected }I}\int_I f
 \le F.
\]
Therefore $|K|\le3F/z$. Taking the supremum over compact subsets of
$\{A>z\}$ proves \eqref{eq:cr-fractional-weak} by inner regularity.

We now integrate the level-set estimate. By the layer-cake formula,
\[
 \begin{aligned}
 \int_E\sqrt{A(v)}\,\dd v
 &=\int_0^\infty
   \bigl|\{v\in E:A(v)>y^2\}\bigr|\,\dd y\\
 &\le
 \int_0^\infty
 \min\left\{m,\frac{3F}{y^2}\right\}\dd y.
 \end{aligned}
\]
If $F>0$, splitting the last integral at
\[
 y_0=\sqrt{\frac{3F}{m}}
\]
gives
\[
 my_0+\frac{3F}{y_0}
 =2\sqrt{3mF}.
\]
If $F=0$, then $f=0$ almost everywhere on $J$, so $A(v)=0$ for almost
every $v\in E$ and the conclusion is immediate.
\end{proof}

Combining this interval estimate with the one-sided residual bounds
and \eqref{eq:cr-quantile-average} gives the desired dependence on
the quantile image's diameter.

\begin{lemma}\zlabel{lem:cr-refined-shell}
Let $J\subset(0,1-s)$ be a quantile interval of length
$0<m\le u\le1$, and suppose
\[
 a(Q(v))\ge u
 \qquad\text{for almost every }v\in J.
\]
Let
\[
 \ell
 =
 \operatorname*{ess\,sup}_{v,w\in J}|Q(v)-Q(w)|
 \le1
\]
be the diameter of its image in the overlap variable. Then there is a
universal constant $C<\infty$ such that
\begin{equation}\label{eq:cr-shell-bound}
 \int_J r_{Q(v)}\,\dd v
 \le
 C\left[
   M^{-3/4}u^{-1/2}
   +M^{-2/3}u^{-1/3}\ell^{2/3}
   +\frac1{Mu}
 \right].
\end{equation}
\end{lemma}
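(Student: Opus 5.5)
The plan is to compare each $L_{Q(v)}$, $v\in J$, with averages of $L$ over nearby quantile levels on either side. Averages are controlled by \eqref{eq:cr-quantile-average}. The increments are controlled by \zcref{lem:cr-oriented-residual}. The square-root terms this produces are summed with \zcref{lem:cr-fractional-interval}, and the result is closed by a self-consistent (absorption) argument for $R=\int_J r_{Q(v)}\,\dd v$.

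\emph{Step 1 (one-sided sandwich).} Fix a scale $0<x\le m$, to be chosen at the end. Partition $J$ into consecutive blocks of length $x$. Discard the two extreme blocks and any leftover piece, whose total contribution to $R$ is at most $Cx$ because $r\le1$. For $v$ in an interior block, let $I_v^-$ and $I_v^+$ be the adjacent blocks to the left and right. Set $\bar L^\pm=x^{-1}\int_{I^\pm_v}L_{Q(w)}\,\dd w$. Because the quadratic term in \zcref{lem:cr-oriented-residual} is nonnegative, for $q=Q(v)$ we get
\[
\bar L^- + x^{-1}\!\int_{I_v^-}\eta_{Q(w),q}\,\dd w\;\le\; L_q\;\le\;\bar L^+ - x^{-1}\!\int_{I_v^+}\eta_{q,Q(w)}\,\dd w .
\]
Center everything. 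The contact identity $\E L_p=p/2$ on $\mathcal K$ means the centering costs only a deterministic shift bounded by half the diameter of the image of $I_v^-\cup I_v\cup I_v^+$. Call this diameter $\ell_k$ for the block $k$ containing $v$. Since $|X|\le\max(|Y_-|,|Y_+|)$ whenever $Y_-\le X\le Y_+$, this gives
\[
r_q\le C\Bigl[\Var\bar L^-+\Var\bar L^++\ell_k^2+x^{-1}\!\int_{I_v^\pm}\E\eta^2\Bigr].
\]

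\emph{Step 2 (estimating each term).} Since $a\ge u$ on $J$, \eqref{eq:cr-quantile-average} gives $\Var\bar L^\pm\le (Mux)^{-1}$. Integrated over $J$, this contributes at most $m/(Mux)\le 1/(Mx)$, using $m\le u$.

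For the shifts, the image diameters of disjoint blocks satisfy $\sum_k\ell_k\le 3\ell$ by monotonicity of $Q$. Hence $\int\ell_{k(v)}^2\,\dd v\le x\sum_k\ell_k^2\le 3x\ell\max_k\ell_k\le 3x\ell^2$, which is at most $3x\ell$.

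For the $\eta$ terms, \eqref{eq:cr-residual-refinement} gives $\E\eta^2\le (u\sqrt M)^{-1}\bigl(\ell_k+\sqrt{8r_q+32r_p}\bigr)$. The $\ell_k$ part integrates to at most $m\ell/(u\sqrt M)\cdot C$, and more carefully to at most $Cx\ell/(u\sqrt M)$ after summing block diameters. The remaining part is an average of $\sqrt{r}$ over an interval containing $v$ in its closure. By Jensen and \zcref{lem:cr-fractional-interval}, with $f=r\circ Q$, its integral over $J$ is at most $C\sqrt{mR}/(u\sqrt M)$. The $\sqrt{r_q}$ piece is handled directly by Cauchy--Schwarz.

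\emph{Step 3 (closing).} Collecting the terms,
\[
R\le C\Bigl[x+\frac1{Mx}+x\ell+\frac{x\ell}{u\sqrt M}+\frac{\sqrt{mR}}{u\sqrt M}\Bigr].
\]
Young's inequality absorbs the last term, at the cost of $Cm/(Mu^2)\le C/(Mu)$. This is the $1/(Mu)$ term in \eqref{eq:cr-shell-bound}. Optimizing $x$ against $1/(Mx)$, subject to $x\le m\le u$, should produce $M^{-3/4}u^{-1/2}$ and $M^{-2/3}u^{-1/3}\ell^{2/3}$.

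\emph{Main obstacle.} I expect the hardest part to be choosing the comparison window so that the deterministic mean shifts (diameter terms) and the $\eta$ terms balance against the averaging gain $(Mux)^{-1}$ with exactly these exponents. The crude bookkeeping above gives $M^{-1/2}$-type rates rather than $M^{-3/4}u^{-1/2}$.

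I would first try taking the window $x$ in the quantile variable and treating the $\eta$ contribution more sharply. Rather than bounding $\E\eta^2$ termwise, I would apply \eqref{eq:cr-averaging-input} to the averaged increment itself, which is again a weighted average of $L$'s. The residual square root would then be taken only once, giving the $\sqrt{\ell/(u\sqrt M)}$-type scaling. If necessary, I would then re-optimize $x$ separately in the regimes where $\ell^2$ is larger or smaller than $(u\sqrt M)^{-1}$. Those two regimes are the ones that produce the two fractional terms.
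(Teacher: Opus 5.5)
Your proposal has a genuine gap: with fixed-length windows, the estimate you set up cannot reach the exponents in \eqref{eq:cr-shell-bound}, and the fix you sketch does not work.

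\textbf{Why fixed windows fail.} Your skeleton matches the paper's: a one-sided sandwich from the nonnegative quadratic term in \zcref{lem:cr-oriented-residual}, centering by $\E L_p=p/2$, averages controlled by \eqref{eq:cr-quantile-average}, and absorption of the $\sqrt{\mathcal R}$ terms. The failure is in how you choose the comparison windows. With quantile blocks of fixed length $x$, you pay $2x$ for the discarded end blocks, and you pay $\varepsilon/x$ at every point, where $\varepsilon=(Mu)^{-1}$. So Step~3 can never give better than $\min_x(x+m\varepsilon/x)\asymp\sqrt{m\varepsilon}$, which equals $M^{-1/2}$ when $m=u$, whatever the value of $\ell$. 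Compare this with the lemma:
\begin{itemize}
\item for $u=m=1$ the lemma gives order $M^{-2/3}$;
\item at $\ell=0$ one even has $\mathcal R\le\min\{m,\varepsilon\}$, by averaging over all of $J$.
\end{itemize}
So this is a lost power, not loose bookkeeping. The weaker bound is also not enough downstream. The dyadic sums in \zcref{lem:moving-cutoff} would pick up a logarithm, and the $s$-dependent bound and the $\ell^{2/3}$ gain used later would be lost.

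\textbf{Why your suggested remedy fails.} You propose applying \eqref{eq:cr-averaging-input} to the averaged increment. But $\bar\eta_\pm$ contains the point value $L_{Q(v)}$ and the quadratic term $|\Delta|^2/(2N)$. Neither is of the form $\int kL\,\dd\nu$, so that inequality does not control it.

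\textbf{Minor point.} Your neighbour blocks $I_v^\pm$ do not contain $v$ in their closure, so \zcref{lem:cr-fractional-interval} does not apply verbatim. Cauchy--Schwarz over blocks suffices there.

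\textbf{The missing idea.} The windows must adapt to the overlap geometry, not to quantile length.
\begin{itemize}
\item Partition the image $Q(J)$ into $K$ overlap bins of width $h=\ell/K$. Since $Q$ is monotone, each bin pulls back to an interval $(b_-,b_+)\subset J$.
\item For $v$ in that interval, average over the whole remaining pieces $I_-=(b_-,v)$ and $I_+=(v,b_+)$.
\item The mean shift is then at most $h/2$, and the averaging cost is $\varepsilon(x_-^{-1}+x_+^{-1})$, with $x_\pm$ the distances to the ends of the interval.
\item This cost is large only in thin strips next to the at most $2K$ endpoints, and nothing is discarded.
\end{itemize}
Then choose $K$ pointwise and integrate by the layer-cake formula. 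At level $z$ take $K_z\approx1+\ell z^{-1/2}+\alpha\ell z^{-1}$, with $\alpha=(u\sqrt M)^{-1}$. The set where the deterministic envelope exceeds $z$ has measure at most $\min\{m,\,2K_z\cdot48\varepsilon/z\}$. Integrating in $z$ gives
\[
m^{1/4}\varepsilon^{3/4}+(\varepsilon\ell)^{2/3}m^{1/3}+(\varepsilon\alpha\ell)^{1/2}m^{1/2},
\]
and absorption gives $m\alpha^2$. Together these are exactly the three terms of \eqref{eq:cr-shell-bound}. Because these windows have variable length, the maximal-type \zcref{lem:cr-fractional-interval} is genuinely needed to sum the $\sqrt{R_\pm}$ terms.
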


\begin{proof}
Write
\[
 r(v)=r_{Q(v)},\qquad
 \varepsilon=\frac1{Mu},\qquad
 \alpha=\frac1{u\sqrt M},\qquad
 \mathcal R=\int_Jr(v)\,\dd v.
\]
The parameter $\varepsilon$ is the variance scale for averages over
quantile intervals, by \eqref{eq:cr-quantile-average}, while
$\alpha$ is the coefficient in the residual estimate
\eqref{eq:cr-residual-refinement} when $a\ge u$.

If $\ell=0$, then $Q$ is constant almost everywhere on $J$, so
$L_{Q(v)}$ is the same random variable for almost every $v\in J$.
Applying \eqref{eq:cr-quantile-average} to the whole interval gives
\[
 \mathcal R\le\varepsilon.
\]
We also have $\mathcal R\le m$ because $r\le1$. Hence
\[
 \mathcal R
 \le\min\{m,\varepsilon\}
 \le m^{1/4}\varepsilon^{3/4},
\]
which proves both \eqref{eq:cr-shell-bound} and the finer estimate
\eqref{eq:cr-fine-shell} below. We therefore assume $\ell>0$.

\medskip
\emph{Step 1: Compare a point with averages on either side.}
Fix an integer $K\ge1$. Partition the smallest interval containing
$Q(J)$ up to null sets into $K$ overlap bins of width
\[
 h=\ell/K.
\]
Use half-open bins, closing only the final bin at its right endpoint,
so that an atom lying on a bin boundary is assigned to a unique bin.
Since $Q$ is nondecreasing, the inverse image in $J$ of every
positive-mass bin is an interval.

Fix a point $v$ in the interior of one such inverse-image interval
$(b_-,b_+)$. Set
\[
 x_-=v-b_-,
 \qquad
 x_+=b_+-v,
\]
and
\[
 I_-=(b_-,v),
 \qquad
 I_+=(v,b_+).
\]
The endpoints arising from all integer values of $K$ form a null set,
which we exclude throughout.

Let
\[
 q=Q(v),
 \qquad
 Z_v=L_q-\frac q2.
\]
For the averages to the left and right of $v$, define
\[
 \overline Z_\pm
 =
 \frac1{x_\pm}\int_{I_\pm}
 \left(L_{Q(w)}-\frac{Q(w)}2\right)\dd w.
\]
We also average the centered residuals from
\zcref{lem:cr-oriented-residual}:
\[
 \overline\eta_-
 =
 \frac1{x_-}\int_{I_-}\eta_{Q(w),q}\,\dd w,
 \qquad
 \overline\eta_+
 =
 \frac1{x_+}\int_{I_+}\eta_{q,Q(w)}\,\dd w.
\]

For $w\in I_-$, the nonnegative quadratic term in
\zcref{lem:cr-oriented-residual} gives a lower bound on $L_q-L_{Q(w)}$;
for $w\in I_+$ it gives the corresponding upper bound.
Since all overlap values in one bin differ by at most $h$, averaging
these inequalities gives
\[
 Z_v
 \ge
 \overline Z_-+\overline\eta_- -\frac h2,
 \qquad
 Z_v
 \le
 \overline Z_+-\overline\eta_+ +\frac h2.
\]
At contact points, $\E Z_v=0$. Hence the negative and positive parts
of $Z_v$ satisfy
\[
 (Z_v)_-
 \le
 |\overline Z_-|+|\overline\eta_-|+\frac h2,
 \qquad
 (Z_v)_+
 \le
 |\overline Z_+|+|\overline\eta_+|+\frac h2.
\]
Squaring, taking expectations, and adding the two inequalities yields
\begin{equation}
 r(v)
 \le
 3\varepsilon(x_-^{-1}+x_+^{-1})
 +\frac32h^2
 +3\E\overline\eta_-^2
 +3\E\overline\eta_+^2.
 \label{eq:cr-bin-raw}
\end{equation}
Indeed, \eqref{eq:cr-quantile-average} gives
\[
 \E\overline Z_\pm^2
 \le\frac{\varepsilon}{x_\pm}.
\]

We next estimate the residual terms. Since $a(Q(w))\ge u$ on $J$,
\zcref{lem:cr-oriented-residual} and Jensen's inequality give
\[
 \begin{aligned}
 \E\overline\eta_-^2
 &\le
 \alpha\left[
   h+\sqrt8\,\sqrt{r(v)}
      +\sqrt{32}\,\sqrt{R_-(v)}
 \right],\\
 \E\overline\eta_+^2
 &\le
 \alpha\left[
   h+\sqrt8\,\sqrt{R_+(v)}
      +\sqrt{32}\,\sqrt{r(v)}
 \right],
 \end{aligned}
\]
where
\[
 R_\pm(v)
 =
 \frac1{x_\pm}\int_{I_\pm}r(w)\,\dd w.
\]
For example, on $I_-$ we use
\[
 \sqrt{h^2+8r(v)+32r(w)}
 \le
 h+\sqrt8\,\sqrt{r(v)}+\sqrt{32}\,\sqrt{r(w)},
\]
and then
\[
 \frac1{x_-}\int_{I_-}\sqrt{r(w)}\,\dd w
 \le\sqrt{R_-(v)}.
\]

Substituting these estimates into \eqref{eq:cr-bin-raw} gives
\begin{align}
 r(v)
 &\le
 d_K(v)
 +\alpha\left[
   18\sqrt2\,\sqrt{r(v)}
   +12\sqrt2\,\sqrt{R_-(v)}
   +6\sqrt2\,\sqrt{R_+(v)}
 \right],
 \label{eq:cr-scale-pairing}\\
 d_K(v)
 &=
 3\varepsilon(x_-^{-1}+x_+^{-1})
 +\frac32(\ell/K)^2
 +6\alpha\ell/K.
 \label{eq:cr-scale-envelope}
\end{align}
The three terms in $d_K$ have different origins: the first is the
variance cost of averaging over intervals of masses $x_\pm$, the
second is the change of the deterministic mean $q/2$ across one
overlap bin, and the third comes from the $h$ term in the residual
estimate.

\medskip
\emph{Step 2: Choose the bin scale pointwise and integrate the residuals.}
For every $v$ outside the common null set, define
\[
 d(v)=\inf_{K\ge1}d_K(v),
 \qquad
 e(v)=\min\{1,d(v)\}.
\]
Since $x_-+x_+\le m$,
\[
 x_-^{-1}+x_+^{-1}
 \ge\frac4{x_-+x_+}
 \ge\frac4m,
\]
and therefore
\[
 d_K(v)\ge\frac{12\varepsilon}{m}>0.
\]
Also $d_1(v)<\infty$ almost everywhere.

On the set
\[
 E=\{v\in J:d(v)<1\},
\]
choose the smallest integer $K(v)$ satisfying
\[
 d_{K(v)}(v)\le2d(v).
\]
This choice is measurable and depends only on the deterministic
quantile geometry. Apply \eqref{eq:cr-scale-pairing} using the two
intervals determined by this selected scale. On $J\setminus E$, use
only $r(v)\le1=e(v)$. Integrating gives
\[
 \begin{aligned}
 \mathcal R
 \le
 2\int_Je(v)\,\dd v
 +\alpha\biggl[
   18\sqrt2\int_J\sqrt{r(v)}\,\dd v
   +12\sqrt2\int_E\sqrt{R_-(v)}\,\dd v
   +6\sqrt2\int_E\sqrt{R_+(v)}\,\dd v
 \biggr].
 \end{aligned}
\]

Cauchy--Schwarz gives
\[
 \int_J\sqrt r\le\sqrt{m\mathcal R}.
\]
For each $v\in E$, the selected intervals $I_\pm(v)$ contain $v$ in
their closure. Applying \zcref{lem:cr-fractional-interval} with
$f=r$ therefore gives
\[
 \int_E\sqrt{R_\pm(v)}\,\dd v
 \le2\sqrt{3m\mathcal R}.
\]
Thus, with
\[
 B=18\sqrt2(1+2\sqrt3),
\]
we obtain
\[
 \mathcal R
 \le
 2\int_Je(v)\,\dd v
 +B\alpha\sqrt{m\mathcal R}.
\]
Using
\[
 B\alpha\sqrt{m\mathcal R}
 \le
 \frac{\mathcal R}{2}
 +\frac{B^2m\alpha^2}{2}
\]
and moving $\mathcal R/2$ to the left gives
\begin{equation}
 \mathcal R
 \le
 4\int_Je(v)\,\dd v+B^2m\alpha^2.
 \label{eq:cr-direct-l1}
\end{equation}
It remains to estimate the deterministic integral of $e$.

\medskip
\emph{Step 3: Bound the deterministic envelope.}
Fix $0<z<1$. We choose a single bin count $K_z$ for which both
deterministic bin-width terms in \eqref{eq:cr-scale-envelope} are at
most $z/8$. We also exclude short strips near the two endpoints of
each inverse-image interval so that the averaging term is at most
$z/8$.

These requirements are satisfied by
\[
 K_z
 =
 \left\lceil
 \max\left\{
   1,\sqrt{12}\,\ell z^{-1/2},
   48\alpha\ell z^{-1}
 \right\}
 \right\rceil,
 \qquad
 b_z=\frac{48\varepsilon}{z}.
\]
Indeed, away from endpoint strips of width $b_z$ we have
$x_-,x_+\ge b_z$, and hence
\[
 3\varepsilon(x_-^{-1}+x_+^{-1})\le\frac z8.
\]
The choices in $K_z$ similarly give
\[
 \frac32(\ell/K_z)^2\le\frac z8,
 \qquad
 6\alpha\ell/K_z\le\frac z8.
\]
Thus
\[
 d_{K_z}(v)\le\frac{3z}{8}<z
\]
outside the endpoint strips. Consequently $\{e>z\}$ is contained in
their union.

There are at most $K_z$ nonempty inverse-image intervals, and each
contributes at most two endpoint strips of length $b_z$. Therefore
\[
 |\{e>z\}|
 \le
 \min\left\{m,2K_zb_z\right\}.
\]
Since
\[
 K_z
 \le
 1+\sqrt{12}\,\ell z^{-1/2}
   +48\alpha\ell z^{-1},
\]
we obtain
\begin{equation}
 |\{e>z\}|
 \le
 \min\left\{
   m,\,
   z_0z^{-1}+z_1z^{-3/2}+z_2z^{-2}
 \right\},
 \label{eq:cr-envelope-tail}
\end{equation}
where
\[
 z_0=96\varepsilon,\qquad
 z_1=96\sqrt{12}\,\varepsilon\ell,\qquad
 z_2=4608\varepsilon\alpha\ell.
\]

We now integrate this level-set estimate. Since $0\le e\le1$,
\[
 \int_Je(v)\,\dd v
 =
 \int_0^1|\{e>z\}|\,\dd z.
\]
Use
\[
 \min\{m,A+B+D\}
 \le
 \min\{m,A\}+\min\{m,B\}+\min\{m,D\}.
\]
For the first contribution,
\[
 \min\{m,z_0z^{-1}\}
 \le
 m^{1/4}z_0^{3/4}z^{-3/4},
\]
whose integral over $(0,1)$ is at most
$4m^{1/4}z_0^{3/4}$. For $k>1$,
\[
 \int_0^\infty\min\{m,bz^{-k}\}\,\dd z
 =
 \frac{k}{k-1}b^{1/k}m^{1-1/k}.
\]
Applying this with $k=3/2$ and $k=2$ to the other two terms gives
\[
 \int_Je(v)\,\dd v
 \le
 C\left[
   m^{1/4}\varepsilon^{3/4}
   +(\varepsilon\ell)^{2/3}m^{1/3}
   +(\varepsilon\alpha\ell)^{1/2}m^{1/2}
 \right].
\]
Combining this with \eqref{eq:cr-direct-l1} proves the finer estimate
\begin{equation}
 \int_Jr(v)\,\dd v
 \le
 C\left[
   m^{1/4}\varepsilon^{3/4}
   +(\varepsilon\ell)^{2/3}m^{1/3}
   +(\varepsilon\alpha\ell)^{1/2}m^{1/2}
   +m\alpha^2
 \right].
 \label{eq:cr-fine-shell}
\end{equation}

Finally substitute
\[
 \varepsilon=(Mu)^{-1},
 \qquad
 \alpha=(u\sqrt M)^{-1},
\]
and use $m\le u$ and $\ell\le1$. The four terms on the right of
\eqref{eq:cr-fine-shell} are bounded respectively by
\[
 M^{-3/4}u^{-1/2},\qquad
 M^{-2/3}u^{-1/3}\ell^{2/3},\qquad
 M^{-3/4}u^{-1/2},\qquad
 (Mu)^{-1}.
\]
This proves \eqref{eq:cr-shell-bound}. 
\end{proof}

\section{Quantitative Parisi formulas}\zlabel{sec:quantitative}
In this section we turn the interpolation estimates from the preceding
sections into quantitative finite-size Parisi formulas. At positive
temperature, the derivative of the optimized interpolation is controlled
by the overlap defect. We combine the refined quantile-interval estimate  with a cutoff
adapted to \(N\beta^2(1-t)\), and then integrate over the interpolation
parameter to prove \zcref{thm:uniform-parisi}. The resulting uniformity in
\(s\) also permits the zero-temperature scaling \(s=\lambda/\beta\), which
yields the ground-state comparison in \zcref{thm:ground-state}.

\subsection{A cutoff adapted to the interpolation parameter}
\zlabel{subsec:moving-cutoff}

The bound from \zcref{lem:cr-refined-shell} deteriorates when the
cumulative mass $a(q)$ is small. We therefore discard a short initial
quantile interval and apply the quantile-interval estimate only after the
cumulative mass has reached a threshold $u_*$. The threshold is chosen
as a function of
\[
 M=N\beta^2(1-t),
\]
so that the mass discarded near zero and the largest remaining shell
error have the same order. This choice gives a bound of order
$M^{-1/2}$ without a logarithmic loss when the interpolation parameter
$t$ is later integrated.

\begin{lemma}\zlabel{lem:moving-cutoff}
Fix $N\ge1$, $\beta>0$, $0<s\le1/2$, and $0\le t<1$, and set
\[
 c=\beta^2(1-t),\qquad M=Nc.
\]
For every deterministic minimizer $\mu=s\delta_0+\nu$ of
$\mathcal F_{t,s}$,
\begin{equation}\label{eq:moving-defect}
 D_{t,s}(\mu)
 \le C\min\{1,M^{-1/2}\}
 =C\min\{1,(Nc)^{-1/2}\},
\end{equation}
where $C>0$ is a numerical constant.
\end{lemma}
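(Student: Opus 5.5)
The plan is to combine the decomposition \eqref{eq:fn-defect}, namely $D\le A+4\mathcal W$, with the trace bound \eqref{eq:fn-trace-error}. Then $\mathcal W=\int r_q\,\nu(\dd q)=\int_0^{1-s} r_{Q(v)}\,\dd v$ will be estimated by splitting the quantile variable at a cutoff and applying \zcref{lem:cr-refined-shell} on dyadic shells.

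\emph{Trivial regime.} If $M\le1$, we simply use $D_{t,s}(\mu)\le4(1-s)\le4$ from \eqref{eq:fn-fixed-interpolation}. So assume $M>1$ and set $u_0=M^{-1/2}<1$.

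\emph{Trace term.} The bound $A\le(Ms)^{-1}$ is not good enough when $s$ is small, so we redo the quantile computation with a cutoff. By \zcref{prop:fn-integrated-variance} and $a(Q(v))\ge s+v\ge v$, the integrand $N^{-2}\E\Tr C_{Q(v)}^2$ is at most $\min\{1,(Mv^2)^{-1}\}$. Integrating over $v$ gives at most $u_0+\int_{u_0}^\infty M^{-1}v^{-2}\,\dd v=2M^{-1/2}$.

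\emph{Residual term.} On the initial interval $(0,\min\{u_0,1-s\})$ we use only $r\le1$, which contributes at most $u_0$. The remainder of $(0,1-s)$ is covered by the dyadic intervals $J_j=[2^ju_0,2^{j+1}u_0)\cap(0,1-s)$ for $j\ge0$ with $2^ju_0<1$. Each $J_j$ has length $m\le 2^ju_0=:u_j\le1$, and $a(Q(v))\ge s+v\ge u_j$ on it. \zcref{lem:cr-refined-shell} with the trivial diameter bound $\ell\le1$ therefore gives
\[
 \int_{J_j}r_{Q(v)}\,\dd v\le C\bigl[M^{-3/4}u_j^{-1/2}+M^{-2/3}u_j^{-1/3}+(Mu_j)^{-1}\bigr].
\]
At $j=0$ each of the three terms equals $M^{-1/2}$. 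Each term carries a negative power of $u_j=2^ju_0$, so the bounds decay geometrically in $j$, with ratios $2^{-1/2}$, $2^{-1/3}$ and $2^{-1}$. Summing over $j$ gives $\mathcal W\le u_0+CM^{-1/2}\le CM^{-1/2}$.

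One hypothesis of \zcref{lem:cr-refined-shell} must be checked: $m\le u$ with $u\le1$. If the last interval is truncated at $1-s$, then $m$ only decreases, so the condition still holds. The condition $u_j\le1$ holds because we only take $j$ with $2^ju_0<1$.

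\emph{Conclusion.} Combining the two terms gives $D\le CM^{-1/2}$ for $M>1$, and $D\le4$ always. Together these yield \eqref{eq:moving-defect}. The only real subtlety is the cutoff near $v=0$, where $a$ may be as small as $s$. The step I expect to be the main obstacle is checking that no logarithmic factor appears. This is exactly why the cutoff is placed at $u_0=M^{-1/2}$, where all three shell terms balance: with that choice the dyadic sum is geometric and starts at order $M^{-1/2}$.
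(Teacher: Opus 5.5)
Your proposal is correct and follows the paper's proof essentially step for step. Both arguments use the trivial bound when $M\le1$ and the cutoff quantile integral giving $A\le 2M^{-1/2}$. Both then discard an initial quantile interval of length at most $M^{-1/2}$ and apply \zcref{lem:cr-refined-shell} with $\ell\le1$ on dyadic shells, whose three terms all equal $M^{-1/2}$ at the first level. The only difference is cosmetic: you index the shells by $v$ and start at $u_0=M^{-1/2}$, whereas the paper indexes them by the cumulative level $s+v$ and starts at $\max\{s,M^{-1/2}\}$. Dropping the extra mass $s$ costs nothing for this bound.
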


\begin{proof}
Write, as in \eqref{eq:fn-defect},
\[
 D_{t,s}(\mu)\le A+4\mathcal W,
\]
where
\[
 A=\frac1{N^2}\int\E\Tr C_q^2\,\nu(\dd q),
 \qquad
 \mathcal W=\int r_q\,\nu(\dd q),
 \qquad
 r_q=\Var(L_q).
\]
The term $A$ measures the conditional-covariance contribution to the
overlap error, while $\mathcal W$ is the integrated pointwise variance
estimated by the quantile argument.

If $M\le1$, then the elementary bound
$D_{t,s}(\mu)\le4(1-s)\le4$ proves the result. We therefore assume
$M>1$.

We first bound $A$. Let
\[
 Q(v)=\inf\{q:\nu([0,q])\ge v\},
 \qquad 0<v<1-s,
\]
be the quantile function of $\nu$. On the support of $\nu$,
\zcref{prop:fn-integrated-variance} and the trivial bound
$\Tr C_q^2\le N^2$ give
\[
 \frac1{N^2}\E\Tr C_q^2
 \le
 \min\left\{1,\frac1{Ma(q)^2}\right\}.
\]
Since
\[
 a(Q(v))\ge s+v,
\]
we obtain
\[
 \begin{aligned}
 A
 &\le
 \int_0^{1-s}
 \min\left\{1,\frac1{M(s+v)^2}\right\}\dd v\\
 &\le
 \int_0^1
 \min\left\{1,\frac1{Mu^2}\right\}\dd u\\
 &=2M^{-1/2}-M^{-1}
 \le2M^{-1/2}.
 \end{aligned}
\]

It remains to bound $\mathcal W$. If we discard all quantile mass
below cumulative level $u$, the discarded contribution is at most
$u$, because $r_q\le1$. On the remaining mass, the last term in
\eqref{eq:cr-shell-bound} is of order $(Mu)^{-1}$. We therefore choose
$u$ so that
\[
 u\asymp(Mu)^{-1},
\]
namely
\[
 u_*=M^{-1/2}.
\]
Because the cumulative mass is already at least $s$, set
\[
 u_0=\max\{s,u_*\}.
\]
Since $M>1$ and $s\le1/2$, we have $u_0<1$.

The initial quantile interval
\[
 \{v: s+v<u_0\}
\]
has length
\[
 (u_0-s)_+\le u_*=M^{-1/2}.
\]
Using only $r_q\le1$, its contribution to $\mathcal W$ is therefore at
most $M^{-1/2}$.

We partition the remaining quantile mass into dyadic levels. For
integers $j\ge0$ with
\[
 u_j=2^ju_0<1,
\]
set
\[
 J_j
 =
 \left\{
 v\in(0,1-s):
 u_j\le s+v<\min\{2u_j,1\}
 \right\}.
\]
Every nonempty $J_j$ has length at most $u_j$, and
\[
 a(Q(v))\ge u_j
 \qquad (v\in J_j).
\]
Apply \zcref{lem:cr-refined-shell} to $J_j$, using only the trivial
diameter bound $\ell\le1$. This gives
\[
 \int_{J_j}r_{Q(v)}\,\dd v
 \le
 C\left[
 M^{-3/4}u_j^{-1/2}
 +M^{-2/3}u_j^{-1/3}
 +(Mu_j)^{-1}
 \right].
\]
Each term decreases geometrically as $j$ increases. Summing over $j$
therefore gives
\[
 \mathcal W
 \le
 M^{-1/2}
 +C\left[
 M^{-3/4}u_0^{-1/2}
 +M^{-2/3}u_0^{-1/3}
 +(Mu_0)^{-1}
 \right].
\]
Because $u_0\ge M^{-1/2}$,
\[
 \begin{aligned}
 M^{-3/4}u_0^{-1/2}&\le M^{-1/2},\\
 M^{-2/3}u_0^{-1/3}&\le M^{-1/2},\\
 (Mu_0)^{-1}&\le M^{-1/2}.
 \end{aligned}
\]
Hence
\[
 \mathcal W\le C M^{-1/2}.
\]

Combining
\[
 A\le2M^{-1/2},
 \qquad
 \mathcal W\le CM^{-1/2},
\]
with $D_{t,s}(\mu)\le A+4\mathcal W$ proves
\eqref{eq:moving-defect}.
\end{proof}

\begin{proof}[Proof of \zcref{thm:uniform-parisi}]
Fix $0<s\le1/2$. We first bound the overlap defect at a fixed
interpolation parameter. Write
\[
 c=\beta^2(1-t)>0.
\]
At every differentiability point of the optimized value $f$, define
\[
 D(c)=-\frac{4}{\beta^2}f'(t),
 \qquad t=1-\frac{c}{\beta^2}.
\]
By \zcref{prop:fn-envelope}, this equals $D_{t,s}(\mu_t)$ for every
minimizer $\mu_t$ at that interpolation parameter.

We first retain the dependence on the fractional-moment parameter
$s$. Let $Q$ be the quantile function of the remaining measure
$\nu$, and divide the quantile variable into the dyadic intervals
\[
 J_j
 =
 \left\{
 v:2^js\le s+v<\min\{2^{j+1}s,1\}
 \right\},
 \qquad j\ge0.
\]
Each nonempty $J_j$ has length at most $2^js$ and satisfies
$a(Q(v))\ge2^js$. Applying
\zcref{lem:cr-refined-shell} with the trivial diameter bound
$\ell\le1$, summing the resulting geometric series, and adding the
trace estimate \eqref{eq:fn-trace-error} through
\eqref{eq:fn-defect} gives
\[
 D(c)
 \le
 C\left[
   (Nc)^{-3/4}s^{-1/2}
   +(Nc)^{-2/3}s^{-1/3}
   +(Ncs)^{-1}
 \right].
\]

We compare these three terms according to the size of $Ncs^2$.
If
\[
 Ncs^2\ge1,
\]
then, relative to the middle term,
\[
 \frac{(Nc)^{-3/4}s^{-1/2}}
      {(Nc)^{-2/3}s^{-1/3}}
 =(Ncs^2)^{-1/12}\le1,
\]
and
\[
 \frac{(Ncs)^{-1}}
      {(Nc)^{-2/3}s^{-1/3}}
 =(Ncs^2)^{-1/3}\le1.
\]
Hence
\[
 D(c)\le C(Nc)^{-2/3}s^{-1/3}.
\]

If instead
\[
 Ncs^2<1,
\]
we use \zcref{lem:moving-cutoff}, which gives
\[
 D(c)\le C\min\{1,(Nc)^{-1/2}\}.
\]
Moreover,
\[
 \frac{(Nc)^{-1/2}}
      {(Nc)^{-2/3}s^{-1/3}}
 =(Ncs^2)^{1/6}<1.
\]
Combining the two cases with the trivial bound $D(c)\le C$ yields
\begin{equation}\label{eq:full-defect}
 D(c)
 \le
 C\min\left\{
   1,\,
   (Nc)^{-1/2},\,
   (Nc)^{-2/3}s^{-1/3}
 \right\}.
\end{equation}

We now integrate this estimate over the interpolation. By
\eqref{eq:fn-endpoints} and the change of variables
$c=\beta^2(1-t)$,
\[
 0\le
 P_\beta(s)-\phi^c_{N,\beta}(s)
 =
 \frac14\int_0^{\beta^2}D(c)\,\dd c.
\]

The three terms in \eqref{eq:full-defect} give three separate bounds:
\[
 \int_0^{\beta^2}1\,\dd c=\beta^2,
\]
\[
 \int_0^{\beta^2}(Nc)^{-1/2}\,\dd c
 =
 2\beta N^{-1/2},
\]
and
\[
 \int_0^{\beta^2}
 (Nc)^{-2/3}s^{-1/3}\,\dd c
 =
 3\beta^{2/3}N^{-2/3}s^{-1/3}.
\]
Therefore
\[
 0\le
 P_\beta(s)-\phi^c_{N,\beta}(s)
 \le
 C\min\left\{
   \beta^2,\,
   \beta N^{-1/2},\,
   \beta^{2/3}N^{-2/3}s^{-1/3}
 \right\}.
\]
This argument also covers the regime $\beta^2<1/N$; in that case the
first term provides the useful bound.

It remains to pass to $s=0$. At fixed $N$ and $\beta$, Gaussian
domination gives
\[
 \phi^c_{N,\beta}(s)\longrightarrow p_{N,\beta}
 \qquad\text{as }s\downarrow0.
\]
We also have
\[
 P_\beta(s)\longrightarrow P_\beta.
\]
Indeed, the constraint defining $P_\beta(s)$ only shrinks the
admissible class, so
\[
 P_\beta\le P_\beta(s).
\]
If $\mu$ is an unconstrained minimizer and
\[
 \mu_s=s\delta_0+(1-s)\mu,
\]
then $\mu_s$ is admissible for $P_\beta(s)$ and its cumulative
function converges in $L^1$ to that of $\mu$. By
\eqref{eq:fn-coefficient-continuity}, including the deterministic
penalty,
\[
 P_\beta(s)
 \le \mathcal P_\beta(\mu_s)
 \longrightarrow
 \mathcal P_\beta(\mu)=P_\beta.
\]
Thus $P_\beta(s)\to P_\beta$.

Letting $s\downarrow0$ in the positive-$s$ comparison therefore gives
\[
 0\le P_\beta-p_{N,\beta}
 \le
 C\min\{\beta^2,\beta N^{-1/2}\},
\]
which is the asserted expected-pressure bound.
\end{proof}

\subsection{Zero temperature}\zlabel{subsec:sec-zero-temperature}

We first define the zero-temperature scalar PDE value for every
$\gamma\in\mathcal M_\lambda$, including profiles that are integrable
but unbounded. We use the stochastic-control formulation of the Parisi
PDE \cite{AuffingerChen2015Uniqueness,Chen2017Variational}. The control
representation is convenient here because it requires only
integrability of $\gamma$ and remains meaningful for the nonsmooth
terminal function $|x|$.

Let $B$ be standard Brownian motion. For $0\le q\le1$ and
$x\in\mathbb R$, define
\begin{equation}\label{eq:zero-control}
 \Psi_\gamma(q,x)
 =
 \sup_{b}
 \E\left[
 \left|
 x+B_1-B_q+\int_q^1\gamma(r)b_r\,\dd r
 \right|
 -\frac12\int_q^1\gamma(r)b_r^2\,\dd r
 \right],
\end{equation}
where the supremum is over processes $b$ satisfying
$|b_r|\le1$ almost everywhere and predictable with respect to the
augmented natural filtration of $(B_r-B_q)_{q\le r\le1}$.
The value is finite because
\[
 \left|\int_q^1\gamma(r)b_r\,\dd r\right|
 \le\int_q^1\gamma(r)\,\dd r<\infty.
\]

For bounded step profiles, \eqref{eq:zero-control} agrees with the
usual Cole--Hopf solution of the zero-temperature Parisi PDE. To see
why, first replace the terminal function $|x|$ by a smooth
$1$-Lipschitz function $f$. Let $\Psi_{\gamma,f}$ denote the
corresponding PDE solution. The maximum principle gives
\[
 |\partial_x\Psi_{\gamma,f}|\le1.
\]
For any admissible control $b$, let
\[
 \dd Y_r=\dd B_r+\gamma(r)b_r\,\dd r,
 \qquad Y_q=x.
\]
It\^o's formula gives
\[
 \begin{aligned}
 &\E\left[
 f(Y_1)-\frac12\int_q^1\gamma(r)b_r^2\,\dd r
 \right]
 -\Psi_{\gamma,f}(q,x)\\
 &\qquad
 =-\frac12
 \E\int_q^1
 \gamma(r)
 \bigl(b_r-\partial_x\Psi_{\gamma,f}(r,Y_r)\bigr)^2\,\dd r.
 \end{aligned}
\]
Thus every control gives a value at most
$\Psi_{\gamma,f}(q,x)$, and equality is attained by the feedback
control
\[
 b_r=\partial_x\Psi_{\gamma,f}(r,Y_r).
\]
Approximating $|x|$ uniformly by smooth $1$-Lipschitz functions
therefore proves \eqref{eq:zero-control} for the nonsmooth terminal
condition.

We next record continuity in the profile. If
$\gamma,\widetilde\gamma\in\mathcal M_\lambda$, evaluate the two
control problems using the same admissible control $b$. Since
$|b|\le1$ and the terminal function is $1$-Lipschitz,
\[
 \left|
 \Psi_\gamma(q,x)-\Psi_{\widetilde\gamma}(q,x)
 \right|
 \le
 \frac32\int_q^1
 |\gamma(r)-\widetilde\gamma(r)|\,\dd r.
\]
Taking the supremum over $q,x$ gives
\begin{equation}\label{eq:zero-continuity}
 \sup_{q,x}
 |\Psi_\gamma(q,x)-\Psi_{\widetilde\gamma}(q,x)|
 \le
 \frac32\|\gamma-\widetilde\gamma\|_{L^1}.
\end{equation}

For bounded profiles, step approximation together with
\eqref{eq:zero-continuity} therefore gives the same value as
\eqref{eq:zero-control}. For a general integrable profile, truncate at
height $R$:
\[
 \gamma_R=\gamma\wedge R.
\]
Since $\gamma_R\to\gamma$ in $L^1$, the values
$\Psi_{\gamma_R}$ converge uniformly to $\Psi_\gamma$.
Thus the control representation, step approximation, and truncation
all define the same zero-temperature value.

The zero-temperature Parisi functional consists of
$\Psi_\gamma(0,0)$ and the penalty
\[
 -\frac12\int_0^1q\gamma(q)\,\dd q.
\]
The first term is $3/2$-Lipschitz in $L^1$ by
\eqref{eq:zero-continuity}, and the penalty is $1/2$-Lipschitz.
Hence the full functional is $2$-Lipschitz in $L^1$. In particular,
its infimum is unchanged if we restrict to bounded profiles in
$\mathcal M_\lambda$.

The infimum is finite. A bounded constant profile gives a finite upper
bound. For the lower bound, use the admissible control $b_r=1$ in
\eqref{eq:zero-control}. Jensen's inequality gives
\[
 \Psi_\gamma(0,0)
 \ge
 \frac12\int_0^1\gamma(q)\,\dd q,
\]
and therefore
\[
 \Psi_\gamma(0,0)
 -\frac12\int_0^1q\gamma(q)\,\dd q
 \ge
 \frac12\int_0^1(1-q)\gamma(q)\,\dd q
 \ge0.
\]

We now identify this variational problem as the zero-temperature limit
of the positive-temperature constrained Parisi formula.

\begin{lemma}\zlabel{lem:zero-limit}
For every $\lambda\ge0$,
\begin{equation}\label{eq:zero-limit}
 \lim_{\beta\to\infty}
 \frac{P_\beta(\lambda/\beta)}{\beta}
 =
 \mathcal E(\lambda).
\end{equation}
\end{lemma}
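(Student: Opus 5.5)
The plan is to rescale the positive-temperature problem so that it becomes a zero-temperature problem with a perturbed terminal condition. I then compare the two problems through the $L^1$-Lipschitz estimate \eqref{eq:zero-continuity}.

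Fix $\lambda\ge0$ and let $\beta>\lambda$, so that $s=\lambda/\beta\le1$. For a cumulative profile $a$ with $a\ge s$, set $\gamma=\beta a$. Then $\gamma$ is nondecreasing with $\lambda\le\gamma\le\beta$, and $\gamma\in\mathcal M_\lambda$. Define $\Phi(q,x)=\beta^{-1}u_a(q,\beta x)$. A direct computation from \eqref{eq:scalar-functional} gives
\[
 \partial_q\Phi=-\tfrac12\bigl(\Phi_{xx}+\gamma(q)\Phi_x^2\bigr),
 \qquad
 \Phi(1,x)=\beta^{-1}\log(2\cosh\beta x).
\]
The penalty also rescales correctly:
\[
 \beta^{-1}\cdot\tfrac{\beta^2}{2}\int_0^1 q a\,\dd q=\tfrac12\int_0^1 q\gamma\,\dd q.
\]
Hence $\beta^{-1}\mathcal P_\beta(a)$ equals the zero-temperature functional of $\gamma$, except that the terminal condition $|x|$ is replaced by $f_\beta(x)=\beta^{-1}\log(2\cosh\beta x)$.

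Next I compare the two terminal conditions. The function $f_\beta$ is smooth and $1$-Lipschitz, and it satisfies $0\le f_\beta-|x|\le\log2/\beta$. By the control representation \eqref{eq:zero-control}, which was verified above for smooth $1$-Lipschitz terminal data, changing the terminal function by a uniform amount $\delta$ changes the value by at most $\delta$. This holds uniformly over the control $b$, so it holds uniformly in $\gamma$. Therefore
\[
 \bigl|\beta^{-1}\mathcal P_\beta(a)-\mathcal E_\gamma\bigr|\le\frac{\log 2}{\beta},
\]
where $\mathcal E_\gamma$ denotes the zero-temperature functional evaluated at $\gamma$.

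With this comparison the two inequalities follow. Taking the minimum over $a$ gives
\[
 \frac{P_\beta(\lambda/\beta)}{\beta}\ge\inf_{\gamma\in\mathcal M_\lambda,\ \gamma\le\beta}\mathcal E_\gamma-\frac{\log2}\beta\ge\mathcal E(\lambda)-\frac{\log2}\beta,
\]
which yields the liminf bound. For the limsup, fix $\epsilon>0$. Since the functional is $2$-Lipschitz in $L^1$, I choose a bounded $\gamma\in\mathcal M_\lambda$ with $\mathcal E_\gamma\le\mathcal E(\lambda)+\epsilon$. One technical point must be settled: $a=\gamma/\beta$ has to be the cumulative function of a probability measure, so I need $a(1)=1$, that is, $\gamma$ must reach the value $\beta$ at $q=1$. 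I handle this by allowing an atom at $q=1$. Its effect on $\gamma$ is confined to the single point $q=1$, which has measure zero, so it does not change $\mathcal E_\gamma$ in $L^1$. Once $\beta\ge\|\gamma\|_\infty$, the profile $a=\gamma/\beta$ is admissible with $a\ge s$. This gives
\[
 \frac{P_\beta(\lambda/\beta)}{\beta}\le\mathcal E(\lambda)+\epsilon+\frac{\log2}{\beta}.
\]
Letting $\beta\to\infty$ and then $\epsilon\to0$ completes the limit.

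I expect the main obstacle to be the careful treatment of the terminal-condition comparison and the endpoint. First, the control representation must be shown to apply to the profile $\gamma$ with the smooth terminal data $f_\beta$. Second, the atom at $1$ must be handled correctly. On the positive-temperature side, the PDE with the profile $a$ on $[0,1)$ does not see the value of $a$ at the single point $q=1$, so the rescaled PDE solution coincides with the control value. I would verify this by step approximation, as was done above for \eqref{eq:zero-control}.
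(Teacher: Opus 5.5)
Your proposal is correct and follows essentially the same route as the paper: you rescale by $\gamma=\beta a$, compare the terminal data $\beta^{-1}\log(2\cosh\beta x)$ and $|x|$ uniformly over controls to get an error of at most $\log 2/\beta$ uniformly in the profile, and then use that the classes $\{\lambda\le\gamma\le\beta\}$ exhaust the bounded profiles in $\mathcal M_\lambda$, whose infimum equals $\mathcal E(\lambda)$ by truncation. Your explicit $\epsilon$-argument with the endpoint atom at $q=1$ spells out the paper's monotone-exhaustion step in more detail.
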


\begin{proof}
Fix $\lambda\ge0$ and take $\beta\ge\lambda$. If $a$ is an admissible
positive-temperature cumulative profile, set
\[
 \gamma=\beta a,
 \qquad
 \Psi_{\gamma,\beta}(q,x)
 =
 \frac1\beta u_a(q,\beta x).
\]
Then $\Psi_{\gamma,\beta}$ satisfies
\[
 \partial_q\Psi_{\gamma,\beta}
 =
 -\frac12\left(
 \Psi_{\gamma,\beta,xx}
 +\gamma(q)\Psi_{\gamma,\beta,x}^2
 \right),
\]
with terminal condition
\[
 \Psi_{\gamma,\beta}(1,x)
 =
 \frac1\beta\log(2\cosh(\beta x)).
\]
The elementary bounds
\[
 |x|
 \le
 \frac1\beta\log(2\cosh(\beta x))
 \le
 |x|+\frac{\log2}{\beta}
\]
hold uniformly in $x$. By the control representation, equivalently by
the comparison principle for bounded step profiles followed by
approximation,
\[
 0
 \le
 \Psi_{\gamma,\beta}(0,0)-\Psi_\gamma(0,0)
 \le
 \frac{\log2}{\beta}.
\]
The bound is uniform over all admissible $\gamma$.

After the rescaling $\gamma=\beta a$, the constraint
$a(0)\ge\lambda/\beta$ becomes
\[
 \gamma(q)\ge\lambda,
\]
and the positive-temperature bound $a\le1$ becomes
$\gamma\le\beta$. Thus
$P_\beta(\lambda/\beta)/\beta$ is, up to an error at most
$\log2/\beta$, the minimum of the zero-temperature functional over
nondecreasing right-continuous profiles satisfying
\[
 \lambda\le\gamma\le\beta
 \qquad\text{on }[0,1).
\]
Any remaining endpoint mass at $q=1$ enforces the positive-temperature
normalization $a(1)=1$ but does not change the functional.

As $\beta$ increases, these admissible classes increase to the set of
all bounded profiles in $\mathcal M_\lambda$. Their infima therefore
decrease to the infimum over bounded profiles. By the truncation
argument above, that infimum is exactly $\mathcal E(\lambda)$.
Together with the uniform terminal error
$\log2/\beta$, this proves the desired conclusion.
\end{proof}

\begin{proof}[Proof of \zcref{thm:ground-state}]
We first pass the expected-pressure comparison to zero temperature.
For every realization of the completed Hamiltonian,
\begin{equation}\label{eq:soft-maximum}
 G_N^c
 \le
 \frac{F^c_{N,\beta}}{\beta}
 \le
 G_N^c+\frac{N\log2}{\beta}.
\end{equation}
Indeed, the largest Boltzmann weight is bounded by the full partition
sum, while the latter is at most $2^N$ times its largest term.
Consequently
\[
 \frac1N\E G_N^c
 \le
 \frac1{N\beta}\E F^c_{N,\beta}
 \le
 \frac1N\E G_N^c+\frac{\log2}{\beta}.
\]

Apply the expected-pressure estimate
\eqref{eq:quantitative-pressure} and divide by $\beta$. By
\zcref{lem:zero-limit} with $\lambda=0$,
\[
 \frac{P_\beta}{\beta}\longrightarrow\mathcal E(0),
\]
while \eqref{eq:soft-maximum} gives
\[
 \frac1{N\beta}\E F^c_{N,\beta}
 \longrightarrow
 \frac1N\E G_N^c.
\]
Thus the finite-temperature comparison passes directly to the
completed ground-state mean. Since
\[
 G_N^c=G_N+D_N
\]
and $\E D_N=0$, the completed and off-diagonal ground-state means are
equal. This proves \eqref{eq:ground-mean}.

We next pass the positive fractional-moment comparison to the
ground-state log-Laplace transform. Fix $N\ge1$ and $\lambda>0$, and
for $\beta\ge2\lambda$ set
\[
 s=\lambda/\beta\le1/2.
\]
Then
\[
 \frac{\phi^c_{N,\beta}(\lambda/\beta)}{\beta}
 =
 \frac1{N\lambda}
 \log\E\exp\left\{
   \lambda\frac{F^c_{N,\beta}}{\beta}
 \right\}.
\]
Multiplying \eqref{eq:soft-maximum} by $\lambda$ and exponentiating
shows that
\[
 \E e^{\lambda G_N^c}
 \le
 \E\exp\left\{
   \lambda\frac{F^c_{N,\beta}}{\beta}
 \right\}
 \le
 e^{N\lambda\log2/\beta}\E e^{\lambda G_N^c}.
\]
Hence
\[
 0
 \le
 \frac{\phi^c_{N,\beta}(\lambda/\beta)}{\beta}
 -
 \frac1{N\lambda}\log\E e^{\lambda G_N^c}
 \le
 \frac{\log2}{\beta}.
\]
All of these exponential moments are finite because $G_N^c$ is the
maximum of finitely many Gaussian affine functions.

Apply \eqref{eq:uniform-parisi} with $s=\lambda/\beta$, divide by
$\beta$, and let $\beta\to\infty$. The scalar variational term
converges by \zcref{lem:zero-limit}, while the finite-system term
converges by the preceding display. This gives
\eqref{eq:ground-laplace}. Because the constant in
\eqref{eq:uniform-parisi} is numerical and independent of $\beta$,
the same constant remains valid after taking the limit, uniformly in
$N$ and $\lambda$.

Finally, remove the diagonal completion. Since
\[
 G_N^c=G_N+D_N,
\]
with $D_N$ independent of $G_N$ and
$D_N\sim N(0,1/2)$,
\[
 \log\E e^{\lambda G_N^c}
 =
 \log\E e^{\lambda G_N}
 +\log\E e^{\lambda D_N}
 =
 \log\E e^{\lambda G_N}
 +\frac{\lambda^2}{4}.
\]
Substituting this exact identity into the completed ground-state
comparison gives \eqref{eq:ground-offdiagonal}.
\end{proof}

\section{Positive moments and variance}\zlabel{sec:moments}

To convert the fractional-moment comparison into a lower bound on the
variance, we need an upper bound on the centered moment generating
function whose scale is determined by $\Var F$, rather than by the
dimension of the underlying Gaussian vector. The required estimate
comes from two facts. First, transitivity of the Gaussian model allows
us to pin one configuration without changing the distribution of the
log partition function. Second, after adjoining an independent
exponential random variable, the pinned law becomes the marginal of a
log-concave measure. A one-dimensional tail estimate for log-concave
densities then gives the desired variance-sensitive moment bound.

We formulate the argument for a general finite Gaussian log partition
function. Let
\begin{equation}\label{eq:transitive-F}
 F(g)=\log\sum_{\sigma\in S}e^{a_\sigma\cdot g},
 \qquad M=|S|,
\end{equation}
where $g\sim N(0,I_d)$ and $a_\sigma\in\mathbb R^d$. Repeated vectors
$a_\sigma$ are allowed. Assume that for every
$\sigma,\tau\in S$ there is an orthogonal map $O$ such that
\[
 F(Og)=F(g)
 \qquad\text{and}\qquad
 O^Ta_\sigma=a_\tau.
\]
Thus the orthogonal symmetries preserving $F$ act transitively on the
features $a_\sigma$. In the off-diagonal SK model, these symmetries are
the coordinatewise gauge transformations
\[
 g_{ij}\mapsto \tau_i\tau_j g_{ij},
 \qquad \tau\in\{-1,1\}^N.
\]

\begin{proposition}\zlabel{prop:variance-sensitive-moments}
Let $F$ be as in \eqref{eq:transitive-F}, under the transitivity
assumption above, and set
\[
 V=\Var(F).
\]
Define
\[
 b=\frac{\log3}{8},
 \qquad
 B=\frac{64}{(\log3)^2}.
\]
Then, for every
\[
 0\le s\le\frac{b}{\sqrt{V+1}},
\]
we have
\begin{equation}\label{eq:variance-sensitive-mgf}
 \log\E e^{s(F-\E F)}
 \le
 Bs^2(V+1).
\end{equation}
\end{proposition}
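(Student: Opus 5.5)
The plan is to reduce the statement to a moment generating function bound for a one-dimensional log-concave random variable $Y$ with $\Var Y=V+1$; the ``$+1$'' in the statement is the variance of an adjoined exponential variable. The proof has three steps: a change of measure using transitivity, the construction of the log-concave variable, and a one-dimensional estimate.

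\emph{Step 1 (pinning).} First I show that all $|a_\sigma|$ are equal, since $O^Ta_\sigma=a_\tau$ with $O$ orthogonal; hence $\E e^{F}=Me^{|a|^2/2}$. The tilted law $e^{F(g)}\gamma_d(\dd g)/\E e^F$ is then the uniform mixture over $\sigma$ of the shifted Gaussian laws $g+a_\sigma$. Fix $\tau$ and take $O$ with $F(Og)=F(g)$ and $O^Ta_\sigma=a_\tau$; then $F(g+a_\sigma)=F(O^Tg+a_\tau)$, so $F(g+a_\sigma)$ and $F(g+a_\tau)$ have the same law. Therefore, under the tilt, $F$ has the law of $F_*:=F(g+a_{\sigma_0})$ for a fixed $\sigma_0$. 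This gives, for $s<1$,
\[
 \E e^{sF}=\E e^{F}\cdot\E e^{(s-1)F_*}.
\]

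\emph{Step 2 (log-concave variable).} Since $g+a_{\sigma_0}$ has a log-concave density $\varphi$ and $F$ is convex, the joint density $\varphi(x)e^{-t}\1\{t\ge F(x)\}$ on $\R^d\times\R$ is log-concave, because the epigraph of $F$ is convex. By Prékopa's theorem its $t$-marginal, proportional to $e^{-t}\Prob(F_*\le t)$, is log-concave; equivalently it is the law of $Y=F_*+E$ under the $e^{-F_*}$-tilt, with $E\sim\mathrm{Exp}(1)$ independent. Integration by parts gives
\[
 \int e^{(s-1)t}\Prob(F_*\le t)\,\dd t=\frac{\E e^{(s-1)F_*}}{1-s}.
\]
Combining this with Step~1 and normalizing at $s=0$, I obtain the identity
\[
 \log\E e^{sF}-\log\E e^{F}
 =\log\E e^{sY}+\log(1-s)+\mathrm{const}.
\]
Differentiating once at $s=0$ gives $\E Y=\E F+1$, and differentiating twice gives $\Var Y=V+1$. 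Hence
\[
 \log\E e^{s(F-\E F)}=K_Y(s)+s+\log(1-s)\le K_Y(s),
\]
where $K_Y$ is the centered log moment generating function of $Y$. I will justify differentiation under the integral by the Gaussian integrability of $F$, which is Lipschitz.

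\emph{Step 3 (one-dimensional estimate), the main obstacle.} It remains to show that a log-concave $Y$ with variance $v$ satisfies $K_Y(s)\le Bs^2v$ for $0\le s\le b/\sqrt v$, with the explicit constants $b=\log3/8$ and $B=64/(\log3)^2$; these constants suggest the route.
\begin{enumerate}
\item By Chebyshev, $\Prob(|Y-\E Y|\ge\sqrt{3v})\le1/3$.
\item Log-concavity of the tail function then forces geometric decay of the upper tail. Writing $\bar F(x)=\Prob(Y-\E Y\ge x)$, concavity of $\log\bar F$ gives $\bar F(x+k\cdot 2\sqrt{3v})\le 3^{-k}$, up to adjusting the base point.
\item Hence $\Prob(Y-\E Y\ge x)\le3e^{-x\log3/(c\sqrt v)}$ for some explicit $c$, a subexponential tail at scale $\sqrt v$.
\item A standard computation with $e^{y}\le1+y+y^2e^{|y|}$ and $\E(Y-\E Y)=0$ then yields the quadratic bound $K_Y(s)\le Bs^2v$ on the range $s\sqrt v\le b$, where $b$ is a fixed fraction of the tail rate $\log3/c$.
\end{enumerate}
Applying this bound with $v=V+1$ proves \eqref{eq:variance-sensitive-mgf}. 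Only the upper tail of $Y$ enters, since $s\ge0$, and this is the direction that log-concavity controls.
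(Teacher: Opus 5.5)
Your Steps 1 and 2 are correct and are essentially the paper's argument. Tilting by $e^{F}$ and recognizing the tilt as the uniform mixture of the shifted laws $g+a_\sigma$ is equivalent to the paper's pinned measure $q_0\propto e^{a_\ast\cdot g-F(g)}\varphi_d$. Your $Y$ is exactly the paper's $T=F+E_1$, and your identity amounts to $\E e^{sY}=\E e^{sF}/(1-s)$. The resulting transfer $K_F(s)=K_Y(s)+s+\log(1-s)\le K_Y(s)$ is the paper's as well.

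The gap is in Step 3. That step is exactly where the stated constants $b=\log3/8$ and $B=64/(\log3)^2$ are decided, and the tools you name do not produce them.

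First, two-sided Chebyshev at $\sqrt{3v}$ only gives $\Prob(Y-m\ge-\sqrt{3v})\ge2/3$ and $\Prob(Y-m\ge\sqrt{3v})\le1/3$. The secant of the log-survival function therefore yields a factor $1/2$, not $1/3$, per length $2\sqrt{3v}$. Cantelli would give the $3/4$ versus $1/4$ you need.

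More seriously, $e^{y}\le1+y+y^2e^{|y|}$ is too lossy. From $\Prob(|Y-m|\ge x)\le Ae^{-\kappa x}$ it gives only $\log\E e^{s(Y-m)}\le 2A\kappa s^2/(\kappa-s)^3$. Take even the paper's tail, with $A=2$, $\kappa=h/d$, $h=\log3/4$, $d=\sqrt v$. At the endpoint $s=h/(2d)=b/d$ the bound equals $32s^2d^2/h^2$, which is eight times the claimed $Bs^2v=4s^2d^2/h^2$. So your assertion that a standard computation yields $K_Y(s)\le Bs^2v$ on $s\sqrt v\le b$ is unverified, and along your route it is false near the endpoint.

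The paper's fix has two parts.
\begin{enumerate}
\item Use Chebyshev at $m\pm2d$, so that $\Prob(Y\ge m-2d)\ge3/4$ and $\Prob(Y\ge m+2d)\le1/4$. Concavity of the log-survival function then gives slope at most $-\log3/(4d)$ beyond $m+2d$. Run the same argument for $-Y$; its distribution function is also log-concave by Pr\'ekopa, so both tails are controlled, not only the upper one (note that your $e^{|y|}$ already brings in the lower tail). With the trivial bound for $x\le2d$, this gives $\Prob(|Y-m|\ge x)\le2e^{-hx/d}$.
\item Bound the exponential series termwise. The moment bound $\E|Y-m|^k\le2k!(d/h)^k$ gives $\E e^{s(Y-m)}\le1+2r^2/(1-r)$ with $r=sd/h$. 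This is at most $1+4r^2$ for $r\le1/2$, which yields exactly $b=h/2$ and $B=4/h^2$.
\end{enumerate}
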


\begin{proof}
We proceed in three steps.

\medskip
\emph{Step 1: Pinning one configuration without changing the law of
$F$.}
Let $\varphi_d$ denote standard Gaussian measure on $\mathbb R^d$,
and fix a label $\ast\in S$. For any bounded measurable function $h$,
orthogonal invariance of $\varphi_d$ and the transitivity assumption
show that
\[
 \E\!\left[
   h(F(g))e^{a_\sigma\cdot g-F(g)}
 \right]
\]
has the same value for every $\sigma\in S$. Summing this identity over
$\sigma$ and using
\[
 \sum_{\sigma\in S}e^{a_\sigma\cdot g-F(g)}=1
\]
gives
\[
 M\E\!\left[
   h(F(g))e^{a_\ast\cdot g-F(g)}
 \right]
 =
 \E h(F(g)).
\]
Consequently
\begin{equation}\label{eq:pinned-law}
 q_0(\dd g)
 =
 M e^{a_\ast\cdot g-F(g)}\varphi_d(\dd g)
\end{equation}
is a probability measure, and $F$ has the same distribution under
$q_0$ as under the original Gaussian law. The identity extends from
bounded $h$ to every integrable $h$ by truncation.

\medskip
\emph{Step 2: Exponential augmentation produces a log-concave
one-dimensional law.}
Under $q_0$, let $E_1$ be an independent exponential random variable
of rate one and set
\[
 T=F(g)+E_1.
\]
With
\[
 U(g)=\frac{|g|^2}{2}-a_\ast\cdot g,
\]
the joint law of $(g,T)$ has density, up to normalization,
\begin{equation}\label{eq:epigraph-law}
 e^{-U(g)-t}\1_{\{t\ge F(g)\}}
\end{equation}
with respect to Lebesgue measure on $\mathbb R^d\times\mathbb R$.
Indeed, the factor $e^{F(g)}$ arising from the change of variables
$E_1=t-F(g)$ cancels the factor $e^{-F(g)}$ in
\eqref{eq:pinned-law}.

The function $F$ is convex because it is a log-sum-exp of affine
functions. Hence
\[
 \{(g,t):t\ge F(g)\}
\]
is convex. The function $U(g)+t$ is also convex, so the density in
\eqref{eq:epigraph-law} is log-concave. By Pr\'ekopa's marginal
theorem \cite[Theorems~6 and~8]{Prekopa}, the one-dimensional density
of $T$ is log-concave. Since $F$ has its original distribution under
$q_0$ and $E_1$ is independent,
\[
 \E T=\E F+1,
 \qquad
 \Var(T)=V+1.
\]

\medskip
\emph{Step 3: A log-concave variable has Gaussian-type moments for
small tilts.}
Let $X$ be a real random variable with a log-concave density, mean
$m$, and standard deviation $d>0$. Pr\'ekopa's theorem also implies
that the functions
\[
 x\mapsto\Prob(X\ge x),
 \qquad
 x\mapsto\Prob(X\le x)
\]
are log-concave. Chebyshev's inequality gives
\begin{equation}\label{eq:chebyshev-tail}
 \Prob(X\ge m-2d)\ge\frac34,
 \qquad
 \Prob(X\ge m+2d)\le\frac14.
\end{equation}
Apply concavity of the logarithm of the survival function to the two
points $m-2d$ and $m+2d$. For $x\ge2d$, the resulting secant-slope
bound gives an exponential upper tail. Applying the same argument to
$-X$ and using the trivial bound for $0\le x\le2d$ yields
\begin{equation}\label{eq:logconcave-tail}
 \Prob(|X-m|\ge x)
 \le
 2e^{-hx/d},
 \qquad
 h=\frac{\log3}{4},
 \qquad x\ge0.
\end{equation}
For completeness, when $x\le2d$ the right-hand side is at least one.
For $x\ge2d$, the two one-sided estimates obtained from
\eqref{eq:chebyshev-tail} sum to at most
\[
 \frac{\sqrt3}{2}e^{-hx/d}
 \le2e^{-hx/d}.
\]

Integrating \eqref{eq:logconcave-tail} gives, for every integer
$k\ge2$,
\[
 \E|X-m|^k
 \le
 2k!\left(\frac d h\right)^k.
\]
Let
\[
 r=\frac{|\lambda|d}{h}.
\]
For $r<1$, the exponential series is absolutely convergent. Since
$\E(X-m)=0$,
\[
 \begin{aligned}
 \E e^{\lambda(X-m)}
 &\le
 1+
 \sum_{k=2}^\infty
 \frac{|\lambda|^k}{k!}\E|X-m|^k\\
 &\le
 1+2\sum_{k=2}^\infty r^k
 =
 1+\frac{2r^2}{1-r}.
 \end{aligned}
\]
Therefore
\begin{equation}\label{eq:logconcave-mgf}
 \log\E e^{\lambda(X-m)}
 \le
 \frac{2r^2}{1-r}
 \le4r^2
 \qquad
 \text{whenever }|\lambda|d\le h/2.
\end{equation}

Apply this estimate to $T$, for which
$d=\sqrt{V+1}$. For a random variable $Y$ with exponential moments
near zero, write
\[
 K_Y(\lambda)
 =
 \log\E e^{\lambda(Y-\E Y)}.
\]
Because $T=F+E_1$ under $q_0$, with $F$ and $E_1$ independent there,
\[
 K_T(\lambda)
 =
 K_F(\lambda)-\lambda-\log(1-\lambda),
 \qquad 0\le\lambda<1.
\]
Equivalently,
\begin{equation}\label{eq:mgf-transfer}
 K_F(\lambda)
 =
 K_T(\lambda)+\lambda+\log(1-\lambda)
 \le K_T(\lambda).
\end{equation}
If
\[
 0\le\lambda
 \le
 \frac{h}{2\sqrt{V+1}},
\]
then $\lambda<1$ and \eqref{eq:logconcave-mgf} gives
\[
 K_F(\lambda)
 \le
 \frac{4}{h^2}\lambda^2(V+1).
\]
Since
\[
 b=\frac h2=\frac{\log3}{8},
 \qquad
 B=\frac4{h^2}
   =\frac{64}{(\log3)^2},
\]
this is exactly \eqref{eq:variance-sensitive-mgf}.
\end{proof}

For the SK free energy, $V=\Var(F)=V_N$. Thus the proposition controls
positive tilts up to order
\[
 (V_N+1)^{-1/2},
\]
which is the scale needed below to convert the fractional-moment lower
bound into a variance lower bound.

\subsection{Convexity of normalized Gaussian log moments}

We will use convexity in the tilt parameter to compare a positive
logarithmic moment with its derivative at zero. For a convex Lipschitz
function of a Gaussian vector, this convexity is a special case of
Chen's Gaussian convexity theorem
\cite[Theorem~1]{ChenConvexity}. We follow Chen's stochastic-control proof in the convex Lipschitz
setting.

\begin{lemma}
\zlabel{lem:short-laplace-convexity}
Let $g$ be a standard Gaussian vector in $\mathbb R^d$, and let
$f:\mathbb R^d\to\mathbb R$ be convex and $K$-Lipschitz. For
$\lambda\ne0$, define
\[
 L_f(\lambda)
 =
 \frac1\lambda\log\E e^{\lambda f(g)}.
\]
Then $L_f$ extends to a convex function on $\mathbb R$, with
\[
 L_f(0)=\E f(g),
 \qquad
 L_f'(0)=\frac12\Var(f(g)).
\]
\end{lemma}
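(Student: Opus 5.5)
The plan is to follow Chen's stochastic-control route. Convexity of $L_f$ will come from writing $\lambda L_f(\lambda)$ through the Boué--Dupuis variational formula, and the two endpoint identities from a second-order expansion at $\lambda=0$. Because $f$ is $K$-Lipschitz, Gaussian concentration gives $\E e^{\lambda f(g)}<\infty$ for every $\lambda$, and $\lambda\mapsto\log\E e^{\lambda f(g)}$ is smooth. Expanding this cumulant generating function, $\log\E e^{\lambda f}=\lambda\E f+\frac{\lambda^2}{2}\Var f+O(\lambda^3)$, so
\[
 L_f(\lambda)=\E f+\frac{\lambda}{2}\Var f+O(\lambda^2).
\]
Hence $L_f$ extends smoothly across $0$ with $L_f(0)=\E f(g)$ and $L_f'(0)=\frac12\Var f(g)$. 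Only convexity remains.

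For convexity I will use the representation, valid for $\lambda\neq0$,
\[
 L_f(\lambda)=\sup_{v}\E\Bigl[f\Bigl(B_1+\int_0^1 v_r\,\dd r\Bigr)-\frac1{2\lambda}\int_0^1|v_r|^2\,\dd r\Bigr]\quad(\lambda>0),
\]
with the supremum replaced by an infimum when $\lambda<0$. Rescaling $v=\lambda w$ turns this into
\[
 \sup_w\E\Bigl[f\Bigl(B_1+\lambda\int_0^1 w_r\,\dd r\Bigr)-\frac{\lambda}{2}\int_0^1|w_r|^2\,\dd r\Bigr],
\]
and here the sup/inf distinction disappears under the rescaling. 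For each fixed adapted $w$, the map $\lambda\mapsto f(B_1+\lambda\int w)-\frac\lambda2\int|w|^2$ is convex, since $f$ is convex and the penalty is linear in $\lambda$. A pointwise supremum of convex functions is convex, which gives convexity on each half-line.

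To cover all of $\mathbb R$ I will instead establish the single formula
\[
 \lambda L_f(\lambda)\cdot\lambda^{-1}=\sup_w\E\Bigl[f\Bigl(B_1+\lambda\int w\Bigr)-\frac\lambda2\int|w|^2\Bigr]
\]
for all real $\lambda$. For $\lambda<0$ this comes from applying Boué--Dupuis to $-f$: the equation $\frac1\lambda\log\E e^{\lambda f}=-\frac1{|\lambda|}\log\E e^{|\lambda|(-f)}$ produces an infimum, and that infimum is rewritten with $w\mapsto -w$. I expect this to be the main obstacle, because for $\lambda<0$ the natural formula is an infimum, not a supremum.

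If the sign issue does not resolve cleanly, I will use the fallback in Chen's paper. I will compute $L_f''$ directly in terms of the tilted measure $\nu_\lambda\propto e^{\lambda f}\varphi_d$,
\[
 \lambda^3L_f''(\lambda)=\lambda^2\Var_{\nu_\lambda}(f)-2\bigl(\lambda\E_{\nu_\lambda}f-\log\E e^{\lambda f}\bigr),
\]
so the task becomes showing the entropy-variance inequality $\lambda^2\Var_{\nu_\lambda}f\ge2\,\mathrm{Ent}(\nu_\lambda\mid\varphi_d)$ for $\lambda>0$, with the reverse inequality for $\lambda<0$. I will prove this by interpolating along $\theta\mapsto\nu_{\theta\lambda}$ and differentiating the resulting control value. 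Convexity of $f$ enters through the Brascamp--Lieb/Cramér--Rao comparison of $\Var_{\nu}f$ with $\E_\nu|\nabla f|^2$: under $\nu_\lambda$ with $\lambda>0$ the potential $\frac{|x|^2}{2}-\lambda f$ is less convex than Gaussian, while with $\lambda<0$ it is more convex, which is exactly the needed sign flip.

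The Lipschitz assumption is used throughout to justify differentiation under $\E$ and the admissibility of the controls.
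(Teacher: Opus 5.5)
Your handling of the value and derivative at zero, and of convexity on $(0,\infty)$ as a supremum of functions convex in $\lambda$, is correct and matches the paper. The gap is the negative half-line.

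The claim that the sup/inf distinction disappears under $v=\lambda w$ is false. For $\lambda<0$ the penalty $-\frac\lambda2\int_0^1|w_r|^2\,\dd r$ is nonnegative and quadratic, while $f$ grows at most linearly. So over finite-energy adapted controls, $\sup_w\E\bigl[f(B_1+\lambda\int_0^1w_r\,\dd r)-\frac\lambda2\int_0^1|w_r|^2\,\dd r\bigr]=+\infty$. The substitution $w\mapsto-w$ leaves an infimum an infimum. What Bou\'e--Dupuis applied to $-f$ actually gives is $L_f(\lambda)=\inf_wJ_\lambda(w)$, an infimum of functions convex in $\lambda$, and such an infimum need not be convex.

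The missing idea is to undo the rescaling. In the drift $v=\lambda w$, the objective $\E f(B_1+\int_0^1v_r\,\dd r)+\frac1{2|\lambda|}\int_0^1|v_r|^2\,\dd r$ is jointly convex in $(\lambda,v)$ on $\lambda<0$, because the penalty is a perspective of $|v|^2$. Minimizing a jointly convex function over one variable leaves a convex function of the other. Concretely, as in the paper:
\begin{itemize}
\item Given $(\lambda_0,w_0)$ and $(\lambda_1,w_1)$, use $w_\theta=(\theta\lambda_0w_0+(1-\theta)\lambda_1w_1)/\lambda_\theta$.
\item Then $B_1+\lambda_\theta\int w_\theta$ is the corresponding convex combination of the two endpoints.
\item Pointwise, $-\lambda_\theta|w_\theta|^2\le-\theta\lambda_0|w_0|^2-(1-\theta)\lambda_1|w_1|^2$.
\item Taking infima over $w_0,w_1$ gives $L_f(\lambda_\theta)\le\theta L_f(\lambda_0)+(1-\theta)L_f(\lambda_1)$.
\end{itemize}

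Your fallback does not close the gap either. The identity $\lambda^3L_f''(\lambda)=\lambda^2\Var_{\nu_\lambda}(f)-2\,\mathrm{Ent}(\nu_\lambda\mid\varphi_d)$ is correct, and for $\lambda<0$ you need $\lambda^2\Var_{\nu_\lambda}(f)\le2\,\mathrm{Ent}(\nu_\lambda\mid\varphi_d)$. Brascamp--Lieb gives only $\lambda^2\Var_{\nu_\lambda}(f)\le\lambda^2\E_{\nu_\lambda}|\nabla f|^2$, which is the relative Fisher information of $\nu_\lambda$. The Gaussian log-Sobolev inequality bounds $2\,\mathrm{Ent}(\nu_\lambda\mid\varphi_d)$ by that same quantity, so the two sides are never compared. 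Interpolating in the tilt does not help by itself. One computes $\frac{\dd}{\dd\lambda}\bigl[\frac{\lambda^2}{2}\Var_{\nu_\lambda}(f)-\mathrm{Ent}(\nu_\lambda\mid\varphi_d)\bigr]=\frac{\lambda^2}{2}\kappa_3(\lambda)$, where $\kappa_3(\lambda)$ is the third cumulant of $f$ under $\nu_\lambda$, and no variance comparison controls its sign.

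Separately, the paper does not apply Bou\'e--Dupuis directly to the unbounded Lipschitz $f$. It mollifies $f$, proves the control formula with controls bounded by $K$ via a verification argument for the Hamilton--Jacobi--Bellman equation, and then removes the mollification using uniform closeness. You would need some such justification as well.
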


\begin{proof}
We first derive the form of the Bou\'e--Dupuis control representation
\cite{BoueDupuis1998} needed below for a smooth terminal function, and
then remove the smoothing. Let $B$ be standard Brownian
motion in $\mathbb R^d$, equipped with its usual augmented natural
filtration, and let $\mathcal A_K$ be the set of predictable processes
$a=(a_q)_{0\le q\le1}$ satisfying
\[
 |a_q|\le K
\]
almost everywhere. The same admissible class will be used for every
value of $\lambda$.

For $\varepsilon>0$, let
\[
 f_\varepsilon(x)=\E f(x+\sqrt\varepsilon\,g).
\]
Gaussian convolution preserves convexity and the $K$-Lipschitz
constant. It also makes $f_\varepsilon$ smooth with bounded Hessian.
We first work with $f_\varepsilon$ and suppress the subscript
$\varepsilon$.

Fix $\lambda\ne0$ and define
\[
 u_\lambda(q,x)
 =
 \frac1\lambda
 \log\E
 \exp\left\{
   \lambda f(x+B_1-B_q)
 \right\}.
\]
Then
\[
 u_\lambda(0,0)=L_f(\lambda)
\]
and $u_\lambda$ solves
\[
 \partial_q u_\lambda
 +\frac12\Delta u_\lambda
 +\frac{\lambda}{2}|\nabla u_\lambda|^2
 =0,
 \qquad
 u_\lambda(1,x)=f(x).
\]
Differentiating the normalized Gaussian expectation gives
\[
 |\nabla u_\lambda|\le K.
\]
Moreover,
\[
 \nabla^2u_\lambda
 =
 \E_\lambda[\nabla^2f]
 +\lambda\Cov_\lambda(\nabla f),
\]
where $\E_\lambda$ denotes expectation under the corresponding tilted
Gaussian law. Hence
\[
 \|\nabla^2u_\lambda\|_{\mathrm{op}}
 \le
 \|\nabla^2f\|_\infty+|\lambda|K^2.
\]
In particular, $\nabla u_\lambda$ is globally Lipschitz.

For $a\in\mathcal A_K$, let
\[
 Y_q^\lambda
 =
 B_q+\lambda\int_0^q a_r\,\dd r
\]
and define
\[
 J_\lambda(a)
 =
 \E\left[
 f(Y_1^\lambda)
 -\frac{\lambda}{2}\int_0^1|a_q|^2\,\dd q
 \right].
\]
It\^o's formula, together with the PDE for $u_\lambda$, gives
\[
 J_\lambda(a)-L_f(\lambda)
 =
 -\frac{\lambda}{2}
 \E\int_0^1
 \left|
   a_q-\nabla u_\lambda(q,Y_q^\lambda)
 \right|^2\dd q.
\]
The stochastic integral has mean zero because
$|\nabla u_\lambda|\le K$. Since $\nabla u_\lambda$ is bounded and
globally Lipschitz, the feedback equation
\[
 \dd Y_q
 =
 \dd B_q
 +\lambda\nabla u_\lambda(q,Y_q)\,\dd q,
 \qquad
 Y_0=0,
\]
has a unique strong solution. Its feedback control
\[
 a_q=\nabla u_\lambda(q,Y_q)
\]
belongs to $\mathcal A_K$ and makes the integral in the preceding
identity vanish. Therefore
\begin{equation}\label{eq:convex-control-formula}
 L_f(\lambda)
 =
 \begin{cases}
 \displaystyle
 \sup_{a\in\mathcal A_K}J_\lambda(a),
 &\lambda>0,\\[6pt]
 \displaystyle
 \inf_{a\in\mathcal A_K}J_\lambda(a),
 &\lambda<0.
 \end{cases}
\end{equation}

We next prove convexity for positive $\lambda$. For each fixed
$a\in\mathcal A_K$, the endpoint
\[
 Y_1^\lambda
 =
 B_1+\lambda\int_0^1a_q\,\dd q
\]
is affine in $\lambda$. Since $f$ is convex,
$\lambda\mapsto\E f(Y_1^\lambda)$ is convex, while the second term in
$J_\lambda(a)$ is linear in $\lambda$. Thus
$\lambda\mapsto J_\lambda(a)$ is convex. By
\eqref{eq:convex-control-formula}, $L_f$ is the supremum of convex
functions on $(0,\infty)$ and is therefore convex there.

For negative $\lambda$, the infimum representation requires a
different argument. Fix $\lambda_0,\lambda_1<0$,
$a_0,a_1\in\mathcal A_K$, and $0\le\theta\le1$. Set
\[
 \lambda_\theta
 =
 \theta\lambda_0+(1-\theta)\lambda_1
\]
and
\[
 a_\theta
 =
 \frac{
   \theta\lambda_0a_0+(1-\theta)\lambda_1a_1
 }{\lambda_\theta}.
\]
Because all three $\lambda$'s are negative, the two coefficients in
this convex combination are nonnegative and sum to one. Hence
$a_\theta\in\mathcal A_K$. Moreover,
\[
 Y_1^{\lambda_\theta,a_\theta}
 =
 \theta Y_1^{\lambda_0,a_0}
 +(1-\theta)Y_1^{\lambda_1,a_1}.
\]
Convexity of $f$ therefore gives
\[
 \E f(Y_1^{\lambda_\theta,a_\theta})
 \le
 \theta\E f(Y_1^{\lambda_0,a_0})
 +(1-\theta)\E f(Y_1^{\lambda_1,a_1}).
\]
Also, convexity of $x\mapsto|x|^2$ gives, pointwise in $q$,
\[
 -\lambda_\theta|a_\theta|^2
 \le
 -\theta\lambda_0|a_0|^2
 -(1-\theta)\lambda_1|a_1|^2.
\]
Consequently
\[
 J_{\lambda_\theta}(a_\theta)
 \le
 \theta J_{\lambda_0}(a_0)
 +(1-\theta)J_{\lambda_1}(a_1).
\]
Using the infimum representation at $\lambda_\theta$ and then taking
the infimum independently over $a_0$ and $a_1$ gives
\[
 L_f(\lambda_\theta)
 \le
 \theta L_f(\lambda_0)
 +(1-\theta)L_f(\lambda_1).
\]
Thus $L_f$ is convex on $(-\infty,0)$.

We now remove the smoothing. Since $f$ is $K$-Lipschitz,
\[
 \sup_x|f_\varepsilon(x)-f(x)|
 \le
 K\sqrt\varepsilon\,\E|g|.
\]
If two terminal functions differ uniformly by at most $\delta$, then
their normalized logarithmic moments differ by at most $\delta$ for
every $\lambda\ne0$, and the corresponding control objectives differ
by the same amount. Hence the control representations and the
convexity conclusions above pass to $f$ as
$\varepsilon\downarrow0$.

It remains to identify the value and derivative at $\lambda=0$.
Because a Lipschitz function of a Gaussian vector has exponential
moments in a neighborhood of zero,
\[
 \log\E
 e^{\lambda(f(g)-\E f(g))}
 =
 \frac{\lambda^2}{2}\Var(f(g))
 +O(\lambda^3)
 \qquad (\lambda\to0).
\]
Therefore
\[
 L_f(\lambda)
 =
 \E f(g)
 +\frac{\lambda}{2}\Var(f(g))
 +O(\lambda^2).
\]
Thus $L_f$ extends continuously to zero with
\[
 L_f(0)=\E f(g),
 \qquad
 L_f'(0)=\frac12\Var(f(g)).
\]
The restrictions of $L_f$ to $(-\infty,0]$ and $[0,\infty)$ are
convex and have the same one-sided derivative at zero. They therefore
join to a convex function on all of $\mathbb R$.
\end{proof}

\subsection{Consequences of a mean-deficit estimate}

We now record how a bound on the finite-size mean deficit
\[
 A_N-\frac{\E X_N}{N}
\]
combines with a fifth-order bound on normalized positive moments. The
mean deficit determines a crossover tilt $t_N$: below this scale,
convexity of the normalized logarithmic moment gives a quadratic bound,
while above it the fifth-order variational error gives a sixth-order
bound for the centered log moment. These two regimes yield a variance
bound and two corresponding upper-tail scales.

We will also use Chen's negative-tilt estimate
\cite[Theorem~2(1)]{ChenConvexity}, which follows from
\zcref{lem:short-laplace-convexity}. If $X=f(g)$ with $f$ convex and
Lipschitz, then 
\begin{equation}\label{eq:chen-negative-moment}
 \log\E e^{s(X-\E X)}
 \le\frac{s^2}{2}\Var(X),
 \qquad s\le0.
\end{equation}
Indeed, convexity of
\[
 L_f(s)=\frac1s\log\E e^{sf(g)}
\]
and the tangent inequality at zero give
\[
 L_f(s)\ge L_f(0)+sL_f'(0)
 =\E X+\frac{s}{2}\Var(X).
\]
Multiplication by $s<0$ reverses the inequality and gives
\eqref{eq:chen-negative-moment}; at $s=0$ both sides vanish. This
estimate will control the lower tail around the finite-system mean.

\begin{proposition}\zlabel{prop:mean-moment-consequences}
Let $0<a<5/6$. For each $N\ge1$, let $X_N$ be a convex function of a
standard Gaussian vector, with Lipschitz constant at most
$L\sqrt N$, where $L>0$ is fixed, and let $A_N$ be deterministic.
Suppose that there are fixed constants $M_0,C_0,s_*>0$ such that, for
all sufficiently large $N$,
\begin{equation}\label{eq:mean-moment-hypotheses}
 \begin{gathered}
 0\le A_N-\frac{\E X_N}{N}\le M_0N^{-a},\\
 \frac1{Ns}\log\E e^{sX_N}
 \le A_N+C_0s^5,
 \qquad 0<s\le s_*.
 \end{gathered}
\end{equation}

Choose the crossover tilt by matching the two errors,
\[
 t_N^5=N^{-a},
 \qquad\text{that is,}\qquad
 t_N=N^{-a/5},
\]
and set
\[
 Q_N=\frac{N^{1-a}}{t_N}=N^{1-4a/5},
 \qquad
 K_N^X(s)=\log\E e^{s(X_N-\E X_N)}.
\]
Then there are constants $C_1,c_1>0$ and $N_1<\infty$, depending only
on $a,L,M_0,C_0,s_*$ and the threshold in
\eqref{eq:mean-moment-hypotheses}, such that, for every $N\ge N_1$,
\begin{equation}\label{eq:mean-moment-window}
 \begin{aligned}
 K_N^X(s)&\le C_1Q_Ns^2
 &&\text{for }s\le t_N,\\
 K_N^X(s)&\le C_1Ns^6
 &&\text{for }t_N\le s\le s_*,\\
 \Var(X_N)&\le2C_1Q_N.
 \end{aligned}
\end{equation}
The first estimate includes the entire negative half-line.

For every $u\ge0$,
\begin{equation}\label{eq:mean-moment-tails}
 \begin{aligned}
 \Prob\{X_N-\E X_N\le-u\}
 &\le
 \exp\left(-c_1\frac{u^2}{Q_N}\right),\\
 \Prob\{X_N-\E X_N\ge u\}
 &\le
 \exp\left[
  -c_1\min\left\{
    \frac{u^2}{Q_N},
    \frac{u^{6/5}}{N^{1/5}}
  \right\}
 \right].
 \end{aligned}
\end{equation}
Moreover, for every $s<0$,
\begin{equation}\label{eq:mean-negative-replicas}
 0\le
 A_N-\frac1{Ns}\log\E e^{sX_N}
 \le
 C_1\left[
   N^{-a}+|s|N^{-4a/5}
 \right].
\end{equation}
The two upper-tail exponents in \eqref{eq:mean-moment-tails} are equal
when
\[
 u=N^{1-a},
\]
and their common value is of order $N^{1-6a/5}$. The restriction
$a<5/6$ ensures that this crossover exponent grows with $N$.
\end{proposition}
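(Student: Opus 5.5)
The plan is to work throughout with the normalized log moment $L(s)=\frac1{Ns}\log\E e^{sX_N}$. By \zcref{lem:short-laplace-convexity}, applied to $f=X_N$ with $K=L\sqrt N$, $L$ is convex on $\mathbb R$, with $L(0)=\E X_N/N$ and $L'(0)=\Var(X_N)/(2N)$. Write $m_N=A_N-\E X_N/N\in[0,M_0N^{-a}]$. Then
\[
 K_N^X(s)=Ns\bigl(L(s)-L(0)\bigr),
\]
so each bound on $K_N^X$ becomes a bound on the increment $L(s)-L(0)$. For $0<s\le s_*$ the hypothesis gives
\[
 L(s)-L(0)\le m_N+C_0s^5\le M_0N^{-a}+C_0s^5 .
\]

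\emph{Large tilts.} For $t_N\le s\le s_*$ we have $N^{-a}=t_N^5\le s^5$, so $L(s)-L(0)\le(M_0+C_0)s^5$. This gives $K_N^X(s)\le C_1Ns^6$. We need $t_N\le s_*$, which holds for large $N$.

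\emph{Small positive tilts.} For $0<s\le t_N$, convexity puts $L$ below its chord on $[0,t_N]$:
\[
 L(s)-L(0)\le \frac{s}{t_N}\bigl(L(t_N)-L(0)\bigr)\le \frac{s}{t_N}(M_0+C_0)N^{-a}.
\]
Hence $K_N^X(s)\le C_1 s^2N^{1-a}/t_N=C_1Q_Ns^2$. The same chord argument bounds the slope at zero: $L'(0)\le(L(t_N)-L(0))/t_N$. This gives $\Var(X_N)\le 2N(M_0+C_0)N^{-a}/t_N\le 2C_1Q_N$.

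\emph{Negative tilts.} For $s<0$, the negative-tilt estimate \eqref{eq:chen-negative-moment} gives $K_N^X(s)\le s^2\Var(X_N)/2\le C_1Q_Ns^2$, so the first line of \eqref{eq:mean-moment-window} covers the whole negative half-line.

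For \eqref{eq:mean-negative-replicas}, take $s<0$. The lower bound needs $L(s)\le A_N$. Since $L$ is nondecreasing (from convexity with $L'(0)\ge0$, or directly by Jensen), $L(s)\le L(0)\le A_N$. For the upper bound, use the tangent at zero and the variance bound:
\[
 L(s)\ge L(0)+sL'(0)\ge L(0)-|s|C_1Q_N/N .
\]
Then $A_N-L(s)\le m_N+C_1|s|N^{-4a/5}$, as required.

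\emph{Tails.} These follow by Chernoff's inequality. For the lower tail, $\Prob\{X_N-\E X_N\le -u\}\le\exp(-su+C_1Q_Ns^2)$ with $s=u/(2C_1Q_N)$, which gives the Gaussian bound for every $u$.

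For the upper tail, take $s=\min\{u/(2C_1Q_N),t_N\}$ when this lies in the quadratic regime. If $u\le 2C_1Q_Nt_N\asymp N^{1-a}$, this yields $\exp(-cu^2/Q_N)$. Otherwise take $s=\min\{(u/(6C_1N))^{1/5},s_*\}$, which is at least $t_N$ up to constants since $u\gtrsim N^{1-a}=Nt_N^5$. This yields $\exp(-cu^{6/5}N^{-1/5})$.

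When $u$ is so large that $s$ is capped at $s_*$, we have $su-C_1Ns_*^6\ge su/2$. The resulting bound $e^{-cu}$ is at least as strong as $e^{-cu^{6/5}N^{-1/5}}$ only when $u\lesssim N$. For $u\gtrsim N$ I would instead use Gaussian concentration for the $L\sqrt N$-Lipschitz $X_N$, which gives $e^{-u^2/(2L^2N)}$; this dominates both exponents there. This large-$u$ bookkeeping is the main obstacle, and the rest is matching constants. Finally, the crossover identity $u=N^{1-a}$ is checked directly: $u^2/Q_N=N^{2-2a-1+4a/5}=N^{1-6a/5}$ and $u^{6/5}N^{-1/5}=N^{1-6a/5}$. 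This exponent grows with $N$ exactly when $a<5/6$.
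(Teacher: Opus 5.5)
Your proposal is correct and follows essentially the same route as the paper: convexity of the normalized log moment (the chord on $[0,t_N]$ for the quadratic window and the variance, the tangent at zero for negative tilts), the hypothesis used directly for $t_N\le s\le s_*$, and Chernoff for both tails, with Gaussian Lipschitz concentration taking over for $u\gtrsim N$. The differences are minor. The paper avoids your case split, and the \emph{$s\ge t_N$ up to constants} adjustment, by using the combined bound $K_N^X(s)\le C_1(Q_Ns^2+Ns^6)$ on all of $[0,s_*]$ with the single tilt $s=c_0\min\{u/Q_N,(u/N)^{1/5}\}$. Also, for your normalized moment to satisfy $L(s)\le L(0)$ at $s<0$, only your Jensen justification works: convexity with $L'(0)\ge0$ gives monotonicity only on $[0,\infty)$.
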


\begin{proof}
Let
\[
 D_0=M_0+C_0
\]
and define
\[
 L_N(s)=\frac1s\log\E e^{sX_N},
 \qquad s\ne0,
 \qquad
 L_N(0)=\E X_N.
\]
By \zcref{lem:short-laplace-convexity}, $L_N$ is convex on
$\mathbb R$.

Take $N$ large enough that $t_N\le s_*$. Since $t_N^5=N^{-a}$,
\eqref{eq:mean-moment-hypotheses} gives
\[
 \begin{aligned}
 L_N(t_N)-L_N(0)
 &\le
 N\left(A_N-\frac{\E X_N}{N}\right)
 +C_0Nt_N^5\\
 &\le D_0N^{1-a}.
 \end{aligned}
\]
For $0\le s\le t_N$, convexity gives the  bound
\[
 L_N(s)-L_N(0)
 \le
 \frac{s}{t_N}\bigl(L_N(t_N)-L_N(0)\bigr).
\]
Multiplying by $s$ yields
\[
 K_N^X(s)
 \le
 D_0\frac{N^{1-a}}{t_N}s^2
 =
 D_0Q_Ns^2.
\]
Thus the mean-deficit estimate produces a quadratic centered moment
bound below the crossover tilt.

For $t_N\le s\le s_*$, we instead use the second hypothesis directly:
\[
 \begin{aligned}
 K_N^X(s)
 &=s\bigl(L_N(s)-L_N(0)\bigr)\\
 &\le
 Ns\left[
   M_0N^{-a}+C_0s^5
 \right].
 \end{aligned}
\]
Since $s\ge t_N$ implies $N^{-a}=t_N^5\le s^5$,
\[
 K_N^X(s)\le D_0Ns^6.
\]
This gives the second line of \eqref{eq:mean-moment-window}.

As $s\to0$,
\[
 K_N^X(s)=\frac{s^2}{2}\Var(X_N)+o(s^2).
\]
Comparing this expansion with the quadratic bound above gives
\[
 \Var(X_N)\le2D_0Q_N.
\]
Equation \eqref{eq:chen-negative-moment} then gives, for every $s<0$,
\[
 K_N^X(s)
 \le
 \frac{s^2}{2}\Var(X_N)
 \le
 D_0Q_Ns^2.
\]
After enlarging the constant, all three estimates in
\eqref{eq:mean-moment-window} follow.

We next derive the tail bounds. For the lower tail, Chernoff's
inequality and the negative quadratic moment bound give, for every
$s<0$,
\[
 \Prob\{X_N-\E X_N\le-u\}
 \le
 \exp\{K_N^X(s)+su\}.
\]
Choosing
\[
 s=-\frac{u}{2C_1Q_N}
\]
gives
\[
 \Prob\{X_N-\E X_N\le-u\}
 \le
 \exp\left(-c\frac{u^2}{Q_N}\right).
\]

For the upper tail, the two positive-tilt estimates imply, after
enlarging $C_1$ if necessary,
\[
 K_N^X(s)
 \le
 C_1\bigl(Q_Ns^2+Ns^6\bigr),
 \qquad 0\le s\le s_*.
\]
Choose $c_0>0$, depending only on $C_1$, sufficiently small that
\[
 C_1(c_0+c_0^5)\le\frac12,
\]
and set
\[
 s
 =
 c_0\min\left\{
   \frac{u}{Q_N},
   \left(\frac{u}{N}\right)^{1/5}
 \right\}.
\]
Then
\[
 Q_Ns^2\le c_0su,
 \qquad
 Ns^6\le c_0^5su,
\]
and hence
\[
 K_N^X(s)\le\frac12su.
\]
For $u\le c_2N$, where $c_2>0$ is chosen sufficiently small in terms
of $c_0$ and $s_*$, this tilt satisfies $s\le s_*$. Chernoff's
inequality therefore gives
\[
 \Prob\{X_N-\E X_N\ge u\}
 \le e^{-su/2},
\]
and consequently
\[
 \Prob\{X_N-\E X_N\ge u\}
 \le
 \exp\left[
  -c\min\left\{
    \frac{u^2}{Q_N},
    \frac{u^{6/5}}{N^{1/5}}
  \right\}
 \right]
\]
throughout this range.

For $u>c_2N$, the Gaussian Lipschitz concentration inequality gives
\[
 \Prob\{X_N-\E X_N\ge u\}
 \le
 \exp\left(-\frac{u^2}{2L^2N}\right).
\]
In this range,
\[
 \frac{u^2/N}{u^{6/5}/N^{1/5}}
 =
 \left(\frac{u}{N}\right)^{4/5}
 \ge c_2^{4/5}.
\]
Thus, after decreasing the constant in the exponent, the
same upper-tail estimate holds for all $u\ge0$. This proves
\eqref{eq:mean-moment-tails}.

Finally, let $s<0$. Since
\[
 \log\E e^{sX_N}
 =
 s\E X_N+K_N^X(s),
\]
we have the exact identity
\[
 A_N-\frac1{Ns}\log\E e^{sX_N}
 =
 A_N-\frac{\E X_N}{N}
 +\frac{K_N^X(s)}{N|s|}.
\]
Jensen's inequality gives $K_N^X(s)\ge0$, so the left-hand side is
nonnegative. Using \eqref{eq:chen-negative-moment} and the variance
bound,
\[
 \frac{K_N^X(s)}{N|s|}
 \le
 \frac{|s|}{2N}\Var(X_N)
 \le
 C|s|\frac{Q_N}{N}
 =
 C|s|N^{-4a/5}.
\]
Together with the mean-deficit hypothesis, this proves
\eqref{eq:mean-negative-replicas}.

To locate the crossover in the upper-tail estimate, solve
\[
 \frac{u^2}{Q_N}
 =
 \frac{u^{6/5}}{N^{1/5}}.
\]
Since $Q_N=N^{1-4a/5}$, this gives $
 u=N^{1-a}$, 
and substitution into either exponent gives $
 N^{1-6a/5}$.
\end{proof}

\section{Two-sided upper-tail bounds}\zlabel{sec:tails}

\subsection{Consequences of a sixth-power moment window}

We now convert matching sixth-power bounds on logarithmic moments of
$Y_N-NA_N$ into matching upper and lower bounds for the upper-tail
probability. A tilt $s$ probes a per-particle deviation of size
$x\asymp s^5$, while
\[
 \ell_N(s)\asymp Ns^6\asymp Nx^{6/5}.
\]
The lower moment bound is assumed only for
$s\gtrsim N^{-\tau}$, so the matching lower bound on the upper-tail probability begins at
$x\gtrsim N^{-5\tau}$. The restriction $\tau<1/6$ ensures that the
corresponding exponent $N^{1-6\tau}$ tends to infinity.

The same assumptions also control the law obtained by exponentially
tilting $Y_N$. Convexity of the normalized logarithmic moment
$\ell_N(s)/s$ forces the tilted mean to move by order $Ns^5$ and,
on the same tilt window, gives a lower bound of order $Ns^4$ on the
tilted variance. These estimates will be used both for the
positive-temperature free energy and, after the zero-temperature
rescaling, for the ground-state energy.

\begin{proposition}\zlabel{prop:sixth-window-consequences}
Let $0<\tau<1/6$. For each $N\ge1$, let $Y_N$ be a random variable
with finite exponential moments of all orders, let $A_N$ be
deterministic, and define
\[
 \ell_N(s)
 =
 \log\E e^{s(Y_N-NA_N)}.
\]
Assume that
\[
 s\longmapsto\frac{\ell_N(s)}{s}
\]
is convex on $(0,\infty)$. Suppose further that there are fixed
constants
\[
 a_0,b_0,\kappa,s_*>0
\]
and an integer $N_{\mathrm m}$ such that, for every
$N\ge N_{\mathrm m}$,
\begin{equation}\label{eq:sixth-window-hypotheses}
 \begin{aligned}
 \ell_N(s)
 &\le b_0Ns^6,
 &&0<s\le s_*,\\
 \ell_N(s)
 &\ge a_0Ns^6,
 &&\kappa N^{-\tau}\le s\le s_*.
 \end{aligned}
\end{equation}

Then there exist constants
\[
 A_x,x_*,c_x,C_x>0
\]
and an integer $N_0<\infty$, depending only on
$a_0,b_0,\kappa,s_*,\tau$, and $N_{\mathrm m}$, such that, for every
$N\ge N_0$ and
\[
 A_xN^{-5\tau}\le x\le x_*,
\]
the upper-tail probability satisfies
\begin{equation}\label{eq:sixth-window-tails}
 \exp\{-C_xNx^{6/5}\}
 \le
 \Prob\{Y_N\ge N(A_N+x)\}
 \le
 \exp\{-c_xNx^{6/5}\}.
\end{equation}
If only the upper bound in
\eqref{eq:sixth-window-hypotheses} is assumed, the upper probability
bound in \eqref{eq:sixth-window-tails} still holds for every
$0<x\le x_*$.

For $s>0$, let $\E_s$ denote expectation under the exponentially
tilted law
\[
 \E_s f(Y_N)
 =
 \frac{\E[f(Y_N)e^{sY_N}]}{\E e^{sY_N}},
\]
and write
\[
 \Var_s(Y_N)
 =
 \E_s\bigl[(Y_N-\E_sY_N)^2\bigr].
\]
After increasing $N_0$ if necessary, for every $N\ge N_0$ and
\[
 \kappa N^{-\tau}\le s\le s_*/2,
\]
we have
\begin{equation}\label{eq:sixth-window-tilted}
 \begin{gathered}
 a_0Ns^5
 \le
 \E_sY_N-NA_N
 \le
 64b_0Ns^5,\\
 \Var_s(Y_N)\ge a_0Ns^4.
 \end{gathered}
\end{equation}

In particular, if the constants and the onset
$\kappa N^{-\tau}$ in \eqref{eq:sixth-window-hypotheses} are uniform
over an additional parameter, then the constants in
\eqref{eq:sixth-window-tails} and
\eqref{eq:sixth-window-tilted} may be chosen uniformly over that
parameter as well.
\end{proposition}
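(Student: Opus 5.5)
The proof has three parts: a Chernoff bound for the upper tail, convexity of $\ell_N(s)/s$ for the tilted mean and variance, and a Paley--Zygmund argument under the tilt for the lower tail. Throughout, put $x=\theta s^5$ for suitable constants $\theta$.

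\emph{Upper tail.} Chernoff with $s>0$ gives
\[
 \Prob\{Y_N\ge N(A_N+x)\}\le \exp\{\ell_N(s)-sNx\}\le\exp\{N(b_0s^6-sx)\}.
\]
Take $s=(x/(2b_0))^{1/5}$, which requires $x\le x_*:=2b_0s_*^5$. Then $b_0s^6=sx/2$, and the exponent is $-Nsx/2=-c_xNx^{6/5}$. Only the upper hypothesis enters, so this holds for every $0<x\le x_*$.

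\emph{Tilted mean and variance.} Set $\psi(s)=\ell_N(s)/s$, which is convex, and note $\E_sY_N-NA_N=\ell_N'(s)=\psi(s)+s\psi'(s)$.

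For the lower bound on the mean, convexity gives $\psi'(s)\ge(\psi(s)-\psi(s/2))/(s/2)$. Since $\ell_N$ is convex with $\ell_N(0)=0$, $\psi$ is nondecreasing, and $\ell_N'(s)\ge \ell_N(s)/s\ge a_0Ns^5$ on the lower window. So the lower mean bound is immediate.

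For the upper bound on the mean, use $\ell_N'(s)\le(\ell_N(2s)-\ell_N(s))/s\le b_0N(2s)^6/s=64b_0Ns^5$. This needs $2s\le s_*$, which explains the restriction $s\le s_*/2$.

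For the variance, write $\Var_s=\ell_N''(s)=2\psi'(s)+s\psi''(s)\ge 2\psi'(s)$, using $\psi''\ge0$ (approximate by smooth functions if needed). Convexity gives $\psi'(s)\ge(\psi(s)-\psi(s'))/(s-s')$ for $s'<s$. Take $s'=\epsilon s$ with $\epsilon$ small. Then $\psi(s)\ge a_0Ns^5$ and $\psi(\epsilon s)\le b_0N\epsilon^5s^5$, so
\[
 \psi'(s)\ge \frac{(a_0-b_0\epsilon^5)Ns^5}{s}=(a_0-b_0\epsilon^5)Ns^4.
\]
Choosing $\epsilon$ so that $b_0\epsilon^5\le a_0/2$ gives $\Var_s\ge a_0Ns^4$. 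This step uses the lower bound only at $s$, and the upper bound at $\epsilon s$, which is valid for all small tilts.

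\emph{Lower tail bound.} Given $x$, choose $s=Kx^{1/5}$ with $K$ large, so that $a_0s^5\ge 2x$ and the tilted mean exceeds $N(A_N+2x)$. The range $x\ge A_xN^{-5\tau}$ ensures $s\ge\kappa N^{-\tau}$, and $x\le x_*$ small ensures $2s\le s_*$. Write $Z=Y_N-NA_N$ and $E=\{Z\ge Nx\}$. Then
\[
 \Prob(E)=e^{\ell_N(s)}\E_s[e^{-sZ}\1_E].
\]
Restricting to the event $\{Nx\le Z\le N x'\}$ with $x'\asymp s^5$ bounds this below by
\[
 e^{\ell_N(s)-sNx'}\,\Prob_s\{Nx\le Z\le Nx'\}.
\]
The exponent is of order $-Ns^6\asymp -Nx^{6/5}$. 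It remains to show the tilted probability is at least a constant, or at least not smaller than $e^{-CNx^{6/5}}$.

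For $\Prob_s\{Z\ge Nx\}$, apply Paley--Zygmund to $e^{sZ}$-type weights, i.e.\ use the $2s$ moment as the summary indicates:
\[
 \Prob_s\{Z\ge \tfrac12\E_sZ\}\ge \frac{(\E_s Z)^2}{4\,\E_sZ^2}.
\]
Alternatively, control $\E_s e^{sZ}=e^{\ell_N(2s)-\ell_N(s)}\le e^{CNs^6}$ and use Chernoff under $\Prob_s$ for the upper truncation: $\Prob_s\{Z\ge Nx'\}\le e^{\ell_N(2s)-\ell_N(s)-sNx'}$, which is $\le 1/2\cdot\Prob_s\{Z\ge Nx\}$ once $x'=Ms^5$ with $M$ large.

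\emph{Main obstacle.} The hard part is the lower bound on $\Prob_s\{Z\ge Nx\}$. Since $\E_sZ\ge a_0Ns^5\ge 2Nx$, a second-moment bound needs $\E_sZ^2\lesssim(\E_s Z)^2$. Here $\Var_s$ is only bounded below, not above, so I would instead argue with moment generating functions. On $\{Z<Nx\}$ we have $e^{sZ}\le e^{sNx}$, hence
\[
 \Prob\{Z\ge Nx\}\ge \E[e^{2sZ}\1_E]\,/\,\E e^{2sZ}\cdot(\ldots).
\]
More concretely, use
\[
 \E[e^{sZ}\1_{Z\ge Nx}]\ge e^{\ell_N(s)}-e^{sNx}\ge \tfrac12e^{\ell_N(s)},
\]
valid since $\ell_N(s)\ge a_0Ns^6\ge 2sNx$ and this is $\gg1$. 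Cauchy--Schwarz then gives
\[
 \Prob(E)\ge \frac{\E[e^{sZ}\1_E]^2}{\E e^{2sZ}}\ge \tfrac14e^{2\ell_N(s)-\ell_N(2s)}\ge \tfrac14e^{-64b_0Ns^6}.
\]
This is exactly the Paley--Zygmund at $s,2s$, and it yields $\Prob(E)\ge e^{-C_xNx^{6/5}}$, absorbing $\tfrac14$ since $Nx^{6/5}\gtrsim N^{1-6\tau}\to\infty$. Uniformity over an extra parameter is automatic, since all constants depend only on $a_0,b_0,\kappa,s_*,\tau$.
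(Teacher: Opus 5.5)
Your proposal is correct and follows essentially the same route as the paper: Chernoff for the upper tail; Paley--Zygmund at tilts $s$ and $2s$ (your Cauchy--Schwarz truncation) for the lower bound, with $\tfrac14$ absorbed since $Nx^{6/5}\gtrsim N^{1-6\tau}$; the forward secant of $\ell_N$ up to $2s$ for the tilted-mean upper bound; and the backward secant of the convex function $\ell_N(s)/s$ for $\Var_s\ge a_0Ns^4$. The only small deviation is that you get the tilted-mean lower bound from $\ell_N'(s)\ge \ell_N(s)/s$, using convexity of $\ell_N$ and $\ell_N(0)=0$, where the paper writes $\ell_N'=H+sH'$ and uses $H'\ge0$. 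Also state explicitly that dropping $\ell_N(s)$ in the mean upper bound and in $2\ell_N(s)-\ell_N(2s)$ relies on $\ell_N(s)\ge a_0Ns^6\ge0$ on the lower window.
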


\begin{proof}
We first choose the constants so that the lower moment window is
nonempty and the Paley--Zygmund argument below has a nonvanishing
exponential scale. Increase $N_0$ so that, for every $N\ge N_0$,
\[
 \kappa N^{-\tau}\le\frac{s_*}{2},
 \qquad
 a_0\kappa^6N^{1-6\tau}\ge2\log2.
\]
This is possible because $\tau<1/6$. Since the upper and lower bounds
in \eqref{eq:sixth-window-hypotheses} hold simultaneously on a
nonempty interval for all sufficiently large $N$, necessarily
$a_0\le b_0$.

Set
\[
 A_x=\frac{a_0}{2}\kappa^5,
 \qquad
 x_*=
 \min\left\{
   \frac{a_0}{2}\left(\frac{s_*}{2}\right)^5,
   6b_0s_*^5
 \right\}.
\]

\medskip
\emph{Upper-tail bound.}
Fix $0<x\le x_*$. Chernoff's inequality gives, for every
$0<s\le s_*$,
\[
 \Prob\{Y_N-NA_N\ge Nx\}
 \le
 \exp\{\ell_N(s)-Nsx\}
 \le
 \exp\{N(b_0s^6-sx)\}.
\]
Choose
\[
 s=\left(\frac{x}{6b_0}\right)^{1/5}.
\]
Because $a_0\le b_0$ and $x\le x_*$, this choice lies in
$(0,s_*]$. Substitution gives 
\[
 \Prob\{Y_N-NA_N\ge Nx\}
 \le
 \exp\left\{
 -5\cdot6^{-6/5}b_0^{-1/5}Nx^{6/5}
 \right\}.
\]
This proves the upper bound in \eqref{eq:sixth-window-tails}; only the
upper moment hypothesis was used.

\medskip
\emph{Lower bound on the upper-tail probability.}
Now assume
\[
 A_xN^{-5\tau}\le x\le x_*,
\]
and choose
\[
 s=\left(\frac{2x}{a_0}\right)^{1/5}.
\]
By the definitions of $A_x$ and $x_*$,
\[
 \kappa N^{-\tau}\le s\le\frac{s_*}{2}.
\]
Hence both $s$ and $2s$ lie in the ranges needed below, and
\[
 a_0Ns^6
 \ge
 a_0\kappa^6N^{1-6\tau}
 \ge2\log2.
\]

Apply Paley--Zygmund to
\[
 Z=e^{s(Y_N-NA_N)}.
\]
Since $\E Z=e^{\ell_N(s)}$,
\[
 \begin{aligned}
 \Prob\left\{
   Z\ge\frac12\E Z
 \right\}
 &\ge
 \frac14\frac{(\E Z)^2}{\E Z^2}\\
 &=
 \frac14
 \exp\{2\ell_N(s)-\ell_N(2s)\}\\
 &\ge
 \frac14e^{-64b_0Ns^6}.
 \end{aligned}
\]
On this event,
\[
 Y_N-NA_N
 \ge
 \frac{\ell_N(s)-\log2}{s}.
\]
The lower moment bound and the choice of $N_0$ give
\[
 \frac{\ell_N(s)-\log2}{s}
 \ge
 \frac{a_0}{2}Ns^5
 =
 Nx.
\]
Therefore
\[
 \Prob\{Y_N-NA_N\ge Nx\}
 \ge
 \frac14e^{-64b_0Ns^6}.
\]
Since
\[
 \log4
 \le
 a_0Ns^6,
\]
we may absorb the prefactor into the exponential:
\[
 \Prob\{Y_N-NA_N\ge Nx\}
 \ge
 \exp\{-(64b_0+a_0)Ns^6\}.
\]
Finally,
\[
 s^6=\left(\frac{2x}{a_0}\right)^{6/5},
\]
so the lower bound in \eqref{eq:sixth-window-tails} follows with, for
example,
\[
 C_x=(64b_0+a_0)\left(\frac{2}{a_0}\right)^{6/5}.
\]

\medskip
\emph{Mean and variance under the tilted law.}
It remains to prove \eqref{eq:sixth-window-tilted}. Define
\[
 H(s)=\frac{\ell_N(s)}{s},
 \qquad s>0,
\]
and set
\[
 d=\max\left\{
 2,\left(\frac{2b_0}{a_0}\right)^{1/5}
 \right\}.
\]
Fix
\[
 \kappa N^{-\tau}\le s\le\frac{s_*}{2}.
\]
The lower moment estimate applies at $s$, while the upper estimate
applies at $s/d$ whether or not $s/d$ remains inside the lower moment
window. Thus
\[
 H(s)\ge a_0Ns^5,
 \qquad
 H(s/d)\le b_0Nd^{-5}s^5.
\]

By convexity of $H$, its derivative at $s$ is bounded below by the
backward secant slope:
\[
 \begin{aligned}
 H'(s)
 &\ge
 \frac{H(s)-H(s/d)}{s-s/d}\\
 &\ge
 \frac{a_0-b_0d^{-5}}{1-d^{-1}}Ns^4.
 \end{aligned}
\]
The definition of $d$ gives
$b_0d^{-5}\le a_0/2$, and $1-d^{-1}\le1$. Hence
\begin{equation}\label{eq:sixth-window-Hprime}
 H'(s)\ge\frac{a_0}{2}Ns^4.
\end{equation}

Because $\ell_N(s)=sH(s)$,
\[
 \ell_N'(s)=H(s)+sH'(s).
\]
Using $H(s)\ge a_0Ns^5$ gives
\[
 \ell_N'(s)\ge a_0Ns^5.
\]
For the upper bound, ordinary convexity of $\ell_N$ gives
\[
 \ell_N'(s)
 \le
 \frac{\ell_N(2s)-\ell_N(s)}{s}.
\]
Here $2s\le s_*$, and the lower moment estimate implies
$\ell_N(s)\ge0$. Therefore
\[
 \ell_N'(s)
 \le
 \frac{\ell_N(2s)}{s}
 \le
 64b_0Ns^5.
\]

Finally, $H$ is convex and smooth on $(0,\infty)$, so
$H''(s)\ge0$. Differentiating $\ell_N=sH$ twice and using
\eqref{eq:sixth-window-Hprime} gives
\[
 \ell_N''(s)
 =
 2H'(s)+sH''(s)
 \ge
 a_0Ns^4.
\]
Finite exponential moments justify these differentiations, and
differentiating the logarithmic moment identifies
\[
 \ell_N'(s)=\E_sY_N-NA_N,
 \qquad
 \ell_N''(s)=\Var_s(Y_N).
\]
Thus
\[
 a_0Ns^5
 \le
 \E_sY_N-NA_N
 \le
 64b_0Ns^5,
 \qquad
 \Var_s(Y_N)\ge a_0Ns^4,
\]
which proves \eqref{eq:sixth-window-tilted}.
\end{proof}

An upper bound on the tilted variance follows from a posterior
identity for the general transitive Gaussian model.

\begin{proposition}\zlabel{prop:tilted-transitive-variance}
Let $S$ be finite and let $(a_\sigma)_{\sigma\in S}\subset\mathbb R^d$.
Assume that a family of orthogonal maps acts transitively on the labels
and preserves the feature family: for every such map $O$ there is a
permutation $\pi_O$ of $S$ such that
\[
 O^Ta_\sigma=a_{\pi_O(\sigma)},
\]
and the induced action on $S$ is transitive. Repeated features are
allowed. For $\kappa>0$, define
\[
 Y_\kappa(g)
 =
 \frac1\kappa\log\sum_{\sigma\in S}
 e^{\kappa a_\sigma\cdot g},
 \qquad
 R=\max_{\sigma\in S}|a_\sigma|,
\]
where $g$ is standard Gaussian in $\mathbb R^d$. Let $\Var_s$ denote
variance under the exponentially tilted law with density proportional
to $e^{sY_\kappa(g)}$. Then
\begin{equation}\label{eq:tilted-transitive-cap}
 \Var_s(Y_\kappa)\le R^2,
 \qquad 0\le s\le\kappa.
\end{equation}
For
\[
 Y_\infty(g)=\max_{\sigma\in S}a_\sigma\cdot g,
\]
the same bound holds under its exponential tilt for every finite
$s\ge0$.
\end{proposition}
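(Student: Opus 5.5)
The plan is to mimic the fixed-spin posterior device of \zcref{prop:fn-integrated-variance} and the pinning step of \zcref{prop:variance-sensitive-moments}. I would introduce a spin variable, show that conditionally on the spin the Gaussian coordinates have a uniformly log-concave law, and then apply Brascamp--Lieb together with transitivity.

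\emph{Step 1: the joint law.} Write $Z(g)=\sum_\sigma e^{\kappa a_\sigma\cdot g}$, so that $e^{sY_\kappa}=Z^{\theta}$ with $\theta=s/\kappa\in[0,1]$. On $\mathbb R^d\times S$ consider the joint law with density proportional to
\[
 Z(g)^{\theta-1}e^{\kappa a_\sigma\cdot g}
\]
relative to $\varphi_d\otimes$ counting measure. Summing over $\sigma$ shows that its $g$-marginal is exactly the tilted law, with density proportional to $e^{sY_\kappa}\varphi_d$. Integrability is clear, since $Z^{\theta-1}e^{\kappa a_\sigma\cdot g}\le Z^\theta\le e^{\kappa R|g|}|S|$.

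\emph{Step 2: the fixed-spin posterior is uniformly log-concave.} Fix $\sigma$. The negative log-density of $g$ given $\sigma$, relative to Lebesgue measure, is, up to a constant,
\[
 V_\sigma(g)=\frac{|g|^2}{2}-\kappa a_\sigma\cdot g+(1-\theta)\kappa Y_\kappa(g).
\]
Here $Y_\kappa$ is convex as a scaled log-sum-exp, and $1-\theta\ge0$, so $\nabla^2V_\sigma\succeq I$. Brascamp--Lieb, in the form \eqref{eq:fn-brascamp-lieb} with $Q\succeq I$, then gives
\[
 \Var(Y_\kappa\mid\sigma)\le\E\bigl[|\nabla Y_\kappa|^2\,\big|\,\sigma\bigr].
\]
Since $\nabla Y_\kappa(g)$ is a Gibbs average of the vectors $a_\tau$, we have $|\nabla Y_\kappa|\le R$, and so the conditional variance is at most $R^2$. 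The condition $s\le\kappa$ is used exactly here: it keeps the coefficient $1-\theta$ of the convex term nonnegative.

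\emph{Step 3: remove the conditioning by transitivity.} Take an orthogonal $O$ with $\pi_O(\sigma)=\tau$. Then $Y_\kappa(Og)=Y_\kappa(g)$, because $O$ only permutes the features, so $Z(Og)=Z(g)$. Also $\varphi_d$ is $O$-invariant, and $a_\sigma\cdot Og=O^Ta_\sigma\cdot g=a_\tau\cdot g$. Changing variables $g\mapsto Og$ therefore maps the conditional density given $\sigma$ to the conditional density given $\tau$, with equal normalizations. Consequently the conditional law of $Y_\kappa$ does not depend on the spin. In particular the conditional mean is constant, and the law of total variance gives $\Var_s(Y_\kappa)=\Var(Y_\kappa\mid\sigma)\le R^2$, which is \eqref{eq:tilted-transitive-cap}.

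\emph{Step 4: the case $Y_\infty$.} Fix $s\ge0$ and let $\kappa\to\infty$, so that eventually $s\le\kappa$. We have $Y_\infty\le Y_\kappa\le Y_\infty+\log|S|/\kappa$ uniformly in $g$, and all of these are dominated by $R|g|+\log|S|$. Dominated convergence then gives convergence of $\E[Y_\kappa^je^{sY_\kappa}]$ for $j=0,1,2$, and hence of the tilted variances, so the bound $R^2$ passes to the limit.

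The only step needing care is Step 2, namely the Brascamp--Lieb application for the merely convex, nonsmooth-at-infinity potential. Since $Y_\kappa$ is smooth with bounded Hessian for finite $\kappa$ and $V_\sigma$ has Gaussian tails, the cutoff and approximation remark made after \eqref{eq:fn-brascamp-lieb} suffices.
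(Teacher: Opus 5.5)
Your proof is correct and is essentially the paper's argument. Your fixed-spin posterior $Z^{\theta-1}e^{\kappa a_\sigma\cdot g}\varphi_d$ is exactly the paper's pinned law $q_s$, with negative log-density $|g|^2/2-\kappa a_\ast\cdot g+(\kappa-s)Y_\kappa$, and your joint-law and total-variance formulation repackages the paper's observation that, by transitivity, $Y_\kappa$ has the same law under $q_s$ as under the tilt; Brascamp--Lieb with $|\nabla Y_\kappa|\le R$ and the dominated-convergence passage to $Y_\infty$ then proceed identically.
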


\begin{proof}
Let $M=|S|$, let $\varphi_d$ denote standard Gaussian measure on
$\mathbb R^d$, and fix a label $\ast\in S$. Define the Gibbs weights
\[
 p_\sigma(g)
 =
 \frac{e^{\kappa a_\sigma\cdot g}}
      {\sum_{\tau\in S}e^{\kappa a_\tau\cdot g}}
 =
 e^{\kappa a_\sigma\cdot g-\kappa Y_\kappa(g)}.
\]
The symmetry assumption implies
\[
 Y_\kappa(Og)=Y_\kappa(g),
\]
and, by orthogonal invariance of $\varphi_d$, for every bounded
measurable $h$ the quantity
\[
 \int
 h(Y_\kappa(g))e^{sY_\kappa(g)}p_\sigma(g)\,
 \varphi_d(\dd g)
\]
is independent of $\sigma$. Summing over $\sigma$ and using
$\sum_\sigma p_\sigma=1$ shows that the distribution of $Y_\kappa$
under its exponential tilt is the same as its distribution under
\begin{equation}\label{eq:tilted-pinned-law}
 q_s(\dd g)
 =
 \frac{
 M e^{\kappa a_\ast\cdot g-(\kappa-s)Y_\kappa(g)}
 }{
 \E e^{sY_\kappa(g)}
 }
 \,\varphi_d(\dd g).
\end{equation}
Taking $h\equiv1$ also verifies that $q_s$ is a probability measure.
This argument is unchanged when different labels carry the same
feature vector.

We now use the curvature of the pinned density. Relative to Lebesgue
measure, its negative logarithm is, up to an additive constant,
\[
 V_s(g)
 =
 \frac{|g|^2}{2}
 -\kappa a_\ast\cdot g
 +(\kappa-s)Y_\kappa(g).
\]
Since $Y_\kappa$ is convex,
\[
 \nabla^2V_s(g)
 =
 I+(\kappa-s)\nabla^2Y_\kappa(g)
 \succeq I
\]
whenever $0\le s\le\kappa$. Brascamp--Lieb therefore gives
\[
 \Var_{q_s}(Y_\kappa)
 \le
 \E_{q_s}\!\left[
 \nabla Y_\kappa^T
 (\nabla^2V_s)^{-1}
 \nabla Y_\kappa
 \right]
 \le
 \E_{q_s}|\nabla Y_\kappa|^2.
\]
Moreover,
\[
 \nabla Y_\kappa(g)
 =
 \sum_{\sigma\in S}p_\sigma(g)a_\sigma.
\]
This is a convex combination of vectors of norm at most $R$, and hence
\[
 |\nabla Y_\kappa(g)|\le R.
\]
Because $Y_\kappa$ has the same tilted distribution under $q_s$ as
under the original Gaussian law, we conclude that
\[
 \Var_s(Y_\kappa)
 =
 \Var_{q_s}(Y_\kappa)
 \le R^2,
\]
which proves \eqref{eq:tilted-transitive-cap}.

Finally,
\[
 0\le Y_\kappa(g)-Y_\infty(g)
 \le\frac{\log M}{\kappa}
\]
for every $g$. Thus $Y_\kappa\to Y_\infty$ uniformly as
$\kappa\to\infty$. For each fixed finite $s\ge0$, we may take
$\kappa\ge s$ and apply \eqref{eq:tilted-transitive-cap}. In addition,
\[
 |Y_\kappa(g)|
 \le R|g|+\frac{\log M}{\kappa},
\]
so, for $j=0,1,2$, the quantities
\[
 |Y_\kappa(g)|^j e^{sY_\kappa(g)}
\]
are dominated, for all sufficiently large $\kappa$, by an integrable
function of the form
\[
 C(1+|g|^2)e^{sR|g|}.
\]
Dominated convergence therefore passes the tilted zeroth, first, and
second moments to the limit. Hence
\[
 \Var_s(Y_\infty)
 =
 \lim_{\kappa\to\infty}\Var_s(Y_\kappa)
 \le R^2,
\]
as claimed.
\end{proof}

\section{Fluctuations uniformly at low temperature}
\zlabel{sec:ground-fluctuations}

Fix $\beta_0>1$. Throughout this section $\beta\ge\beta_0$, and
constants and integer thresholds may depend only on $\beta_0$.

\subsection{Rescaling for the zero-temperature regime}

We now rewrite the scalar and finite-$N$ variational problems in the
variables that remain of order one as $\beta\to\infty$. Fix
$\beta_0>1$ and consider $\beta\ge\beta_0$. For a positive-temperature
cumulative profile $a$ and fractional-moment parameter $s$, set
\[
 \gamma=\beta a,
 \qquad
 \lambda=\beta s.
\]
We also rescale the spatial variable by
$x_{\mathrm{old}}=\beta x$. Thus
\[
 u_\gamma^{\mathrm{en}}(q,x)
 =
 \frac1\beta u_{\gamma/\beta}(q,\beta x),
 \qquad
 \cP_\beta^{\mathrm{en}}(\gamma)
 =
 \frac1\beta\cP_\beta(\gamma/\beta).
\]
In these variables the Parisi PDE has coefficient $\gamma$, rather
than $a$, and its terminal value is
\[
 f_\beta(x)
 =
 \frac1\beta\log(2\cosh(\beta x)).
\]

For later use, write
\[
 X_{N,\beta}=\frac{F_{N,\beta}}{\beta},
 \qquad
 E_\beta(\lambda)
 =
 \frac1\beta P_\beta(\lambda/\beta).
\]
The rescaled scalar functional has penalty
\[
 \frac12\int_0^1q\gamma(q)\,\dd q.
\]
Its admissible profiles are nondecreasing and right-continuous on
$[0,1)$, with
\[
 0\le\gamma(q)\le\beta,
 \qquad
 \gamma(0-)=0.
\]
The normalization of the original probability measure requires total
mass $\beta$. Any missing mass may be placed at $q=1$; this changes
only the endpoint value $\gamma(1)=\beta$ and affects neither the PDE
nor the integral penalty. The constraint that the original measure
have mass at least $s=\lambda/\beta$ at zero becomes
\[
 \gamma(q)\ge\lambda,
 \qquad 0\le q<1.
\]
Consequently $E_\beta(\lambda)$ is the minimum of the rescaled
functional over profiles satisfying this lower bound. Notice that
$\dd\gamma$ has total mass $\beta$, not one, and that
$\gamma\ge\lambda$ is a lower bound on the cumulative profile. It
allows additional mass at zero and is not a bound on the density of
$\dd\gamma$.

The variational definition is meaningful for
$0\le\lambda\le\beta$. Below we use it only on a fixed interval
independent of $\beta$:
\[
 0<\lambda\le\lambda_0,
 \qquad
 \lambda_0\le\min\{1,\beta_0/2\}.
\]
Define the completed finite-$N$ normalized log moment by
\[
 \psi^c_{N,\beta}(\lambda)
 =
 \frac1{N\lambda}
 \log\E e^{\lambda(X_{N,\beta}+D_N)},
 \qquad \lambda>0,
\]
and set
\[
 \psi^c_{N,\beta}(0)
 =
 \frac{\E X_{N,\beta}}{N},
\]
using $\E D_N=0$. Applying
\eqref{eq:uniform-parisi} with $s=\lambda/\beta$ and dividing by
$\beta$ gives
\begin{equation}\label{eq:gs-full-fv}
 0
 \le
 E_\beta(\lambda)-\psi^c_{N,\beta}(\lambda)
 \le
 C\min\left\{
   N^{-1/2},
   N^{-2/3}\lambda^{-1/3}
 \right\},
 \qquad 0<\lambda\le\lambda_0.
\end{equation}
The important point is that the right-hand side is now independent of
$\beta$.

We will also need the finite-$N$ interpolation in these rescaled
variables. Put
\[
 r=1-t,
 \qquad
 J_{ij}=\sqrt{\frac{t}{N}}\,g_{ij}\quad(i<j),
 \qquad
 J_{ii}=0.
\]
For a fixed profile $\gamma$, let $u$ solve
\begin{equation}\label{eq:gs-energy-interpolation}
\begin{gathered}
 \partial_q u
 +\frac r2\bigl(\Delta u+\gamma(q)|\nabla u|^2\bigr)=0,\\
 u(1,y;J)
 =
 \frac1\beta
 \log\sum_{\sigma\in\{-1,1\}^N}
 \exp\left\{
   \beta\left(\frac12\sigma^TJ\sigma+y\cdot\sigma\right)
 \right\},
 \\[2mm]
 \mathfrak F_t^c(\gamma)
 =
 \frac1{N\lambda}\log\E e^{\lambda u(0,0;J)}
 -\frac r2\int_0^1q\gamma(q)\,\dd q
 +\frac{\lambda t}{4N},
 \qquad
 V(t)=\min_{\gamma\ge\lambda}\mathfrak F_t^c(\gamma).
\end{gathered}
\end{equation}
At $\lambda=0$, the first term in $\mathfrak F_t^c$ is interpreted as
\[
 \frac1N\E u(0,0;J).
\]
The functional in \eqref{eq:gs-energy-interpolation} is exactly
$\beta^{-1}\mathcal F_{t,\lambda/\beta}$ after the spatial rescaling.
Here the matrix $J$ contains only the off-diagonal couplings. The
diagonal Gaussian completion is independent of $J$ and can be
integrated explicitly; its normalized logarithmic moment is
$\lambda t/(4N)$, which accounts for the final deterministic term.

For reference, denote the terminal function in
\eqref{eq:gs-energy-interpolation} by
\[
 f_{\beta,J}(y)=u(1,y;J).
\]
For a fixed profile $\gamma$, let $Y_q$, conditional on $J$, solve
\[
 \dd Y_q
 =
 r\gamma(q)\nabla u(q,Y_q;J)\,\dd q+\sqrt r\,\dd B_q,
 \qquad
 Y_0=0,
\]
where $B$ is standard $N$-dimensional Brownian motion. Take
expectations under the normalized joint law induced by this
interpolation and define
\[
 m_q=\nabla u(q,Y_q;J),
 \qquad
 H_q=\nabla^2u(q,Y_q;J),
\]
together with the deterministic functions
\[
 S(q)=\frac1N\E|m_q|^2,
 \qquad
 b(q)=\frac1N\E\Tr H_q,
 \qquad
 \zeta(q)=\frac1N\E\Tr H_q^2.
\]
Thus $S(q)$ is the expectation of the random observable
$|m_q|^2/N$ used in \zcref{sec:finite-n}, expressed after the present
rescaling.

The martingale identities from
\eqref{eq:fn-martingale-sdes} become
\[
 S'(q)=r\zeta(q),
 \qquad
 b'(q)=-r\gamma(q)\zeta(q).
\]
At zero external field, global spin-flip symmetry gives
\[
 S(0)=0.
\]
Moreover $|m_{q,i}|\le1$, so
\[
 0\le S(q)\le1.
\]
Finally, Cauchy--Schwarz gives
\[
 \zeta(q)
 =
 \frac1N\E\Tr H_q^2
 \ge
 \left(\frac1N\E\Tr H_q\right)^2
 =
 b(q)^2.
\]
These identities will be the finite-$N$ inputs for the variational
estimates below.

\subsection{Uniform scalar estimates near zero and the cost of imposing a floor}

We now study the unconstrained scalar minimizer near $q=0$. The goal
is to show, uniformly for $\beta\ge\beta_0>1$, that its rescaled
profile grows linearly from zero. This local information will then be
used to estimate the increase in the scalar variational minimum caused
by the constraint
\[
 \gamma(q)\ge\lambda.
\]

We first record the continuity and variational facts needed below.
They follow from the scalar version of the finite-temperature
calculation leading to \eqref{eq:abs-first-variation}, with $N=1$,
$r=1$, and no outer exponential tilt. That calculation uses only the
finite-temperature Parisi PDE and does not depend on any of the
near-zero estimates proved in this subsection.

For any two admissible scalar profiles $\gamma$ and
$\widetilde\gamma$,
\[
 \left|
 \cP_\beta^{\mathrm{en}}(\gamma)
 -\cP_\beta^{\mathrm{en}}(\widetilde\gamma)
 \right|
 \le
 \|\gamma-\widetilde\gamma\|_{L^1([0,1])}.
\]
Indeed, the first-variation formula bounds the change in the PDE value
by
\[
 \frac12\|\gamma-\widetilde\gamma\|_{L^1},
\]
while the change in the penalty
\[
 -\frac12\int_0^1 q\gamma(q)\,\dd q
\]
is bounded by the same quantity. If the measures associated with a
sequence of admissible profiles converge weakly, then their cumulative
functions converge at every continuity point of the limiting
cumulative function. Since these functions are uniformly bounded by
$\beta$, dominated convergence gives convergence in $L^1([0,1])$.
The admissible measures form a weakly compact set, so the preceding
continuity implies existence of a scalar minimizer.

Fix one such unconstrained minimizer and denote its rescaled
cumulative profile by $\gamma_\beta$. We use here the full interval
support theorem \cite[Theorem~1.1]{Lopatto}: for some
$q_\beta<1$,
\[
 \supp\bigl(\beta^{-1}\dd\gamma_\beta\bigr)
 =
 [0,q_\beta],
\]
and
\[
 \gamma_\beta(q)=\beta,
 \qquad q_\beta<q\le1.
\]
Thus every point of $(0,q_\beta)$ is a contact point for the scalar
first variation. 

Let $u_\beta$ be the scalar Parisi PDE solution associated with
$\gamma_\beta$. To obtain a uniform lower bound on
$\gamma_\beta(q)/q$ for small $q$, we will first prove that
\[
 -u_{\beta,xxxx}(0,0)
\]
is bounded below by a positive constant independent of
$\beta\ge\beta_0$. The required estimate is obtained from a
parabolic boundary comparison applied to
\[
 W(s,x)=-u_{\beta,xxx}(T-s,x),
\]
where the time $T>0$ is chosen uniformly in $\beta$ in the proof of
\zcref{lem:gs-scalar-root}. The next lemma isolates that comparison.
Its assumptions record explicitly the three quantities on which the
estimate depends: the bound on the drift, the amount of positive mass
in the initial data, and the spatial regularity of that data.

\begin{lemma}
\zlabel{lem:scalar-boundary-strictness}
Let $T>0$ and $M\ge1$. Suppose
\[
 W:[0,T]\times[0,5]\to[0,\infty)
\]
satisfies the boundary condition
\[
 W(s,0)=0,\qquad 0\le s\le T,
\]
and, for almost every $s\in[0,T]$,
\[
 W_s
 =
 \frac12W_{xx}+B(s,x)W_x+C(s,x)W
\]
on $(0,5)$, where $B$ and $C$ are bounded and measurable and
\[
 0\le B(s,x)\le2,\qquad C(s,x)\ge0.
\]
Assume that $W,W_x,W_{xx}$ are continuous up to the spatial boundary,
that $W$ is absolutely continuous in $s$, and that
\[
 |W_x(s,x)|\le M
 \qquad\text{on }[0,T]\times[0,5].
\]
Finally, suppose that the initial profile contains a fixed amount of
mass away from the boundary:
\[
 \int_0^4W(0,x)\,\dd x\ge\frac34.
\]

Define
\[
 a=\frac3{32},
 \qquad
 R=\min\left\{\frac14,\frac3{32M}\right\},
\]
and then
\[
 A=2+\frac6T,
 \qquad
 K=\frac3{R^2}+\frac32A^2,
 \qquad
 \eta=ae^{-KT},
 \qquad
 \alpha=1+\frac6T.
\]
Then the inward spatial derivative at the boundary satisfies the
explicit lower bound
\[
 W_x(T,0)
 \ge
 a_0(T,M)
 :=
 4\eta\alpha e^{-4\alpha}
 >0.
\]

Thus, for fixed $T$ and $M$, the lower bound is uniform over all
solutions satisfying the hypotheses above. No uniform upper bound on
$C$ is needed across such a family: it is enough that, for each
solution, $C$ is bounded and nonnegative.
\end{lemma}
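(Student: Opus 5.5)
The plan is a barrier (Hopf-type) argument in two stages. First, propagate a quantitative positive lower bound $W\ge\eta$ from the initial mass to a fixed interior region at every time $s\in[0,T]$. Second, compare $W$ with an explicit exponential subsolution on a boundary layer near $x=0$ to extract the slope $W_x(T,0)$.

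Throughout, the structure of the equation is used as follows. Since $C\ge0$ and $W\ge0$, we have $W_s\ge\frac12W_{xx}+BW_x$. Hence $W$ is a supersolution of a linear equation with drift in $[0,2]$ and no zeroth-order term. The comparison principle for that linear operator therefore applies without any upper bound on $C$, which explains the final remark of the statement.

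\emph{Step 1 (initial localization).} From $\int_0^4W(0,x)\dd x\ge\frac34$ and $W(0,\cdot)\le 5M$, we cannot directly locate a point with $W(0,x_0)\ge$ something. Instead we use the Lipschitz bound $|W_x|\le M$ together with $W(0,0)=0$. The mass condition forces a point $x_0\in[0,4]$ with $W(0,x_0)\ge\frac3{16}=2a$. Lipschitz continuity then gives $W(0,\cdot)\ge a$ on $[x_0-R,x_0+R]\cap[0,5]$, since $MR\le\frac3{32}=a$. The Lipschitz bound also forces $x_0\ge 2a/M$, so the interval has length at least $R$. This explains the choices of $a$ and $R$.

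\emph{Step 2 (interior propagation).} On $[0,T]$ we compare $W$ with the subsolution
\[
\phi(s,x)=\eta' e^{-Ks}\Bigl(1-\tfrac{(x-x_0-As)^2}{R^2}\Bigr)_+ ,
\]
a bump of height $ae^{-Ks}$ whose center moves with speed $A$ chosen to dominate the drift. The restriction $x_0+As$ must stay inside $(0,5)$, or the bump must be replaced by a version clipped at the walls. The constant $K=3/R^2+\frac32A^2$ is exactly what makes $\phi_s\le\frac12\phi_{xx}+B\phi_x$ where $\phi>0$. This uses $|B|\le2\le A$ and an elementary inequality in $(x-x_0-As)/R$.

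The comparison yields $W\ge\eta=ae^{-KT}$ along a path of points up to time $T$. Combining this with time-independent barriers, I would arrange that $W(s,x)\ge\eta$ at $x=4$, or at some fixed interior $x$, for all $s\in[T/2,T]$, or for $s$ in an interval of length comparable to $T$. The factor $6/T$ in $A$ signals that the center must travel a distance of order $6$ in time $T$, sweeping the domain.

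\emph{Step 3 (boundary slope).} On the layer $[0,4]\times[s_0,T]$, compare $W$ with
\[
\psi(s,x)=\eta\,\frac{1-e^{-\alpha x}}{1-e^{-4\alpha}}\cdot\theta(s),
\]
or a similar exponential profile, which vanishes at $x=0$. It is a subsolution because $\frac12\psi_{xx}+B\psi_x\ge\psi(\ldots)$ is arranged by $\alpha=1+6/T$ absorbing the time cutoff $\theta$. It lies below $W$ on the parabolic boundary: on $x=0$ both are zero, on $x=4$ by Step 2, and at the initial time by the cutoff. The maximum principle gives $W\ge\psi$. Since both vanish at $x=0$, this yields $W_x(T,0)\ge\psi_x(T,0)\gtrsim 4\eta\alpha e^{-4\alpha}$; the exact constant follows from the chosen normalization.

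\emph{Main obstacle.} The delicate part is Step 2. A moving bump must be kept inside $(0,5)$ while the drift pushes rightward, and the lower bound must be made to hold at a fixed interior line for a whole time interval rather than at a single moving point. I would first try a bump moving left at speed $A$ against the drift, restarted from Step 1's interval. If geometry fails, I would use a time-independent Dirichlet-type barrier on $[x_0-R,x_0+R]$.

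The regularity hypotheses (continuity of $W_{xx}$ up to the boundary, absolute continuity in $s$) are used only to justify the weak maximum principle and the one-sided derivative at $x=0$.
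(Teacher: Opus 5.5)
Your two-stage architecture matches the paper's: first transport a positive bump from the initial mass to a fixed interior line, then use a Hopf-type barrier at $x=0$. Step~1 is fine; averaging over $[0,4]$ gives the point $x_0$ directly, despite your opening sentence. Your remark that $CW\ge0$ makes $W$ a supersolution of $\partial_s-\frac12\partial_{xx}-B\partial_x$ is also correct, and it spares you both the $\|C\|_\infty$ in the paper's Gronwall step and the localized comparison. However, both explicit barriers you write down fail, and those barriers are the entire content of the lemma.

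In Step~2, $\phi=\eta' e^{-Ks}g(x-c(s))$ with $g(z)=(1-z^2/R^2)_+$ is not a subsolution near the edge of its support, for any $K$. Since $-\frac12g''=R^{-2}$ does not vanish where $g\to0$, you get $(\partial_s-\frac12\partial_{xx}-B\partial_x)\phi\to\eta' e^{-Ks}\bigl[R^{-2}\pm2(B+c')R^{-1}\bigr]$ as $z\to\pm R$, and at least one of these limits is positive. The constant $K=3/R^2+\frac32A^2$ is tailored to the cubed bump $\psi(z)=(1-z^2/R^2)_+^3$. That bump is $C^2$, convex near its edge, and satisfies $A|\psi'|-\frac12\psi''\le K\psi$, by $2uv\le u^2+v^2$ and $y(y-1)(y-2)\ge0$ for $y=1-z^2/R^2\in[0,1]$.

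You have also misread the role of $A$. It is not the speed of the center; a center moving at speed $A$ leaves $(0,5)$ before time $T$, since $AT=2T+6>5$. Rather, $A$ bounds $|B+c'|$. Because the bump inequality absorbs any relative drift of size at most $A$, the center may follow any path with $|c'|\le6/T$. The paper moves it linearly from $x_0$ to $1$ on $[0,T/2]$, so that $|c'|\le2|x_0-1|/T\le6/T$, and then parks it at $1$. This gives $W(s,1)\ge ae^{-Ks}\ge\eta$ for every $s\in[T/2,T]$, which is exactly the fixed-line, whole-interval bound you leave open as your main obstacle.

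In Step~3, $\psi=\eta\,\theta(s)(1-e^{-\alpha x})/(1-e^{-4\alpha})$ is concave in $x$. At $x=0$ you have $\psi=\psi_s=0$ but $\frac12\psi_{xx}<0$ whenever $\theta(s)>0$. So for $B\equiv0$ the inequality $\psi_s\le\frac12\psi_{xx}+B\psi_x$ fails near $x=0$, however $\theta$ and $\alpha$ are chosen. As in the classical Hopf lemma, the barrier must be convex in $x$ near the boundary and must rise in time from nonpositive initial values.

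The paper works on $[T/2,T]\times[0,1]$ with $V=\eta(F-e^{-4\alpha})$, where $F=\exp\{-\alpha((2-x)^2+\kappa(T-s))\}$, $\kappa=6/T$, $\alpha=\kappa+1$. The quantity $(2-x)^2+\kappa(T-s)$ is at least $1+3=4$ at $s=T/2$ and at least $4$ at $x=0$. Hence $F\le e^{-4\alpha}$ and $V\le0\le W$ there, while $V(s,1)\le\eta\le W(s,1)$ by Step~2. Moreover $(\partial_s-\frac12\partial_{xx}-B\partial_x)V=\eta\alpha\bigl[\kappa+1-2\alpha(2-x)^2-2B(2-x)\bigr]F\le-\eta\alpha^2F$, because $(2-x)^2\ge1$ and $B\ge0$. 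Comparison together with $W(T,0)=V(T,0)=0$ then gives $W_x(T,0)\ge V_x(T,0)=4\eta\alpha e^{-4\alpha}$. That is where the constant $a_0(T,M)$ comes from; note that it uses the interior bound at $x=1$, not at $x=4$.
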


\begin{proof}
The argument has two stages. First we propagate the lower bound in the
initial profile to a uniform lower bound for $W(s,1)$ on
$T/2\le s\le T$. We then use this interior lower bound as boundary data
for a second barrier on $[0,1]$, which yields a positive derivative at
$x=0$ at time $T$.

\medskip
\emph{Comparison principle.}
We first record the comparison argument used for both barriers. Let
$z$ satisfy
\[
 z_s-\frac12z_{xx}-Bz_x-Cz\ge0
\]
on a bounded space-time rectangle, with nonnegative initial data and
nonnegative values on the spatial boundary. Put
\[
 z_-=\max\{-z,0\}.
\]
Multiplying the differential inequality by $-2z_-$ and integrating in
space gives
\[
 \frac{\dd}{\dd s}\|z_-\|_2^2
 \le
 -\frac12\|(z_-)_x\|_2^2
 +(2\|B\|_\infty^2+2\|C\|_\infty)\|z_-\|_2^2.
\]
Indeed, integration by parts produces
$-\|(z_-)_x\|_2^2$, while
\[
 2\int |B|z_-|(z_-)_x|
 \le
 \frac12\|(z_-)_x\|_2^2
 +2\|B\|_\infty^2\|z_-\|_2^2.
\]
The zeroth-order term contributes at most
$2\|C\|_\infty\|z_-\|_2^2$. Approximating the negative part by smooth
convex functions justifies the calculation under the stated spatial
regularity, and absolute continuity in time is enough to integrate the
resulting differential inequality. Since $z_-$ vanishes initially,
Gronwall's inequality gives $z_-\equiv0$, hence $z\ge0$.

We will also use the following localized version. Suppose $W\ge0$ and
$v$ satisfies
\[
 v_s-\frac12v_{xx}-Bv_x-Cv\le0
\]
only on the set where $v>0$, with $v\le W$ on the parabolic boundary.
Then the same comparison applies to $z=W-v$: its negative part is
supported on
\[
 \{W-v<0\}\subset\{v>W\ge0\}\subset\{v>0\},
\]
precisely where the required differential inequality for $v$ holds.
Thus $W\ge v$ everywhere. Notice that neither argument differentiates
$B$ or $C$ in time.

\medskip
\emph{Step 1: Produce a lower bound at $x=1$.}
The assumption
\[
 \int_0^4W(0,x)\,\dd x\ge\frac34
\]
implies that there is $x_0\in[0,4]$ such that
\[
 W(0,x_0)\ge\frac{3}{16}.
\]
Since $W(0,0)=0$ and $|W_x|\le M$,
\[
 x_0\ge\frac{3}{16M}\ge2R.
\]
Moreover, because
\[
 R\le\frac{3}{32M},
\]
we have
\[
 W(0,x)
 \ge
 \frac{3}{16}-MR
 \ge
 \frac{3}{32}
 =a
 \qquad\text{whenever }|x-x_0|\le R.
\]
The choice $R\le1/4$ also ensures that this interval lies inside
$(0,5)$.

We construct a compactly supported bump that can be transported from
$x_0$ to $1$. Define
\[
 \psi(z)
 =
 \left(1-\frac{z^2}{R^2}\right)_+^3.
\]
Then $\psi\in C^2(\mathbb R)$, $0\le\psi\le1$, and $\psi(0)=1$.
For $|z|<R$, write
\[
 y=1-\frac{z^2}{R^2}.
\]
A direct calculation gives
\[
 \psi'
 =
 -\frac{6z}{R^2}y^2,
 \qquad
 \psi''
 =
 \frac{24z^2}{R^4}y-\frac6{R^2}y^2.
\]
For any $A>0$,
\[
 \frac{6A|z|}{R^2}y^2
 \le
 \frac{6z^2}{R^4}y+\frac32A^2y^3,
\]
by $2uv\le u^2+v^2$. Hence
\[
 \begin{aligned}
 A|\psi'|-\frac12\psi''
 &\le
 \frac3{R^2}(3y^2-2y)
 +\frac32A^2y^3\\
 &\le
 \left(\frac3{R^2}+\frac32A^2\right)y^3
 =K\psi.
 \end{aligned}
\]
For the second inequality we used
\[
 y^3-3y^2+2y
 =
 y(y-1)(y-2)\ge0,
 \qquad 0\le y\le1.
\]
At $|z|\ge R$, the function and its first two derivatives vanish, so
the same inequality holds everywhere.

Let $c(s)$ move linearly from $x_0$ to $1$ during
$0\le s\le T/2$ and remain equal to $1$ for $T/2\le s\le T$.
Since $x_0\in[0,4]$,
\[
 |c'(s)|
 \le
 \frac{2|x_0-1|}{T}
 \le\frac6T
\]
for almost every $s$. The $R$-neighborhood of the path of $c$ remains
inside $(0,5)$. Define
\[
 v(s,x)
 =
 ae^{-Ks}\psi(x-c(s)).
\]
On the support of $v$, its parabolic operator is
\[
 \begin{aligned}
 &v_s-\frac12v_{xx}-Bv_x-Cv\\
 &\qquad=
 ae^{-Ks}
 \left[
 -K\psi-\frac12\psi''
 -(B+c')\psi'-C\psi
 \right].
 \end{aligned}
\]
Because
\[
 |B+c'|\le2+\frac6T=A
\]
and $C\ge0$, the preceding bound on $\psi$ gives
\[
 v_s-\frac12v_{xx}-Bv_x-Cv\le0.
\]
At time zero, $v(0,\cdot)\le W(0,\cdot)$ by the construction of
$x_0$ and $R$, while $v=0\le W$ at $x=0$ and $x=5$.
Comparison therefore yields $W\ge v$. For $T/2\le s\le T$ we have
$c(s)=1$, and hence
\[
 W(s,1)
 \ge
 ae^{-Ks}
 \ge
 ae^{-KT}
 =\eta.
\]

\medskip
\emph{Step 2: Convert the bound at $x=1$ into a boundary derivative.}
We now work on the rectangle
\[
 [T/2,T]\times[0,1].
\]
Set
\[
 \kappa=\frac6T,
 \qquad
 \alpha=\kappa+1=1+\frac6T,
\]
and define
\[
 F(s,x)
 =
 \exp\left\{
 -\alpha\bigl((2-x)^2+\kappa(T-s)\bigr)
 \right\},
\]
\[
 V(s,x)
 =
 \eta\bigl(F(s,x)-e^{-4\alpha}\bigr).
\]
We first verify the parabolic boundary inequalities. At $s=T/2$,
\[
 (2-x)^2+\kappa T/2
 \ge
 1+3=4
 \qquad (0\le x\le1),
\]
so $V\le0\le W$. At $x=0$,
\[
 (2-x)^2=4,
\]
and therefore $V(s,0)\le0=W(s,0)$. At $x=1$,
\[
 F(s,1)\le1,
\]
so
\[
 V(s,1)\le\eta\le W(s,1)
\]
by Step~1.

It remains to check that $V$ is a subsolution wherever it is positive.
A direct differentiation gives
\[
 \begin{aligned}
 &(\partial_s-\tfrac12\partial_{xx}
      -B\partial_x-C)V\\
 &\qquad=
 \eta\alpha
 \left[
 \kappa+1
 -2\alpha(2-x)^2
 -2B(2-x)
 \right]F
 -CV.
 \end{aligned}
\]
On $0\le x\le1$,
\[
 (2-x)^2\ge1,
 \qquad
 B\ge0,
 \qquad
 C\ge0.
\]
Since $\alpha=\kappa+1$, on the set $\{V>0\}$ we therefore have
\[
 \kappa+1-2\alpha(2-x)^2-2B(2-x)
 \le
 \alpha-2\alpha
 \le0,
\]
and also $-CV\le0$. Hence $V$ is a subsolution wherever $V>0$.
The localized comparison principle gives
\[
 W\ge V
 \qquad\text{on }[T/2,T]\times[0,1].
\]

At the point $(T,0)$,
\[
 W(T,0)=V(T,0)=0.
\]
For $x>0$,
\[
 \frac{W(T,x)}{x}
 \ge
 \frac{V(T,x)}{x}.
\]
Letting $x\downarrow0$ and using the assumed continuity of the first
spatial derivatives gives
\[
 W_x(T,0)
 \ge
 V_x(T,0).
\]
Finally,
\[
 V_x(T,0)
 =
 4\eta\alpha e^{-4\alpha}
 =
 a_0(T,M).
\]
This proves the claimed uniform positive lower bound.
\end{proof}

Parabolic smoothing gives the spatial derivative bounds needed to
apply \zcref{lem:scalar-boundary-strictness} uniformly in
$\beta\ge\beta_0$. The resulting positive lower bound on
$-u_{\beta,xxxx}(0,0)$ can then be inserted into the scalar contact
identities. This yields two-sided linear control of the minimizing
profile $\gamma_\beta$ near $q=0$, together with a uniform bound on
its slope. Consequently, the trial profile
$\max\{\lambda,\gamma_\beta\}$ differs from $\gamma_\beta$ only on
an interval of length of order $\lambda$. The first variation
vanishes there at the unconstrained minimizer, and the excess
variational cost of this trial profile is at most of order
$\lambda^5$.

\begin{lemma}
\zlabel{lem:gs-scalar-root}
There exist constants
\[
 T_*>0,\qquad k>0,\qquad C_0<\infty,\qquad
 C_1<\infty,\qquad \lambda_*>0,
\]
depending only on $\beta_0$, such that the following holds for every
$\beta\ge\beta_0$.

Let $\gamma_\beta$ be an unconstrained minimizer of the rescaled scalar
Parisi functional. Then
\[
 \gamma_\beta(0)=0,
\]
and $\gamma_\beta$ is absolutely continuous on $[0,T_*]$. On this
interval,
\[
 kq\le\gamma_\beta(q)\le C_0q,
 \qquad 0\le q\le T_*,
\]
and
\[
 0\le\gamma_\beta'(q)\le C_0
\]
for almost every $q\in[0,T_*]$.

For $0<\lambda\le\lambda_*$, define
\[
 \gamma_{\beta,\lambda}^{\mathrm{tr}}(q)
 =
 \max\{\lambda,\gamma_\beta(q)\}.
\]
This profile satisfies the constraint
$\gamma_{\beta,\lambda}^{\mathrm{tr}}\ge\lambda$ and is therefore
admissible for the variational problem defining $E_\beta(\lambda)$.
Set
\[
 A_\beta(\lambda)
 =
 \cP_\beta^{\mathrm{en}}
 \bigl(\gamma_{\beta,\lambda}^{\mathrm{tr}}\bigr).
\]
Then
\begin{equation}
 E_\beta(0)
 \le
 E_\beta(\lambda)
 \le
 A_\beta(\lambda)
 \le
 E_\beta(0)+C_1\lambda^5.
 \label{eq:gs-scalar-trial-cost}
\end{equation}
Thus imposing the floor $\gamma\ge\lambda$ increases the scalar
variational minimum by at most order $\lambda^5$, uniformly in
$\beta\ge\beta_0$.
\end{lemma}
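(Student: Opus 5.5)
I will work with the scalar rescaled Parisi PDE for $\gamma_\beta$, with terminal value $f_\beta$, and its diffusion $Y_q$. I write $S(q)=\E u_{\beta,x}(q,Y_q)^2$, $b(q)=\E u_{\beta,xx}(q,Y_q)$ and $\zeta(q)=\E u_{\beta,xx}(q,Y_q)^2$, so that $S'=\zeta$ and $b'=-\gamma_\beta\zeta$. Interval support gives $S(q)=q$ on $[0,q_\beta]$, hence $\zeta\equiv1$ there and $b'=-\gamma_\beta$. The fact $\gamma_\beta(0)=0$ comes from the definition $\gamma(0-)=0$, together with the fact that zero-field minimizers carry no atom at $0$; if needed, an atom at $0$ would give $b'(0)=-\gamma(0)$ with $b(0)=u_{xx}(0,0)$, contradicting the identity below at $q=0$. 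To get the profile I differentiate the contact identity once more. Writing $\zeta(q)=\E u_{xx}^2(q,Y_q)=1$, Itô's formula gives $\zeta'(q)=\E[-u_{xxx}^2+\ldots]+$ terms involving $\gamma_\beta$. The standard outcome (Auffinger--Chen regularity on support intervals) is an identity
\[
\gamma_\beta(q)=\frac{\E u_{xxx}^2(q,Y_q)}{\E\bigl(u_{xx}u_{xxxx}\bigr)\ \text{or}\ \E u_{xx}^3\text{-type}}
\]
which I would derive explicitly, making $\gamma_\beta$ absolutely continuous where the denominator is positive. Near $q=0$ we have $Y_0=0$ and $u_{xxx}(0,0)=0$ by oddness. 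So the numerator vanishes at $q=0$ and grows like $q\,\bigl(u_{xxxx}(0,0)\bigr)^2$, while the denominator stays close to its value at $(0,0)$. This is where the linear behaviour $kq\le\gamma_\beta\le C_0q$ comes from.

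\textbf{Step 1 (uniform regularity).} All spatial derivatives of $u_\beta$ up to order five are bounded on $[0,1-\delta']\times\R$, uniformly in $\beta\ge\beta_0$. The argument is that $f_\beta$ is $1$-Lipschitz and convex, the PDE has coefficient $\gamma\le\beta$, and on $[0,q_\beta]$ we have $\gamma_\beta\le C$ because $\int\gamma_\beta=1$ and $q_\beta\ge1-\beta_0^{-1}$. Parabolic smoothing over a time gap of order one then gives derivative bounds at times $q\le q_\beta/2$ that do not depend on $\beta$. The same bounds also give a uniform modulus of continuity in $q$ for $\E u_{xxx}^2$ and the other quantities appearing in the identity.

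\textbf{Step 2 (strict fourth derivative; the main obstacle).} The lower bound $k>0$ requires $-u_{\beta,xxxx}(0,0)\ge c(\beta_0)>0$. I will apply Lemma~\ref{lem:scalar-boundary-strictness} to $W(s,x)=-u_{\beta,xxx}(T-s,x)$, rescaled in $x$ if needed. The inputs are as follows.
\begin{itemize}
\item $W(s,0)=0$ holds by oddness.
\item $W\ge0$ for $x\ge0$ should follow from the known sign structure of odd derivatives of $f_\beta$ being preserved by the equation.
\item Differentiating the PDE three times gives the required linear equation, with drift $B=\gamma u_x\in[0,2]$ after using $0\le u_x\le1$ for $x\ge0$ and $\gamma\le C$ on the range considered, and with $C=3\gamma u_{xx}\ge0$.
\item $|W_x|\le M$ holds by Step 1.
\item The mass condition comes from $\int_0^4 W(0,x)\,\dd x=u_{xx}(T,0)-u_{xx}(T,4)$, where $u_{xx}(T,0)\ge b$-type lower bounds and $u_{xx}(T,4)$ is small because of Gaussian decay of convexity away from the origin.
\end{itemize}
The lemma then gives $W_x(T,0)=-u_{xxxx}(0,0)\ge a_0(T,M)$. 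The delicate points are verifying the sign $W\ge0$ and the mass bound uniformly in $\beta$, which forces the choice of $T_*$.

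\textbf{Step 3 (trial cost).} From the linear bounds, $\gamma^{\mathrm{tr}}-\gamma_\beta$ is supported on $[0,\lambda/k]$ and bounded by $\lambda$. I expand $\cP^{\mathrm{en}}_\beta$ along the segment. The first variation is $\frac12\int g\,\dd(\gamma^{\mathrm{tr}}-\gamma_\beta)$ with $g'=q-S=0$ on the support, so it vanishes. The second variation involves the response $\delta S$ of $S$, and I will bound it by $C\int_0^T\delta\gamma\,\dd q\cdot T\lesssim\lambda T^2$ using Step 1 derivative bounds. Integrating this against $\delta\gamma$ over $[0,T]$ gives $C\lambda^2T^3$, which for $T\asymp\lambda$ is $C\lambda^5$. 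The remaining inequalities $E_\beta(0)\le E_\beta(\lambda)\le A_\beta(\lambda)$ are immediate from admissibility.
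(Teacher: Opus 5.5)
Your outline follows the paper's proof step for step. It has uniform parabolic bounds on an early strip, the boundary-strictness lemma applied to $W(s,x)=-u_{\beta,xxx}(T-s,x)$ to get $-u_{\beta,xxxx}(0,0)\ge c_0$, the quotient formula obtained by differentiating the contact condition $\E u_{\beta,xx}(q,Y_q)^2=1$, and a second variation of order $\lambda^2T^3$ with $T\asymp\lambda$. Your $\partial_\theta S_\theta$ bookkeeping is equivalent to the paper's It\^o computation for $\partial_\theta^2u_\theta$, provided you check that the smoothing bounds hold along the whole segment $\gamma_\theta$. They do, because they use only $|u_x|\le1$ and $\gamma\le2$ on $[0,1/2]$.

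Several intermediate claims need correcting.
\begin{itemize}
\item The bound ``$\gamma_\beta\le C$ on $[0,q_\beta]$'' is false, since $\gamma_\beta(q_\beta)=\beta$. The usable bound is $(1-q)\gamma_\beta(q)\le\int_q^1\gamma_\beta\le1$. The identity $\int_0^1\gamma_\beta=1$ must be proved, not quoted: integrate $b'=-\gamma_\beta S'$ from $b(0)=u_{\beta,xx}(0,0)=1$ to $b(1)=\beta(1-S(1))$. The value $b(1)$ is read off from the explicit solution $u_\beta=f_\beta+\beta(1-q)/2$ on $[q_\beta,1]$.
\item It\^o's formula gives $\frac{\dd}{\dd q}\E h_q^2=\E j_q^2-2\gamma_\beta(q)\E h_q^3$, with a plus sign on $\E u_{\beta,xxx}^2$. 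Hence $\gamma_\beta=\E j_q^2/(2\E h_q^3)$, and the denominator is at least $(\E h_q^2)^{3/2}=1$. The alternative denominator $\E(u_{xx}u_{xxxx})$ you mention is negative near $q=0$.
\item The equality $\gamma_\beta(0)=0$ should be read off from this formula and right-continuity. It should not come from a general no-atom property, which fails for $\beta\le1$, where the minimizer is $\delta_0$.
\item For the linear lower bound you must also absorb the term $-6\gamma_\beta\E(h_qj_q^2)\ge-C\E j_q^2$ in the evolution of $\E j_q^2$. The bound on $\gamma_\beta'$ comes from differentiating the quotient.
\end{itemize}

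The step that is genuinely missing is the mass hypothesis $\int_0^4W(0,x)\,\dd x\ge3/4$ of the boundary lemma. ``Gaussian decay of convexity away from the origin'' is not an argument. Making such decay uniform in $\beta$ would need additional work, because $\gamma_\beta$ rises to $\beta$ near $q_\beta$.

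No decay estimate is needed. Once $W\ge0$ is known, $u_{\beta,xx}(T,\cdot)$ is even and nonincreasing on $[0,\infty)$. Also $\int_0^\infty u_{\beta,xx}(T,x)\,\dd x=1$, because $u_{\beta,x}(T,0)=0$ and $u_{\beta,x}(T,x)\to1$. Hence $4u_{\beta,xx}(T,4)\le1$. At a contact time $T<q_\beta$, the identity $\E u_{\beta,xx}(T,X_T)^2=1$ forces $u_{\beta,xx}(T,0)=\max_xu_{\beta,xx}(T,x)\ge1$, which yields exactly the required $3/4$. Your lower bound $u_{\beta,xx}(T,0)\ge b(T)=1-\int_0^T\gamma_\beta$ loses $O(T)$ against that constant.

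Finally, choose $T\le1/8$ with $T<1-\beta_0^{-1}\le q_\beta$. This keeps $\gamma_\beta\le2$ on the strip, so $0\le B\le2$ holds without any rescaling in $x$.
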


\begin{proof}
Let $u_\beta$ denote the scalar Parisi PDE solution associated with
the unconstrained minimizer $\gamma_\beta$. We prove the result in
four steps. First we obtain uniform bounds on $\gamma_\beta$ and on
the spatial derivatives of $u_\beta$ on a fixed interval near
$q=0$. 
We then use \zcref{lem:scalar-boundary-strictness} to obtain a uniform
positive lower bound on $-u_{\beta,xxxx}(0,0)$. The scalar contact
identities convert this strictness into two-sided linear bounds for
$\gamma_\beta$. Finally, we estimate the cost of replacing
$\gamma_\beta$ by $\max\{\lambda,\gamma_\beta\}$.

\medskip
\emph{Step 1: A uniform early-time strip and spatial regularity.}
Let $X_q$ be the scalar diffusion associated with $\gamma_\beta$,
started from $X_0=0$, and define
\[
 S(q)=\E u_{\beta,x}(q,X_q)^2.
\]
By the interval-support theorem, every
$q\in[0,q_\beta]$ is a contact point. We use the scalar contact and
differential identities from
\cite[Proposition~3 and Theorem~5]{AuffingerChen2015Properties}, written
here along the associated diffusion. The first-variation identity
therefore gives
\[
 S(q)=q,\qquad 0\le q\le q_\beta.
\]
The diffusion identity
\[
 S'(q)=\E u_{\beta,xx}(q,X_q)^2
\]
then gives
\[
 u_{\beta,xx}(0,0)=1,
\]
because $X_0=0$ and $u_{\beta,xx}\ge0$.

We next obtain a bound on the coefficient $\gamma_\beta$ that is
uniform in $\beta$. Set
\[
 b(q)=\E u_{\beta,xx}(q,X_q).
\]
The scalar identities give
\[
 b'(q)=-\gamma_\beta(q)S'(q),
 \qquad
 b(0)=1.
\]
Above the endpoint $q_\beta$ of the support,
$\gamma_\beta=\beta$, and the PDE can be solved explicitly:
\[
 u_\beta(q,x)
 =
 f_\beta(x)+\frac{\beta}{2}(1-q),
 \qquad q_\beta\le q\le1.
\]
Consequently
\[
 b(1)=\beta(1-S(1)).
\]
Integrating $b'=-\gamma_\beta S'$ first on the contact interval,
where $S'=1$, and then on $[q_\beta,1]$, where
$\gamma_\beta=\beta$, gives
\[
 \beta(1-S(1))
 =
 1-\int_0^{q_\beta}\gamma_\beta(q)\,\dd q
 -\beta\bigl(S(1)-q_\beta\bigr).
\]
The terms containing $S(1)$ cancel, and hence
\begin{equation}\label{eq:gs-scalar-mass-identity}
 \int_0^1\gamma_\beta(q)\,\dd q
 =
 \int_0^{q_\beta}\gamma_\beta(q)\,\dd q
 +\beta(1-q_\beta)
 =1.
\end{equation}
Since $\gamma_\beta\ge0$,
\[
 \beta(1-q_\beta)\le1,
\]
so
\[
 q_\beta
 \ge1-\beta^{-1}
 \ge1-\beta_0^{-1}>0.
\]
Moreover, monotonicity of $\gamma_\beta$ and
\eqref{eq:gs-scalar-mass-identity} imply
\[
 (1-q)\gamma_\beta(q)
 \le
 \int_q^1\gamma_\beta(s)\,\dd s
 \le1.
\]
Therefore
\begin{equation}\label{eq:gs-scalar-crude-profile}
 \gamma_\beta(q)\le\frac1{1-q},
 \qquad 0\le q<1.
\end{equation}
In particular,
\[
 0\le\gamma_\beta(q)\le2,
 \qquad 0\le q\le\frac12.
\]

We now obtain spatial derivative bounds that are uniform in
$\beta\ge\beta_0$. Put
\[
 p(s,x)=u_{\beta,x}(1/2-s,x),
 \qquad 0\le s\le\frac12.
\]
Then
\[
 p_s
 =
 \frac12p_{xx}
 +\gamma_\beta(1/2-s)\,p\,p_x,
 \qquad |p|\le1.
\]
Let
\[
 M_1(s)=\|p_x(s,\cdot)\|_\infty.
\]
The heat-kernel representation, together with
$|p|\le1$ and \eqref{eq:gs-scalar-crude-profile}, gives
\[
 M_1(s)
 \le
 Cs^{-1/2}
 +C\int_0^s(s-v)^{-1/2}M_1(v)\,\dd v.
\]
Iterating this Volterra inequality bounds $M_1$ uniformly on every
interval bounded away from $s=0$. Indeed, the $j$-fold convolution
of $s^{-1/2}$ contributes a factor proportional to
\[
 \frac{\Gamma(1/2)^j}{\Gamma(j/2)}.
\]

The same argument applies successively to higher derivatives. Write
\[
 M_j(s)=\|\partial_x^jp(s,\cdot)\|_\infty.
\]
Once derivatives of orders below $j$ are bounded on a strip beginning
at $s_0>0$, differentiate the  equation $j-1$ times and place one
spatial derivative on the heat kernel. In
$\partial_x^{j-1}(pp_x)$, the only term containing the derivative of
order $j$ is $p\,\partial_x^jp$; all remaining factors involve lower
derivatives. Thus
\[
 M_j(s)
 \le
 C_j(s-s_0)^{-1/2}
 +C_j\int_{s_0}^s
 (s-v)^{-1/2}\bigl(1+M_j(v)\bigr)\,\dd v.
\]
Applying the same convolution argument on successively smaller strips
gives
\begin{equation}\label{eq:gs-scalar-smoothing}
 \sup_{\beta\ge\beta_0}
 \sup_{0\le q\le1/4}
 \|\partial_x^ju_\beta(q,\cdot)\|_\infty
 \le C_j,
 \qquad 1\le j\le6.
\end{equation}
The proof uses only $|u_x|\le1$ and
$0\le\gamma\le2$ on $[0,1/2]$. Hence the same constants apply to
any profile satisfying these two bounds. No time derivative of
$\gamma$ is used.

\medskip
\emph{Step 2: A uniform lower bound on $-u_{\beta,xxxx}(0,0)$.}
Set
\[
 w(q,x)=-u_{\beta,xxx}(q,x),
 \qquad x\ge0.
\]
By parity, $w(q,0)=0$. Its linear parabolic equation has
nonnegative terminal data on the half-line, so comparison gives
\[
 w(q,x)\ge0.
\]
Thus $x\mapsto u_{\beta,xx}(q,x)$ is nonincreasing on
$[0,\infty)$.

At each fixed $\beta$, the control representation gives a uniform
bound on $u_\beta(q,x)-|x|$ as $|x|\to\infty$. Convexity and parity
therefore imply
\[
 \int_0^\infty u_{\beta,xx}(q,x)\,\dd x=1.
\]
Choose the common contact time
\[
 T_0
 =
 \min\left\{
 \frac18,\,
 \frac{1-\beta_0^{-1}}2
 \right\}>0.
\]
Since $q_\beta\ge1-\beta_0^{-1}$, we have
$T_0<q_\beta$ for every $\beta\ge\beta_0$.

At this contact point,
\[
 \E u_{\beta,xx}(T_0,X_{T_0})^2=1.
\]
Because $u_{\beta,xx}(T_0,x)$ is nonnegative, even, and decreasing
for $x\ge0$,
\[
 u_{\beta,xx}(T_0,0)\ge1.
\]
The identity
$\int_0^\infty u_{\beta,xx}(T_0,x)\,\dd x=1$ then implies
\[
 u_{\beta,xx}(T_0,4)\le\frac14.
\]
Consequently
\[
 \int_0^4 w(T_0,x)\,\dd x
 =
 u_{\beta,xx}(T_0,0)-u_{\beta,xx}(T_0,4)
 \ge\frac34.
\]

Define
\[
 W(s,x)=w(T_0-s,x),
 \qquad 0\le s\le T_0.
\]
Its equation on $x\ge0$ has the form
\[
 W_s
 =
 \frac12W_{xx}+B(s,x)W_x+C(s,x)W,
\]
with
\[
 B(s,x)
 =
 \gamma_\beta(T_0-s)
 u_{\beta,x}(T_0-s,x),
\]
and
\[
 C(s,x)
 =
 3\gamma_\beta(T_0-s)
 u_{\beta,xx}(T_0-s,x).
\]
For $x\ge0$, convexity and parity give
$0\le u_{\beta,x}\le1$, while
\eqref{eq:gs-scalar-crude-profile} gives
$\gamma_\beta\le2$ on this strip. Hence
\[
 0\le B\le2,
 \qquad
 C\ge0.
\]
The smoothing estimate \eqref{eq:gs-scalar-smoothing} supplies a
constant $M\ge1$, depending only on $\beta_0$, such that
\[
 |W_x|\le M.
\]
It also bounds the higher spatial derivatives appearing in the PDE.
Integrating the PDE in time therefore gives the absolute continuity
required by \zcref{lem:scalar-boundary-strictness}; no derivative of
$\gamma_\beta$ in $q$ is needed.

Applying that lemma on the interval of length $T_0$ yields
\begin{equation}\label{eq:gs-fourth-strict}
 -u_{\beta,xxxx}(0,0)
 =
 W_x(T_0,0)
 \ge
 a_0(T_0,M)
 =:c_0>0,
\end{equation}
where $c_0$ depends only on $\beta_0$.

\medskip
\emph{Step 3: Convert the fourth-derivative bound into linear growth
of $\gamma_\beta$.}
Along the scalar diffusion, write
\[
 h_q=u_{\beta,xx}(q,X_q),\qquad
 j_q=u_{\beta,xxx}(q,X_q),\qquad
 k_q=u_{\beta,xxxx}(q,X_q),
\]
and define the deterministic functions
\[
 \mathcal A(q)=\E j_q^2,
 \qquad
 \mathcal B(q)=\E h_q^3.
\]
On the contact interval, the scalar It\^o identities give
\begin{align}
 \mathcal A'(q)
 &=
 \E k_q^2
 -6\gamma_\beta(q)\E(h_qj_q^2),
 \label{eq:gs-A-evolution}\\
 \mathcal B'(q)
 &=
 -3\gamma_\beta(q)\E h_q^4
 +3\E(h_qj_q^2).
 \label{eq:gs-B-evolution}
\end{align}
Parity gives
\[
 \mathcal A(0)=0.
\]
Moreover, on the contact interval
\[
 \E h_q^2=S'(q)=1,
\]
so monotonicity of $L^p$ norms gives
\begin{equation}\label{eq:gs-B-lower}
 \mathcal B(q)=\E h_q^3
 \ge(\E h_q^2)^{3/2}=1.
\end{equation}

Let $\mathcal B_q$ denote the Brownian motion driving $X_q$. The
fourth derivative satisfies
\[
 \dd k_q
 =
 -\gamma_\beta(q)
 \bigl(4h_qk_q+3j_q^2\bigr)\,\dd q
 +u_{\beta,xxxxx}(q,X_q)\,\dd\mathcal B_q.
\]
On the strip supplied by \eqref{eq:gs-scalar-smoothing}, both the
drift and diffusion coefficients are bounded uniformly in $\beta$.
It\^o isometry and Cauchy--Schwarz therefore give
\[
 \E|k_q-k_0|^2
 \le C(q+q^2)
 \le Cq.
\]
By \eqref{eq:gs-fourth-strict},
$|k_0|\ge c_0$. After decreasing the common strip if necessary,
\[
 \E k_q^2\ge c_1>0.
\]
Using the uniform Hessian bound in
\eqref{eq:gs-A-evolution} gives
\[
 c_1-C\mathcal A(q)
 \le
 \mathcal A'(q)
 \le C
\]
for almost every sufficiently small $q$. Since
$\mathcal A(0)=0$, integration yields
\begin{equation}\label{eq:gs-A-linear}
 c_2q\le\mathcal A(q)\le C_2q
\end{equation}
on a common interval $[0,T_*]$, after decreasing $T_*$ if necessary.

We now use the contact identity once more. Since
$\E h_q^2=1$ on $[0,T_*]$, differentiating this constant expectation
by It\^o's formula gives, for almost every $q$,
\[
 0
 =
 \frac{\dd}{\dd q}\E h_q^2
 =
 \E j_q^2
 -2\gamma_\beta(q)\E h_q^3.
\]
Thus
\begin{equation}\label{eq:gs-gamma-quotient}
 \gamma_\beta(q)
 =
 \frac{\mathcal A(q)}{2\mathcal B(q)}
\end{equation}
for almost every $q\in[0,T_*]$. The functions
$\mathcal A$ and $\mathcal B$ are absolutely continuous, and
$\mathcal B\ge1$. Hence the quotient on the right is absolutely
continuous. Since $\gamma_\beta$ is right-continuous,
\eqref{eq:gs-gamma-quotient} holds for every $q\in[0,T_*)$.
Decreasing $T_*$ once more makes the identity valid on the closed
interval $[0,T_*]$.

In particular,
\[
 \gamma_\beta(0)=0.
\]
The smoothing estimates give a uniform upper bound on
$\mathcal B$, while \eqref{eq:gs-A-linear} and
\eqref{eq:gs-B-lower} give
\[
 kq\le\gamma_\beta(q)\le C_s q,
 \qquad 0\le q\le T_*,
\]
for constants depending only on $\beta_0$. Equations
\eqref{eq:gs-A-evolution}--\eqref{eq:gs-B-evolution} also give
uniform bounds on $\mathcal A'$ and $\mathcal B'$ on this strip.
Differentiating the quotient
\eqref{eq:gs-gamma-quotient} therefore yields
\[
 |\gamma_\beta'(q)|\le C_s
\]
for almost every $q\in[0,T_*]$. Since $\gamma_\beta$ is
nondecreasing,
\[
 0\le\gamma_\beta'(q)\le C_s
\]
almost everywhere. This proves the asserted near-zero regularity.

\medskip
\emph{Step 4: Cost of imposing the floor $\gamma\ge\lambda$.}
For $0<\lambda\le\lambda_*$, set
\[
 \gamma_{\beta,\lambda}^{\mathrm{tr}}
 =
 \max\{\lambda,\gamma_\beta\},
 \qquad
 v
 =
 \gamma_{\beta,\lambda}^{\mathrm{tr}}-\gamma_\beta.
\]
By the lower bound $\gamma_\beta(q)\ge kq$,
\[
 v(q)=0
 \qquad\text{for }q\ge T_\lambda,
 \qquad
 T_\lambda:=\frac{\lambda}{k}.
\]
Choose $\lambda_*>0$, depending only on $\beta_0$, small enough that
\[
 T_\lambda\le T_*
\]
and that all profiles considered below satisfy the uniform early-strip
bounds from Step~1. We also have
\[
 0\le v\le\lambda.
\]

For $0\le\theta\le1$, interpolate between the unconstrained profile
and the trial profile by
\[
 \gamma_\theta
 =
 \gamma_\beta+\theta v.
\]
Let $u_\theta$ be the corresponding scalar solution and
$X_q^\theta$ its scalar diffusion started from zero. Expectations in
the remainder of the proof are taken under this fixed
$\theta$-profile law and are denoted by $\E_\theta$. Set
\[
 m_\theta(q)
 =
 u_{\theta,x}(q,X_q^\theta),
 \qquad
 h_\theta(q)
 =
 u_{\theta,xx}(q,X_q^\theta).
\]
The smoothing argument of Step~1 applies uniformly to
$\gamma_\theta$. Since zero external field gives
$m_\theta(0)=0$, the martingale identity for $m_\theta$ yields
\begin{equation}\label{eq:gs-trial-m-bound}
 \|m_\theta(q)\|_{L^2(\E_\theta)}
 \le C\sqrt q,
 \qquad 0\le q\le T_\lambda.
\end{equation}

Let
\[
 z_\theta=\partial_\theta u_\theta.
\]
For $q\le T_\lambda$, let $X_s^{q,x;\theta}$ denote the scalar
diffusion for $\gamma_\theta$ started from $x$ at time $q$, and let
$\E_{q,x}^\theta$ denote expectation under that law. Since
$v=0$ above $T_\lambda$, differentiation of the PDE gives
\[
 z_\theta(q,x)
 =
 \frac12\E_{q,x}^\theta
 \int_q^{T_\lambda}
 v(s)
 u_{\theta,x}(s,X_s^{q,x;\theta})^2\,\dd s.
\]
The spatial derivative of the diffusion flow is
\[
 J_{q,s}^\theta
 =
 \partial_xX_s^{q,x;\theta}
 =
 \exp\left(
 \int_q^s
 \gamma_\theta(r)
 u_{\theta,xx}(r,X_r^{q,x;\theta})\,\dd r
 \right).
\]
The uniform Hessian bound on the early strip gives
\[
 |J_{q,s}^\theta|\le C.
\]
Differentiating the representation for $z_\theta$ therefore yields
\[
 z_{\theta,x}(q,x)
 =
 \E_{q,x}^\theta
 \int_q^{T_\lambda}
 v(s)
 u_{\theta,x}(s,X_s^{q,x;\theta})
 u_{\theta,xx}(s,X_s^{q,x;\theta})
 J_{q,s}^\theta\,\dd s.
\]
Using the uniform Hessian and flow bounds, followed by conditional
Jensen and Minkowski, gives
\[
 \begin{aligned}
 \|z_{\theta,x}(q,X_q^\theta)\|_{L^2(\E_\theta)}
 &\le
 C\lambda
 \int_q^{T_\lambda}
 \|m_\theta(s)\|_{L^2(\E_\theta)}\,\dd s\\
 &\le
 C\lambda T_\lambda^{3/2}.
 \end{aligned}
\]
Here we used \eqref{eq:gs-trial-m-bound} in the second line.

To control the second variation, set
\[
 y_\theta=\partial_\theta^2u_\theta
\]
and define
\[
 \mathcal L_\theta
 =
 \partial_q+\frac12\partial_{xx}
 +\gamma_\theta u_{\theta,x}\partial_x.
\]
Differentiating the PDE once and twice in $\theta$ gives
\[
 \mathcal L_\theta z_\theta
 =
 -\frac12v u_{\theta,x}^2,
\]
and
\[
 \mathcal L_\theta y_\theta
 =
 -2v u_{\theta,x}z_{\theta,x}
 -\gamma_\theta z_{\theta,x}^2,
\]
with
\[
 z_\theta(1,\cdot)=y_\theta(1,\cdot)=0.
\]
Both functions vanish identically above $T_\lambda$. Applying
It\^o's formula under the fixed $\theta$-profile law and using
$\gamma_\theta\le2$ on the relevant strip gives
\[
 \begin{aligned}
 |y_\theta(0,0)|
 &\le
 2\lambda\int_0^{T_\lambda}
 \|m_\theta(q)\|_2
 \|z_{\theta,x}(q,X_q^\theta)\|_2\,\dd q\\
 &\qquad
 +2\int_0^{T_\lambda}
 \|z_{\theta,x}(q,X_q^\theta)\|_2^2\,\dd q\\
 &\le
 C\lambda^2T_\lambda^3
 +C\lambda^2T_\lambda^4\\
 &\le
 C\lambda^2T_\lambda^3.
 \end{aligned}
\]
All $L^2$ norms in this display are taken with respect to
$\E_\theta$.

The penalty in $\cP_\beta^{\mathrm{en}}$ is affine in $\theta$.
Hence
\[
 \frac{\dd^2}{\dd\theta^2}
 \cP_\beta^{\mathrm{en}}(\gamma_\theta)
 =
 y_\theta(0,0).
\]
The perturbation $v$ is supported in
$[0,T_\lambda]\subset[0,q_\beta]$, where the scalar contact identity
holds. Therefore the first variation at $\theta=0$ vanishes:
\[
 \left.
 \frac{\dd}{\dd\theta}
 \cP_\beta^{\mathrm{en}}(\gamma_\theta)
 \right|_{\theta=0}
 =0.
\]
Taylor's formula with integral remainder now gives
\[
 \begin{aligned}
 A_\beta(\lambda)-E_\beta(0)
 &=
 \cP_\beta^{\mathrm{en}}
   (\gamma_{\beta,\lambda}^{\mathrm{tr}})
 -
 \cP_\beta^{\mathrm{en}}(\gamma_\beta)\\
 &\le
 C\lambda^2T_\lambda^3
 \le
 C\lambda^5,
 \end{aligned}
\]
because $T_\lambda=\lambda/k$. Finally,
$\gamma_\beta$ minimizes the unconstrained problem and
$\gamma_{\beta,\lambda}^{\mathrm{tr}}$ is admissible for the
constrained problem, so
\[
 E_\beta(0)
 \le
 E_\beta(\lambda)
 \le
 A_\beta(\lambda).
\]
Combining these inequalities proves
\eqref{eq:gs-scalar-trial-cost}.
\end{proof}

The upper bound in \zcref{lem:gs-scalar-root} was obtained from one
particular trial profile. For the matching lower bound, we must show
that no admissible profile satisfying the floor constraint can have a
smaller-order cost.

\begin{proposition}
\zlabel{prop:scalar-lower-via-fisher}
There exist constants
\[
 c_0>0,\qquad \lambda_0>0,
\]
depending only on $\beta_0$, such that the following holds. For every
$\beta\ge\beta_0$, every $0<\lambda\le\lambda_0$, and every admissible
scalar profile $\eta$ satisfying
\[
 \eta(q)\ge\lambda,
 \qquad 0\le q<1,
\]
one has
\[
 \cP_\beta^{\mathrm{en}}(\eta)-E_\beta(0)
 \ge c_0\lambda^5.
\]
Equivalently,
\[
 E_\beta(\lambda)-E_\beta(0)\ge c_0\lambda^5
 \qquad
 (\beta\ge\beta_0,\ 0<\lambda\le\lambda_0).
\]
\end{proposition}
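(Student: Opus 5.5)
The plan is to argue along the segment $\gamma_\theta=\gamma_\beta+\theta(\eta-\gamma_\beta)$, $0\le\theta\le1$, from the unconstrained minimizer to the competitor. We first reduce to competitors with $\cP_\beta^{\mathrm{en}}(\eta)\le E_\beta(0)+1$; for the others the claim holds once $c_0\lambda_0^5\le1$.

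\emph{Step 1: uniform control on an initial strip.} For the reduced competitors, the control representation of Section~\ref{subsec:sec-zero-temperature}, adapted to the terminal function $f_\beta$, gives $\int_0^{1/2}\eta\le C$. Indeed, the admissible control $b\equiv1$ yields $\cP^{\mathrm{en}}_\beta(\eta)\ge\frac12\int_0^1(1-q)\eta(q)\,\dd q$. Monotonicity then gives $\eta\le C$ on $[0,1/4]$. The same bounds hold along the whole segment $\gamma_\theta$, since it is a convex combination. The smoothing argument of Step~1 in the proof of \zcref{lem:gs-scalar-root} then gives uniform bounds on $\partial_x^ju_\theta$ on a fixed strip $[0,T_1]$. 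That argument used only $|u_x|\le1$ and a bound on $\gamma$ on the strip, so it applies after rescaling the bound $2$ to $C$.

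\emph{Step 2: convexity along the segment.} Set $v=\eta-\gamma_\beta$. Because $\gamma_\beta$ minimizes, the first variation at $\theta=0$ is nonnegative. Taylor's formula therefore gives
\[
\cP^{\mathrm{en}}_\beta(\eta)-E_\beta(0)\ge\int_0^1(1-\theta)\,y_\theta(0,0)\,\dd\theta,\qquad y_\theta=\partial_\theta^2u_\theta .
\]
Repeating the computation in Step~4 of \zcref{lem:gs-scalar-root}, $\mathcal L_\theta y_\theta=-2vu_{\theta,x}z_{\theta,x}-\gamma_\theta z_{\theta,x}^2$, and It\^o's formula yields
\[
y_\theta(0,0)=\E_\theta\int_0^1\bigl(2v\,m_\theta z_{\theta,x}+\gamma_\theta z_{\theta,x}^2\bigr)\dd q .
\]
Expanding $\partial_\theta^2$ of the log-moment form of $u$ (the Cole--Hopf recursion) shows this equals a sum of conditional variances. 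It is therefore nonnegative on all of $[0,1]$, as is standard for strict convexity of the Parisi functional \cite{AuffingerChen2015Uniqueness}. So we may discard everything except the contribution of a short interval $[0,T]$ and need a lower bound of order $\lambda^2T^3$ there.

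\emph{Step 3: localization.} By \zcref{lem:gs-scalar-root}, $\gamma_\beta(q)\le C_0q$, so $v\ge\lambda/2$ on $[0,T]$ with $T=\lambda/(2C_0)$. On this interval the relevant quantity is the response $z_{\theta,x}$ produced by the part of $v$ supported in $[0,T]$. It is cleaner to use the variance form of the second variation and test with a function supported in $[0,T]$. Concretely, $y_\theta(0,0)$ dominates $\int_0^T\gamma_\theta\,\Var_\theta(\cdot)$ terms. These can be bounded below by the variance of $\int_q^T v(s)\,u_{\theta,x}(s,X_s)^2\,\dd s$ under the tilted law. That quantity is of size $\lambda^2\cdot(\text{spread of }u_x^2\text{ over }[q,T])^2\sim\lambda^2T^2$ once $\E_\theta u_{\theta,xx}^2\ge c$ holds on the strip. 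Integrating against $\gamma_\theta\ge\lambda$ over $q\in[0,T]$ is not needed; integrating $\dd q$ over $[0,T]$ gives $c\lambda^2T^3=c\lambda^5$.

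\emph{Main obstacle.} The main obstacle is the curvature lower bound $\E_\theta u_{\theta,xx}(q,X^\theta_q)^2\ge c>0$ for $q\le T_1$, uniformly in $\theta$, $\beta\ge\beta_0$, and the competitor, under each profile's own law; this is the response estimate deferred to \zcref{app:response-estimates}. I would first try to prove it directly. At $q=0$ we have $X_0=0$, and $u_{\theta,xx}(0,0)$ is the variance of the spin under the one-dimensional tilted measure, which is bounded below using the uniform derivative bounds together with the mass identity $\int_0^\infty u_{xx}=1$, exactly as in Step~2 of \zcref{lem:gs-scalar-root}. The It\^o estimate $\E|h_q-h_0|^2\le Cq$ then propagates the bound to a short strip. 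If that fails, the fallback is the route the outline suggests: differentiate the finite-spin recursion in the terminal energies and apply a conditional Gaussian (Brascamp--Lieb type) lower bound.
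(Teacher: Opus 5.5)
There is a genuine gap. Your reduction to $D\le1$, the strip bounds, the Taylor step and the choice $T=\lambda/(2C_0)$ are all fine. The failure is in Steps~2--3, in the passage from the second variation to a lower bound of order $\lambda^2T^3$.

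\emph{Pointwise sign.} Convexity gives only $y_\theta(0,0)\ge0$, that is, nonnegativity of the $q$-integral of $\E_\theta\bigl(2v\,m_\theta z_{\theta,x}+\gamma_\theta z_{\theta,x}^2\bigr)$. Discarding $q\notin[0,T]$ needs a pointwise sign in $q$, and the cross term $2v\,m_\theta z_{\theta,x}$ has none. The primal Cole--Hopf expansion is also not a sum of conditional variances. Already for a single level, with $P_a$ the law tilted by $e^{aX}$,
\[
 \partial_a^2\Bigl(\frac1a\log\E e^{aX}\Bigr)
 =\frac1a\Var_a(X)-\frac{2}{a^3}\,\mathrm{KL}(P_a\,\|\,P),
\]
which is a difference. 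Its sign comes from Gaussian convexity, not from a variance decomposition.

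\emph{Nonlocality.} $z_{\theta,x}(q,\cdot)$ depends on $v$ on all of $[q,1]$, and $v=\eta-\gamma_\beta$ is completely uncontrolled on $(T,1)$. So ``the contribution of $[0,T]$'' is not even well defined in these variables.

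\emph{Sizes.} As in Step~4 of \zcref{lem:gs-scalar-root}, a perturbation of size $\lambda$ supported on $[0,T]$ gives $\|z_{\theta,x}\|_2\lesssim\lambda T^{3/2}$. Hence the sign-definite term $\gamma_\theta z_{\theta,x}^2$ is only $O(\lambda^2T^4)$, and the order $\lambda^2T^3$ sits entirely in the indefinite cross term. Your Step~3 count fails in the same way. The variance of $\int_q^Tv\,u_{\theta,x}^2\,\dd s$ is $O(\lambda^2T^4)$, not of order $\lambda^2T^2$, since $u_{\theta,x}^2\sim s$ and the integral has length at most $T$. The weight $\gamma_\theta$ is not bounded below by $\lambda$ along the segment (near $\theta=0$ it is about $kq$), so dropping it is unjustified. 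Finally, no anti-concentration is given to bound a variance from below.

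\emph{What is missing.} You need a form of the second variation with a pointwise nonnegative density in $q$ that couples to $v$ only locally, together with a duality argument to bound it from below. The paper obtains this in Legendre-dual magnetization variables (\zcref{prop:gs-response-tools}):
\[
 \frac{\dd^2}{\dd\theta^2}\cP^{\mathrm{en}}_\beta(\gamma_\theta)=\int_0^1e_\theta\,\dd q,
 \qquad
 e_\theta=\E_\theta\Tr(hBhBh)\ge0,
\]
where $B=D_m^2\partial_\theta\Phi_\theta$ solves a linear equation with source $vI$.
\begin{itemize}
\item $\mathcal A=\E_\theta\Tr(h^2B)$ satisfies $\mathcal A'=v\zeta_\theta+O\bigl(\sqrt{\mathcal J_\theta e_\theta}\bigr)$ and $|\mathcal A|\le\sqrt{b_\theta e_\theta}$. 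Testing against a cutoff $\varphi$ supported in $[T/4,3T/4]$ gives \eqref{eq:shared-pairing}.
\item The Fisher balance $\dd\mathcal P=2\mathcal J_\theta\,\dd q-\zeta_\theta\,\dd\gamma_\theta$ gives \eqref{eq:shared-fisher}.
\item With $b_\theta\le50$, $\zeta_\theta\ge c_{48}^2$ and $\gamma_\theta\le48$ on $[0,1/4]$, and $\int|\varphi'|^2\le C/T$, one gets $c\lambda T\le\int v\zeta_\theta\varphi\le CT^{-1/2}\bigl(\int e_\theta\bigr)^{1/2}$.
\end{itemize}
Hence $\int e_\theta\ge c\lambda^2T^3$: the $T^3$ is $(\lambda T)^2$ divided by the $1/T$ cost of localizing. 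Here $v$ enters only through $\int v\zeta_\theta\varphi$ on $\supp\varphi$, which is exactly how its uncontrolled part on $(T,1)$ is neutralized. Your phrase ``test with a function supported in $[0,T]$'' points this way, but without the $\mathcal A'$ identity and the Fisher bound there is no mechanism.

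\emph{Curvature bound.} Your direct route to $\E_\theta u_{\theta,xx}^2\ge c$ also does not work as stated. In Step~2 of \zcref{lem:gs-scalar-root}, the bound $u_{\beta,xx}(T_0,0)\ge1$ came from the contact identity, which holds only for the minimizer. For a competitor, derivative bounds plus $\int_0^\infty u_{xx}=1$ do not prevent $u_{xx}(0,\cdot)$ from spreading thin. The paper instead uses the Gaussian cutoff argument of \zcref{lem:response-curvature}.
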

\begin{proof}
Fix $\beta\ge\beta_0$, $0<\lambda\le\lambda_0$, and an admissible
profile $\eta$ satisfying $\eta\ge\lambda$. Set
\[
 D
 =
 \cP_\beta^{\mathrm{en}}(\eta)-E_\beta(0).
\]
We first prove the desired bound under the additional assumption
$D\le1$. At the end we remove this restriction.

\medskip
\emph{Step 1: Uniform bounds along the segment from the minimizer to
the competitor.}
The profile that vanishes on $[0,1)$ is admissible for the
unconstrained scalar problem. If $Z$ is standard Gaussian, it gives
\[
 E_\beta(0)
 \le
 \E f_\beta(Z)
 \le
 \sqrt{\frac2\pi}+\frac{\log2}{\beta}
 \le
 \sqrt{\frac2\pi}+\log2
 <2.
\]
Hence $D\le1$ implies
\[
 \cP_\beta^{\mathrm{en}}(\eta)<3.
\]

We next convert this sublevel bound into a pointwise bound on $\eta$
near the origin. The constant control $b\equiv1$ in the scalar control
representation gives
\[
 \cP_\beta^{\mathrm{en}}(\eta)
 \ge
 \frac12\int_0^1(1-q)\eta(q)\,\dd q.
\]
Since $\eta$ is nondecreasing,
\[
 \frac12\int_0^1(1-q)\eta(q)\,\dd q
 \ge
 \frac{\eta(1/2)}{16}.
\]
Therefore
\[
 \eta(q)\le\eta(1/2)\le48,
 \qquad 0\le q\le\frac12.
\]
By \zcref{lem:gs-scalar-root}, 
\[
 \gamma_\beta(q)\le2,
 \qquad 0\le q\le\frac12.
\]
Thus every convex interpolation
\[
 \gamma_\theta
 =
 (1-\theta)\gamma_\beta+\theta\eta,
 \qquad 0\le\theta\le1,
\]
satisfies
\begin{equation}\label{eq:gs-lower-segment-bound}
 0\le\gamma_\theta(q)\le48,
 \qquad 0\le q\le\frac12.
\end{equation}

We now invoke the response estimates
\zcref{prop:gs-response-tools,lem:response-fisher} with
$N=1$, $t=0$, $r=1$, and outer tilt equal to zero. The floor constraint $\eta\ge\lambda$ is unrelated to the outer tilt;
the latter is zero for both profiles $\gamma_\beta$ and $\eta$. 

For a fixed $\theta$, let $X^\theta$ denote the scalar diffusion
associated with $\gamma_\theta$, and let $\E_\theta$ denote
expectation under that diffusion law. Write
\[
 h_\theta(q)
 =
 u_{\gamma_\theta,xx}(q,X_q^\theta),
\]
and define
\[
 b_\theta(q)=\E_\theta h_\theta(q),
 \qquad
 \zeta_\theta(q)=\E_\theta h_\theta(q)^2.
\]
The response estimates also provide nonnegative functions
$e_\theta$ and $\mathcal J_\theta$. We use only the following two
facts about them:
\[
 \frac{\dd^2}{\dd\theta^2}
 \cP_\beta^{\mathrm{en}}(\gamma_\theta)
 =
 \int_0^1 e_\theta(q)\,\dd q,
\]
and the pairing and cutoff inequalities
\eqref{eq:shared-pairing} and \eqref{eq:shared-fisher}.

The pointwise curvature estimate in
\zcref{lem:response-curvature}, applied with terminal time $1/2$ and
profile bound $48$, gives
\[
 u_{\gamma_\theta,xx}(q,x)
 \le
 48+(1/2-q)^{-1/2}
 \le50,
 \qquad 0\le q\le\frac14.
\]
The averaged curvature estimate
\eqref{eq:shared-averaged-curvature}, with the same profile bound,
gives a constant $c_{48}>0$ such that
\[
 \zeta_\theta(q)\ge c_{48}^2.
\]
Consequently, uniformly in $\theta$,
\begin{equation}\label{eq:gs-lower-curvature-window}
 b_\theta(q)\le50,
 \qquad
 c_{48}^2\le\zeta_\theta(q)\le2500,
 \qquad
 0\le q\le\frac14.
\end{equation}

\medskip
\emph{Step 2: The floor forces a perturbation of size $\lambda$ on
an interval of length of order $\lambda$.}
Let $C_r$ be the upper-slope constant from
\zcref{lem:gs-scalar-root}, enlarged so that $C_r\ge1$, and define
\[
 T_\lambda=\frac{\lambda}{2C_r}.
\]
Choose $\lambda_0>0$, depending only on $\beta_0$, so small that
\[
 T_\lambda\le T_*,
 \qquad
 T_\lambda\le\frac14
\]
whenever $0<\lambda\le\lambda_0$.

Set
\[
 v=\eta-\gamma_\beta.
\]
Since $\eta\ge\lambda$ and
$\gamma_\beta(q)\le C_rq$ on $[0,T_*]$, for
$0\le q\le T_\lambda$ we have
\begin{equation}\label{eq:gs-lower-v}
 v(q)
 \ge
 \lambda-C_rT_\lambda
 =
 \frac{\lambda}{2}.
\end{equation}

Choose a fixed nonnegative Lipschitz cutoff $\varphi$ supported in
\[
 [T_\lambda/4,3T_\lambda/4],
\]
with height at most one and satisfying
\[
 \int_0^1\varphi(q)\,\dd q\ge cT_\lambda,
 \qquad
 \int_0^1|\varphi'(q)|^2\,\dd q\le\frac{C}{T_\lambda}.
\]
The constants $c,C$ are numerical.

Apply the general-profile cutoff estimate
\eqref{eq:shared-fisher}. Using
\eqref{eq:gs-lower-curvature-window},
\[
 \begin{aligned}
 \int_0^1\varphi(q)^2\mathcal J_\theta(q)\,\dd q
 &\le
 \int_0^1|\varphi'(q)|^2b_\theta(q)\,\dd q
 +
 \int_0^1\varphi(q)^2\zeta_\theta(q)\,
       \dd\gamma_\theta(q)\\
 &\le
 \frac{C}{T_\lambda}+C.
 \end{aligned}
\]
Indeed, the second Stieltjes integral is supported below
$T_\lambda\le1/4$ and is at most
\[
 2500\,\gamma_\theta(T_\lambda)\le C,
\]
by \eqref{eq:gs-lower-segment-bound}. Since $T_\lambda\le1$, we may
absorb this constant into the first term and conclude that
\begin{equation}\label{eq:gs-lower-fisher-cutoff}
 \int_0^1\varphi^2\mathcal J_\theta\,\dd q
 \le
 \frac{C}{T_\lambda}.
\end{equation}
The cutoff vanishes in a neighborhood of $q=0$, so an atom of
$\dd\gamma_\theta$ at zero makes no contribution. Atoms inside the
support of $\varphi$ are automatically included in the Stieltjes
integral.

\medskip
\emph{Step 3: A uniform lower bound on the second variation.}
By \eqref{eq:gs-lower-v},
\eqref{eq:gs-lower-curvature-window}, and the lower bound on
$\int\varphi$,
\[
 \int_0^1
 v(q)\zeta_\theta(q)\varphi(q)\,\dd q
 \ge
 c\lambda T_\lambda.
\]
The response pairing estimate \eqref{eq:shared-pairing}, together with
\eqref{eq:gs-lower-curvature-window}, the cutoff bounds above, and
\eqref{eq:gs-lower-fisher-cutoff}, gives
\[
 \begin{aligned}
 \int_0^1
 v\zeta_\theta\varphi\,\dd q
 &\le
 \left(\int_0^1 e_\theta(q)\,\dd q\right)^{1/2}
 \left[
 \left(\int_0^1b_\theta(q)|\varphi'(q)|^2\,\dd q\right)^{1/2}
 +
 2\left(\int_0^1\varphi(q)^2
          \mathcal J_\theta(q)\,\dd q\right)^{1/2}
 \right]\\
 &\le
 C T_\lambda^{-1/2}
 \left(\int_0^1e_\theta(q)\,\dd q\right)^{1/2}.
 \end{aligned}
\]
Combining the last two displays yields
\[
 \int_0^1e_\theta(q)\,\dd q
 \ge
 c\lambda^2T_\lambda^3.
\]
Therefore, for every $0<\theta<1$,
\begin{equation}\label{eq:gs-lower-second-variation}
 \frac{\dd^2}{\dd\theta^2}
 \cP_\beta^{\mathrm{en}}(\gamma_\theta)
 \ge
 c\lambda^2T_\lambda^3.
\end{equation}

\medskip
\emph{Step 4: Convert strict convexity along the segment into the
floor cost.}
For any function $f$ whose first derivative is absolutely continuous,
\begin{equation}\label{eq:midpoint-convexity-gap}
 f(0)+f(1)-2f(1/2)
 =
 \int_0^1
 \min\{\theta,1-\theta\}f''(\theta)\,\dd\theta.
\end{equation}
The weight on the right has integral $1/4$.

Apply this identity to
\[
 f(\theta)
 =
 \cP_\beta^{\mathrm{en}}(\gamma_\theta).
\]
Using \eqref{eq:gs-lower-second-variation},
\[
 \begin{aligned}
 &\cP_\beta^{\mathrm{en}}(\eta)
 +\cP_\beta^{\mathrm{en}}(\gamma_\beta)
 -2\cP_\beta^{\mathrm{en}}
   \left(\frac{\eta+\gamma_\beta}{2}\right)\\
 &\qquad\ge
 c\lambda^2T_\lambda^3.
 \end{aligned}
\]
Both $\gamma_\beta$ and
$(\eta+\gamma_\beta)/2$ are admissible for the unconstrained scalar
problem, and $\gamma_\beta$ is a minimizer. Hence
\[
 \cP_\beta^{\mathrm{en}}(\gamma_\beta)=E_\beta(0),
 \qquad
 \cP_\beta^{\mathrm{en}}
 \left(\frac{\eta+\gamma_\beta}{2}\right)
 \ge E_\beta(0).
\]
It follows that
\[
 \cP_\beta^{\mathrm{en}}(\eta)-E_\beta(0)
 \ge
 c\lambda^2T_\lambda^3.
\]
Since
\[
 T_\lambda=\frac{\lambda}{2C_r},
\]
we obtain
\[
 \cP_\beta^{\mathrm{en}}(\eta)-E_\beta(0)
 \ge
 c_0\lambda^5
\]
for a constant $c_0>0$ depending only on $\beta_0$.

This proves the result for competitors with $D\le1$. To remove that
restriction, decrease $c_0$ if necessary so that
\[
 c_0\lambda_0^5\le1.
\]
If $D>1$ and $0<\lambda\le\lambda_0$, then automatically
\[
 D>1\ge c_0\lambda^5,
\]
so the same lower bound holds for every admissible competitor.

At each fixed finite $\beta$, the scalar terminal function is smooth
and strictly convex. The inverse-Hessian identities used in the
response estimates may therefore be justified by first stopping the
scalar diffusion on bounded spatial intervals and then removing the
stopping. The exponential bounds in
\zcref{app:response-estimates} provide the required domination.
\end{proof}

After this proposition, decrease the value of $\lambda_*$ from
\zcref{lem:gs-scalar-root}, if necessary, so that
\[
 \lambda_*
 \le
 \min\{1,\beta_0/2,\lambda_0\}.
\]
Then the upper and lower  estimates hold on the same
$\lambda$-interval.

\subsection{Positive moments and the mean deficit}

We next relate the finite-size mean deficit to the variance. The
mechanism is the following. Convexity of the normalized logarithmic
moment gives a linear gain of order
\[
 \lambda\,\frac{\Var X}{N}
\]
at small positive tilt, while \zcref{lem:gs-scalar-root} bounds the
increase of the scalar variational value by order $\lambda^5$.
Balancing these two terms gives
$\lambda\asymp(\Var X/N)^{1/4}$ and hence a deficit of order
$(\Var X/N)^{5/4}$. Once an upper bound on the mean deficit is
available, this relation will give an upper bound on the variance.
Combined with the variance lower bound proved below, it will also give
a lower bound on the mean deficit.

\begin{lemma}
\zlabel{lem:short-variance-mean}
There exists a constant $c>0$, depending only on $\beta_0$, such that
for every $N\ge1$ and $\beta\ge\beta_0$,
\[
 E_\beta(0)-\frac{\E F_{N,\beta}}{N\beta}
 \ge
 c\left(
 \frac{\Var(F_{N,\beta}/\beta)+1/2}{N}
 \right)^{5/4}.
\]
The same bound holds at zero temperature:
\[
 e_*-\frac{\E G_N}{N}
 \ge
 c\left(
 \frac{\Var(G_N)+1/2}{N}
 \right)^{5/4}.
\]
\end{lemma}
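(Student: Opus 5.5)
The plan is to combine three earlier ingredients: the convexity of the normalized Gaussian log moment (\zcref{lem:short-laplace-convexity}), the one-sided comparison \eqref{eq:gs-full-fv}, and the scalar upper cost from \zcref{lem:gs-scalar-root}. The relevant random variable is the completed quantity $Y=X_{N,\beta}+D_N$. Here $D_N$ is independent of $X_{N,\beta}$, and $Y$ is a convex Lipschitz function of the full Gaussian vector of couplings $(g_{ij},z_i)$, since it is a log-sum-exp of affine functions divided by $\beta$. Its mean is $\E X_{N,\beta}$, and its variance is $\Var(X_{N,\beta})+1/2$. This is the source of the $+1/2$ in the statement.

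By \zcref{lem:short-laplace-convexity}, the function $L(\lambda)=\lambda^{-1}\log\E e^{\lambda Y}$ is convex with $L(0)=\E Y$ and $L'(0)=\tfrac12\Var Y$. The tangent line at zero therefore gives, for $\lambda>0$,
\[
 \psi^c_{N,\beta}(\lambda)=\frac{L(\lambda)}N\ge\frac{\E X_{N,\beta}}N+\frac{\lambda}{2}\,\frac{\Var(X_{N,\beta})+1/2}{N}.
\]
For $0<\lambda\le\lambda_*$ we combine this with the left inequality of \eqref{eq:gs-full-fv}, namely $\psi^c_{N,\beta}(\lambda)\le E_\beta(\lambda)$, and with $E_\beta(\lambda)\le E_\beta(0)+C_1\lambda^5$. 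Writing $v=(\Var(X_{N,\beta})+1/2)/N$, these give
\[
 E_\beta(0)-\frac{\E X_{N,\beta}}N\ge\frac{\lambda v}{2}-C_1\lambda^5 .
\]
We then optimize over $\lambda=\kappa v^{1/4}$ with a small constant $\kappa$, chosen so that $C_1\kappa^4\le 1/4$. This yields a lower bound $\tfrac{\kappa}{4}v^{5/4}$, valid provided $\kappa v^{1/4}\le\lambda_*$.

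The only real obstacle is the constraint $\lambda\le\lambda_*$. We need $v$ bounded above uniformly, and the plan is to handle this by Gaussian Poincaré. The gradient of $Y$ with respect to the Gaussian coordinates has squared norm $\beta^{-2}\sum_e\langle\beta v_e\rangle^2$, where the coefficients are those from \zcref{sec:finite-n}. This is at most $\tfrac N2\langle R^2\rangle\le N/2$. Hence $\Var Y\le N/2$, so $v\le 1/2$. Shrinking $\kappa$ so that $\kappa 2^{-1/4}\le\lambda_*$ then makes the choice of $\lambda$ admissible for every $N$ and every $\beta\ge\beta_0$. The constants depend only on $\beta_0$, through $C_1$ and $\lambda_*$.

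For zero temperature we let $\beta\to\infty$ at fixed $N$. The bound \eqref{eq:soft-maximum} gives $|X_{N,\beta}+D_N-G_N^c|\le N\log2/\beta$ uniformly, which implies $\E X_{N,\beta}\to\E G_N$ and $\Var X_{N,\beta}\to\Var G_N$. The latter holds by dominated convergence, since all variables involved have Gaussian tails. \zcref{lem:zero-limit} with $\lambda=0$ gives $E_\beta(0)\to e_*$. The inequality is uniform in $\beta\ge\beta_0$, so it passes to the limit with the same constant $c$.
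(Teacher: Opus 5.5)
Your proposal is correct and follows the paper's proof essentially step for step. The ingredients match: the tangent-line inequality from \zcref{lem:short-laplace-convexity} for the completed variable, the chain $\psi^c_{N,\beta}(\lambda)\le E_\beta(\lambda)\le E_\beta(0)+C\lambda^5$, the choice $\lambda\asymp v^{1/4}$ made admissible by the Poincar\'e bound $v\le 1/2$, and the soft-maximum limit $\beta\to\infty$. One cosmetic slip: the coefficients $v_e$ of \zcref{sec:finite-n} already contain the factor $\beta$, so the squared gradient norm is $\beta^{-2}\sum_e\langle v_e\rangle^2=\frac N2\langle R^2\rangle$ rather than an expression in $\beta v_e$; your conclusion $\Var Y\le N/2$ is unaffected.
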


\begin{proof}
We work first at finite temperature. Add the independent diagonal
completion and set
\[
 X=\frac{F_{N,\beta}}{\beta}+D_N,
 \qquad
 \Var(D_N)=\frac12.
\]
Since $D_N$ is independent of $F_{N,\beta}$,
\[
 \Var(X)
 =
 \Var(F_{N,\beta}/\beta)+\frac12.
\]
Define
\[
 v=\frac{\Var(X)}{N}
\]
and
\[
 \psi(\lambda)
 =
 \frac1{N\lambda}\log\E e^{\lambda X},
 \qquad
 \psi(0)=\frac{\E X}{N}
 =\frac{\E F_{N,\beta}}{N\beta}.
\]

As a function of the completed Gaussian coordinates, $X$ is convex.
Its gradient is a convex combination of the spin feature vectors,
each of squared norm $N/2$. Hence $X$ is
$\sqrt{N/2}$-Lipschitz. Gaussian Poincar\'e therefore gives
\[
 \Var(X)\le\frac N2,
 \qquad\text{so}\qquad
 0<v\le\frac12.
\]
The strict positivity also follows directly from the independent
completion, whose variance is $1/2$.

By \zcref{lem:short-laplace-convexity}, the function
\[
 \lambda\longmapsto
 \frac1\lambda\log\E e^{\lambda X}
\]
is convex and has derivative $\Var(X)/2$ at zero. Its tangent
inequality at zero therefore gives, for $\lambda\ge0$,
\[
 \psi(\lambda)-\psi(0)
 \ge
 \frac{v\lambda}{2}.
\]
On the other hand, the finite-size comparison
\eqref{eq:gs-full-fv} gives
\[
 \psi(\lambda)\le E_\beta(\lambda),
\]
and the scalar trial estimate
\eqref{eq:gs-scalar-trial-cost} gives, for
$0<\lambda\le\lambda_*$,
\[
 E_\beta(\lambda)
 \le
 E_\beta(0)+C_s\lambda^5.
\]
Combining these inequalities,
\begin{equation}\label{eq:variance-mean-balance}
 \frac{v\lambda}{2}
 \le
 E_\beta(0)-\psi(0)+C_s\lambda^5.
\end{equation}

We now choose the tilt so that the fifth-order term is at most one
half of the linear gain. Set
\[
 a_0
 =
 \min\left\{
 1,\lambda_*,
 (4C_s)^{-1/4}
 \right\},
 \qquad
 \lambda=a_0v^{1/4}.
\]
Because $v\le1/2$, this choice satisfies $\lambda\le\lambda_*$, and
\[
 C_s\lambda^5
 =
 C_sa_0^5v^{5/4}
 \le
 \frac{a_0}{4}v^{5/4}
 =
 \frac{v\lambda}{4}.
\]
Substituting into \eqref{eq:variance-mean-balance} gives
\[
 \begin{aligned}
 E_\beta(0)-\psi(0)
 &\ge
 a_0\left(\frac12-C_sa_0^4\right)v^{5/4}\\
 &\ge
 \frac{a_0}{4}v^{5/4}.
 \end{aligned}
\]
Since
\[
 \psi(0)=\frac{\E F_{N,\beta}}{N\beta},
 \qquad
 v=
 \frac{\Var(F_{N,\beta}/\beta)+1/2}{N},
\]
this proves the finite-temperature inequality.

It remains to pass to zero temperature. For fixed $N$, the
soft-maximum estimate gives
\[
 \frac{F_{N,\beta}}{\beta}\longrightarrow G_N
\]
uniformly in the Gaussian disorder, up to the deterministic error
$N\log2/\beta$. In particular, the convergence holds in $L^2$, so
\[
 \E\frac{F_{N,\beta}}{\beta}\longrightarrow\E G_N,
 \qquad
 \Var(F_{N,\beta}/\beta)\longrightarrow\Var(G_N).
\]
By \zcref{lem:zero-limit},
\[
 E_\beta(0)\longrightarrow e_*.
\]
The constants above are uniform for $\beta\ge\beta_0$, so letting
$\beta\to\infty$ proves
\[
 e_*-\frac{\E G_N}{N}
 \ge
 c\left(
 \frac{\Var(G_N)+1/2}{N}
 \right)^{5/4}.
\]
\end{proof}

We next need an upper bound on small positive centered moments whose
admissible tilt is determined by the variance. Apply
\eqref{eq:variance-sensitive-mgf} to the finite-temperature SK log
partition function and rescale the tilt by
$s=\lambda/\beta$. With
\[
 X_{N,\beta}=\frac{F_{N,\beta}}{\beta},
 \qquad
 K_{N,\beta}(\lambda)
 =
 \log\E
 e^{\lambda(X_{N,\beta}-\E X_{N,\beta})},
\]
we obtain
\begin{equation}\label{eq:gs-energy-mgf}
 K_{N,\beta}(\lambda)
 \le
 B\lambda^2
 \left(
 \Var X_{N,\beta}+\beta^{-2}
 \right)
\end{equation}
whenever
\[
 0\le
 \lambda
 \sqrt{\Var X_{N,\beta}+\beta^{-2}}
 \le b.
\]
The constants $B,b>0$ are numerical and may be chosen so that
\[
 Bb^2=1.
\]
Thus the available positive-tilt window has size comparable to
\[
 \left(
 \Var X_{N,\beta}+\beta^{-2}
 \right)^{-1/2}.
\]
On the temperature range $\beta\ge\beta_0$, the auxiliary term is
uniformly bounded by
\[
 \beta^{-2}\le\beta_0^{-2}.
\]

\subsection{Reflected regularization and the uniform comparison}
\zlabel{subsec:abs-section}

The refined comparison will use a local regularization of the free
mass $\nu$ below a specified quantile. The amount of mass moved by
this regularization is small by construction, while the resulting
change in the variational functional is quadratic in the smoothing
scale. To make this estimate uniform, we first record the convexity
and first-variation identities that will be used along the segment
joining the original profile to its regularization.

Fix finite $N\ge1$, $\beta>0$, $0<r\le1$, and
$0\le\lambda\le\beta$. Set
\[
 t=1-r,\qquad
 J_{ij}=\sqrt{\frac{t}{N}}\,g_{ij}\quad(i<j),
 \qquad
 J_{ii}=0,
\]
and write
\[
 F_r(\gamma)=\mathfrak F_t^c(\gamma)
\]
for the rescaled finite-$N$ functional in
\eqref{eq:gs-energy-interpolation}, with outer tilt $\lambda$.
When $\lambda=0$, its logarithmic outer moment is interpreted as the
corresponding expectation.

An admissible profile has total mass
\[
 \dd\gamma([0,1])=\beta
\]
and may be written
\[
 \dd\gamma=\lambda\delta_0+\nu,
 \qquad \nu\ge0,
 \qquad \nu([0,1])=\beta-\lambda.
\]
Thus $\lambda\delta_0$ is the prescribed root mass and $\nu$ is the
remaining free mass. The independent diagonal completion contributes
only the deterministic term $\lambda t/(4N)$ to $F_r$ and therefore
plays no role in the profile variations below.

For a fixed profile $\gamma$, take expectations under its normalized
joint law and define
\[
 S_\gamma(q)
 =
 \frac1N\E_\gamma|m_q|^2.
\]
The response function associated with the first variation is
\[
 G_\gamma(q)
 =
 \int_q^1\bigl(S_\gamma(s)-s\bigr)\,\dd s.
\]
If $\nu'\ge0$ is any measure of total mass $\beta-\lambda$, then the
segment obtained by replacing $\nu$ with
$(1-\theta)\nu+\theta\nu'$ has cumulative profile
\[
 \gamma_\theta(q)
 =
 \gamma(q)
 +\theta(\nu'-\nu)([0,q]).
\]
The functional $F_r$ is convex along every such segment, and its
one-sided derivative at $\theta=0$ is
\begin{equation}\label{eq:abs-first-variation}
 DF_r(\gamma)[\nu'-\nu]
 =
 \frac r2
 \int_0^1G_\gamma(q)\,(\nu'-\nu)(\dd q).
\end{equation}
Moreover,
\begin{equation}\label{eq:abs-semiconcavity}
 G_\gamma'(0)=0,
 \qquad
 G_\gamma''(q)\le1
\end{equation}
at every point where the second derivative exists, and hence in the
distributional sense on $[0,1]$.

We briefly justify these facts. Fix the coupling matrix $J$. Let $B$
be standard $N$-dimensional Brownian motion, and let $\E_B$ denote
expectation over its law. The admissible controls below are predictable
with respect to the augmented natural filtration of $B$ and satisfy
\[
 |b_i(q)|\le1
\]
for every coordinate, almost everywhere in time. The
finite-temperature control representation is
\[
 u_\gamma(0,0;J)
 =
 \sup_b
 \E_B\left[
 f_{\beta,J}\left(
   \sqrt r\,B_1
   +r\int_0^1\gamma(q)b_q\,\dd q
 \right)
 -\frac r2\int_0^1\gamma(q)|b_q|^2\,\dd q
 \right].
\]
For each fixed control, the endpoint inside $f_{\beta,J}$ depends
affinely on $\gamma$, while $f_{\beta,J}$ is convex and the running
cost is affine. Hence the control objective is convex in $\gamma$,
and taking the supremum preserves convexity. For $\lambda>0$, the map
\[
 u\longmapsto
 \frac1{N\lambda}\log\E_J e^{\lambda u}
\]
also preserves convexity by H\"older's inequality. At $\lambda=0$,
the same conclusion follows by taking expectation. Since the penalty
in $F_r$ is affine in $\gamma$, the full functional is convex.

The control representation itself follows from It\^o's formula and
completion of the square; the maximizing feedback is
\[
 b_q=\nabla u_\gamma(q,Y_q).
\]
Now perturb the cumulative profile in a deterministic direction $v$.
Linearizing the PDE and evaluating along the feedback diffusion for
the unperturbed profile gives, for fixed $J$,
\[
 \left.
 \frac{\dd}{\dd\theta}
 u_{\gamma+\theta v}(0,0;J)
 \right|_{\theta=0}
 =
 \frac r2
 \E_B\int_0^1v(q)|m_q|^2\,\dd q.
\]
Differentiating the normalized outer moment and applying Fubini then
gives \eqref{eq:abs-first-variation} after writing
\[
 v(q)=(\nu'-\nu)([0,q])
\]
and integrating once in the measure variable.

The bound \eqref{eq:abs-semiconcavity} follows from the same diffusion.
Zero-field spin-flip symmetry gives
\[
 m_0=0
\]
for every fixed $J$, and hence
\[
 S_\gamma(0)=0.
\]
Differentiating the PDE and applying It\^o's formula gives
\[
 \dd m_q=\sqrt r\,H_q\,\dd B_q.
\]
Therefore
\[
 S_\gamma'(q)
 =
 \frac rN\E_\gamma\Tr H_q^2
 \ge0.
\]
Since
\[
 G_\gamma'(q)=q-S_\gamma(q),
\]
we obtain
\[
 G_\gamma'(0)=0,
 \qquad
 G_\gamma''(q)=1-S_\gamma'(q)\le1.
\]

The preceding calculations are first justified for step profiles.
At fixed finite $N$ and $\beta$, all spatial derivatives of the
terminal log-sum-exp of any fixed order are bounded. In particular,
\[
 |\partial_i u|\le1,
\]
and the conditional-covariance representation of the Hessian gives
\[
 0\preceq H\preceq\beta C\preceq N\beta I.
\]
Successive differentiation of the PDE then bounds every further
fixed spatial derivative needed here. The control representation also
gives the profile-continuity estimate
\[
 \sup_{q,x,J}
 |u_\gamma(q,x;J)-u_{\widetilde\gamma}(q,x;J)|
 \le
 \frac{3rN}{2}
 \|\gamma-\widetilde\gamma\|_{L^1([0,1])}.
\]
Step approximation, the corresponding differentiated difference
equations, a common Brownian coupling, and Gaussian domination of the
outer weights therefore extend the convexity, first-variation, and
response identities to arbitrary admissible cumulative profiles.
Equivalently, one may first pass the integrated first-variation
identity along each admissible segment and then take its one-sided
derivative. All constants in this approximation step may depend on
the fixed $N$ and $\beta$.

We now construct the reflected regularization. It leaves the
prescribed atom $\lambda\delta_0$ unchanged and moves only a chosen
portion of the free mass $\nu$. The next lemma quantifies both the
amount of mass moved and the resulting increase in $F_r$.

\begin{lemma}\zlabel{lem:p2-smoothing}
Let $\gamma$ be an admissible profile with
\[
 \dd\gamma=\lambda\delta_0+\nu,
 \qquad
 \nu([0,1])=\beta-\lambda.
\]
Fix
\[
 0<T<1,\qquad 0<h\le1-T,
\]
and let
\[
 m=\nu([0,T))
\]
be the amount of free mass below $T$. Let $U$ be uniform on
$[-h,h]$. Define a deterministic measure $\bar\nu$ on $[0,1]$ by
\[
 \bar\nu(B)
 =
 \int_{[0,T)}
 \Prob_U\{|q+U|\in B\}\,\nu(\dd q)
 +
 \nu(B\cap[T,1])
\]
for every Borel set $B\subset[0,1]$, and let $\bar\gamma$ be the
cumulative profile associated with
\[
 \dd\bar\gamma=\lambda\delta_0+\bar\nu.
\]

Thus the prescribed atom $\lambda\delta_0$ and all free mass on
$[T,1]$ are left unchanged, while each unit of free mass at
$q<T$ is replaced by the law of the reflected displacement
$|q+U|$. Then $\bar\gamma$ is admissible,
\[
 \dd\bar\gamma([0,1])=\beta,
 \qquad
 \bar\gamma(0)=\lambda,
\]
and the restriction of $\dd\bar\gamma$ to $(0,T)$ is absolutely
continuous with respect to Lebesgue measure, with density bounded by
\[
 \frac{m}{h}.
\]

Moreover, there exists a finite measure $\pi$ on $[0,1]^2$ whose
marginals are $\dd\gamma$ and $\dd\bar\gamma$ and whose support is
contained in
\[
 \{(q,\bar q):|q-\bar q|\le h\}.
\]
Consequently, after extending both cumulative functions by $0$ on
$(-\infty,0)$ and by $\beta$ on $(1,\infty)$,
\[
 \gamma(x-h)\le\bar\gamma(x)\le\gamma(x+h),
\]
and
\[
 \bar\gamma(x-h)\le\gamma(x)\le\bar\gamma(x+h)
\]
for every $x\in\mathbb R$.

Finally, the increase in the finite-$N$ variational functional is
bounded by
\begin{equation}\label{eq:p2-smoothing-cost}
 F_r(\bar\gamma)-F_r(\gamma)
 \le
 \frac{rmh^2}{12}.
\end{equation}
If $\gamma$ minimizes $F_r$ over the admissible profiles, then also
\[
 0
 \le
 F_r(\bar\gamma)-F_r(\gamma)
 \le
 \frac{rmh^2}{12}.
\]
\end{lemma}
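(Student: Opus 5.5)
The plan has four parts: the structural facts about $\bar\gamma$, the shift inequalities via the coupling $\pi$, and then the cost bound, which follows from convexity of $F_r$ along the segment $\gamma\to\bar\gamma$ together with the first-variation formula \eqref{eq:abs-first-variation} and the semiconcavity bound \eqref{eq:abs-semiconcavity}.

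\emph{Structural facts.} Since $0\le q<T$ and $h\le 1-T$, we have $|q+U|\le T+h\le1$, so $\bar\nu$ is a measure on $[0,1]$ with $\bar\nu([0,1])=\nu([0,1])=\beta-\lambda$. Hence $\dd\bar\gamma([0,1])=\beta$. The moved mass is diffuse, since the law of $|q+U|$ has density at most $2\cdot\frac1{2h}=\frac1h$ on $[0,\infty)$. This has three consequences:
\begin{itemize}
\item $\bar\nu(\{0\})=\nu(\{0\}\cap[T,1])=0$, so $\bar\gamma(0)=\lambda$ once any free atom at zero has been smoothed (zero lies in $[0,T)$);
\item the moved part has density at most $\int_{[0,T)}h^{-1}\,\nu(\dd q)=m/h$ everywhere;
\item on $(0,T)$ the unmoved part $\nu|_{[T,1]}$ contributes nothing.
\end{itemize}
Monotonicity and right-continuity of $\bar\gamma$ are automatic, so $\bar\gamma$ is admissible.

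\emph{Coupling and shift inequalities.} Define $\pi$ as follows: on the atom $\lambda\delta_0$ use the identity coupling; on $\nu|_{[T,1]}$ use the diagonal coupling; and on $[0,T)$ let $\pi$ be the law of $(q,|q+U|)$ under $\nu(\dd q)\otimes\mathrm{Unif}[-h,h]$. Since $\bigl||q+U|-q\bigr|\le|U|\le h$, the support condition $|q-\bar q|\le h$ holds. The shift inequalities then follow directly:
\[
 \bar\gamma(x)=\pi(\bar q\le x)\le\pi(q\le x+h)=\gamma(x+h),
\]
and the other three comparisons are obtained in the same way. The extension conventions handle $x$ outside $[0,1]$.

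\emph{Cost bound.} Write $\nu'=\bar\nu$. Convexity of $\theta\mapsto F_r(\gamma_\theta)$ along this segment gives
\[
 F_r(\bar\gamma)-F_r(\gamma)\le DF_r(\bar\gamma)[\bar\nu-\nu],
\]
that is, the one-sided derivative at $\theta=1$ in the direction from $\nu$ to $\bar\nu$. It is more convenient to apply \eqref{eq:abs-first-variation} with base profile $\bar\gamma$ and direction $\nu-\bar\nu$:
\[
 F_r(\gamma)-F_r(\bar\gamma)\ge DF_r(\bar\gamma)[\nu-\bar\nu]
 =\frac r2\int G_{\bar\gamma}\,\dd(\nu-\bar\nu).
\]
Rearranging,
\[
 F_r(\bar\gamma)-F_r(\gamma)\le \frac r2\int_{[0,T)}\E_U\bigl[G(|q+U|)-G(q)\bigr]\,\nu(\dd q),
 \qquad G=G_{\bar\gamma}.
\]

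The core step is a pointwise estimate on the bracket. Extend $G$ evenly to $[-1,0]$. Because $G'(0)=0$, the even extension has no kink at zero, and its distributional second derivative is still at most $1$, so $G(|y|)=\tilde G(y)$. The function $y\mapsto \tilde G(y)-y^2/2$ is concave, so Jensen's inequality with $\E U=0$ gives
\[
 \E\tilde G(q+U)-\tilde G(q)\le\tfrac12\E U^2=\frac{h^2}{6}.
\]
Substituting this bound yields
\[
 F_r(\bar\gamma)-F_r(\gamma)\le \frac r2\cdot\frac{h^2}{6}\cdot m=\frac{rmh^2}{12},
\]
which is \eqref{eq:p2-smoothing-cost}. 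When $\gamma$ is a minimizer, the lower bound $0\le F_r(\bar\gamma)-F_r(\gamma)$ is immediate from admissibility of $\bar\gamma$.

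\emph{Main obstacle.} The delicate point is the semiconcavity of the even extension across zero. This requires $G'(0)=0$ for the profile $\bar\gamma$ rather than $\gamma$, which holds by \eqref{eq:abs-semiconcavity} since that bound applies to every admissible profile. It also requires checking that the first-variation formula and convexity extend to general (non-step) profiles, as asserted in the preceding paragraph. Both hold at fixed $N$ and $\beta$.
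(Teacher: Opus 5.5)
Your proposal is correct and follows the paper's proof essentially step by step. It uses the same reflected-displacement coupling, the same convexity inequality at the endpoint $\bar\gamma$ in the admissible direction $\nu-\bar\nu$ via the first-variation formula, and the same $1$-semiconcavity of the even extension of $G_{\bar\gamma}$ (which rests on $G_{\bar\gamma}'(0)=0$). Your Jensen step on the concave function $\tilde G(y)-y^2/2$ is equivalent to the paper's tangent-line bound. The paper justifies the semiconcavity slightly more directly: it shows that the derivative of $\tilde G(y)-y^2/2$, namely $-S_{\bar\gamma}(y)$ for $y>0$ and $S_{\bar\gamma}(-y)$ for $y<0$, is nonincreasing.
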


\begin{proof}
We verify first the measure-theoretic properties of the
regularization, then the displacement bound, and finally the
variational cost.

\medskip
\emph{Admissibility and density.}
Fix $q\in[0,T)$. The law of $|q+U|$ has, for $x>0$, density
\[
 \frac1{2h}
 \left(
   \mathbf 1_{\{|x-q|\le h\}}
   +
   \mathbf 1_{\{|x+q|\le h\}}
 \right)
 \le \frac1h.
\]
Because $U$ has a continuous distribution, this reflected law has no
atom at zero. Moreover,
\[
 |q+U|\le q+h<T+h\le1,
\]
so its support is contained in $[0,1]$.

Integrating these kernels against the free mass on $[0,T)$ shows that
the regularized part of $\bar\nu$ has density at most $m/h$ on
$(0,T)$. The free mass on $[T,1]$, including any atoms at $T$ or at
$1$, is left unchanged. The prescribed atom
$\lambda\delta_0$ is also unchanged. Hence
\[
 \bar\gamma(0)=\lambda,
 \qquad
 \dd\bar\gamma([0,1])=\beta,
\]
and $\bar\gamma$ is admissible.

\medskip
\emph{Displacement coupling.}
Couple each point $q<T$ with the reflected point
\[
 \bar q=|q+U|,
\]
and couple each point $q\ge T$, as well as the prescribed root atom,
with itself. Since $q\ge0$,
\[
 \bigl||q+U|-q\bigr|
 \le |U|
 \le h.
\]
This gives a coupling of $\dd\gamma$ and $\dd\bar\gamma$ supported on
pairs $(q,\bar q)$ satisfying $|q-\bar q|\le h$.

The corresponding cumulative functions therefore satisfy, after
extension by $0$ below zero and by $\beta$ above one,
\[
 \gamma(x-h)\le\bar\gamma(x)\le\gamma(x+h)
\]
and, by symmetry of the same displacement bound,
\[
 \bar\gamma(x-h)\le\gamma(x)\le\bar\gamma(x+h).
\]

\medskip
\emph{Variational cost.}
We estimate the cost using the first variation at the regularized
profile. Let
\[
 G(q)=G_{\bar\gamma}(q)
 =
 \int_q^1\bigl(S_{\bar\gamma}(s)-s\bigr)\,\dd s.
\]
By \eqref{eq:abs-semiconcavity},
\[
 G'(0)=0,
 \qquad
 G''\le1.
\]
Extend $G$ evenly to $[-1,1]$ by
\[
 \widetilde G(x)=G(|x|).
\]
We claim that $\widetilde G$ is $1$-semiconcave, equivalently that
\[
 x\longmapsto \widetilde G(x)-\frac{x^2}{2}
\]
is concave.

Indeed, for $x>0$,
\[
 \frac{\dd}{\dd x}
 \left(\widetilde G(x)-\frac{x^2}{2}\right)
 =
 G'(x)-x
 =
 -S_{\bar\gamma}(x),
\]
whereas for $x<0$,
\[
 \frac{\dd}{\dd x}
 \left(\widetilde G(x)-\frac{x^2}{2}\right)
 =
 -G'(-x)-x
 =
 S_{\bar\gamma}(-x).
\]
Since $S_{\bar\gamma}$ is nondecreasing and
$S_{\bar\gamma}(0)=0$, these two derivatives form a nonincreasing
function of $x$ and agree at zero. This proves the claimed
semiconcavity.

Consequently, for every $q\in[0,1]$ and every $z$ for which
$q+z\in[-1,1]$,
\[
 \widetilde G(q+z)
 \le
 \widetilde G(q)+G'(q)z+\frac{z^2}{2}.
\]
Taking $z=U$ and using
\[
 \widetilde G(q+U)=G(|q+U|),
 \qquad
 \E_U U=0,
 \qquad
 \E_U U^2=\frac{h^2}{3},
\]
gives
\begin{equation}\label{eq:p2-reflected-semiconcavity}
 \E_U G(|q+U|)-G(q)
 \le
 \frac{h^2}{6}.
\end{equation}

We now compare the two functional values. Convexity of $F_r$ at the
endpoint $\bar\gamma$, in the admissible direction from
$\bar\gamma$ back to $\gamma$, gives
\[
 F_r(\gamma)
 \ge
 F_r(\bar\gamma)
 +
 D_+F_r(\bar\gamma)[\nu-\bar\nu].
\]
Therefore, by \eqref{eq:abs-first-variation},
\[
 \begin{aligned}
 F_r(\bar\gamma)-F_r(\gamma)
 &\le
 -D_+F_r(\bar\gamma)[\nu-\bar\nu]\\
 &=
 \frac r2
 \int_0^1G(q)\,(\bar\nu-\nu)(\dd q).
 \end{aligned}
\]
The mass on $[T,1]$ is unchanged, so the last integral is
\[
 \int_{[0,T)}
 \left[
   \E_U G(|q+U|)-G(q)
 \right]\nu(\dd q).
\]
Using \eqref{eq:p2-reflected-semiconcavity} and
$\nu([0,T))=m$ yields
\[
 F_r(\bar\gamma)-F_r(\gamma)
 \le
 \frac r2\,m\,\frac{h^2}{6}
 =
 \frac{rmh^2}{12},
\]
which is \eqref{eq:p2-smoothing-cost}.

If $\gamma$ minimizes $F_r$, then $\bar\gamma$ is admissible and hence
\[
 F_r(\bar\gamma)-F_r(\gamma)\ge0.
\]
\end{proof}

Choose
\[
 0<T_0\le\min\{T_*,1/4\},
\]
where $T_*$ is the uniform near-zero interval from
\zcref{lem:gs-scalar-root}. Decrease $\lambda_*$, if necessary, so
that both scalar floor estimates hold on $(0,\lambda_*]$ and
\[
 \lambda_*\le\min\{1,\beta_0/2\}.
\]
Then fix
\[
 0<\lambda_0\le\lambda_*.
\]
For the remainder of this subsection, let
\[
 \beta\ge\beta_0,\qquad 0<\lambda\le\lambda_0,
\]
and use as reference profile the scalar trial
\[
 \gamma^{\mathrm{tr}}_{\beta,\lambda}(q)
 =
 \max\{\lambda,\gamma_\beta(q)\}.
\]
By \zcref{lem:gs-scalar-root}, after choosing $T_0$ and
$\lambda_0$ sufficiently small in terms of $\beta_0$, this profile
satisfies
\[
 kq\le\gamma^{\mathrm{tr}}_{\beta,\lambda}(q)\le2,
 \qquad 0\le q\le T_0,
\]
and is absolutely continuous there, with
\[
 0\le
 (\gamma^{\mathrm{tr}}_{\beta,\lambda})'(q)
 \le C_s
\]
for almost every $q\in[0,T_0]$. All constants below depend only on
$T_0$, $k$, $C_s$, and the uniform curvature constant
\[
 c_2:=c_U\big|_{U=2}>0
\]
from \eqref{eq:shared-averaged-curvature}. In particular, they are
independent of $N$, $\beta$, $\lambda$, and the interpolation
parameter $r$.

We next record the response estimates in the precise form used below.
Let
\[
 \gamma_\theta=\gamma_0+\theta v,
 \qquad 0\le\theta\le1,
\]
be any admissible profile segment under consideration. Every
expectation in the following identities is taken under the normalized
joint law associated with the current profile $\gamma_\theta$. Define
\[
 S_\theta(q)
 =
 \frac1N\E_\theta|m_q|^2,
 \qquad
 b_\theta(q)
 =
 \frac1N\E_\theta\Tr H_q,
 \qquad
 \zeta_\theta(q)
 =
 \frac1N\E_\theta\Tr H_q^2.
\]
Then
\[
 S_\theta'(q)=r\zeta_\theta(q),
 \qquad
 b_\theta'(q)=-\gamma_\theta(q)S_\theta'(q),
\]
and
\[
 S_\theta(0)=0,\qquad
 0\le S_\theta\le1,\qquad
 b_\theta\ge0,\qquad
 \zeta_\theta\ge b_\theta^2.
\]

The response estimates
\zcref{prop:gs-response-tools,lem:response-fisher,lem:response-curvature}
also provide nonnegative functions
$e_\theta$ and $\mathcal J_\theta$ such that
\begin{equation}\label{eq:abs-response-summary-second}
 \frac{\dd^2}{\dd\theta^2}F_r(\gamma_\theta)
 \ge
 r\int_0^1 e_\theta(q)\,\dd q,
\end{equation}
and, for every compactly supported Lipschitz function $\varphi$,
\begin{equation}\label{eq:abs-response-summary-pairing}
 r\left|
 \int_0^1v(q)\zeta_\theta(q)\varphi(q)\,\dd q
 \right|
 \le
 \left(\int_0^1e_\theta(q)\,\dd q\right)^{1/2}
 \left[
 \left(\int_0^1b_\theta(q)|\varphi'(q)|^2\,\dd q\right)^{1/2}
 +
 2\left(\int_0^1\mathcal J_\theta(q)\varphi(q)^2\,\dd q\right)^{1/2}
 \right],
\end{equation}
while the Fisher cutoff estimate gives
\begin{equation}\label{eq:abs-response-summary-fisher}
 \int_0^1\mathcal J_\theta(q)\varphi(q)^2\,\dd q
 \le
 \int_0^1b_\theta(q)|\varphi'(q)|^2\,\dd q
 +
 r\int_{[0,1]}
 \varphi(q)^2\zeta_\theta(q)\,\dd\gamma_\theta(q).
\end{equation}
The final integral is a Stieltjes integral, so atoms of
$\dd\gamma_\theta$ inside the support of $\varphi$ are included
automatically. This is the form of the Fisher identity used below. 

Finally, the averaged-curvature estimate gives the uniform implication
\begin{equation}\label{eq:abs-response-summary-curvature}
 \gamma_\theta(q)\le2
 \quad\Longrightarrow\quad
 \zeta_\theta(q)\ge c_2^2.
\end{equation}
The estimates
\eqref{eq:abs-response-summary-second}--\eqref{eq:abs-response-summary-curvature}
hold for arbitrary admissible $\gamma_\theta$, so we may apply them
along the entire segment joining a minimizing finite-$N$ profile to a
regularized comparison profile.

We now combine the scalar lower slope with the reflected
regularization of \zcref{lem:p2-smoothing}. The result is a uniform
localization bound for the low quantiles of the free mass of every
finite-$N$ minimizing profile.

\begin{lemma}\zlabel{lem:p2-quantile}
There exists a constant $C<\infty$, depending only on the fixed
constants $T_0,k,C_s,c_2$ above, with the following property.

Let $\gamma$ be any minimizer of $F_r$ over the full admissible class
of nondecreasing profiles satisfying
\[
 \dd\gamma([0,1])=\beta,
 \qquad
 \dd\gamma\ge\lambda\delta_0.
\]
No absolute-continuity or density assumption is imposed on
$\dd\gamma$. Write
\[
 \nu=\dd\gamma-\lambda\delta_0
\]
for the remaining free mass, and let
\[
 a(q)=\max\{\lambda,\gamma_\beta(q)\}
\]
be the scalar trial profile fixed above. Suppose that its excess over
the finite-$N$ minimum satisfies
\[
 0\le F_r(a)-F_r(\gamma)\le\eta.
\]

For $0<v<\beta-\lambda$, define the generalized quantile of the free
measure $\nu$ by
\[
 Q_\nu(v)
 =
 \inf\{q\in[0,1]:\nu([0,q])\ge v\}.
\]
Then, for every $0<u\le1$ and every
$0<v<\beta-\lambda$ satisfying
\[
 \lambda+v<2u,
\]
one has
\begin{equation}\label{eq:p2-quantile-bound}
 Q_\nu(v)
 \le
 C\left[
   u
   +r^{-7/11}\eta^{2/11}
   +r^{-5/8}u^{3/8}\eta^{1/8}
 \right].
\end{equation}

Thus every free-mass quantile whose cumulative level
$\lambda+v$ is below $2u$ must lie near the origin, with the
localization deteriorating only through the displayed powers of the
trial excess $\eta$ and the interpolation scale $r$.
The estimate holds for every minimizer $\gamma$ in the full
admissible class; no selection of a particular minimizer is required.
\end{lemma}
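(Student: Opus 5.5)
We prove \eqref{eq:p2-quantile-bound} by contradiction at a fixed quantile. Write $Q=Q_\nu(v)$, and assume $Q\ge C_0u$ with $C_0$ large. The goal is to bound $Q$ by the two $\eta$-dependent terms. The strategy mirrors \zcref{prop:scalar-lower-via-fisher}, but with roles reversed. There, the competitor lay above the scalar minimizer near zero. Here, the finite-$N$ minimizer $\gamma$ has cumulative level $\gamma(q)\le\lambda+v<2u$ for all $q<Q$. By contrast, the trial $a(q)\ge kq$ exceeds $2u$ as soon as $q\ge 2u/k$. So on the interval $I=[2u/k,Q)$, whose length is of order $Q$ when $C_0\ge 4/k$, we have the gap $a-\gamma\ge kq-2u\gtrsim q$.

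The first step is to regularize. The pairing/Fisher machinery needs a profile without uncontrolled atoms inside the support of the cutoff, and it also needs a density bound. We therefore apply \zcref{lem:p2-smoothing} to $\gamma$ with threshold $T\asymp Q$ and smoothing scale $h$. The mass moved is $m\le\lambda+v<2u$, so $F_r(\bar\gamma)-F_r(\gamma)\le ru h^2/6$. The displacement bound gives $\bar\gamma(q)\le\gamma(q+h)<2u$ for $q<Q-h$. The density bound on $(0,T)$ is $m/h\le 2u/h$. Setting $\bar\eta=\eta+ruh^2$, we then have $0\le F_r(a)-F_r(\bar\gamma)\le\bar\eta$, and $F_r(\bar\gamma)\ge F_r(\gamma)$.

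The second step is to lower-bound the convexity gap along the segment $\gamma_\theta=\bar\gamma+\theta(a-\bar\gamma)$. Using the midpoint identity \eqref{eq:midpoint-convexity-gap} as in Step~4 of \zcref{prop:scalar-lower-via-fisher}, together with minimality of $\gamma$ (so that $F_r$ at the midpoint is at least $F_r(\gamma)$), we get
\[
 r\int_0^1\min\{\theta,1-\theta\}\int_0^1 e_\theta\,\dd q\,\dd\theta\le F_r(a)+F_r(\bar\gamma)-2F_r(\gamma)\lesssim\bar\eta .
\]
In the other direction, we take a Lipschitz cutoff $\varphi$ supported in the middle of $I$, on a subinterval of length $\ell\asymp Q$ (or a shorter length to be optimized). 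On this subinterval $v=a-\bar\gamma\gtrsim Q$ and $\gamma_\theta\le2$, so \eqref{eq:abs-response-summary-curvature} gives $\zeta_\theta\ge c_2^2$, and hence the left side of \eqref{eq:abs-response-summary-pairing} is at least $crQ\ell$. For the right side we need the Fisher cutoff \eqref{eq:abs-response-summary-fisher}. There $b_\theta\le$ const, using the curvature bound for profiles bounded by $2$ near the origin. The term $\int b_\theta|\varphi'|^2$ is then $\lesssim 1/\ell$. The Stieltjes term is $r\int\varphi^2\zeta_\theta\,\dd\gamma_\theta\lesssim r\bigl(\ell C_s+\ell u/h\bigr)$, using $a'\le C_s$ for the trial part and the density $2u/h$ for the smoothed part. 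Combining,
\[
 r^2Q^2\ell^2\lesssim\Bigl(\int e_\theta\Bigr)\Bigl(\frac1\ell+r\ell+\frac{r\ell u}{h}\Bigr).
\]
Averaging in $\theta$ against $\min\{\theta,1-\theta\}$ and inserting the convexity bound gives, roughly, $rQ^2\ell^2\lesssim\bar\eta\,(1/\ell+r\ell u/h)$. If needed we shorten $\ell$ and use the gap $\gtrsim Q$ only where it is valid.

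The last step is optimization. We choose $\ell\asymp Q$ and $h\asymp Q$, subject to $h\le Q/2$ so that the displacement bound keeps $\bar\gamma<2u$ on $I$. In the regime where the $1/\ell$ term dominates this yields $rQ^5\lesssim\eta+ruQ^2$, up to the balancing with $h$. Optimizing over $h$ between the smoothing cost $ruh^2$ and the density term $u/h$ produces the stated fractional powers $r^{-7/11}\eta^{2/11}$ and $r^{-5/8}u^{3/8}\eta^{1/8}$. I expect the exact exponents to come out of this three-way balance after casework on which term dominates. The main obstacle is controlling the Stieltjes term in \eqref{eq:abs-response-summary-fisher} for an arbitrary minimizer, since atoms of $\dd\gamma$ inside the cutoff are exactly what the regularization must tame. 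Getting the $h$-tradeoff right there, while keeping all constants independent of $N,\beta,\lambda$, is the delicate part.
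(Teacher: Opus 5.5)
Your overall route is the right one: smooth the free mass below the quantile, take the segment from $\bar\gamma$ to the trial $a$, combine the pairing and Fisher cutoff estimates with the midpoint convexity gap, then tune $h$. However, there is a genuine gap. The two inputs you feed into the Fisher cutoff are false, and they are exactly what produce the exponents in \eqref{eq:p2-quantile-bound}.

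First, the claim $b_\theta\le$ const fails uniformly in $r$ and $\beta$. The early diagonal bound \eqref{eq:shared-early-diagonal} with $A=2$ gives only $b_\theta(s)\le 2+[r(q-s)]^{-1/2}$, and this order is genuine. For fixed $N,\beta$, as $r\downarrow0$ one has $u(0,\cdot;J)\to f_{\beta,J}$, whose zero-field Hessian at the origin has diagonal entries $\beta$; hence $b_\theta(0)\to\beta$. The correct uniform statement is $b_\theta\le 2+(rq)^{-1/2}\le 3(rq)^{-1/2}$ on $[0,q]$. It follows from $b_\theta'=-\gamma_\theta S_\theta'$, $S_\theta'=r\zeta_\theta\ge r b_\theta^2$ and $S_\theta(q)\le 1$. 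So $\int b_\theta|\varphi'|^2\lesssim r^{-1/2}q^{-3/2}$, not $1/\ell$.

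Second, your Stieltjes bound $r\ell(C_s+u/h)$ presupposes $\zeta_\theta\lesssim 1$ pointwise. That fails, since $\zeta_\theta\ge b_\theta^2$. What you can use is $r\int_0^q\zeta_\theta\,\dd s=S_\theta(q)\le 1$, which gives $r\int\varphi^2\zeta_\theta\,\dd\gamma_\theta\le C_s+m/h$ with no factor $r\ell$.

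You also lose a factor $r^2$ when averaging in $\theta$. From $r^2Q^2\ell^2\lesssim(\int e_\theta)(\cdots)$ and $r\int e_\theta\lesssim\bar\eta$ one gets $r^3Q^2\ell^2\lesssim\bar\eta(\cdots)$. Carried out with your inputs, the balance yields bounds such as $Q\lesssim r^{-3/5}\eta^{1/5}$, which look sharper but rest on invalid premises. With the correct inputs one obtains
\[
 \frac{\dd^2}{\dd\theta^2}F_r(\gamma_\theta)\ge\frac{c\,r^{7/2}q^{11/2}}{1+\sqrt r\,q^{3/2}m/h},
\]
and only this inequality produces the exponents $7/11$, $2/11$ and $5/8$, $3/8$, $1/8$.

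The choice of $h$ also needs fixing. With $h\asymp Q$, the smoothing cost $\asymp ruQ^2$ stays in the budget. The balance $r^{7/2}Q^{11/2}\lesssim\eta+ruQ^2$ then admits the $\eta$-independent alternative $Q\lesssim r^{-5/7}u^{2/7}$, which is not controlled by the right side of \eqref{eq:p2-quantile-bound} as $\eta\to0$. You must take the adaptive scale $h=\min\{q/8,\sqrt{\eta/(rm)}\}$. Then \zcref{lem:p2-smoothing} costs at most $\eta/12$, and once $q>16u/k$,
\[
 \frac mh\le\frac{8m}{q}+\sqrt r\,m^{3/2}\eta^{-1/2}\le C+C\sqrt r\,u^{3/2}\eta^{-1/2}.
\]
This gives $q^{11/2}\le Cr^{-7/2}\eta+Cr^{-5/2}q^{3/2}u^{3/2}\eta^{1/2}$, whose two cases are the two $\eta$-terms.

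Finally, replace $Q$ by $q=\min\{Q_\nu(v),T_0\}$ and use $Q_\nu(v)\le q/T_0$. The facts $a(s)\ge ks$ and $a\le 2$, and hence $\gamma_\theta\le2$ and $\zeta_\theta\ge c_2^2$ on the cutoff, are only available on $[0,T_0]$. The smoothing threshold must also not exceed the quantile, so that the moved mass satisfies $m\le v<2u$.
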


\begin{proof}
We first assume $\eta>0$. The argument localizes the $v$-th free-mass
quantile by smoothing only the mass strictly below that quantile and
then comparing the smoothed profile with the scalar trial profile.

Set
\[
 q=\min\{Q_\nu(v),T_0\}.
\]
Since $T_0\le1$, we have
\begin{equation}\label{eq:p2-q-to-quantile}
 Q_\nu(v)\le\frac{q}{T_0}.
\end{equation}
Indeed, if $Q_\nu(v)\le T_0$, then $q=Q_\nu(v)$; if
$Q_\nu(v)>T_0$, then $q=T_0$ and $Q_\nu(v)\le1=q/T_0$.
Thus it is enough to bound $q$.

If $q=0$, there is nothing to prove. If
\[
 q\le\frac{16u}{k},
\]
then \eqref{eq:p2-q-to-quantile} gives
\[
 Q_\nu(v)\le\frac{16u}{kT_0},
\]
as required. We therefore assume 
\begin{equation}\label{eq:p2-large-q}
 q>\frac{16u}{k}.
\end{equation}

\medskip
\emph{Step 1: Regularize the free mass below $q$.}
Let
\[
 m=\nu([0,q)).
\]
By the definition of the generalized quantile,
\[
 m\le v<2u-\lambda.
\]
We choose the smoothing scale to satisfy two requirements: the
displacement should be small compared with the interval $[0,q]$, and
the smoothing cost should be at most a fixed fraction of the available
budget $\eta$. Set
\[
 h
 =
 \min\left\{
   \frac q8,\,
   \sqrt{\frac{\eta}{rm}}
 \right\},
\]
where the second quantity is interpreted as $+\infty$ when $m=0$.
Since $q\le T_0\le1/4$, this choice satisfies the hypotheses of
\zcref{lem:p2-smoothing}. Let $\bar\gamma$ be the reflected
regularization obtained there with $T=q$.

Only the free mass on $[0,q)$ is moved. Hence, on $[0,q)$,
\[
 \bar\gamma(s)\le\lambda+m<2u.
\]
The restriction of $\dd\bar\gamma$ to $(0,q)$ has density at most
$m/h$. Any atom of $\nu$ at $q$ belongs to the unchanged tail and is
therefore not regularized. Moreover,
\[
 rmh^2\le\eta,
\]
so \eqref{eq:p2-smoothing-cost} gives
\begin{equation}\label{eq:p2-regularized-budget}
 F_r(\bar\gamma)
 \le
 F_r(\gamma)+\frac{\eta}{12}.
\end{equation}

\medskip
\emph{Step 2: Compare the regularized profile with the scalar trial.}
Recall that
\[
 a(q)=\max\{\lambda,\gamma_\beta(q)\}.
\]
Join $\bar\gamma$ to $a$ by the admissible segment
\[
 \gamma_\theta=(1-\theta)\bar\gamma+\theta a,
 \qquad 0\le\theta\le1,
\]
whose cumulative direction is
\[
 a-\bar\gamma.
\]
On $[0,q)$, both endpoint profiles are bounded by $2$:
\[
 \bar\gamma\le2u\le2,
 \qquad
 a\le2.
\]
Hence every $\gamma_\theta$ is bounded by $2$ on $[0,q)$. This is
sufficient below, since the profile bound is used only under time
integration or on the support of a cutoff contained strictly below $q$. 

For the current profile $\gamma_\theta$, suppress $\theta$ from the
notation temporarily and write
\[
 S=S_\theta,\qquad b=b_\theta,\qquad \zeta=\zeta_\theta.
\]
Since
\[
 b'(s)=-\gamma_\theta(s)S'(s),
 \qquad
 0\le\gamma_\theta(s)\le2
 \quad\text{for a.e. }s\in[0,q],
 \qquad
 S(0)=0,
\]
we have, for $0\le s\le q$,
\[
 b(s)\ge b(0)-2S(s)\ge b(0)-2.
\]
Using $S'=r\zeta$, $\zeta\ge b^2$, and $S(q)\le1$ therefore gives
\[
 1
 \ge
 S(q)
 =
 r\int_0^q\zeta(s)\,\dd s
 \ge
 rq\,(b(0)-2)_+^2.
\]
Thus
\[
 b(0)
 \le
 2+(rq)^{-1/2}.
\]
Since $b$ is nonincreasing,
\begin{equation}\label{eq:p2-b-bound}
 0\le b(s)\le b(0)
 \le
 3(rq)^{-1/2},
 \qquad 0\le s\le q,
\end{equation}
where we used $rq\le1$.

Choose the triangular cutoff
\[
 \varphi(s)
 =
 \left(
 1-\frac{4|s-q/2|}{q}
 \right)_+.
\]
It is supported on $[q/4,3q/4]$, has height one, and satisfies
\begin{equation}\label{eq:p2-triangle-data}
 \int_0^1\varphi(s)\,\dd s=\frac q4,
 \qquad
 \int_0^1|\varphi'(s)|^2\,\dd s=\frac8q.
\end{equation}
On its support, the scalar lower slope gives
\[
 a(s)\ge ks\ge\frac{kq}{4},
\]
whereas $\bar\gamma(s)<2u$. By
\eqref{eq:p2-large-q},
\[
 a(s)-\bar\gamma(s)
 \ge
 \frac{kq}{4}-2u
 \ge
 \frac{kq}{8}.
\]
The same profile bound $\gamma_\theta\le2$ gives, by
\eqref{eq:abs-response-summary-curvature},
\[
 \zeta_\theta(s)\ge c_2^2
\]
on the support of $\varphi$. Consequently
\begin{equation}\label{eq:p2-pairing-lower}
 \int_0^1
 (a-\bar\gamma)\zeta_\theta\varphi\,\dd s
 \ge cq^2.
\end{equation}

We next bound the two localization terms in the response pairing.
From \eqref{eq:p2-b-bound} and
\eqref{eq:p2-triangle-data},
\begin{equation}\label{eq:p2-cutoff-gradient}
 \int_0^1b_\theta|\varphi'|^2\,\dd s
 \le
 C r^{-1/2}q^{-3/2}.
\end{equation}
On the support of $\varphi$, the measure $\dd a$ is absolutely
continuous with density at most $C_s$, while
$\dd\bar\gamma$ has density at most $m/h$. The cutoff is separated
from both $0$ and $q$, so neither the prescribed root atom nor the
unchanged tail mass at $q$ contributes. Hence
\[
 \begin{aligned}
 r\int_{[0,1]}\varphi^2\zeta_\theta\,\dd\gamma_\theta
 &\le
 (C_s+m/h)\,
 r\int_0^q\zeta_\theta(s)\,\dd s\\
 &=
 (C_s+m/h)S_\theta(q)\\
 &\le
 C_s+\frac mh.
 \end{aligned}
\]
The localized Fisher estimate
\eqref{eq:abs-response-summary-fisher} therefore gives
\begin{equation}\label{eq:p2-fisher-bound}
 \int_0^1\mathcal J_\theta\varphi^2\,\dd s
 \le
 C r^{-1/2}q^{-3/2}
 +C_s+\frac mh.
\end{equation}

Combining
\eqref{eq:p2-pairing-lower},
\eqref{eq:p2-cutoff-gradient},
\eqref{eq:p2-fisher-bound}, and the response pairing
\eqref{eq:abs-response-summary-pairing}, we obtain
\[
 crq^2
 \le
 C\left(\int_0^1e_\theta\,\dd s\right)^{1/2}
 \left(
 r^{-1/2}q^{-3/2}+C_s+\frac mh
 \right)^{1/2}.
\]
After squaring and using
\eqref{eq:abs-response-summary-second},
\[
 \frac{\dd^2}{\dd\theta^2}F_r(\gamma_\theta)
 \ge
 \frac{c r^3q^4}
 {r^{-1/2}q^{-3/2}+C_s+m/h}.
\]
Since $r,q\le1$, the fixed $C_s$ term may be absorbed into the
constant after multiplying numerator and denominator by
$r^{1/2}q^{3/2}$. Thus
\begin{equation}\label{eq:p2-second-variation-lower}
 \frac{\dd^2}{\dd\theta^2}F_r(\gamma_\theta)
 \ge
 \frac{c r^{7/2}q^{11/2}}
 {1+\sqrt r\,q^{3/2}m/h},
 \qquad 0<\theta<1.
\end{equation}

\medskip
\emph{Step 3: Use the variational budget to bound $q$.}
Apply the midpoint identity
\eqref{eq:midpoint-convexity-gap} to
\[
 f(\theta)=F_r(\gamma_\theta).
\]
Since $\gamma$ minimizes $F_r$ over the full admissible class,
\[
 F_r\left(\frac{a+\bar\gamma}{2}\right)\ge F_r(\gamma).
\]
Together with the hypothesis
$F_r(a)-F_r(\gamma)\le\eta$ and
\eqref{eq:p2-regularized-budget}, this gives
\[
 \begin{aligned}
 &F_r(a)+F_r(\bar\gamma)
 -2F_r\left(\frac{a+\bar\gamma}{2}\right)\\
 &\qquad\le
 F_r(a)+F_r(\bar\gamma)-2F_r(\gamma)
 \le
 \frac{13}{12}\eta.
 \end{aligned}
\]
On the other hand, integrating
\eqref{eq:p2-second-variation-lower} against the midpoint weight,
whose integral is $1/4$, yields
\[
 \eta
 \ge
 \frac{c r^{7/2}q^{11/2}}
 {1+\sqrt r\,q^{3/2}m/h}.
\]
Equivalently,
\begin{equation}\label{eq:p2-q-prebalance}
 q^{11/2}
 \le
 C r^{-7/2}\eta
 \left(
 1+\sqrt r\,q^{3/2}\frac mh
 \right).
\end{equation}

By the definition of $h$,
\[
 \frac mh
 \le
 \frac{8m}{q}
 +\sqrt r\,m^{3/2}\eta^{-1/2}.
\]
Since $m<2u$ and $q>16u/k$, the first term is bounded by a constant
depending only on $k$. Hence
\begin{equation}\label{eq:p2-density-cost}
 \frac mh
 \le
 C+C\sqrt r\,u^{3/2}\eta^{-1/2}.
\end{equation}
Substituting this into \eqref{eq:p2-q-prebalance}, and using again
$r,q\le1$ to absorb the fixed contribution, gives
\begin{equation}\label{eq:p2-q-balance}
 q^{11/2}
 \le
 C r^{-7/2}\eta
 +
 C r^{-5/2}q^{3/2}u^{3/2}\eta^{1/2}.
\end{equation}

At least one term on the right of \eqref{eq:p2-q-balance} is at least
one half of the left-hand side. If the first term is dominant, then
\[
 q
 \le
 C r^{-7/11}\eta^{2/11}.
\]
If the second is dominant, then
\[
 q^4
 \le
 C r^{-5/2}u^{3/2}\eta^{1/2},
\]
and hence
\[
 q
 \le
 C r^{-5/8}u^{3/8}\eta^{1/8}.
\]
Together with the previously treated case
$q\le16u/k$, we have
\[
 q
 \le
 C\left[
 u
 +r^{-7/11}\eta^{2/11}
 +r^{-5/8}u^{3/8}\eta^{1/8}
 \right].
\]
Finally, \eqref{eq:p2-q-to-quantile} converts this estimate into
\[
 Q_\nu(v)
 \le
 C\left[
 u
 +r^{-7/11}\eta^{2/11}
 +r^{-5/8}u^{3/8}\eta^{1/8}
 \right],
\]
after enlarging $C$ by the fixed factor $T_0^{-1}$.

If $\eta=0$, the hypothesis also holds with every positive budget
$\varepsilon>0$ in place of $\eta$. Applying the estimate just proved
with $\varepsilon$ and then sending $\varepsilon\downarrow0$ gives
the same conclusion.
\end{proof}

Integrating the quantile localization from \zcref{lem:p2-quantile}
over the interpolation parameter closes the finite-$N$ comparison in
the rescaled variables. The resulting estimate is uniform in
$\beta\ge\beta_0$ and retains the dependence on the positive tilt
$\lambda$. We write
\[
 e_{N,\beta}(\lambda)
 =
 E_\beta(\lambda)-\psi^c_{N,\beta}(\lambda)
\]
for the difference between the constrained scalar variational value
and the completed finite-$N$ normalized logarithmic moment. The
fixed-temperature comparison used later is obtained from this estimate
by returning to the original variables.

\begin{proposition}\zlabel{prop:abs-selfconsistent-comparison}
There exist constants
\[
 C<\infty,
 \qquad
 0<\lambda_0\le\min\{1,\beta_0/2\},
\]
depending only on $\beta_0$, such that for every
\[
 N\ge1,\qquad
 \beta\ge\beta_0,\qquad
 0<\lambda\le\lambda_0,
\]
one has
\begin{equation}\label{eq:abs-comparison}
 0\le e_{N,\beta}(\lambda)
 \le
 C\left[
   N^{-2/3}
   +N^{-3/4}\lambda^{-1/2}
   +\frac{\log(2+N)}{N\lambda}
 \right].
\end{equation}
\end{proposition}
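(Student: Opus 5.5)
The lower bound $e_{N,\beta}(\lambda)\ge0$ is the left inequality of \eqref{eq:gs-full-fv}. For the upper bound I start from the rescaled optimized interpolation. Set $V(t)=\min_{\gamma\ge\lambda}\mathfrak F^c_t(\gamma)$ with $r=1-t$. By \zcref{prop:fn-envelope}, rescaled by $\beta^{-1}$, we have $V(0)=E_\beta(\lambda)$ and $V(1)=\psi^c_{N,\beta}(\lambda)$, and
\[
 e_{N,\beta}(\lambda)=\frac14\int_0^1 D(r)\,\dd r .
\]
Here $D(r)$ is the overlap defect at a minimizer, and in the rescaled variables the quantile mass is measured by $\gamma$, which is uniform in $\beta$. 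As in \eqref{eq:fn-defect}, I bound $D(r)\le A+4\mathcal W$. The trace term $A$ is controlled as in \zcref{lem:moving-cutoff}; in the variable $\gamma$ the relevant scale is $M=Nr$, and the root mass is $\lambda$. The term $\mathcal W$ is split into a cutoff strip plus dyadic shells in the cumulative level $\gamma\in[2^j u_0,2^{j+1}u_0)$. On each shell I apply \zcref{lem:cr-refined-shell}, this time keeping the diameter $\ell$. Shells above a fixed level contribute the numerical sum with $\ell\le1$. For the low shells, $\lambda+v<2u$, so \zcref{lem:p2-quantile} with budget
\[
 \eta=e_{N,\beta}(\lambda)+C\lambda^5+\bigl(F_r(a)-E_\beta(\lambda)\text{-type correction}\bigr)
\]
gives $\ell\lesssim u+r^{-7/11}\eta^{2/11}+r^{-5/8}u^{3/8}\eta^{1/8}$.

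The point that needs care is that the excess $F_r(a)-V(t)$ must be bounded by $\eta$ uniformly in $r$. Along the interpolation, $\partial_t F_r(a)=-\tfrac{\beta^2}{4}D_a\le0$ (rescaled), while $V$ is Lipschitz with $V'=-D(r)/4$. I would therefore write
\[
 F_r(a)-V(t)\le F_{1}(a)-V(0)+\int (\text{defect of }V) ,
\]
or more simply use monotonicity: $F_r(a)\le F_{r=1}(a)=A_\beta(\lambda)\le E_\beta(\lambda)+C_1\lambda^5$ by \eqref{eq:gs-scalar-trial-cost}, and $V(t)\ge V(1)=\psi^c$. This gives $\eta\le e_{N,\beta}(\lambda)+C\lambda^5$ for all $r$, which is what is needed.

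Substituting into the shell bound, the diameter term $M^{-2/3}u^{-1/3}\ell^{2/3}$ produces contributions such as $(Nr)^{-2/3}u^{1/3}$, $(Nr)^{-2/3}u^{-1/3}r^{-14/33}\eta^{4/33}$, and $(Nr)^{-2/3}u^{-1/12}r^{-5/12}\eta^{1/12}$. I sum over dyadic $u$ from $u_0=\max\{\lambda,(Nr)^{-1/2}\}$ up to order one. Shells at levels below $\lambda$ do not exist, since $\gamma\ge\lambda$. The sum is dominated by the endpoint $u_0$ for negative powers of $u$ and by $u\sim1$ for positive powers. The positive-power case is the $u^{1/3}$ term, giving $(Nr)^{-2/3}$; integrated in $r$ this yields $N^{-2/3}$. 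The term $M^{-3/4}u^{-1/2}$ at $u_0\ge\lambda$ integrates to $N^{-3/4}\lambda^{-1/2}$. The term $(Mu)^{-1}$ at $u_0=\lambda$ gives $\int_{1/N}^1 (Nr\lambda)^{-1}\dd r\lesssim \log(2+N)/(N\lambda)$, and on $r<1/N$ the trivial bound $D\le C$ contributes $1/N$. The $r$-powers attached to $\eta$ are $r^{-2/3-14/33}$ and $r^{-2/3-5/12}$. Both exceed $1$, so I would cap them by splitting at $r\ge r_0(N)$, using the moving-cutoff bound $(Nr)^{-1/2}$ for small $r$ and optimizing $r_0$; alternatively I would use $\min$ with the cruder estimate. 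Either way the result should be terms of the form $N^{-\alpha}\eta^{\kappa}$ with $\kappa<1$.

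Finally, set $e=e_{N,\beta}(\lambda)$, so that $\eta\le e+C\lambda^5$, and use $(e+C\lambda^5)^\kappa\le e^\kappa+C\lambda^{5\kappa}$. Young's inequality gives $N^{-\alpha}e^\kappa\le \tfrac12 e+CN^{-\alpha/(1-\kappa)}$, and the $\tfrac12 e$ is absorbed into the left side. The exponents must satisfy $\alpha/(1-\kappa)\ge2/3$. The cross terms $N^{-\alpha}\lambda^{5\kappa}$ need to be bounded by the three displayed terms using $\lambda\le1$.

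The step I expect to be the main obstacle is this exponent bookkeeping: showing that after the $r$-integration every $\eta$-term has $\alpha/(1-\kappa)\ge 2/3$, and that no $\lambda^{5\kappa}$ cross term survives. If a cross term does survive, I would first try to absorb it via the $\log(2+N)/(N\lambda)$ or $N^{-3/4}\lambda^{-1/2}$ terms. Otherwise I would shrink $\lambda_0$, noting that the bound is only needed for $\lambda\gtrsim N^{-2/15}$, where $\lambda^5\gtrsim N^{-2/3}$ and the cross terms are subordinate.
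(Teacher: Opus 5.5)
Your architecture matches the paper's. You use the $r$-uniform budget $\eta=e+L_s\lambda^5$ (from $F_r(a)\le A_\beta(\lambda)$ and $\min F_r\ge\psi^c_{N,\beta}(\lambda)$), dyadic quantile shells fed through \zcref{lem:p2-quantile} into \zcref{lem:cr-refined-shell}, the moving cutoff near $r=0$, and absorption by Young's inequality. The gap is at the step you flag, and your first remedy does not close. After summing over shells, the piece coming from $r^{-7/11}\eta^{2/11}$ is $N^{-2/3}\lambda^{-1/3}r^{-12/11}\eta^{4/33}$. Suppose you replace $\mathcal D(r)$ by $C(Nr)^{-1/2}$ below a cutoff $r_0$, or take the pointwise minimum with $(Nr)^{-1/2}$. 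The best $r_0$ then yields $N^{-25/39}\lambda^{-11/39}\eta^{4/39}$, and Young's inequality leaves $N^{-5/7}\lambda^{-11/35}$. At $\lambda=N^{-1/6}$ this equals $N^{-139/210}$, which exceeds the target $N^{-2/3}+N^{-3/4}\lambda^{-1/2}\asymp N^{-2/3}$ by $N^{1/210}$.

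What works is to cap the diameter itself, $\ell_u\le\min\{1,C(u+r^{-7/11}\eta^{2/11}+r^{-5/8}u^{3/8}\eta^{1/8})\}$, before raising each piece to the power $2/3$. The capped integrand, summed over $u\ge\lambda$, is $(Nr)^{-2/3}\lambda^{-1/3}$. This is integrable at $r=0$ and smaller than $(Nr)^{-1/2}$ by the factor $(Nr\lambda^2)^{-1/6}$. One gets
\[
 \int_0^1 r^{-2/3}\min\{1,r^{-14/33}\eta^{4/33}\}\,\dd r\le14\eta^{2/21},
 \qquad
 \int_0^1 r^{-2/3}\min\{1,r^{-5/12}u^{1/4}\eta^{1/12}\}\,\dd r\le15u^{1/5}\eta^{1/15},
\]
hence the two terms $N^{-2/3}\lambda^{-1/3}\eta^{2/21}$ and $N^{-2/3}\lambda^{-2/15}\eta^{1/15}$.

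Your closing test $\alpha/(1-\kappa)\ge2/3$ is also not the right one, because the prefactors carry negative powers of $\lambda$. The cross terms are harmless: they are $N^{-2/3}\lambda^{1/7}$ and $N^{-2/3}\lambda^{1/5}$. With $X=N^{-2/3}$ and $Y=N^{-3/4}\lambda^{-1/2}$, the Young remainders are $N^{-14/19}\lambda^{-7/19}=N^{-1/114}X^{5/19}Y^{14/19}$ and $N^{-5/7}\lambda^{-1/7}=N^{-1/42}X^{5/7}Y^{2/7}$. Weighted AM--GM bounds both by $X+Y$ for every $0<\lambda\le\lambda_0$. Your fallback of restricting to $\lambda\gtrsim N^{-2/15}$ would not prove the proposition as stated.

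Two smaller points. On $r<1/N$ the trivial bound in rescaled units is $\mathcal D(r)\le4\beta$, not a constant. You therefore need the rescaled moving cutoff $\mathcal D(r)\le C(Nr)^{-1/2}$ there to stay uniform in $\beta$. Also, the trace term must keep the root mass, i.e.\ be bounded by $(Nr\lambda)^{-1}$. The bare $2M^{-1/2}$ of \zcref{lem:moving-cutoff} rescales to $2(Nr)^{-1/2}$ and integrates to order $N^{-1/2}$.
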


\begin{proof}
Write
\[
 e=e_{N,\beta}(\lambda)
 =
 E_\beta(\lambda)-\psi^c_{N,\beta}(\lambda).
\]
We first obtain a uniform bound on the excess of the scalar trial
profile at every interpolation parameter. We then apply
\zcref{lem:p2-quantile} with this excess as its budget, insert the
resulting quantile localization into the quantile-interval estimate, and finally
close the resulting inequality for $e$.

Let
\[
 a(q)=\max\{\lambda,\gamma_\beta(q)\}
\]
be the scalar trial profile fixed above. After rescaling,
the fixed-profile and optimized monotonicity from
\zcref{prop:fn-envelope} give, for every $0<r\le1$,
\[
 F_r(a)\le A_\beta(\lambda),
 \qquad
 \min_\gamma F_r(\gamma)
 \ge\psi^c_{N,\beta}(\lambda).
\]
Choose a constant $L_s\ge1$, depending only on $\beta_0$, such that
\eqref{eq:gs-scalar-trial-cost} gives
\[
 A_\beta(\lambda)-E_\beta(\lambda)
 \le
 A_\beta(\lambda)-E_\beta(0)
 \le
 L_s\lambda^5.
\]
It follows that
\begin{equation}\label{eq:abs-uniform-budget}
 F_r(a)-\min_\gamma F_r(\gamma)
 \le
 e+L_s\lambda^5
 =:\eta
\end{equation}
for every $0<r\le1$. Thus the same deterministic quantity $\eta$
bounds the scalar trial's excess at every interpolation point and is
positive because $\lambda>0$.

\medskip
\emph{Step 1: Express the interpolation error through the overlap
defect.}
Fix $r>0$ and let $\gamma$ be any minimizer of $F_r$ over the full
admissible class
\[
 \dd\gamma=\lambda\delta_0+\nu,
 \qquad
 \nu\ge0.
\]
Define
\begin{equation}\label{eq:energy-defect}
 \mathcal D(r)
 =
 \int_{[0,1]}
 \E\left[
   \bigl(R(\sigma^1,\sigma^2)-q\bigr)^2
 \right]\nu(\dd q),
\end{equation}
where, conditional on the interpolation history through $q$,
$\sigma^1$ and $\sigma^2$ are independent terminal-spin samples.

Returning momentarily to the probability-scale measure
$\beta^{-1}\dd\gamma$, we have
\[
 \mathcal D(r)
 =
 \beta D_{1-r,\lambda/\beta}.
\]
By \zcref{prop:fn-envelope}, at almost every $r$ the value of
$\mathcal D(r)$ is determined by the derivative of the optimized
envelope. The rescaled envelope identity is 
\begin{equation}\label{eq:abs-energy-envelope}
 4e=\int_0^1\mathcal D(r)\,\dd r.
\end{equation}

\medskip
\emph{Step 2: Localize the free-mass quantiles.}
For each dyadic level
\[
 u=u_j=2^j\lambda<\beta,
 \qquad j\ge0,
\]
define the quantile interval
\[
 I_u
 =
 \left\{
 v\in(0,\beta-\lambda):
 u\le\lambda+v<\min\{2u,\beta\}
 \right\}.
\]
Every nonempty $I_u$ has length at most $u$, and for
$v\in I_u$,
\[
 \gamma(Q_\nu(v))
 \ge
 \lambda+v
 \ge u.
\]
Let
\[
 \ell_u
 =
 \operatorname*{ess\,sup}_{v,w\in I_u}
 |Q_\nu(v)-Q_\nu(w)|
\]
be the diameter of its image in the overlap variable.

For $u\le1$, the condition defining $I_u$ implies
$\lambda+v<2u$ almost everywhere. Applying
\zcref{lem:p2-quantile} with the uniform budget
\eqref{eq:abs-uniform-budget} gives
\begin{equation}\label{eq:abs-diameter-bound}
 \ell_u
 \le
 \min\left\{
 1,\,
 C\bigl(u+A_r+B_{r,u}\bigr)
 \right\},
\end{equation}
where
\[
 A_r=r^{-7/11}\eta^{2/11},
 \qquad
 B_{r,u}=r^{-5/8}u^{3/8}\eta^{1/8}.
\]
For $u>1$ we use only the trivial bound
\[
 \ell_u\le1.
\]

\medskip
\emph{Step 3: Rescale the quantile-interval estimate.} 
We now apply \zcref{lem:cr-refined-shell}. In the original
probability coordinates,
\[
 c=\beta^2r,
 \qquad
 M=N\beta^2r,
\]
and a cumulative level $u$ for $\gamma$ corresponds to $u/\beta$ for
$\gamma/\beta$. The free measure for $\gamma/\beta$ is $\nu/\beta$,
so the quantile-interval estimate must be multiplied by $\beta$.

The three terms in \eqref{eq:cr-shell-bound} then become
\[
 \begin{aligned}
 \beta(N\beta^2r)^{-3/4}(u/\beta)^{-1/2}
 &=(Nr)^{-3/4}u^{-1/2},\\
 \beta(N\beta^2r)^{-2/3}(u/\beta)^{-1/3}\ell_u^{2/3}
 &=(Nr)^{-2/3}u^{-1/3}\ell_u^{2/3},\\
 \frac{\beta}{(N\beta^2r)(u/\beta)}
 &=(Nru)^{-1}.
 \end{aligned}
\]
In particular, every power of $\beta$ cancels. This cancellation is
what makes the final estimate uniform for $\beta\ge\beta_0$.

The trace contribution behaves in the same way. The probability-scale
bound \eqref{eq:fn-trace-error}, with
$s=\lambda/\beta$, becomes after multiplication by $\beta$
\begin{equation}\label{eq:abs-energy-trace}
 \frac1{Nr\lambda}.
\end{equation}

For $u_j\le1$, use \eqref{eq:abs-diameter-bound} and subadditivity of
$x\mapsto x^{2/3}$ to obtain
\begin{align}
 u_j^{-1/3}\ell_{u_j}^{2/3}
 \le C\biggl[
 &u_j^{1/3}
 +u_j^{-1/3}
   \min\{1,r^{-14/33}\eta^{4/33}\}\nonumber\\
 &+
 u_j^{-1/3}
   \min\{1,r^{-5/12}u_j^{1/4}\eta^{1/12}\}
 \biggr].
 \label{eq:abs-diameter-expanded}
\end{align}
The sum of $u_j^{1/3}$ over the levels $u_j\le1$ is bounded by a
numerical constant. For $u_j>1$, the trivial diameter bound gives
$u_j^{-1/3}\ell_{u_j}^{2/3}\le u_j^{-1/3}$, whose dyadic sum is also
bounded independently of the terminal level $\beta$.

The first and third terms in the quantile-interval estimate similarly
give the geometric sums 
\[
 C(Nr)^{-3/4}\lambda^{-1/2}
 \qquad\text{and}\qquad
 C(Nr\lambda)^{-1},
\]
respectively. It remains to integrate in $r$ the two localization
terms in \eqref{eq:abs-diameter-expanded}.

\medskip
\emph{Step 4: Integrate the localization terms.}
For every $\eta\ge0$,
\begin{equation}\label{eq:abs-first-r-integral}
 \int_0^1
 r^{-2/3}
 \min\{1,r^{-14/33}\eta^{4/33}\}\,\dd r
 \le
 14\eta^{2/21}.
\end{equation}
Indeed, when $0<\eta\le1$, split at
$r=\eta^{2/7}$; the integral is exactly
\[
 14\eta^{2/21}-11\eta^{4/33}.
\]
For $\eta\ge1$, the left side is at most $3$, which is bounded by the
right side after enlarging the numerical constant. The case
$\eta=0$ is immediate.

Likewise, for every $u,\eta\ge0$,
\begin{equation}\label{eq:abs-second-r-integral}
 \int_0^1
 r^{-2/3}
 \min\{1,r^{-5/12}u^{1/4}\eta^{1/12}\}\,\dd r
 \le
 15u^{1/5}\eta^{1/15}.
\end{equation}
To see this, put
\[
 w=u^{1/4}\eta^{1/12}.
\]
For $0<w\le1$, split at $r=w^{12/5}$; the exact value is
\[
 15w^{4/5}-12w.
\]
For $w\ge1$, use again the trivial integral bound $3$.

After multiplying by the remaining factor $u_j^{-1/3}$ and summing
over the dyadic levels $u_j\le1$,
\eqref{eq:abs-first-r-integral} contributes
\[
 C\lambda^{-1/3}\eta^{2/21},
\]
while \eqref{eq:abs-second-r-integral} contributes
\[
 C\lambda^{-2/15}\eta^{1/15}.
\]

\medskip
\emph{Step 5: Control the singular endpoint $r=0$.}
The terms $(Nr\lambda)^{-1}$ are not integrable at zero, so we use
the moving-cutoff estimate there. Rescaling
\zcref{lem:moving-cutoff} gives
\[
 \mathcal D(r)\le C(Nr)^{-1/2}.
\]
Hence
\[
 \int_0^{N^{-1}}\mathcal D(r)\,\dd r
 \le
 CN^{-1/2}\int_0^{N^{-1}}r^{-1/2}\,\dd r
 \le
 \frac{C}{N}.
\]
On $[N^{-1},1]$, the trace contribution
\eqref{eq:abs-energy-trace} and the third shell term each give at most
\[
 C\int_{N^{-1}}^1\frac{\dd r}{N\lambda r}
 \le
 C\frac{\log(2+N)}{N\lambda}.
\]
All remaining powers of $r$ are integrable at zero and were estimated
above.

Using $N^{-1}\le N^{-2/3}$ and the envelope identity
\eqref{eq:abs-energy-envelope}, we arrive at
\begin{equation}\label{eq:abs-budget-comparison}
 e
 \le
 C\left[
 N^{-2/3}
 +N^{-3/4}\lambda^{-1/2}
 +\frac{\log(2+N)}{N\lambda}
 +N^{-2/3}\lambda^{-1/3}\eta^{2/21}
 +N^{-2/3}\lambda^{-2/15}\eta^{1/15}
 \right].
\end{equation}

\medskip
\emph{Step 6: Close the self-consistent inequality.}
Recall that
\[
 \eta=e+L_s\lambda^5.
\]
Since the exponents $2/21$ and $1/15$ are below one,
subadditivity gives
\[
 \eta^{2/21}
 \le
 e^{2/21}+C\lambda^{10/21},
 \qquad
 \eta^{1/15}
 \le
 e^{1/15}+C\lambda^{1/3}.
\]
The contributions of the scalar term $L_s\lambda^5$ to
\eqref{eq:abs-budget-comparison} are therefore bounded by
\[
 CN^{-2/3}
 \left(
   \lambda^{1/7}+\lambda^{1/5}
 \right)
 \le
 CN^{-2/3},
\]
because $0<\lambda\le\lambda_0\le1$.

For the two terms containing $e$, Young's inequality gives
\[
 \begin{aligned}
 C N^{-2/3}\lambda^{-1/3}e^{2/21}
 &\le
 \frac e4
 +C\bigl(N^{-2/3}\lambda^{-1/3}\bigr)^{21/19}\\
 &=
 \frac e4
 +C N^{-14/19}\lambda^{-7/19},
 \end{aligned}
\]
and
\[
 \begin{aligned}
 C N^{-2/3}\lambda^{-2/15}e^{1/15}
 &\le
 \frac e4
 +C\bigl(N^{-2/3}\lambda^{-2/15}\bigr)^{15/14}\\
 &=
 \frac e4
 +C N^{-5/7}\lambda^{-1/7}.
 \end{aligned}
\]
Move the two $e/4$ terms to the left.

Finally set
\[
 A=N^{-2/3},
 \qquad
 B=N^{-3/4}\lambda^{-1/2}.
\]
The two remainders from Young's inequality are controlled by the
principal error terms. Indeed,
\[
 N^{-14/19}\lambda^{-7/19}
 =
 N^{-1/114}A^{5/19}B^{14/19}
 \le
 A+B,
\]
and
\[
 N^{-5/7}\lambda^{-1/7}
 =
 N^{-1/42}A^{5/7}B^{2/7}
 \le
 A+B,
\]
where we used $N\ge1$ and the weighted arithmetic--geometric mean
inequality. Substituting these estimates into
\eqref{eq:abs-budget-comparison} gives
\[
 e
 \le
 C\left[
 N^{-2/3}
 +N^{-3/4}\lambda^{-1/2}
 +\frac{\log(2+N)}{N\lambda}
 \right].
\]
This is \eqref{eq:abs-comparison}.
\end{proof}

\subsection{Uniform fluctuation exponents}

We now combine the scalar floor estimates with the refined finite-$N$
comparison and the Gaussian moment bounds. For
\[
 \Delta_{N,\beta}
 =
 E_\beta(0)-\frac{\E X_{N,\beta}}{N},
\]
the quantity $\Delta_{N,\beta}$ is the per-spin finite-size deficit of
the mean rescaled free energy from the scalar Parisi value.

The four estimates below are obtained in the following order. First,
the refined comparison at two positive tilts gives
\[
 \Delta_{N,\beta}\lesssim N^{-2/3}.
\]
The variance--deficit relation in
\zcref{lem:short-variance-mean} then gives the variance upper bound.
For the opposite direction, combine the scalar lower bound
\[
 E_\beta(\lambda)-E_\beta(0)\gtrsim\lambda^5
\]
with the positive-moment estimate \eqref{eq:gs-energy-mgf} to obtain
the variance lower bound. Substituting this bound back into
\zcref{lem:short-variance-mean} gives the lower bound on
$\Delta_{N,\beta}$. 

\begin{theorem}\zlabel{thm:uniform-energy-powers}
Fix $\beta_0>1$. There exist constants
\[
 c>0,\qquad C<\infty,\qquad N_0<\infty,
\]
depending only on $\beta_0$, such that for every
$\beta\ge\beta_0$ and every integer $N\ge N_0$,
\begin{equation}\label{eq:uniform-energy-four-bounds}
 \begin{gathered}
 cN^{4/15}
 \le
 \Var(X_{N,\beta})
 \le
 CN^{7/15},\\[2mm]
 cN^{-11/12}
 \le
 E_\beta(0)-\frac{\E X_{N,\beta}}{N}
 \le
 CN^{-2/3}.
 \end{gathered}
\end{equation}
After increasing $C$ if necessary, the two upper bounds hold for
every $N\ge1$ and every $\beta\ge\beta_0$.
\end{theorem}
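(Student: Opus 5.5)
We prove the four bounds in the order announced before the statement: the deficit upper bound, then the variance upper bound, then the variance lower bound, and finally the deficit lower bound. Throughout, write $\Delta=\Delta_{N,\beta}$, $X=X_{N,\beta}$, and $\psi=\psi^c_{N,\beta}$, so that $\psi(0)=\E X/N$.

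\emph{Deficit upper bound.} Fix $\lambda_1=N^{-2/15}$. This lies in $(0,\lambda_0]$ once $N$ is large. For smaller $N$ we simply use the trivial bound $\Delta\le C$; this follows from $E_\beta(0)\le 2$ and from $\E X/N\ge \E\max$ of the coordinates of $H_N$ divided by $N$, which is at least $0$.

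By \zcref{lem:short-laplace-convexity}, the map $\lambda\mapsto\psi(\lambda)$ is convex with $\psi'(0)=\Var(X+D_N)/(2N)\ge0$. Hence $\psi$ is nondecreasing on $[0,\infty)$, and $\psi(0)\ge\psi(\lambda_1)-\lambda_1\psi'(\lambda_1)$. To make the slope term harmless, we compare two tilts $\lambda_1$ and $2\lambda_1$. Convexity gives
\[
\lambda_1\psi'(\lambda_1)\le\psi(2\lambda_1)-\psi(\lambda_1).
\]
Therefore
\[
\Delta=E_\beta(0)-\psi(0)\le E_\beta(0)-2\psi(\lambda_1)+\psi(2\lambda_1).
\]
We now bound each piece:
\begin{itemize}
\item \zcref{prop:abs-selfconsistent-comparison} gives $\psi(\lambda_1)\ge E_\beta(\lambda_1)-e$, where $e\le C[N^{-2/3}+N^{-3/4}\lambda_1^{-1/2}+\log(2+N)/(N\lambda_1)]\le CN^{-2/3}$, since $N^{-3/4+1/15}\le N^{-2/3}$ and $N^{-13/15}\log N\le N^{-2/3}$.
\item The comparison \eqref{eq:gs-full-fv} gives $\psi(2\lambda_1)\le E_\beta(2\lambda_1)$.
\item \zcref{lem:gs-scalar-root} gives $E_\beta(2\lambda_1)\le E_\beta(0)+C\lambda_1^5$.
\item Monotonicity gives $E_\beta(\lambda_1)\ge E_\beta(0)$.
\end{itemize}
Combining these, $\Delta\le 2e+C\lambda_1^5\le CN^{-2/3}$. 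This estimate holds for every $N\ge1$ after enlarging $C$.

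\emph{Variance upper bound.} \zcref{lem:short-variance-mean} gives $c((\Var X+1/2)/N)^{5/4}\le\Delta\le CN^{-2/3}$. Solving, $\Var X\le CN^{1-8/15}=CN^{7/15}$, again for all $N$.

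\emph{Variance lower bound (the main step).} Take $\lambda=AN^{-2/15}$, with $A$ large depending only on $\beta_0$, and $N$ large so that $\lambda\le\lambda_0$. Write
\[
K_{N,\beta}(\lambda)=N\lambda(\psi(\lambda)-\psi(0))-\lambda^2/4.
\]
For the first piece we use \zcref{prop:scalar-lower-via-fisher} together with \zcref{prop:abs-selfconsistent-comparison}:
\[
\psi(\lambda)\ge E_\beta(0)+c_0\lambda^5-e(\lambda),\qquad e(\lambda)\le C(1+A^{-1/2}+A^{-1})N^{-2/3}.
\]
We also have $\psi(0)\le E_\beta(0)$, since $\Delta\ge0$ by \eqref{eq:gs-full-fv} at $\lambda=0$. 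Hence
\[
K_{N,\beta}(\lambda)\ge N\lambda\bigl(c_0A^5N^{-2/3}-CN^{-2/3}\bigr)-\lambda^2/4\ge c_0A^6N^{1/5}/2-O(1),
\]
which is at least $1$ once $A$ is large and then $N$ is large.

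Now apply the positive-moment bound \eqref{eq:gs-energy-mgf}. If $\lambda\sqrt{\Var X+\beta^{-2}}>b$, then $\Var X+\beta_0^{-2}\ge b^2A^{-2}N^{4/15}$. Otherwise $1\le B\lambda^2(\Var X+\beta^{-2})$, which gives the same conclusion. Absorbing $\beta_0^{-2}$ for large $N$ yields $\Var X\ge cN^{4/15}$.

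The delicate point in this step is checking that every error term in \zcref{prop:abs-selfconsistent-comparison} at this $\lambda$ is $O(N^{-2/3})$ with a constant that does not grow with $A$. This holds because the $\lambda$-dependence of those errors enters only through negative powers of $A$.

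\emph{Deficit lower bound.} Substituting the variance lower bound into \zcref{lem:short-variance-mean} gives $\Delta\ge c(N^{4/15-1})^{5/4}=cN^{-11/12}$.
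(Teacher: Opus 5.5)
Your proposal is correct and follows the paper's proof essentially step for step. It uses the midpoint-convexity comparison at the tilts $0,\lambda,2\lambda$ with $\lambda=N^{-2/15}$ for the deficit upper bound, \zcref{lem:short-variance-mean} for the variance upper bound, the two-case comparison of $K_{N,\beta}(AN^{-2/15})$ with \eqref{eq:gs-energy-mgf} for the variance lower bound, and \zcref{lem:short-variance-mean} again for the deficit lower bound. The only deviations are cosmetic: for the finitely many small $N$ you use the crude bound $\Delta\le2$ (from $E_\beta(0)<2$ and $\E X\ge\E G_N\ge0$) where the paper uses the rescaled pressure bound \eqref{eq:quantitative-pressure}, and $\Delta\ge0$ should be cited from \eqref{eq:quantitative-pressure} rather than \eqref{eq:gs-full-fv}, which is stated only for $\lambda>0$.
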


\begin{proof}
Write
\[
 X=X_{N,\beta}=\frac{F_{N,\beta}}{\beta},
 \qquad
 M_{N,\beta}
 =
 E_\beta(0)-\frac{\E X}{N}.
\]
The comparison at zero gives
\[
 M_{N,\beta}\ge0.
\]
We prove the four bounds in the order described above.

\medskip
\emph{Step 1: Upper bound on the mean deficit.}
Choose the tilt so that the fifth-order scalar cost has the same order
as the leading finite-$N$ comparison error:
\[
 \lambda=N^{-2/15}.
\]
Take $N$ large enough that $2\lambda\le\lambda_0$. Write
\[
 \psi(\lambda)=\psi^c_{N,\beta}(\lambda),
 \qquad
 \psi(0)=\frac{\E X}{N}.
\]
By \zcref{lem:short-laplace-convexity}, $\psi$ is convex on the
positive half-line. Applying the midpoint inequality to the points
$0,\lambda,2\lambda$ gives
\[
 \psi(0)\ge2\psi(\lambda)-\psi(2\lambda).
\]
Since
\[
 \psi(\lambda)
 =
 E_\beta(\lambda)-e_{N,\beta}(\lambda)
\]
and $\psi(2\lambda)\le E_\beta(2\lambda)$, we obtain
\[
 \begin{aligned}
 M_{N,\beta}
 &\le
 E_\beta(0)-2\psi(\lambda)+\psi(2\lambda)\\
 &\le
 E_\beta(0)-2E_\beta(\lambda)+E_\beta(2\lambda)
 +2e_{N,\beta}(\lambda).
 \end{aligned}
\]
The constrained variational values satisfy
$E_\beta(\lambda)\ge E_\beta(0)$, while
\eqref{eq:gs-scalar-trial-cost} at $2\lambda$ gives
\[
 E_\beta(2\lambda)-E_\beta(0)\le C\lambda^5.
\]
Therefore
\begin{equation}\label{eq:uniform-power-mean-pre}
 M_{N,\beta}
 \le
 C\lambda^5+2e_{N,\beta}(\lambda).
\end{equation}

At $\lambda=N^{-2/15}$, the three terms in
\eqref{eq:abs-comparison} have sizes
\[
 N^{-2/3},
 \qquad
 N^{-3/4}\lambda^{-1/2}=N^{-41/60},
 \qquad
 \frac{\log(2+N)}{N\lambda}
 =
 N^{-13/15}\log(2+N).
\]
The last two are $o(N^{-2/3})$. Since
$\lambda^5=N^{-2/3}$, \eqref{eq:uniform-power-mean-pre} yields
\begin{equation}\label{eq:abs-endpoint-mean}
 0\le
 E_\beta(0)-\frac{\E X_{N,\beta}}{N}
 \le
 CN^{-2/3}
\end{equation}
for all sufficiently large $N$, uniformly in $\beta\ge\beta_0$.

For the remaining finite set of system sizes, the
energy-rescaled form of \eqref{eq:quantitative-pressure} gives a
uniform finite bound. Increasing $C$ therefore extends
\eqref{eq:abs-endpoint-mean} to every $N\ge1$.

\medskip
\emph{Step 2: Upper bound on the variance.}
By \zcref{lem:short-variance-mean},
\[
 M_{N,\beta}
 \ge
 c\left(
 \frac{\Var(X_{N,\beta})+1/2}{N}
 \right)^{5/4}.
\]
Solving for the variance and using
\eqref{eq:abs-endpoint-mean} gives
\begin{equation}\label{eq:abs-endpoint-variance}
 \begin{aligned}
 \Var(X_{N,\beta})+\frac12
 &\le
 C N M_{N,\beta}^{4/5}\\
 &\le
 C N^{1-(2/3)(4/5)}
 =
 C N^{7/15}.
 \end{aligned}
\end{equation}
This proves the variance upper bound, again uniformly for
$\beta\ge\beta_0$.

\medskip
\emph{Step 3: Lower bound on the variance.}
We now choose a larger fixed multiple of the same tilt scale:
\[
 \lambda=A N^{-2/15},
\]
where $A\ge1$ will be chosen independently of $N$ and $\beta$.
Let $c_{\mathrm{fl}}>0$ denote the lower floor-cost constant from
\zcref{prop:scalar-lower-via-fisher}. Dividing
\eqref{eq:abs-comparison} by $\lambda^5$ gives
\[
 \frac{e_{N,\beta}(\lambda)}{\lambda^5}
 \le
 C\left[
 A^{-5}
 +A^{-11/2}N^{-1/60}
 +A^{-6}N^{-1/5}\log(2+N)
 \right].
\]
Choose $A$ sufficiently large and then $N$ sufficiently large,
depending only on $\beta_0$, so that
\[
 \lambda\le\lambda_0
 \qquad\text{and}\qquad
 e_{N,\beta}(\lambda)
 \le
 \frac{c_{\mathrm{fl}}}{2}\lambda^5.
\]

Let
\[
 K_{N,\beta}(\lambda)
 =
 \log\E
 e^{\lambda(X_{N,\beta}-\E X_{N,\beta})}.
\]
Because
\[
 \psi^c_{N,\beta}(\lambda)
 =
 E_\beta(\lambda)-e_{N,\beta}(\lambda)
\]
and the independent diagonal completion contributes
$\lambda^2/4$ to its logarithmic moment, we have the exact identity
\[
 K_{N,\beta}(\lambda)
 =
 N\lambda
 \left[
 E_\beta(\lambda)-E_\beta(0)
 +M_{N,\beta}
 -e_{N,\beta}(\lambda)
 \right]
 -\frac{\lambda^2}{4}.
\]
The scalar lower bound gives
\[
 E_\beta(\lambda)-E_\beta(0)
 \ge
 c_{\mathrm{fl}}\lambda^5,
\]
and $M_{N,\beta}\ge0$. Hence
\begin{equation}\label{eq:uniform-power-K-lower}
 K_{N,\beta}(\lambda)
 \ge
 \frac{c_{\mathrm{fl}}}{2}N\lambda^6
 -\frac{\lambda^2}{4}.
\end{equation}
Since
\[
 N\lambda^6=A^6N^{1/5},
\]
the right-hand side exceeds $1$ once $N$ is large enough.

We compare this lower bound with the variance-sensitive estimate
\eqref{eq:gs-energy-mgf}. Put
\[
 V_{N,\beta}
 =
 \Var(X_{N,\beta})+\beta^{-2}.
\]
If
\[
 \lambda\sqrt{V_{N,\beta}}>b,
\]
then directly
\[
 V_{N,\beta}
 >
 b^2\lambda^{-2}
 =
 b^2A^{-2}N^{4/15}.
\]
If instead
\[
 \lambda\sqrt{V_{N,\beta}}\le b,
\]
then \eqref{eq:gs-energy-mgf} applies and gives
\[
 K_{N,\beta}(\lambda)
 \le
 B\lambda^2V_{N,\beta}.
\]
Because \eqref{eq:uniform-power-K-lower} is greater than $1$ and
$Bb^2=1$, this forces
\[
 V_{N,\beta}
 >
 b^2\lambda^{-2}
 =
 b^2A^{-2}N^{4/15}.
\]
Thus in either case
\[
 \Var(X_{N,\beta})+\beta^{-2}
 \ge
 cN^{4/15}.
\]
Since $\beta\ge\beta_0$, the term $\beta^{-2}$ is bounded uniformly.
After increasing the common threshold $N_0$, it can be absorbed into
the right-hand side, yielding
\[
 \Var(X_{N,\beta})\ge cN^{4/15}.
\]

\medskip
\emph{Step 4: Lower bound on the mean deficit.}
Substitute the variance lower bound into
\zcref{lem:short-variance-mean}:
\[
 \begin{aligned}
 M_{N,\beta}
 &\ge
 c\left(
 \frac{\Var(X_{N,\beta})+1/2}{N}
 \right)^{5/4}\\
 &\ge
 cN^{(4/15-1)(5/4)}
 =
 cN^{-11/12}.
 \end{aligned}
\]
Together with Steps~1--3, this proves
\eqref{eq:uniform-energy-four-bounds}. All constants and thresholds
depend only on $\beta_0$.
\end{proof}

\begin{proof}[Proof of \zcref{thm:ground-fluctuations}]
Apply \zcref{thm:uniform-energy-powers} with $\beta_0=2$. Its
constants are then independent of $\beta\ge2$.

Fix $N$. The deterministic soft-maximum bound for the off-diagonal
model gives
\[
 0
 \le
 X_{N,\beta}-G_N
 \le
 \frac{N\log2}{\beta}.
\]
Hence
\[
 X_{N,\beta}\longrightarrow G_N
\]
in $L^2$ as $\beta\to\infty$. Consequently
\[
 \E X_{N,\beta}\longrightarrow\E G_N,
 \qquad
 \Var(X_{N,\beta})\longrightarrow\Var(G_N).
\]
On the scalar side, \zcref{lem:zero-limit} with $\lambda=0$ gives
\[
 E_\beta(0)\longrightarrow e_*.
\]

For every $N$ above the threshold in
\zcref{thm:uniform-energy-powers}, we may therefore let
$\beta\to\infty$ in its four uniform estimates to obtain
\begin{equation}\label{eq:abs-new-powers}
 \begin{gathered}
 cN^{4/15}
 \le
 \Var(G_N)
 \le
 CN^{7/15},\\[2mm]
 cN^{-11/12}
 \le
 e_*-\frac{\E G_N}{N}
 \le
 CN^{-2/3}.
 \end{gathered}
\end{equation}
The lower bounds thus hold for all $N$ above a numerical threshold.
The two finite-temperature upper bounds hold for every $N\ge1$ after
enlarging $C$, and the same limiting argument therefore gives the
ground-state upper bounds for every $N\ge1$.
\end{proof}

\subsection{Uniform tails, tilted variance, and negative moments}

The refined comparison holds uniformly on an interval of positive
tilts, rather than only at a single value of the tilt. Combining it
with the matching scalar upper and lower bounds  gives a
sixth-order bound for the logarithmic moment centered at the limiting
variational value. This is the input needed for the two-sided
upper-tail estimate and for the variance bounds under exponential
tilting. Throughout this subsection, all constants and integer
thresholds depend only on $\beta_0$.

\begin{lemma}\zlabel{lem:uniform-energy-sixth-moment}
There exist constants
\[
 a_0,b_0,A_0,\lambda_*>0
 \qquad\text{and}\qquad
 N_0<\infty,
\]
depending only on $\beta_0$, with $\lambda_*\le1$, such that the
following holds uniformly for every $\beta\ge\beta_0$.

Define
\[
 \ell_{N,\beta}^{\mathrm{en}}(\lambda)
 =
 \log\E
 \exp\left\{
   \lambda\bigl(X_{N,\beta}-NE_\beta(0)\bigr)
 \right\}.
\]
For every $N\ge N_0$ and
\[
 A_0N^{-2/15}\le\lambda\le\lambda_*,
\]
one has
\begin{equation}\label{eq:uniform-energy-sixth-moment}
 a_0N\lambda^6
 \le
 \ell_{N,\beta}^{\mathrm{en}}(\lambda)
 \le
 b_0N\lambda^6.
\end{equation}
The upper bound
\[
 \ell_{N,\beta}^{\mathrm{en}}(\lambda)
 \le b_0N\lambda^6
\]
holds, after increasing $b_0$ if necessary, for every $N\ge1$ and
every $0<\lambda\le\lambda_*$.

At zero temperature, define
\[
 \ell_N^{\mathrm{gs}}(\lambda)
 =
 \log\E
 \exp\left\{
   \lambda(G_N-Ne_*)
 \right\}.
\]
With the same constants, for every $N\ge N_0$ and
\[
 A_0N^{-2/15}\le\lambda\le\lambda_*,
\]
\[
 a_0N\lambda^6
 \le
 \ell_N^{\mathrm{gs}}(\lambda)
 \le
 b_0N\lambda^6,
\]
and the upper bound again holds for every $N\ge1$ and
$0<\lambda\le\lambda_*$.
\end{lemma}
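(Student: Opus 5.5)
The plan is to use an exact decomposition of the logarithmic moment into a scalar floor cost, a mean deficit, and a comparison error, and to bound each piece with the earlier results. Write $\psi=\psi^c_{N,\beta}$, $M_{N,\beta}=E_\beta(0)-\E X_{N,\beta}/N$ and $e=e_{N,\beta}(\lambda)$. Removing the diagonal completion $D_N$, whose logarithmic moment is $\lambda^2/4$, gives
\[
 \ell_{N,\beta}^{\mathrm{en}}(\lambda)
 =N\lambda\bigl[\psi(\lambda)-E_\beta(0)\bigr]-\frac{\lambda^2}{4}
 =N\lambda\bigl[E_\beta(\lambda)-E_\beta(0)-e\bigr]-\frac{\lambda^2}{4}.
\]

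For the upper bound, use $e\ge0$ from \eqref{eq:gs-full-fv} and $E_\beta(\lambda)-E_\beta(0)\le C_1\lambda^5$ from \eqref{eq:gs-scalar-trial-cost}. Dropping $-\lambda^2/4$ then gives $\ell^{\mathrm{en}}_{N,\beta}(\lambda)\le C_1N\lambda^6$. This holds for every $N\ge1$ and every $0<\lambda\le\lambda_*$, provided $\lambda_*$ is taken no larger than the constant of \zcref{lem:gs-scalar-root}.

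For the lower bound, I would repeat Step 3 of the proof of \zcref{thm:uniform-energy-powers}, now for every $\lambda$ in the window rather than a single tilt. By \zcref{prop:scalar-lower-via-fisher}, $E_\beta(\lambda)-E_\beta(0)\ge c_{\mathrm{fl}}\lambda^5$. By \zcref{prop:abs-selfconsistent-comparison}, for $\lambda\ge A_0N^{-2/15}$,
\[
 \frac{e}{\lambda^5}\le C\Bigl[N^{-2/3}\lambda^{-5}+N^{-3/4}\lambda^{-11/2}+\frac{\log(2+N)}{N\lambda^6}\Bigr]
 \le C\bigl[A_0^{-5}+A_0^{-11/2}N^{-1/60}+A_0^{-6}N^{-1/5}\log(2+N)\bigr].
\]
The first step is to choose $A_0$ large, and then $N_0$ large, so that this ratio is at most $c_{\mathrm{fl}}/4$. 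Then $\ell^{\mathrm{en}}_{N,\beta}(\lambda)\ge \tfrac34 c_{\mathrm{fl}}N\lambda^6-\lambda^2/4$. Next absorb the $\lambda^2/4$ term: $N\lambda^6\ge A_0^4N^{1/5}\lambda^2$ on the window, so for $N_0$ large it is at most $\tfrac14c_{\mathrm{fl}}N\lambda^6$. This gives $a_0=c_{\mathrm{fl}}/2$. All constants depend only on $\beta_0$, because every input is uniform in $\beta\ge\beta_0$.

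The zero-temperature statement follows by letting $\beta\to\infty$ at fixed $N$ and $\lambda$. The soft-maximum bound $0\le X_{N,\beta}-G_N\le N\log2/\beta$ gives $\log\E e^{\lambda X_{N,\beta}}\to\log\E e^{\lambda G_N}$. \zcref{lem:zero-limit} gives $E_\beta(0)\to e_*$. Hence $\ell^{\mathrm{en}}_{N,\beta}(\lambda)\to\ell^{\mathrm{gs}}_N(\lambda)$, and the $\beta$-uniform two-sided bounds pass to the limit with the same constants.

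The main obstacle is only bookkeeping: ensuring the comparison error stays at most a small multiple of $\lambda^5$ uniformly on the whole window, and not just at $\lambda=AN^{-2/15}$. The worst case is the left endpoint, and the $\lambda$-power estimate above is monotone in $\lambda$, so the endpoint case controls the whole window.
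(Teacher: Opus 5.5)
Your proposal is correct and matches the paper's proof essentially step for step. It uses the same exact identity $\ell^{\mathrm{en}}_{N,\beta}(\lambda)=N\lambda\bigl[E_\beta(\lambda)-E_\beta(0)-e_{N,\beta}(\lambda)\bigr]-\lambda^2/4$, the two scalar floor estimates for the two directions, and the choice of $A_0$ and then $N_0$ that forces $e_{N,\beta}(\lambda)\le c_{\mathrm{fl}}\lambda^5/4$ on the whole window and absorbs the diagonal term. The $\beta\to\infty$ passage via the soft-maximum bound and \zcref{lem:zero-limit} is also the paper's.

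Two harmless remarks. First, on the window one actually has $N\lambda^6\ge A_0^4N^{7/15}\lambda^2$; your $N^{1/5}$ is true but weaker. Second, $\lambda_*$ must also be capped by the ranges of \zcref{prop:scalar-lower-via-fisher} and \zcref{prop:abs-selfconsistent-comparison}, which is the same bookkeeping the paper does.
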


\begin{proof}
We first prove the finite-temperature bounds uniformly in
$\beta\ge\beta_0$. By the definition of
$e_{N,\beta}(\lambda)$,
\[
 \psi^c_{N,\beta}(\lambda)
 =
 E_\beta(\lambda)-e_{N,\beta}(\lambda).
\]
The completed moment contains the independent Gaussian
$D_N\sim N(0,1/2)$, whereas
$\ell_{N,\beta}^{\mathrm{en}}$ is defined for the off-diagonal
quantity $X_{N,\beta}$. Since
\[
 \log\E e^{\lambda D_N}=\frac{\lambda^2}{4},
\]
we obtain the exact identity
\begin{equation}\label{eq:energy-tail-moment-exact}
 \ell_{N,\beta}^{\mathrm{en}}(\lambda)
 =
 N\lambda
 \left[
   E_\beta(\lambda)-E_\beta(0)
   -e_{N,\beta}(\lambda)
 \right]
 -\frac{\lambda^2}{4}.
\end{equation}

\medskip
\emph{Upper bound.}
By \zcref{lem:gs-scalar-root},
\[
 E_\beta(\lambda)-E_\beta(0)\le C_{\mathrm{fl}}\lambda^5
\]
on a fixed interval $0<\lambda\le\lambda_*$. Since
$e_{N,\beta}(\lambda)\ge0$, \eqref{eq:energy-tail-moment-exact}
immediately gives
\[
 \ell_{N,\beta}^{\mathrm{en}}(\lambda)
 \le
 C_{\mathrm{fl}}N\lambda^6.
\]
After decreasing $\lambda_*$ to lie in the common scalar and
finite-$N$ comparison range, this proves the asserted upper bound
uniformly in $\beta\ge\beta_0$. The estimate is valid for every
$N\ge1$.

\medskip
\emph{Lower bound.}
Let $c_{\mathrm{fl}}>0$ be the constant in the lower bound from
\zcref{prop:scalar-lower-via-fisher}. Thus 
\[
 E_\beta(\lambda)-E_\beta(0)
 \ge
 c_{\mathrm{fl}}\lambda^5
\]
throughout the same fixed small-$\lambda$ interval.

We now require $\lambda$ to lie above a multiple of the scale
$N^{-2/15}$. Let
\[
 \lambda\ge A_0N^{-2/15}.
\]
Dividing \eqref{eq:abs-comparison} by $\lambda^5$ gives
\[
 \frac{e_{N,\beta}(\lambda)}{\lambda^5}
 \le
 C\left[
   A_0^{-5}
   +A_0^{-11/2}N^{-1/60}
   +A_0^{-6}N^{-1/5}\log(2+N)
 \right].
\]
Choose $A_0$ sufficiently large, and then choose $N_0$ sufficiently
large, both depending only on $\beta_0$, so that
\begin{equation}\label{eq:sixth-window-comparison-small}
 e_{N,\beta}(\lambda)
 \le
 \frac{c_{\mathrm{fl}}}{4}\lambda^5
\end{equation}
whenever
\[
 N\ge N_0,
 \qquad
 A_0N^{-2/15}\le\lambda\le\lambda_*.
\]

The remaining negative term in
\eqref{eq:energy-tail-moment-exact} is the diagonal-completion
correction. Relative to $N\lambda^6$,
\[
 \frac{\lambda^2/4}{N\lambda^6}
 =
 \frac1{4N\lambda^4}
 \le
 \frac14A_0^{-4}N^{-7/15}.
\]
Increasing $N_0$ if necessary, we may arrange that this is at most
$c_{\mathrm{fl}}/4$. Combining this estimate with
\eqref{eq:sixth-window-comparison-small} and the scalar lower floor
cost gives
\[
 \begin{aligned}
 \ell_{N,\beta}^{\mathrm{en}}(\lambda)
 &\ge
 N\lambda
 \left(
   c_{\mathrm{fl}}\lambda^5
   -\frac{c_{\mathrm{fl}}}{4}\lambda^5
 \right)
 -\frac{\lambda^2}{4}\\
 &\ge
 \frac{c_{\mathrm{fl}}}{2}N\lambda^6.
 \end{aligned}
\]
This proves the matching lower bound with an onset
$A_0N^{-2/15}$ that is uniform for all $\beta\ge\beta_0$.

\medskip
\emph{Zero-temperature limit.}
Fix $N$ and $\lambda>0$. The deterministic soft-maximum estimate gives
\[
 0\le X_{N,\beta}-G_N\le\frac{N\log2}{\beta},
\]
while \zcref{lem:zero-limit} gives
\[
 E_\beta(0)\longrightarrow e_*.
\]
Hence
\[
 \lambda\bigl(X_{N,\beta}-NE_\beta(0)\bigr)
 \longrightarrow
 \lambda(G_N-Ne_*)
\]
almost surely. Moreover,
\[
 |X_{N,\beta}|
 \le
 |G_N|+\frac{N\log2}{\beta}.
\]
Thus, for each fixed finite $\lambda$ and $\beta\ge1$,
\[
 e^{|\lambda||X_{N,\beta}|}
 \le
 2^{|\lambda|N}e^{|\lambda||G_N|}.
\]
The right-hand side is integrable because $G_N$ is the maximum of
finitely many Gaussian linear forms. Dominated convergence yields 
\[
 \ell_{N,\beta}^{\mathrm{en}}(\lambda)
 \longrightarrow
 \ell_N^{\mathrm{gs}}(\lambda).
\]

The finite-temperature constants and thresholds are independent of
$\beta$. Letting $\beta\to\infty$ therefore gives the same upper and
lower sixth-power bounds for $G_N$, with the same constants and the
same onset $A_0N^{-2/15}$.
\end{proof}

\begin{proof}[Proof of \zcref{thm:uniform-upper-tails}]
Set $\beta_0=2$. We apply
\zcref{prop:sixth-window-consequences} separately to the rescaled
finite-temperature free energy and to the ground-state energy.

For the finite-temperature statement, take
\[
 Y_N=X_{N,\beta},
 \qquad
 A_N=E_\beta(0),
 \qquad
 \tau=\frac{2}{15}.
\]
Then
\[
 \ell_N(s)
 =
 \log\E
 \exp\left\{
   s\bigl(X_{N,\beta}-NE_\beta(0)\bigr)
 \right\}
 =
 \ell_{N,\beta}^{\mathrm{en}}(s).
\]
By \zcref{lem:uniform-energy-sixth-moment}, there are constants,
independent of $\beta\ge2$, such that
\[
 \ell_N(s)\asymp Ns^6
\]
throughout
\[
 A_0N^{-2/15}\le s\le\lambda_*,
\]
with the upper bound valid on the full interval
$0<s\le\lambda_*$.

It remains to verify the convexity assumption in
\zcref{prop:sixth-window-consequences}. As a function of the
underlying Gaussian disorder, $X_{N,\beta}$ is convex and Lipschitz.
Hence \zcref{lem:short-laplace-convexity} shows that
\[
 s\longmapsto
 \frac1s\log\E e^{sX_{N,\beta}}
\]
is convex on $(0,\infty)$. Subtracting the constant
$NE_\beta(0)$ preserves convexity, so
\[
 s\longmapsto\frac{\ell_N(s)}{s}
\]
is convex as required. Gaussian Lipschitz concentration also implies
that all exponential moments are finite. Therefore
\zcref{prop:sixth-window-consequences} applies.

For the ground-state statement, take instead
\[
 Y_N=G_N,
 \qquad
 A_N=e_*,
 \qquad
 \tau=\frac{2}{15}.
\]
The zero-temperature part of
\zcref{lem:uniform-energy-sixth-moment} supplies the same sixth-power
moment window, with the same constants and the same onset
$A_0N^{-2/15}$. Since $G_N$ is the maximum of finitely many Gaussian
linear forms, it is again convex and Lipschitz. Thus
\zcref{lem:short-laplace-convexity} gives the required convexity of
\[
 s\longmapsto
 \frac1s\log\E e^{sG_N}-Ne_*
 =
 \frac{\ell_N^{\mathrm{gs}}(s)}{s}.
\]
Hence the same proposition applies directly at zero temperature.

In both applications,
\[
 5\tau=\frac23,
\]
so the lower end of the deviation window in
\zcref{prop:sixth-window-consequences} is of order
\[
 N^{-5\tau}=N^{-2/3}.
\]
The moment bounds have the same constants and parameter ranges in both
applications, so the resulting tail bounds do as well. 
\end{proof}

\begin{corollary}\zlabel{cor:uniform-energy-tilted}
There exist constants
\[
 c_0,C_0,A_0,\lambda_*>0
 \qquad\text{and}\qquad
 N_0<\infty,
\]
depending only on $\beta_0$, such that the following holds.

For $\beta\ge\beta_0$ and $\lambda\ge0$, let $\E_\lambda$ and
$\Var_\lambda$ denote expectation and variance under the disorder law
with density proportional to
\[
 e^{\lambda X_{N,\beta}}.
\]
Then, whenever
\[
 N\ge N_0,
 \qquad
 A_0N^{-2/15}\le\lambda\le\lambda_*/2,
\]
one has
\begin{equation}\label{eq:uniform-energy-tilted}
 \begin{gathered}
 c_0N\lambda^5
 \le
 \E_\lambda X_{N,\beta}-NE_\beta(0)
 \le
 C_0N\lambda^5,\\[2mm]
 c_0N\lambda^4
 \le
 \Var_\lambda(X_{N,\beta})
 \le
 \frac{N-1}{2}.
 \end{gathered}
\end{equation}
The constants are uniform for all $\beta\ge\beta_0$.

At zero temperature, let $\E_\lambda$ and $\Var_\lambda$ instead
refer to the disorder law tilted by $e^{\lambda G_N}$. Then, with
the same constants and on the same tilt window,
\[
 \begin{gathered}
 c_0N\lambda^5
 \le
 \E_\lambda G_N-Ne_*
 \le
 C_0N\lambda^5,\\[2mm]
 c_0N\lambda^4
 \le
 \Var_\lambda(G_N)
 \le
 \frac{N-1}{2}.
 \end{gathered}
\]
The variance upper bound requires no lower bound on the tilt:
at finite temperature,
\[
 \Var_\lambda(X_{N,\beta})\le\frac{N-1}{2}
 \qquad
 (N\ge1,\ 0\le\lambda\le\beta),
\]
and at zero temperature,
\[
 \Var_\lambda(G_N)\le\frac{N-1}{2}
 \qquad
 (N\ge1,\ 0\le\lambda<\infty).
\]
\end{corollary}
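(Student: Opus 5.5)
The tilted mean and the tilted variance lower bound will come from \zcref{prop:sixth-window-consequences}, and the variance upper bound from \zcref{prop:tilted-transitive-variance}. First, the tilted law is the same whether we tilt by $e^{\lambda X_{N,\beta}}$ or by $e^{\lambda(X_{N,\beta}-NE_\beta(0))}$, since the two weights differ by a deterministic factor. So take
\[
 Y_N=X_{N,\beta},\qquad A_N=E_\beta(0),\qquad \tau=\tfrac{2}{15}.
\]
With these choices, $\ell_N(s)=\ell^{\mathrm{en}}_{N,\beta}(s)$. Moreover $\ell_N'(s)=\E_sY_N-NA_N$ and $\ell_N''(s)=\Var_s(Y_N)$.

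The hypotheses of \zcref{prop:sixth-window-consequences} were already checked in the proof of \zcref{thm:uniform-upper-tails}, for general $\beta_0$. \zcref{lem:uniform-energy-sixth-moment} supplies the two-sided sixth-power window on $A_0N^{-2/15}\le s\le\lambda_*$ with constants uniform in $\beta\ge\beta_0$. \zcref{lem:short-laplace-convexity} gives convexity of $\ell_N(s)/s$. Finite exponential moments follow from Lipschitz concentration. The conclusion \eqref{eq:sixth-window-tilted} on $\kappa N^{-\tau}\le s\le s_*/2$, with $\kappa=A_0$ and $s_*=\lambda_*$, is then exactly the mean bound with $c_0=a_0$ and $C_0=64b_0$, together with $\Var_\lambda\ge a_0N\lambda^4$. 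The zero-temperature statement follows the same way from the ground-state half of \zcref{lem:uniform-energy-sixth-moment}, with $Y_N=G_N$ and $A_N=e_*$. Convexity there holds because $G_N$ is a convex Lipschitz maximum of Gaussian linear forms. The final remark of \zcref{prop:sixth-window-consequences} gives uniformity in $\beta$.

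For the upper bound $\Var_\lambda\le (N-1)/2$, apply \zcref{prop:tilted-transitive-variance} to the off-diagonal model. Take $d=N(N-1)/2$ coordinates $g_{ij}$ and features $(a_\sigma)_{ij}=\sigma_i\sigma_j/\sqrt N$, so that $\beta H_N(\sigma)=\beta a_\sigma\cdot g$. The gauge maps $g_{ij}\mapsto\tau_i\tau_jg_{ij}$ are orthogonal, permute the features, and act transitively on the labels. With $\kappa=\beta$ we get $Y_\kappa=F_{N,\beta}/\beta=X_{N,\beta}$ and
\[
 R^2=|a_\sigma|^2=\frac{N(N-1)}{2N}=\frac{N-1}{2}.
\]
The proposition then gives $\Var_\lambda(X_{N,\beta})\le (N-1)/2$ for $0\le\lambda\le\beta$. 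Its $\kappa=\infty$ case gives $\Var_\lambda(G_N)\le (N-1)/2$ for every finite $\lambda\ge0$, with no lower bound on the tilt needed.

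The constraint $\lambda\le\beta$ is harmless on the window $\lambda\le\lambda_*/2$, because $\lambda_*\le\min\{1,\beta_0/2\}\le\beta$. The only real bookkeeping is matching constants: taking the minimum of the lower constants and the maximum of the upper constants from the two applications, finite temperature and ground state, lets a single set $c_0,C_0,A_0,\lambda_*,N_0$ serve both. Nothing here looks like a genuine obstacle; the heavy lifting is already in \zcref{lem:uniform-energy-sixth-moment}.
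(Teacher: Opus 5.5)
Your proposal is correct and matches the paper's proof essentially step for step. The tilted mean bounds and the tilted variance lower bound come from \zcref{prop:sixth-window-consequences} applied to the window of \zcref{lem:uniform-energy-sixth-moment}, with $c_0=a_0$ and $C_0=64b_0$. The cap $\Var_\lambda\le (N-1)/2$ comes from \zcref{prop:tilted-transitive-variance} with the gauge-transitive features $(a_\sigma)_{ij}=\sigma_i\sigma_j/\sqrt N$, $\kappa=\beta$ (or $\kappa=\infty$ for $G_N$), and $R^2=(N-1)/2$.
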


\begin{proof}
The two bounds on the tilted mean and the lower bound on the tilted
variance follow directly from
\zcref{prop:sixth-window-consequences}, applied to the common
sixth-power moment window supplied by
\zcref{lem:uniform-energy-sixth-moment}. Exponential tilting by
$e^{\lambda(Y_N-NA_N)}$ and by $e^{\lambda Y_N}$ gives the same
probability law, so the centering by $NE_\beta(0)$ or $Ne_*$ does not
affect the tilted measure.

It remains to prove the variance upper bound. In the off-diagonal SK
model, write the Gaussian feature vector associated with
$\sigma\in\{-1,1\}^N$ as
\[
 (a_\sigma)_{ij}
 =
 \frac{\sigma_i\sigma_j}{\sqrt N},
 \qquad i<j.
\]
Then
\[
 |a_\sigma|^2
 =
 \frac1N\binom N2
 =
 \frac{N-1}{2}.
\]
The gauge transformations
\[
 g_{ij}\mapsto\tau_i\tau_jg_{ij},
 \qquad
 \tau\in\{-1,1\}^N,
\]
are orthogonal, preserve the log partition function, and act
transitively on the spin labels. Hence
\zcref{prop:tilted-transitive-variance}, with $\kappa=\beta$ and
\[
 R^2=\frac{N-1}{2},
\]
gives
\[
 \Var_\lambda(X_{N,\beta})
 \le
 \frac{N-1}{2}
 \qquad
 (0\le\lambda\le\beta).
\]
Applying the zero-temperature assertion of the same proposition to
\[
 G_N=\max_\sigma a_\sigma\cdot g
\]
gives
\[
 \Var_\lambda(G_N)\le\frac{N-1}{2}
\]
for every finite $\lambda\ge0$. This proves all the stated bounds.
\end{proof}

The upper bound on the mean deficit also controls fluctuations around
the finite-system mean and normalized logarithmic moments at every
negative tilt. We obtain these consequences by applying
\zcref{prop:mean-moment-consequences} with the mean-deficit exponent
$a=2/3$. The corresponding quadratic fluctuation scale is
\[
 Q_N=N^{1-4a/5}=N^{7/15},
\]
and the crossover tilt is
\[
 t_N=N^{-a/5}=N^{-2/15}.
\]

\begin{corollary}\zlabel{cor:uniform-energy-concentration}
The conclusions of \zcref{prop:mean-moment-consequences} hold
uniformly for $\beta\ge\beta_0$ with
\[
 X_N=X_{N,\beta},
 \qquad
 A_N=E_\beta(0),
 \qquad
 a=\frac23,
 \qquad
 Q_N=N^{7/15},
 \qquad
 t_N=N^{-2/15}.
\]
They also hold, with the same constants and sufficiently large
$N$ threshold, for
\[
 X_N=G_N,
 \qquad
 A_N=e_*.
\]

In particular, for every $\lambda<0$ and all sufficiently large $N$,
\begin{equation}\label{eq:ground-negative-replicas}
 0
 \le
 e_*-\frac1{N\lambda}\log\E e^{\lambda G_N}
 \le
 C\left[
   N^{-2/3}+|\lambda|N^{-8/15}
 \right].
\end{equation}
Uniformly for $\beta\ge\beta_0$, the analogous finite-temperature
bound is
\[
 0
 \le
 E_\beta(0)
 -\frac1{N\lambda}\log\E e^{\lambda X_{N,\beta}}
 \le
 C\left[
   N^{-2/3}+|\lambda|N^{-8/15}
 \right].
\]

The centered upper-tail estimate has a quadratic regime and a
$6/5$-power regime. Their crossover occurs for a total deviation
\[
 u\asymp N^{1-a}=N^{1/3}.
\]
\end{corollary}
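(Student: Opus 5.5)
The plan is to verify the two hypotheses of \zcref{prop:mean-moment-consequences}, uniformly in $\beta\ge\beta_0$, with $X_N=X_{N,\beta}$, $A_N=E_\beta(0)$ and $a=2/3$. Once they hold with constants depending only on $\beta_0$, the proposition gives the window bounds, the tail estimates and the negative-tilt estimate with $Q_N=N^{7/15}$ and $t_N=N^{-2/15}$. Uniformity then follows because the constants it produces depend only on $a,L,M_0,C_0,s_*$ and the threshold.

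The structural hypotheses are immediate. $X_{N,\beta}=F_{N,\beta}/\beta$ is a convex function of the off-diagonal Gaussian vector. Its gradient is a convex combination of feature vectors of norm $\sqrt{(N-1)/2}$, so it is $L\sqrt N$-Lipschitz with $L=1$. The first hypothesis, $0\le E_\beta(0)-\E X_{N,\beta}/N\le M_0N^{-2/3}$, is \zcref{thm:uniform-energy-powers}. For the second, I write
\[
 \frac1{N\lambda}\log\E e^{\lambda X_{N,\beta}}=\psi^c_{N,\beta}(\lambda)-\frac{\lambda}{4N}
 \le E_\beta(\lambda)\le E_\beta(0)+C_s\lambda^5,
\]
for $0<\lambda\le s_*:=\lambda_*$. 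This uses $e_{N,\beta}(\lambda)\ge0$ from \eqref{eq:gs-full-fv} and the scalar trial bound \eqref{eq:gs-scalar-trial-cost}; the diagonal-completion term only helps. With these, $C_0=C_s$ and the threshold depend only on $\beta_0$. Substituting $a=2/3$ into \eqref{eq:mean-negative-replicas} gives the error $N^{-2/3}+|\lambda|N^{-8/15}$. The upper-tail crossover is at $u=N^{1-a}=N^{1/3}$.

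For the ground state, the cleanest route is to check the hypotheses for $G_N$ directly rather than pass each conclusion to the limit. $G_N$ is convex and $\sqrt{(N-1)/2}$-Lipschitz. The mean-deficit bound $0\le e_*-\E G_N/N\le CN^{-2/3}$ is \zcref{thm:ground-fluctuations}, with the lower inequality coming from \eqref{eq:ground-mean}. For the fifth-order moment bound, \eqref{eq:ground-offdiagonal} gives
\[
 \frac1{N\lambda}\log\E e^{\lambda G_N}\le\mathcal E(\lambda)-\frac{\lambda}{4N}.
\]
It remains to show $\mathcal E(\lambda)\le e_*+C_s\lambda^5$. This follows by letting $\beta\to\infty$ in $E_\beta(\lambda)\le E_\beta(0)+C_s\lambda^5$, using \zcref{lem:zero-limit} at both $\lambda$ and $0$. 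Alternatively one could take the zero-temperature upper bound of \zcref{lem:uniform-energy-sixth-moment} and add the mean deficit.

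The main point to watch is that the constants of \zcref{prop:mean-moment-consequences} stay independent of $\beta$ and agree between the two applications. This holds because its inputs $(a,L,M_0,C_0,s_*)$ are the same $\beta_0$-dependent numbers in both cases (taking $\beta_0=2$ for the ground state if needed, as in \zcref{thm:ground-fluctuations}). The threshold must be taken as the maximum of the thresholds in \zcref{thm:uniform-energy-powers,thm:ground-fluctuations}, which accounts for the phrase ``sufficiently large $N$''.
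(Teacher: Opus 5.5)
Your proposal is correct and is essentially the paper's argument: you verify the mean-deficit hypothesis from \eqref{eq:abs-endpoint-mean}, via \zcref{thm:uniform-energy-powers}, and the fifth-order moment hypothesis from $e_{N,\beta}\ge0$ and \eqref{eq:gs-scalar-trial-cost}; together with the convex Lipschitz structure, this lets you apply \zcref{prop:mean-moment-consequences} with $a=2/3$. The only difference is cosmetic and occurs at zero temperature. There you combine the already-proved bound \eqref{eq:ground-offdiagonal} with \zcref{lem:zero-limit} at $\lambda$ and at $0$, whereas the paper passes the finite-temperature inequalities directly to the limit $\beta\to\infty$; both are the same limiting argument. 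To get literally the same constants, pass \eqref{eq:abs-endpoint-mean} with your $\beta_0$ to the limit rather than citing \zcref{thm:ground-fluctuations} at $\beta_0=2$.
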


\begin{proof}
We verify the two hypotheses of
\zcref{prop:mean-moment-consequences} with $a=2/3$.

The required mean-deficit estimate is
\eqref{eq:abs-endpoint-mean}:
\[
 0
 \le
 E_\beta(0)-\frac{\E X_{N,\beta}}{N}
 \le
 CN^{-2/3},
\]
uniformly for $\beta\ge\beta_0$. Its zero-temperature limit gives
\[
 0
 \le
 e_*-\frac{\E G_N}{N}
 \le
 CN^{-2/3}.
\]

For the positive-moment hypothesis, fix
$0<\lambda\le\lambda_*$, where $\lambda_*$ lies in the common range
of the scalar upper and lower bounds. 
Independence of the diagonal completion $D_N$, together with
$\Var(D_N)=1/2$, gives
\[
 \psi^c_{N,\beta}(\lambda)
 =
 \frac1{N\lambda}\log\E e^{\lambda X_{N,\beta}}
 +\frac{\lambda}{4N}.
\]
Hence
\[
 \begin{aligned}
 \frac1{N\lambda}\log\E e^{\lambda X_{N,\beta}}
 &=
 \psi^c_{N,\beta}(\lambda)-\frac{\lambda}{4N}\\
 &\le
 E_\beta(\lambda)\\
 &\le
 E_\beta(0)+C_s\lambda^5,
 \end{aligned}
\]
where the first inequality uses
$e_{N,\beta}(\lambda)\ge0$ and the second uses
\eqref{eq:gs-scalar-trial-cost}. Thus the required fifth-order
positive-moment bound holds on a fixed interval, uniformly in
$\beta\ge\beta_0$.

At zero temperature, the same estimate follows directly by letting
$\beta\to\infty$ at fixed $N$ and $\lambda$: the soft-maximum bound
gives
\[
 X_{N,\beta}\longrightarrow G_N,
\]
while
\[
 E_\beta(0)\longrightarrow e_*,
\]
and Gaussian domination passes the exponential moments to the limit.
Therefore
\[
 \frac1{N\lambda}\log\E e^{\lambda G_N}
 \le
 e_*+C_s\lambda^5.
\]

Finally, both $X_{N,\beta}$ and $G_N$ are convex functions of the
off-diagonal Gaussian disorder. Their Lipschitz constants are at most
\[
 \sqrt{\frac{N-1}{2}}\le\sqrt{\frac N2}.
\]
Thus the Lipschitz hypothesis of
\zcref{prop:mean-moment-consequences} holds with the common constant
$L=1/\sqrt2$. All of that proposition's conclusions therefore apply
with
\[
 a=\frac23,
 \qquad
 Q_N=N^{1-4a/5}=N^{7/15},
 \qquad
 t_N=N^{-a/5}=N^{-2/15},
\]
uniformly in $\beta\ge\beta_0$ and also at zero temperature.

In particular,
\[
 N^{-4a/5}=N^{-8/15},
\]
which gives the negative-tilt bounds displayed above. The two
centered upper-tail exponents meet at $
 u=N^{1-a}=N^{1/3}$, 
giving the stated crossover scale.
\end{proof}

\subsection{Fixed-temperature consequences}
\zlabel{subsec:fixed-temperature-rescaling}

We now return from the rescaled variables to the original
positive-temperature normalization. Fix $\beta>1$, and apply the
preceding estimates with $\beta_0=\beta$. All constants in this
subsection may therefore depend on this fixed value of $\beta$.

The change of variables
\[
 \lambda=\beta s
\]
gives
\begin{equation}\label{eq:fixed-energy-rescaling}
 P_\beta(s)=\beta E_\beta(\beta s),\qquad
 \phi^c_{N,\beta}(s)
   =\beta\psi^c_{N,\beta}(\beta s),\qquad
 F_{N,\beta}=\beta X_{N,\beta}.
\end{equation}
Thus every estimate proved above on the fixed interval
$0<\lambda\le\lambda_0$ transfers to
\[
 0<s\le s_\beta,
 \qquad
 s_\beta\le\min\left\{\frac12,\frac{\lambda_0}{\beta}\right\}.
\]
Since $\beta$ is fixed, powers of $\beta$ arising from this rescaling
are absorbed into constants depending only on $\beta$.

\begin{proposition}\zlabel{prop:scalar-gap}
For every fixed $\beta>1$, there exist constants
\[
 s_\beta>0,\qquad c_\beta>0,\qquad C_\beta<\infty
\]
such that
\begin{equation}\label{eq:scalar-gap}
 c_\beta s^5
 \le
 P_\beta(s)-P_\beta
 \le
 C_\beta s^5,
 \qquad 0<s\le s_\beta.
\end{equation}

Let $\alpha$ be any unconstrained minimizing cumulative profile for
the scalar Parisi functional at inverse temperature $\beta$, and define
the profile obtained by imposing the floor $s$ by
\[
 \widehat a_s(q)=\max\{s,\alpha(q)\}.
\]
Then there exist constants
\[
 q_0>0,\qquad k>0,\qquad K<\infty,
\]
depending only on $\beta$, such that, for
$0<s\le s_\beta$,
\begin{equation}\label{eq:scalar-trial}
 0
 \le
 \cP_\beta(\widehat a_s)-P_\beta(s)
 \le
 C_\beta s^5,
\end{equation}
and
\[
 kq
 \le
 \widehat a_s(q)
 \le
 s+Kq,
 \qquad 0\le q\le q_0.
\]
In particular, the constrained scalar minimum differs from the
unconstrained minimum by exactly order $s^5$, while the explicit trial
profile $\widehat a_s$ realizes this order uniformly on the stated
small-$s$ interval.
\end{proposition}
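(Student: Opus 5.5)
The plan is to derive everything from the rescaled statements by the change of variables \eqref{eq:fixed-energy-rescaling}, applied with $\beta_0=\beta$. We do not reprove any scalar estimate at the original scale.

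\emph{Gap \eqref{eq:scalar-gap}.} Set $\lambda=\beta s$, so that $P_\beta(s)-P_\beta=\beta\bigl(E_\beta(\lambda)-E_\beta(0)\bigr)$. For $0<s\le s_\beta:=\min\{1/2,\lambda_*/\beta\}$, the lower bound follows from \zcref{prop:scalar-lower-via-fisher} and the upper bound from \eqref{eq:gs-scalar-trial-cost} in \zcref{lem:gs-scalar-root}. This gives
\[
 c_0\beta^6s^5\le P_\beta(s)-P_\beta\le C_1\beta^6s^5,
\]
and since $\beta$ is fixed, the factors $\beta^6$ are absorbed into $c_\beta$ and $C_\beta$.

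\emph{Trial profile.} An unconstrained cumulative profile $\alpha$ corresponds to $\gamma_\beta=\beta\alpha$, which is an unconstrained minimizer of the rescaled functional, since $\cP^{\mathrm{en}}_\beta(\gamma)=\beta^{-1}\cP_\beta(\gamma/\beta)$. The floored profile then satisfies
\[
 \beta\widehat a_s=\max\{\lambda,\gamma_\beta\}=\gamma^{\mathrm{tr}}_{\beta,\lambda},
\]
so that $\cP_\beta(\widehat a_s)=\beta A_\beta(\lambda)$. Lemma \zcref{lem:gs-scalar-root} is stated for an arbitrary unconstrained minimizer, so it applies to $\alpha$ whichever minimizer is chosen (uniqueness from \cite{AuffingerChen2015Uniqueness} would also make this point moot). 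Then \eqref{eq:gs-scalar-trial-cost} gives
\[
 0\le\beta\bigl(A_\beta(\lambda)-E_\beta(\lambda)\bigr)\le\beta\bigl(A_\beta(\lambda)-E_\beta(0)\bigr)\le C_1\beta^6s^5,
\]
which is \eqref{eq:scalar-trial}. The left inequality holds because $\widehat a_s$ is admissible for $P_\beta(s)$.

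\emph{Linear bounds.} Take $q_0=T_*$. On $[0,T_*]$, \zcref{lem:gs-scalar-root} gives $kq\le\gamma_\beta(q)\le C_0q$, and hence $(k/\beta)q\le\alpha(q)\le(C_0/\beta)q$. The lower bound for $\widehat a_s$ follows from $\widehat a_s\ge\alpha$. The upper bound follows from $\max\{s,x\}\le s+x$ with $x\ge0$, which gives $\widehat a_s(q)\le s+(C_0/\beta)q$. The constants $k/\beta$ and $K=C_0/\beta$ depend only on $\beta$.

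The only point that needs care is bookkeeping rather than analysis. We must check that the admissible class for $P_\beta(s)$ (probability measures with $\mu(\{0\})\ge s$) matches the rescaled class $\gamma\ge\lambda$ with total mass $\beta$, where any missing mass is placed at $q=1$ as in the rescaling subsection. We must also check that $s_\beta$ keeps $\lambda$ inside the common interval on which both \zcref{lem:gs-scalar-root} and \zcref{prop:scalar-lower-via-fisher} hold; this is exactly why $\lambda_*$ was reduced to at most $\lambda_0$ after that proposition.
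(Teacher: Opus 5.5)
Your proposal is correct and follows the paper's proof essentially step for step. You set $\lambda=\beta s$, apply the rescaled floor estimates (the trial-cost lemma and the Fisher-based lower bound) with threshold $\beta_0=\beta$, absorb the factors $\beta^6$, identify $\beta\widehat a_s$ with the rescaled trial $\max\{\lambda,\gamma_\beta\}$, and divide the near-zero linear bounds for $\gamma_\beta$ by $\beta$ using $\alpha\le\widehat a_s\le s+\alpha$, with $s_\beta\le\lambda_*/\beta$ exactly as in the paper.
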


\begin{proof}
Fix $\beta>1$ and apply
\zcref{lem:gs-scalar-root,prop:scalar-lower-via-fisher} with the
temperature threshold chosen to be $\beta$. Let $\alpha$ be an
unconstrained minimizing cumulative profile in the original
positive-temperature normalization. Under the rescaling
\[
 \gamma=\beta\alpha,
\]
the rescaled minimizer is
\[
 \gamma_\beta=\beta\alpha.
\]
For $\lambda=\beta s$, its floor trial is
\[
 \gamma_{\beta,\beta s}^{\mathrm{tr}}
 =
 \max\{\beta s,\gamma_\beta\}
 =
 \beta\max\{s,\alpha\}
 =
 \beta\widehat a_s.
\]

The two floor estimates give, for $0<\beta s\le\lambda_*$,
\[
 c_0(\beta s)^5
 \le
 E_\beta(\beta s)-E_\beta(0)
 \le
 C_0(\beta s)^5.
\]
Using
\[
 P_\beta(s)=\beta E_\beta(\beta s),
 \qquad
 P_\beta=\beta E_\beta(0),
\]
we obtain
\[
 c_0\beta^6s^5
 \le
 P_\beta(s)-P_\beta
 \le
 C_0\beta^6s^5.
\]
Since $\beta$ is fixed, the factors of $\beta$ are absorbed into
constants $c_\beta,C_\beta>0$. After taking
$s_\beta\le\lambda_*/\beta$, this proves
\eqref{eq:scalar-gap}.

The same rescaling gives the estimate for the explicit trial. Indeed,
\zcref{lem:gs-scalar-root} gives
\[
 E_\beta(\beta s)
 \le
 \cP_\beta^{\mathrm{en}}
   (\gamma_{\beta,\beta s}^{\mathrm{tr}})
 \le
 E_\beta(0)+C_0(\beta s)^5.
\]
Since
\[
 \cP_\beta(\widehat a_s)
 =
 \beta\,
 \cP_\beta^{\mathrm{en}}
   (\gamma_{\beta,\beta s}^{\mathrm{tr}}),
\]
we have
\[
 0
 \le
 \cP_\beta(\widehat a_s)-P_\beta(s)
 \le
 C_\beta s^5,
\]
after enlarging $C_\beta$.

Finally, the near-zero bounds for $\gamma_\beta$ give
\[
 k_0q\le\gamma_\beta(q)\le C_0q
\]
on a fixed interval $[0,q_0]$. Dividing by $\beta$ yields
\[
 \frac{k_0}{\beta}q
 \le
 \alpha(q)
 \le
 \frac{C_0}{\beta}q.
\]
Since
\[
 \alpha\le\widehat a_s=\max\{s,\alpha\}\le s+\alpha,
\]
we conclude that
\[
 kq\le\widehat a_s(q)\le s+Kq,
 \qquad 0\le q\le q_0,
\]
with
\[
 k=\frac{k_0}{\beta},
 \qquad
 K=\frac{C_0}{\beta}.
\]
All constants depend only on the fixed inverse temperature $\beta$.
\end{proof}

\begin{proposition}\zlabel{prop:sharp-comparison}
For every fixed $\beta>1$, there exist constants
\[
 C_\beta<\infty,
 \qquad
 0<s_\beta\le\frac12,
\]
such that, for every $N\ge1$ and $0<s\le s_\beta$,
\begin{equation}\label{eq:sharp-comparison}
 0
 \le
 P_\beta(s)-\phi^c_{N,\beta}(s)
 \le
 C_\beta\left[
   N^{-2/3}
   +N^{-3/4}s^{-1/2}
   +\frac{\log(2+N)}{Ns}
 \right].
\end{equation}
\end{proposition}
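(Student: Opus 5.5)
The plan is to deduce \eqref{eq:sharp-comparison} from the uniform rescaled comparison of \zcref{prop:abs-selfconsistent-comparison}, applied with the temperature threshold $\beta_0=\beta$, by undoing the energy rescaling \eqref{eq:fixed-energy-rescaling}.

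The lower bound $0\le P_\beta(s)-\phi^c_{N,\beta}(s)$ is already part of \zcref{thm:uniform-parisi}, so only the upper bound needs an argument. Fix $\beta>1$ and take $\beta_0=\beta$ in \zcref{prop:abs-selfconsistent-comparison}. This yields constants $C$ and $\lambda_0$ depending only on $\beta$. The estimate \eqref{eq:abs-comparison} then holds at this particular inverse temperature for every $N\ge1$ and every $0<\lambda\le\lambda_0$.

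Set $s_\beta=\min\{1/2,\lambda_0/\beta\}$. For $0<s\le s_\beta$, put $\lambda=\beta s\le\lambda_0$. By \eqref{eq:fixed-energy-rescaling},
\[
 P_\beta(s)-\phi^c_{N,\beta}(s)
 =\beta\bigl(E_\beta(\beta s)-\psi^c_{N,\beta}(\beta s)\bigr)
 =\beta\,e_{N,\beta}(\beta s).
\]
Before using this, I will confirm that $\psi^c_{N,\beta}$ really is $\beta^{-1}\phi^c_{N,\beta}(\cdot/\beta)$. Both are defined with the completion $D_N$: one has $F^c_{N,\beta}/\beta=X_{N,\beta}+D_N$, so the identity holds exactly.

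Inserting \eqref{eq:abs-comparison} gives the bound
\[
 C\beta\left[N^{-2/3}+N^{-3/4}\beta^{-1/2}s^{-1/2}+\frac{\log(2+N)}{N\beta s}\right].
\]
Since $\beta$ is fixed, all powers of $\beta$ can be absorbed into a constant $C_\beta$, which gives \eqref{eq:sharp-comparison}.

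There is no real obstacle in this argument. The only point needing care is that \zcref{prop:abs-selfconsistent-comparison} is stated for a range $\beta\ge\beta_0$, and we need it to apply at $\beta=\beta_0$ itself. This is allowed, because all of its inputs hold for $\beta\ge\beta_0$ with $\beta_0>1$, including the scalar estimates of \zcref{lem:gs-scalar-root}.
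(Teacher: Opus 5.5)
Your proposal is correct and follows the paper's proof essentially verbatim: apply \zcref{prop:abs-selfconsistent-comparison} with threshold $\beta_0=\beta$, pass to $\lambda=\beta s$ via \eqref{eq:fixed-energy-rescaling}, take $s_\beta\le\min\{1/2,\lambda_0/\beta\}$, absorb the powers of $\beta$ into $C_\beta$, and get the lower bound from the general comparison $P_\beta(s)\ge\phi^c_{N,\beta}(s)$. Your check that $\phi^c_{N,\beta}(s)=\beta\,\psi^c_{N,\beta}(\beta s)$ is exact, and your remark that the range $\beta\ge\beta_0$ includes $\beta=\beta_0$ is right.
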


\begin{proof}
Fix $\beta>1$ and apply
\zcref{prop:abs-selfconsistent-comparison} with the temperature
threshold chosen to be $\beta$. Set
\[
 \lambda=\beta s.
\]
By \eqref{eq:fixed-energy-rescaling},
\[
 P_\beta(s)-\phi^c_{N,\beta}(s)
 =
 \beta\left[
   E_\beta(\beta s)-\psi^c_{N,\beta}(\beta s)
 \right].
\]
Hence \eqref{eq:abs-comparison} gives
\[
 \begin{aligned}
 P_\beta(s)-\phi^c_{N,\beta}(s)
 &\le
 C\beta\left[
   N^{-2/3}
   +N^{-3/4}(\beta s)^{-1/2}
   +\frac{\log(2+N)}{N\beta s}
 \right]\\
 &=
 C\left[
   \beta N^{-2/3}
   +\beta^{1/2}N^{-3/4}s^{-1/2}
   +\frac{\log(2+N)}{Ns}
 \right].
\end{aligned}
\]
Choose
\[
 s_\beta
 \le
 \min\left\{
   \frac12,\frac{\lambda_0}{\beta}
 \right\},
\]
so that \eqref{eq:abs-comparison} applies whenever 
$0<s\le s_\beta$. Since $\beta$ is fixed, the factors $\beta$ and
$\beta^{1/2}$ may be absorbed into a constant $C_\beta$. The lower
bound follows from the general comparison
$P_\beta(s)\ge\phi^c_{N,\beta}(s)$.
\end{proof}

\begin{proof}[Proof of \zcref{thm:main}]
Fix $\beta>1$ and apply \zcref{thm:uniform-energy-powers} with
$\beta_0=\beta$. Since
\[
 F_{N,\beta}=\beta X_{N,\beta},
\]
we have
\[
 V_{N,\beta}
 =
 \Var(F_{N,\beta})
 =
 \beta^2\Var(X_{N,\beta}),
\]
while
\[
 P_\beta-p_{N,\beta}
 =
 \beta\left(
 E_\beta(0)-\frac{\E X_{N,\beta}}{N}
 \right).
\]
Thus the four bounds in
\eqref{eq:uniform-energy-four-bounds}, after absorbing the fixed
powers of $\beta$ into constants depending only on $\beta$, give
\eqref{eq:fixed-beta-four-bounds}. The upper bounds extend to every
$N\ge1$ after increasing the corresponding constants, while the
lower bounds hold beyond a threshold depending only on $\beta$.
\end{proof}

\begin{remark}
The same rescaling gives a corresponding variance--mean relation.
Indeed, \zcref{lem:short-variance-mean} gives
\[
 E_\beta(0)-\frac{\E X_{N,\beta}}{N}
 \ge
 c\left(
 \frac{\Var(X_{N,\beta})+1/2}{N}
 \right)^{5/4}.
\]
Multiplying by $\beta$ and using
\[
 \Var(X_{N,\beta})+\frac12
 =
 \beta^{-2}\left(
   V_{N,\beta}+\frac{\beta^2}{2}
 \right)
\]
yields, after absorbing the fixed factor $\beta^{-3/2}$ into the
constant,
\begin{equation}\label{eq:fixed-beta-variance-mean}
 P_\beta-p_{N,\beta}
 \ge
 c_\beta
 \left(
   \frac{V_{N,\beta}+\beta^2/2}{N}
 \right)^{5/4}.
\end{equation}
Thus the independent diagonal completion remains explicit in the
original free-energy normalization through the term $\beta^2/2$.
\end{remark}

\begin{lemma}\zlabel{lem:sixth-moment}
Fix $\beta>1$, and define
\[
 \ell_N(s)
 =
 \log\E\exp\{s(F_{N,\beta}-NP_\beta)\}.
\]
There exist constants
\[
 a_\beta,b_\beta,\kappa_\beta,s_\beta>0
 \qquad\text{and}\qquad
 N_0(\beta)<\infty,
\]
depending only on $\beta$, such that, for every $N\ge N_0(\beta)$,
\begin{equation}\label{eq:sixth-moment}
 a_\beta Ns^6
 \le
 \ell_N(s)
 \le
 b_\beta Ns^6
 \qquad
 \left(
   \kappa_\beta N^{-2/15}\le s\le s_\beta
 \right).
\end{equation}
After increasing $b_\beta$ if necessary, the upper bound
\[
 \ell_N(s)\le b_\beta Ns^6
\]
holds for every $N\ge1$ and every $0<s\le s_\beta$.
\end{lemma}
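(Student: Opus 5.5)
This is a rescaling of \zcref{lem:uniform-energy-sixth-moment} with the threshold $\beta_0=\beta$. Since $F_{N,\beta}=\beta X_{N,\beta}$ and $P_\beta=\beta E_\beta(0)$, we have
\[
 s(F_{N,\beta}-NP_\beta)=\beta s\bigl(X_{N,\beta}-NE_\beta(0)\bigr).
\]
Hence
\[
 \ell_N(s)=\ell^{\mathrm{en}}_{N,\beta}(\beta s).
\]
I will apply \zcref{lem:uniform-energy-sixth-moment} with $\beta_0=\beta$ and $\lambda=\beta s$. Its constants $a_0,b_0,A_0,\lambda_*,N_0$ then depend only on $\beta$.

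The lower and upper bounds on the window $A_0N^{-2/15}\le\beta s\le\lambda_*$ give
\[
 a_0\beta^6Ns^6\le\ell_N(s)\le b_0\beta^6Ns^6 .
\]
I will therefore set
\[
 a_\beta=a_0\beta^6,\qquad b_\beta=b_0\beta^6,\qquad \kappa_\beta=A_0/\beta,\qquad s_\beta=\lambda_*/\beta .
\]
If one wants $s_\beta\le 1/2$, as in the rest of this subsection, I will take the minimum with $1/2$. Shrinking $s_\beta$ only shrinks the window, so this is harmless.

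For the global upper bound I will use the second part of \zcref{lem:uniform-energy-sixth-moment}. It gives $\ell^{\mathrm{en}}_{N,\beta}(\lambda)\le b_0N\lambda^6$ for every $N\ge1$ and every $0<\lambda\le\lambda_*$. Substituting $\lambda=\beta s$ gives the same bound $\ell_N(s)\le b_\beta Ns^6$ for all $N\ge1$ and $0<s\le s_\beta$.

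There is no real obstacle here. The only point to check is that the window $\kappa_\beta N^{-2/15}\le s\le s_\beta$ is nonempty for $N\ge N_0(\beta)$. This holds after enlarging $N_0$ so that $A_0N^{-2/15}\le\lambda_*$.
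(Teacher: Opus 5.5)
Your proposal is correct and is essentially the paper's own proof. You identify $\ell_N(s)=\ell^{\mathrm{en}}_{N,\beta}(\beta s)$ and apply \zcref{lem:uniform-energy-sixth-moment} with threshold $\beta_0=\beta$, taking $a_\beta=a_0\beta^6$, $b_\beta=b_0\beta^6$, $\kappa_\beta=A_0/\beta$, $s_\beta=\lambda_*/\beta$ (possibly decreased). You also correctly obtain the all-$N$ upper bound from the corresponding all-$N$ part of that lemma.
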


\begin{proof}
Return to the variables
\[
 X_{N,\beta}=\frac{F_{N,\beta}}{\beta},
 \qquad
 E_\beta(0)=\frac{P_\beta}{\beta},
 \qquad
 \lambda=\beta s.
\]
Then
\[
 \begin{aligned}
 \ell_N(s)
 &=
 \log\E
 \exp\left\{
   s(F_{N,\beta}-NP_\beta)
 \right\}\\
 &=
 \log\E
 \exp\left\{
   \beta s\bigl(X_{N,\beta}-NE_\beta(0)\bigr)
 \right\}\\
 &=
 \ell_{N,\beta}^{\mathrm{en}}(\beta s).
 \end{aligned}
\]
Apply \zcref{lem:uniform-energy-sixth-moment} with the temperature
threshold chosen to be $\beta$. If its constants are
$a_0,b_0,A_0,\lambda_*$, then
\[
 a_0N(\beta s)^6
 \le
 \ell_N(s)
 \le
 b_0N(\beta s)^6
\]
whenever
\[
 A_0N^{-2/15}\le\beta s\le\lambda_*.
\]
Thus \eqref{eq:sixth-moment} holds with, for example,
\[
 a_\beta=a_0\beta^6,\qquad
 b_\beta=b_0\beta^6,\qquad
 \kappa_\beta=\frac{A_0}{\beta},\qquad
 s_\beta=\frac{\lambda_*}{\beta}.
\]
Decreasing $s_\beta$ if necessary to remain in the fixed-temperature
tilt range gives the stated result. The all-$N$ upper bound follows
from the corresponding all-$N$ assertion in
\zcref{lem:uniform-energy-sixth-moment}.
\end{proof}

\begin{proof}[Proof of \zcref{thm:upper-tails}]
We apply \zcref{prop:sixth-window-consequences} with
\[
 Y_N=F_{N,\beta},
 \qquad
 A_N=P_\beta,
 \qquad
 \tau=\frac{2}{15}.
\]
The required sixth-power moment window is exactly
\zcref{lem:sixth-moment}: its lower bound begins at a constant
multiple of $N^{-2/15}$, while its upper bound holds throughout a
fixed interval $0<s\le s_\beta$.

It remains to verify convexity of the normalized logarithmic moment.
As a function of the Gaussian disorder, $F_{N,\beta}$ is convex and
Lipschitz. Hence \zcref{lem:short-laplace-convexity} implies that
\[
 s\longmapsto
 \frac1s\log\E e^{sF_{N,\beta}}
\]
is convex on $(0,\infty)$. Subtracting the constant $NP_\beta$ shows
that
\[
 s\longmapsto\frac{\ell_N(s)}{s}
\]
is convex as required. Gaussian Lipschitz bounds also ensure finite
exponential moments of every order.

Therefore \zcref{prop:sixth-window-consequences} applies. Since
\[
 5\tau=\frac23,
\]
the matching two-sided upper-tail estimate begins at deviations per
spin of order $N^{-2/3}$, and its exponent is $Nx^{6/5}$, as asserted
in \zcref{thm:upper-tails}.
\end{proof}

\begin{corollary}\zlabel{cor:fixed-beta-tilted}
Fix $\beta>1$. There exist constants
\[
 c_\beta,C_\beta,\kappa_\beta,s_\beta>0
 \qquad\text{and}\qquad
 N_0(\beta)<\infty
\]
such that the following holds. Let $\E_s$ and $\Var_s$ denote
expectation and variance under the disorder law tilted by
$e^{sF_{N,\beta}}$. Then, whenever
\[
 N\ge N_0(\beta),
 \qquad
 \kappa_\beta N^{-2/15}\le s\le s_\beta/2,
\]
one has
\begin{equation}\label{eq:fixed-beta-tilted}
 \begin{gathered}
 c_\beta Ns^5
 \le
 \E_sF_{N,\beta}-NP_\beta
 \le
 C_\beta Ns^5,\\[2mm]
 c_\beta Ns^4
 \le
 \Var_s(F_{N,\beta})
 \le
 \frac{\beta^2(N-1)}{2}.
 \end{gathered}
\end{equation}
Additionally, the variance upper bound holds for all $s\in[0,1]$. 
\end{corollary}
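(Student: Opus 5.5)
The proof follows \zcref{cor:uniform-energy-tilted} almost verbatim, after the rescaling $\lambda=\beta s$ of \zcref{subsec:fixed-temperature-rescaling}. We take $Y_N=F_{N,\beta}$, $A_N=P_\beta$ and $\tau=2/15$. \zcref{lem:sixth-moment} supplies the sixth-power moment window, with lower bound on $\kappa_\beta N^{-2/15}\le s\le s_\beta$ and upper bound on all of $(0,s_\beta]$. The convexity of $s\mapsto\ell_N(s)/s$ was already checked in the proof of \zcref{thm:upper-tails}, using \zcref{lem:short-laplace-convexity} and the convexity and Lipschitz property of $F_{N,\beta}$. Hence the tilted-law conclusion \eqref{eq:sixth-window-tilted} of \zcref{prop:sixth-window-consequences} applies on $\kappa_\beta N^{-2/15}\le s\le s_\beta/2$. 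Tilting by $e^{s(F_{N,\beta}-NP_\beta)}$ gives the same law as tilting by $e^{sF_{N,\beta}}$. So we obtain
\[
 a_\beta Ns^5\le\E_sF_{N,\beta}-NP_\beta\le64b_\beta Ns^5,
 \qquad
 \Var_s(F_{N,\beta})\ge a_\beta Ns^4,
\]
which is the claimed mean bound and variance lower bound with $c_\beta=a_\beta$ and $C_\beta=64b_\beta$.

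For the variance upper bound we use \zcref{prop:tilted-transitive-variance}, exactly as in the proof of \zcref{cor:uniform-energy-tilted}. Write $F_{N,\beta}=\beta Y_\kappa$ with $\kappa=\beta$ and features $(a_\sigma)_{ij}=\sigma_i\sigma_j/\sqrt N$ for $i<j$. These satisfy $R^2=|a_\sigma|^2=(N-1)/2$, and the gauge group acts transitively on them. Tilting $F_{N,\beta}$ by $e^{sF_{N,\beta}}$ is the same as tilting $Y_\beta$ by $e^{(\beta s)Y_\beta}$. The proposition therefore gives $\Var_s(Y_\beta)\le(N-1)/2$ whenever $0\le\beta s\le\beta$, that is, whenever $0\le s\le1$. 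Multiplying by $\beta^2$ yields $\Var_s(F_{N,\beta})\le\beta^2(N-1)/2$ for every $s\in[0,1]$, and in particular on the window, provided $s_\beta\le1$. That condition holds since $s_\beta\le1/2$.

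The argument has no real obstacle. The only points needing care are bookkeeping: the tilt window in \zcref{prop:sixth-window-consequences} is $[\kappa N^{-\tau},s_*/2]$, so the constant $s_\beta$ here must be the one from \zcref{lem:sixth-moment}; and the range restriction $s\le\kappa$ in \zcref{prop:tilted-transitive-variance} must be translated correctly through the factor $\beta$.
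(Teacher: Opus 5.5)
Your proof is correct and is essentially the paper's argument. The paper applies \zcref{cor:uniform-energy-tilted} with threshold $\beta_0=\beta$ and then substitutes $\lambda=\beta s$, whereas you substitute first via \zcref{lem:sixth-moment} and then apply \zcref{prop:sixth-window-consequences}, which gives the same bounds; the variance cap from \zcref{prop:tilted-transitive-variance} with $\kappa=\beta$, using $0\le\beta s\le\beta\iff 0\le s\le 1$, is exactly as in the paper.
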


\begin{proof}
Recall that
\[
 F_{N,\beta}=\beta X_{N,\beta}.
\]
Tilting the original free energy by $e^{sF_{N,\beta}}$ is therefore
exactly the same as tilting $X_{N,\beta}$ by
\[
 e^{\lambda X_{N,\beta}},
 \qquad
 \lambda=\beta s.
\]
Apply \zcref{cor:uniform-energy-tilted} with the temperature threshold
chosen to be $\beta$.

On its common tilt window,
\[
 c_0N\lambda^5
 \le
 \E_\lambda X_{N,\beta}-NE_\beta(0)
 \le
 C_0N\lambda^5,
\]
and
\[
 c_0N\lambda^4
 \le
 \Var_\lambda(X_{N,\beta})
 \le
 \frac{N-1}{2}.
\]
Since
\[
 P_\beta=\beta E_\beta(0),
\]
multiplying the mean estimate by $\beta$ and substituting
$\lambda=\beta s$ gives
\[
 c_0\beta^6Ns^5
 \le
 \E_sF_{N,\beta}-NP_\beta
 \le
 C_0\beta^6Ns^5.
\]
Similarly,
\[
 \Var_s(F_{N,\beta})
 =
 \beta^2\Var_\lambda(X_{N,\beta}),
\]
so
\[
 c_0\beta^6Ns^4
 \le
 \Var_s(F_{N,\beta})
 \le
 \frac{\beta^2(N-1)}{2}.
\]
Because $\beta$ is fixed, the powers of $\beta$ are absorbed into
constants $c_\beta$ and $C_\beta$.

If the interval in $\lambda$ has endpoints $A_0N^{-2/15}$ and
$\lambda_*/2$, then in the variable $s=\lambda/\beta$ these become 
\[
 \kappa_\beta N^{-2/15},
 \qquad
 \frac{s_\beta}{2},
\]
with
\[
 \kappa_\beta=\frac{A_0}{\beta},
 \qquad
 s_\beta=\frac{\lambda_*}{\beta}.
\]
Finally, the variance cap in
\zcref{cor:uniform-energy-tilted} holds for
$0\le\lambda\le\beta$, which under $\lambda=\beta s$ is exactly $
 0\le s\le1$. 
\end{proof}

\begin{corollary}\zlabel{cor:fixed-beta-concentration}
Fix $\beta>1$. The conclusions of
\zcref{prop:mean-moment-consequences} hold with
\[
 X_N=F_{N,\beta},\qquad
 A_N=P_\beta,\qquad
 a=\frac23,\qquad
 Q_N=N^{7/15},\qquad
 t_N=N^{-2/15},
\]
with constants and the sufficiently large $N$ threshold allowed to
depend on $\beta$.

In particular, for every $s<0$ and all sufficiently large $N$,
\begin{equation}\label{eq:fixed-beta-negative-replicas}
 0
 \le
 P_\beta-\frac1{Ns}\log\E e^{sF_{N,\beta}}
 \le
 C_\beta\left[
   N^{-2/3}+|s|N^{-8/15}
 \right].
\end{equation}
Here $N^{-2/3}$ is the mean-deficit contribution, while the term
$|s|N^{-8/15}$ comes from the quadratic bound on the centered
logarithmic moment.

Moreover, for every $u\ge0$,
\[
 \Prob\{F_{N,\beta}-\E F_{N,\beta}\ge u\}
 \le
 \exp\left[
  -c_\beta\min\left\{
    \frac{u^2}{N^{7/15}},
    \frac{u^{6/5}}{N^{1/5}}
  \right\}
 \right].
\]
\end{corollary}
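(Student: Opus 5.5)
The plan is to obtain \zcref{cor:fixed-beta-concentration} from \zcref{cor:uniform-energy-concentration} by the rescaling in \eqref{eq:fixed-energy-rescaling}, applied with threshold $\beta_0=\beta$. A direct application of \zcref{prop:mean-moment-consequences} to $X_N=F_{N,\beta}$, $A_N=P_\beta$ also works. We check the three hypotheses of \zcref{prop:mean-moment-consequences} in turn.

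\emph{Convexity and Lipschitz bound.} The map $F_{N,\beta}$ is a convex function of the off-diagonal Gaussian disorder. Its gradient is $\beta$ times a convex combination of feature vectors of norm $\sqrt{(N-1)/2}$. Hence it is Lipschitz with constant at most $\beta\sqrt{N/2}$, so $L=\beta/\sqrt2$.

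\emph{Mean deficit.} \zcref{thm:main} gives
\[
0\le P_\beta-p_{N,\beta}\le C_\beta N^{-2/3},
\]
which is the first hypothesis with $a=2/3$.

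\emph{Fifth-order moment bound.} Let $0<s\le s_\beta$. Use the identity \eqref{eq:completion},
\[
\frac1{Ns}\log\E e^{sF_{N,\beta}}=\phi^c_{N,\beta}(s)-\frac{\beta^2 s}{4N}.
\]
The lower bound in \zcref{thm:uniform-parisi} gives $\phi^c_{N,\beta}(s)\le P_\beta(s)$, and \eqref{eq:scalar-gap} gives $P_\beta(s)\le P_\beta+C_\beta s^5$. Together these yield
\[
\frac1{Ns}\log\E e^{sF_{N,\beta}}\le P_\beta+C_\beta s^5,
\]
which is the second hypothesis with $s_*=s_\beta$.

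\emph{Conclusions.} \zcref{prop:mean-moment-consequences} now applies. With $a=2/3$ it gives $Q_N=N^{1-4a/5}=N^{7/15}$ and $t_N=N^{-2/15}$. The displayed negative-tilt bound is \eqref{eq:mean-negative-replicas}, since $N^{-4a/5}=N^{-8/15}$. The upper-tail estimate is the second line of \eqref{eq:mean-moment-tails}. All constants depend only on $\beta$ through $L$, $M_0=C_\beta$, $C_0$ and $s_*$.

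The only care needed is to confirm that the sign conventions match: $F_{N,\beta}$ is the original free energy, not the completed one. This is handled by the exact $\beta^2 s/(4N)$ correction, which only helps the inequality. No step is a real obstacle, since everything reduces to results already proved.
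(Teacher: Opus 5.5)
Your proposal is correct and follows the paper's proof essentially verbatim. You verify the mean-deficit hypothesis from \zcref{thm:main}, the fifth-order moment hypothesis via the completion identity $\frac1{Ns}\log\E e^{sF_{N,\beta}}=\phi^c_{N,\beta}(s)-\frac{\beta^2s}{4N}$ together with $\phi^c_{N,\beta}(s)\le P_\beta(s)\le P_\beta+C_\beta s^5$, and the Lipschitz constant $L=\beta/\sqrt2$; you then read off $Q_N=N^{7/15}$, $t_N=N^{-2/15}$ and $N^{-4a/5}=N^{-8/15}$ from \zcref{prop:mean-moment-consequences}, exactly as the paper does (the rescaling route through \zcref{cor:uniform-energy-concentration} that you mention first is not needed).
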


\begin{proof}
We verify the hypotheses of
\zcref{prop:mean-moment-consequences} with $a=2/3$.

The required mean-deficit estimate follows from
\eqref{eq:fixed-beta-four-bounds}:
\[
 0
 \le
 P_\beta-\frac{\E F_{N,\beta}}{N}
 \le
 C_\beta N^{-2/3}.
\]

For the positive-moment hypothesis, take
$0<s\le s_\beta$ in the common scalar comparison interval. Since the
independent diagonal completion has variance $\beta^2/2$,
\[
 \frac1{Ns}\log\E e^{sF_{N,\beta}}
 =
 \phi^c_{N,\beta}(s)-\frac{\beta^2s}{4N}.
\]
The general comparison and the scalar trial bound give
\[
 \phi^c_{N,\beta}(s)
 \le
 P_\beta(s)
 \le
 P_\beta+C_\beta s^5.
\]
Consequently
\[
 \frac1{Ns}\log\E e^{sF_{N,\beta}}
 \le
 P_\beta+C_\beta s^5,
 \qquad
 0<s\le s_\beta.
\]
Thus the two quantitative hypotheses of
\zcref{prop:mean-moment-consequences} hold with constants depending
only on the fixed inverse temperature $\beta$.

Finally, $F_{N,\beta}$ is a convex function of the off-diagonal
Gaussian disorder. Its Gaussian feature vectors have squared norm
$\beta^2(N-1)/2$, so its Lipschitz constant is at most
\[
 \beta\sqrt{\frac{N-1}{2}}
 \le
 \frac{\beta}{\sqrt2}\sqrt N.
\]
Hence the Lipschitz hypothesis holds with
$L=\beta/\sqrt2$. We may therefore apply
\zcref{prop:mean-moment-consequences} with
\[
 a=\frac23.
\]
Its associated scales are
\[
 Q_N=N^{1-4a/5}=N^{7/15},
 \qquad
 t_N=N^{-a/5}=N^{-2/15}.
\]
Moreover,
\[
 N^{-4a/5}=N^{-8/15},
\]
which gives \eqref{eq:fixed-beta-negative-replicas}. The two
centered upper-tail rates cross at $
 u=N^{1-a}=N^{1/3}$.
\end{proof}

\appendix
\section{Finite-sample response and curvature estimates}
\zlabel{app:response-estimates}

The scalar lower floor bound and the reflected-regularization argument
both rely on a common set of finite-$N$ response identities. We collect
those identities here. The main quantities are the second variation of
the interpolating functional, a pairing between a profile perturbation
and the Hessian response, and a localized Fisher-information term.
For arbitrary scalar competitors, the Fisher-information term
is controlled through a Stieltjes integral involving the profile
measure; for profiles with bounded density, it can instead be
estimated directly by that density.

Fix
\[
 N\ge1,\qquad \beta>0,\qquad 0\le t<1,\qquad
 r=1-t,\qquad 0\le\lambda\le\beta,
\]
and set
\[
 J_{ij}=\sqrt{\frac{t}{N}}\,g_{ij}\quad(i<j),
 \qquad
 J_{ii}=0,
\]
where the variables $(g_{ij})_{i<j}$ are independent standard
Gaussians. Let $\gamma$ be a deterministic nondecreasing
right-continuous cumulative profile satisfying
\[
 \lambda\le\gamma(q)\le\beta,\qquad 0\le q<1,
 \qquad
 \gamma(0-)=0,\qquad
 \gamma(1)=\beta.
\]
For this profile, let $u=u_\gamma$ solve
\[
 \partial_q u
 +\frac r2\bigl(\Delta u+\gamma(q)|\nabla u|^2\bigr)=0,
\]
with terminal condition
\[
 u(1,y;J)
 =
 \frac1\beta
 \log\sum_{\sigma\in\{-1,1\}^N}
 \exp\left\{
   \beta\left(\frac12\sigma^TJ\sigma+y\cdot\sigma\right)
 \right\}.
\]
We write
\[
 f_{\beta,J}(y)=u(1,y;J)
\]
for this terminal function.

Let $P_g$ denote the product standard Gaussian law of the
off-diagonal coordinates $(g_{ij})_{i<j}$, and let $\E_g$ denote
expectation with respect to $P_g$. The outer logarithmic moment
changes the matrix law to
\[
 Q_\gamma(\dd g)
 =
 \frac{e^{\lambda u_\gamma(0,0;J)}}
 {\E_g e^{\lambda u_\gamma(0,0;J)}}\,P_g(\dd g).
\]
When $\lambda=0$, this is simply the original Gaussian matrix law.

Conditional on the matrix $J$, let $Y_q$ solve
\[
 \dd Y_q
 =
 r\gamma(q)m_q\,\dd q+\sqrt r\,\dd B_q,
 \qquad
 Y_0=0,
\]
where $B$ is standard $N$-dimensional Brownian motion and
\[
 m_q=\nabla_yu_\gamma(q,Y_q;J),
 \qquad
 H_q=D_y^2u_\gamma(q,Y_q;J).
\]
When it is useful to distinguish the Hessian as a function of the
field variable from its value along the diffusion, we write
\[
 H(q,y;J)=D_y^2u_\gamma(q,y;J),
 \qquad
 H_q=H(q,Y_q;J).
\]

The prescribed root mass is $\lambda\delta_0$. Thus the remaining
free measure is
\[
 \dd\gamma-\lambda\delta_0,
\]
which may contain additional mass
$\gamma(0)-\lambda$ at zero. We write $\E_\gamma$ for expectation
under the joint law obtained by first sampling the matrix from
$Q_\gamma$ and then sampling the diffusion conditional on that
matrix. If the current profile is $\gamma_\theta$, we write
$\E_\theta$. Unless another law is explicitly indicated, all process
expectations below use this convention.

Define the deterministic response functions
\[
 b_\gamma(q)
 =
 \frac1N\E_\gamma\Tr H_q,
 \qquad
 \zeta_\gamma(q)
 =
 \frac1N\E_\gamma\Tr H_q^2.
\]

The completed interpolation functional is
\[
 \mathfrak F_t^c(\gamma)
 =
 \frac1{N\lambda}
 \log\E_g e^{\lambda u_\gamma(0,0;J)}
 -\frac r2\int_0^1q\gamma(q)\,\dd q
 +\frac{\lambda t}{4N}.
\]
At $\lambda=0$, the first term is interpreted as
\[
 \frac1N\E_g u_\gamma(0,0;J).
\]
The final deterministic term is the contribution of the independent
diagonal Gaussian completion and has zero derivative with respect to
the profile.

The response identities and the pointwise early-time Hessian bound
below hold for every $\beta>0$. The assumption $\beta\ge1$ is needed
only for the averaged curvature estimate, whose proof uses that the
profile eventually reaches height one. For an admissible segment
$\gamma_\theta=\gamma_0+\theta v$, differentiating
$\mathfrak F_t^c(\gamma_\theta)$ produces a nonnegative term in the
second variation. The results below estimate this term and relate
it to the perturbation $v$ through the Hessian response.

\begin{proposition}\zlabel{prop:gs-response-tools}
Let $\gamma_0$ and $\gamma_1$ be two admissible profiles with the same
outer tilt $\lambda$, and set
\[
 \gamma_\theta=(1-\theta)\gamma_0+\theta\gamma_1
 =\gamma_0+\theta v,
 \qquad
 v=\gamma_1-\gamma_0,
 \qquad 0\le\theta\le1.
\]
Additional mass at zero may differ between $\gamma_0$ and $\gamma_1$.
Derivatives with respect to $\theta$ at $\theta=0,1$ are interpreted
one-sided.

For fixed $\theta$, define the Legendre transform in the field variable
by
\[
 \Phi_\theta(q,m;J)
 =
 u_{\gamma_\theta}(q,\cdot;J)^*(m),
 \qquad m\in(-1,1)^N.
\]
In the magnetization variable $m$, set
\[
 h=(D_m^2\Phi_\theta)^{-1},
 \qquad
 B=D_m^2(\partial_\theta\Phi_\theta),
 \qquad
 \dot h=\left.\partial_q h\right|_m.
\]
If
\[
 m=\nabla_yu_{\gamma_\theta}(q,y;J),
\]
then Legendre duality gives
\[
 D_m^2\Phi_\theta(q,m;J)
 =
 \bigl(D_y^2u_{\gamma_\theta}(q,y;J)\bigr)^{-1},
\]
and hence
\[
 h(q,m;J)=H(q,y;J).
\]
Thus $h$ and $H$ represent the same Hessian at dual magnetization and
field variables. The derivative $\dot h$ is taken almost everywhere in $q$ at fixed
$m$.

Under the joint law defining $\E_\theta$, evaluate $h$, $B$, and
$\dot h$ at the random magnetization $m_q$, and define
\[
 e_\theta(q)
 =
 \frac1N\E_\theta\Tr(hBhBh),
 \qquad
 \mathcal J_\theta(q)
 =
 \frac1N\E_\theta
 \Tr(\dot h\,h^{-1}\dot h).
\]
Since $h$ is symmetric positive definite and $B$ and $\dot h$ are
symmetric,
\[
 \Tr(hBhBh)=\|h^{1/2}Bh\|_{\mathrm F}^2,
 \qquad
 \Tr(\dot h\,h^{-1}\dot h)
 =\|h^{-1/2}\dot h\|_{\mathrm F}^2,
\]
so $e_\theta(q)$ and $\mathcal J_\theta(q)$ are nonnegative. Moreover,
\begin{equation}\label{eq:shared-response}
 \frac{\dd^2}{\dd\theta^2}
 \mathfrak F_t^c(\gamma_\theta)
 =
 r\int_0^1 e_\theta(q)\,\dd q
 +
 \frac{\lambda}{N}
 \Var_{Q_{\gamma_\theta}}
 \left(
   \partial_\theta
   u_{\gamma_\theta}(0,0;J)
 \right).
\end{equation}
In particular,
\[
 \frac{\dd^2}{\dd\theta^2}
 \mathfrak F_t^c(\gamma_\theta)
 \ge
 r\int_0^1e_\theta(q)\,\dd q.
\]
\end{proposition}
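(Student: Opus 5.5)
The plan is to split the second derivative of $\mathfrak F_t^c(\gamma_\theta)$ into an outer part coming from the tilt and an inner part coming from the PDE value, and then to identify the inner part through the Legendre-dual (magnetization) formulation. Write $U_\theta=u_{\gamma_\theta}(0,0;J)$. The penalty $-\frac r2\int q\gamma_\theta$ is affine in $\theta$, and the diagonal term does not depend on the profile, so neither contributes to the second derivative. For the outer logarithmic moment, differentiating $\frac1{N\lambda}\log\E_g e^{\lambda U_\theta}$ twice gives
\[
 \frac1N\E_{Q_{\gamma_\theta}}\partial_\theta^2U_\theta+\frac{\lambda}{N}\Var_{Q_{\gamma_\theta}}(\partial_\theta U_\theta).
\]
The second term is exactly the variance term in \eqref{eq:shared-response}. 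At $\lambda=0$ the variance term is absent and the formula still holds. It therefore remains to show that, for fixed $J$,
\[
 \E\bigl[\partial_\theta^2U_\theta\mid J\bigr]=r\int_0^1\Tr(hBhBh)\,\dd q,
\]
with the expectation taken along the diffusion $Y$. Averaging this over $Q_{\gamma_\theta}$ then produces $r\int e_\theta$.

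For the inner identity I would work first with bounded step profiles and a smooth, strictly convex terminal function; at finite $\beta$ the function $f_{\beta,J}$ already has these properties. The argument then extends by the approximation scheme of \zcref{subsec:fn-approximation}, using the continuity estimate \eqref{eq:fn-coefficient-continuity}. Strict convexity of $u_\theta(q,\cdot)$ (the Hessian satisfies $H\succ0$) makes the gradient map a diffeomorphism onto $(-1,1)^N$, so $\Phi_\theta$ is well defined and smooth. The HJB equation $\partial_qu+\frac r2(\Delta u+\gamma|\nabla u|^2)=0$ transforms under Legendre duality into
\[
 \partial_q\Phi_\theta=\tfrac r2\Tr\bigl((D_m^2\Phi_\theta)^{-1}\bigr)+\tfrac r2\gamma_\theta|m|^2.
\]
This uses $\partial_q\Phi|_m=-\partial_qu|_y$ and $\Delta_yu=\Tr h$. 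Since the dual equation is linear in $\gamma$, differentiating in $\theta$ gives
\[
 \partial_q\dot\Phi=-\tfrac r2\Tr(hBh)+\tfrac r2 v|m|^2,\qquad \dot\Phi=\partial_\theta\Phi_\theta .
\]
Differentiating a second time and using $\partial_\theta h=-hBh$ gives
\[
 \partial_q\ddot\Phi=-\tfrac r2\Tr(hD_m^2\ddot\Phi\,h)+r\Tr(hBhBh).
\]
Because the terminal data do not depend on $\theta$, both $\dot\Phi$ and $\ddot\Phi$ vanish at $q=1$.

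Next I would relate these dual derivatives back to $u$. By the envelope theorem, $\partial_\theta u=-\partial_\theta\Phi$ at the dual point. Differentiating again gives
\[
 \partial_\theta^2u=-\ddot\Phi+\nabla_m\dot\Phi^{\,T}h\,\nabla_m\dot\Phi .
\]
Evaluating along $m_q$ and applying Itô's formula, the process $m_q$ is a martingale with $\dd m_q=\sqrt r\,h\,\dd B_q$. The Itô correction for $\ddot\Phi(q,m_q)$ is therefore $\frac r2\Tr(hD^2\ddot\Phi h)$, which cancels the first term of its equation. Taking expectations from $0$ to $1$ yields
\[
 -\E\ddot\Phi(0,m_0)=r\,\E\int_0^1\Tr(hBhBh)\,\dd q .
\]
The gradient term vanishes at $q=0$: global spin-flip symmetry gives $m_0=0$, parity gives $\nabla_m\dot\Phi(0,0)=0$, and only the point $y=0$ is relevant. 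Together these give the claimed inner identity.

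The main obstacle is rigor in these dual computations rather than the algebra. One issue is the behaviour of $\Phi$ near $\partial(-1,1)^N$, which controls integrability of $\Tr(hBhBh)$ and justifies dropping the martingale terms. Another is that $\gamma$ may have atoms and jumps, so $\dot h$ and the $q$-derivatives exist only almost everywhere. For both I would first stop the diffusion on compact sub-boxes of the magnetization domain, control the stopped terms by the fixed-$N$ derivative bounds, and only then remove the stopping and pass from step profiles to general ones. Nonnegativity of $e_\theta$ is immediate from the Frobenius-norm representation, and together with the nonnegative variance term this gives the final inequality.
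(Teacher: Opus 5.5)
Your proposal is correct and follows essentially the same route as the paper. Both split off the outer tilt variance $\frac{\lambda}{N}\Var_{Q_{\gamma_\theta}}(\partial_\theta u_{\gamma_\theta}(0,0;J))$, derive the same dual equations for $\partial_\theta\Phi_\theta$ and $z=\partial_\theta^2\Phi_\theta$ (with $z(1,\cdot)=0$), and apply It\^o's formula along the martingale $m_q$. Your explicit envelope identity $\partial_\theta^2u=-\partial_\theta^2\Phi_\theta+\nabla_m\partial_\theta\Phi_\theta^{\,T}h\,\nabla_m\partial_\theta\Phi_\theta$, with the correction term vanishing at $m_0=0$ because $u_{\gamma_\theta}$ is even in $y$, spells out what the paper compresses into ``Legendre duality therefore gives $z(0,0)=-\partial_\theta^2u_{\gamma_\theta}(0,0;J)$.'' One small caveat: $H\succ0$ alone does not make $\nabla_y u$ onto $(-1,1)^N$ (the paper uses coercivity $u-\sum_i|y_i|=O(1)$), but your computation only needs the image of the gradient map.
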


We next control the weighted pairing between the profile perturbation
$v$ and the Hessian response $\zeta_\theta$. The first estimate bounds
this pairing by the second-variation term $e_\theta$ together with a
localized Fisher-information term. The second estimate controls that
Fisher-information term by the cutoff derivative and the curvature
integrated against the profile measure.

\begin{lemma}\zlabel{lem:response-fisher}
Let the profiles and response quantities be as in
\zcref{prop:gs-response-tools}. For every compactly supported Lipschitz
function $\varphi:(0,1)\to\mathbb R$,
\begin{align}
 r\left|\int_0^1v\zeta_\theta\varphi\,\dd q\right|
 &\le
 \left(\int_0^1e_\theta\,\dd q\right)^{1/2}
 \left[
  \left(\int_0^1b_\theta|\varphi'|^2\,\dd q\right)^{1/2}
  +2\left(\int_0^1\mathcal J_\theta\varphi^2\,\dd q\right)^{1/2}
 \right],
 \label{eq:shared-pairing}\\
 \int_0^1\varphi^2\mathcal J_\theta\,\dd q
 &\le
 \int_0^1|\varphi'|^2b_\theta\,\dd q
 +r\int_{(0,1)}\varphi^2\zeta_\theta\,\dd\gamma_\theta.
 \label{eq:shared-fisher}
\end{align}
\end{lemma}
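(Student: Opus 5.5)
The plan is to derive both estimates of \zcref{lem:response-fisher} from the Legendre-dual form of the Parisi equation, evaluated along the magnetization process $m_q$. Write $\Phi=\Phi_\theta$, $h=(D_m^2\Phi)^{-1}$ and $\dot\Phi=\partial_\theta\Phi$. Since $\partial_q\Phi(q,m)=-\partial_qu(q,y)$ at dual points, the equation $\partial_qu+\frac r2(\Delta u+\gamma_\theta|\nabla u|^2)=0$ becomes
\[
 \partial_q\Phi=\frac r2\bigl(\Tr h+\gamma_\theta(q)|m|^2\bigr).
\]
This has two consequences.

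Differentiating the dual equation twice in $m$, and using $\partial_q(h^{-1})=-h^{-1}\dot h h^{-1}$, gives
\[
 \dot h=-\frac r2\,h\,D_m^2(\Tr h)\,h-r\gamma_\theta h^2.
\]
Differentiating instead in $\theta$ and then twice in $m$ gives
\[
 \partial_qB=-\frac r2D_m^2\Tr(hBh)+r\,v(q)\,I,
\]
since $\partial_\theta h=-hBh$. Thus the profile perturbation $v$ is exactly the forcing in the evolution of $B$. Along the diffusion, $\dd m_q=\sqrt r\,h\,\dd B_q$, and the generator of $m_q$ in these coordinates is $\frac r2\Tr(h\,D_m^2\cdot\,h)$. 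Hence, for any smooth functional $G(q,m)$, $\frac{\dd}{\dd q}\E G=\E[\partial_qG+\frac r2\Tr(hD_m^2G\,h)]$. I would first verify all of this for step profiles and stopped diffusions, and then pass to the limit using the approximation scheme of \zcref{subsec:fn-approximation}; the outer law $Q_{\gamma_\theta}$ is fixed for fixed $\theta$, so it plays no role in these $q$-identities.

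For \eqref{eq:shared-pairing}, I would write $r\,v\,\Tr h^2=\Tr\bigl(h(\partial_qB+\frac r2D_m^2\Tr(hBh))h\bigr)$, multiply by $\varphi/N$, take $\E_\theta$ and integrate in $q$. The goal is to recognize the right side as $\frac{\dd}{\dd q}\E\Tr(hBh)$ minus the terms in which $\partial_q$ falls on the two factors of $h$. The second-order spatial term should be absorbed exactly by the Itô generator, so the identity to aim for is
\[
 r\int v\zeta_\theta\varphi\,\dd q
 =-\frac1N\int\varphi'\,\E\Tr(hBh)\,\dd q
 -\frac1N\int\varphi\,\E\Tr(\dot hBh+hB\dot h)\,\dd q,
\]
where boundary terms vanish because $\varphi$ has compact support in $(0,1)$. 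Then I would apply Cauchy--Schwarz twice. First, $|\Tr(hBh)|=|\Tr(h^{1/2}\cdot h^{1/2}Bh)|\le(\Tr h)^{1/2}\|h^{1/2}Bh\|_{\mathrm F}$. Second, $|\Tr(\dot hBh)|\le\|h^{-1/2}\dot h\|_{\mathrm F}\|h^{1/2}Bh\|_{\mathrm F}$. Followed by Cauchy--Schwarz in $(q,\omega)$, these give $(\int b_\theta|\varphi'|^2)^{1/2}(\int e_\theta)^{1/2}$ and $2(\int\mathcal J_\theta\varphi^2)^{1/2}(\int e_\theta)^{1/2}$, which is \eqref{eq:shared-pairing}.

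For \eqref{eq:shared-fisher}, I would start from $\int\varphi^2\frac1N\E\Tr(\dot h\,h^{-1}\dot h)\,\dd q$ and use the formula for $\dot h$ to replace one factor $\dot h$ by $-\frac r2hD_m^2(\Tr h)h-r\gamma_\theta h^2$. The term $-r\gamma_\theta\Tr(\dot h\,h)$ is handled by integrating by parts in $q$. The piece $\frac12\partial_q\Tr h^2$ moves onto $\varphi^2\gamma_\theta$ and produces the Stieltjes term $\frac r2\int\varphi^2\,\E\Tr h^2\,\dd\gamma_\theta$, which is at most $r\int\varphi^2\zeta_\theta\,\dd\gamma_\theta$, together with a term containing $\varphi\varphi'$. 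The Laplacian term, combined with the Itô generator, should become $\frac{\dd}{\dd q}\E\Tr h$, which after integration by parts also yields a $2\varphi\varphi'$ cross term. Each cross term would be bounded by $2|\varphi\varphi'\Tr(h^{-1/2}\dot h\,h^{1/2})|\le\frac12\varphi^2\Tr(\dot hh^{-1}\dot h)+2|\varphi'|^2\Tr h$. The remaining $\varphi^2\mathcal J_\theta$ piece is then absorbed into the left side. I expect the naive accounting to lose a numerical factor, so the sign structure of the exact identity will need care. It may help to use instead $\frac{\dd}{\dd q}\E\log\det h$ or $\frac{\dd}{\dd q}\E\Tr h$ as the exact quantity whose $q$-derivative contains $-\mathcal J_\theta$ plus the $\dd\gamma_\theta$ term.

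The main obstacle is this exact cancellation for the Fisher term. The coefficients must come out as $1$ in front of both $b_\theta|\varphi'|^2$ and the Stieltjes integral, and the argument has to be made rigorous when $\dd\gamma_\theta$ has atoms, where $\dot h$ exists only almost everywhere. I would handle atoms by doing the computation for step profiles and using jump conditions for $h$ at the atoms, and I would control integrability by stopping the diffusion on bounded sets, as indicated at the end of the proof of \zcref{prop:scalar-lower-via-fisher}.
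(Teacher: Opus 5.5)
Your argument for \eqref{eq:shared-pairing} is correct and is the paper's. It uses Dynkin's formula for $\frac1N\E_\theta\Tr(h^2B)$ with generator $\partial_q+\frac r2h^2:D_m^2$, the forcing $rvI$ in the $B$ equation, one integration by parts against $\varphi$, and the two Frobenius Cauchy--Schwarz bounds you state.

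For \eqref{eq:shared-fisher} there is a genuine gap, which you acknowledge, and the route you sketch does not close.

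\begin{itemize}
\item Along the process, $\E_\theta\Tr(h\dot h)=\frac12\frac{\dd}{\dd q}\E_\theta\Tr h^2-\frac r4\E_\theta[h^2:D_m^2\Tr h^2]$. The generator term is controlled by nothing in your argument.
\item Integrating $\varphi^2\gamma_\theta\,\partial_q\zeta_\theta$ by parts produces a cross term $\varphi\varphi'\gamma_\theta\zeta_\theta$, not $\varphi\varphi'\Tr\dot h$. Your bound by $\frac12\varphi^2\Tr(\dot hh^{-1}\dot h)+2|\varphi'|^2\Tr h$ does not apply to it.
\item Two separately absorbed cross terms would use up the whole left side in any case.
\item Your fallback candidates fail as well. $\frac{\dd}{\dd q}\E_\theta\Tr h=-r\gamma_\theta\E_\theta\Tr h^2$ is just $b_\theta'=-r\gamma_\theta\zeta_\theta$ and contains no $\mathcal J_\theta$. $\log\det h$ produces Fisher terms in spatial derivatives of $h$, not in $\dot h$.
\end{itemize}

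The missing idea is to use $\mathcal P(q)=\frac1N\E_\theta\Tr\dot h$. This is the expectation of the fixed-$m$ derivative of $n=\Tr h$, not its total derivative along the process. Tracing your formula for $\dot h$ gives $\dot n+\frac r2h^2:D_m^2n=-r\gamma_\theta\Tr h^2$. Differentiate this in $q$ on an interval where $\gamma_\theta$ is constant, and substitute $D_m^2n+2\gamma_\theta I=-\frac2rh^{-1}\dot hh^{-1}$. This yields
\[
 \Bigl(\partial_q+\frac r2h^2:D_m^2\Bigr)\dot n
 =-\frac r2(\dot hh+h\dot h):(D_m^2n+2\gamma_\theta I)
 =2\Tr(\dot h\,h^{-1}\dot h),
\]
so $\mathcal P'=2\mathcal J_\theta$ between jumps.

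At a jump of $\gamma_\theta$, $h$ and $D_m^2n$ are continuous while $[\dot h]=-r[\gamma_\theta]h^2$. This is where the Stieltjes term comes from, and it handles atoms directly:
\[
 \dd\mathcal P=2\mathcal J_\theta\,\dd q-r\zeta_\theta\,\dd\gamma_\theta.
\]
You also need $|\mathcal P|^2\le b_\theta\mathcal J_\theta$, which follows from $|\Tr\dot h|^2\le\Tr h\,\Tr(\dot hh^{-1}\dot h)$.

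Set $X=\int\varphi^2\mathcal J_\theta$ and $Y=\int|\varphi'|^2b_\theta$. Testing the balance law against $\varphi^2$ produces a single cross term:
\[
 2X=-2\int_0^1\varphi\varphi'\mathcal P\,\dd q+r\int_{(0,1)}\varphi^2\zeta_\theta\,\dd\gamma_\theta
 \le X+Y+r\int_{(0,1)}\varphi^2\zeta_\theta\,\dd\gamma_\theta .
\]
This is \eqref{eq:shared-fisher} with the exact constants.
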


To apply these estimates, we will bound $b_\theta$ from above near
$q=0$ and obtain an averaged lower bound on the Hessian under the joint
law defining $\E_\theta$.

\begin{lemma}\zlabel{lem:response-curvature}
Let $\gamma$ be any admissible profile in the preceding setup. If
$q<T\le1$ and
\[
 \gamma(s)\le A,\qquad q\le s<T,
\]
then, for every $y\in\mathbb R^N$ and every $1\le i\le N$,
\begin{equation}\label{eq:shared-early-diagonal}
 H_{ii}(q,y;J)
 \le
 A+[r(T-q)]^{-1/2}.
\end{equation}

Assume in addition that $\beta\ge1$. For every $q<1$ and $U\ge0$,
\begin{equation}\label{eq:shared-averaged-curvature}
 \gamma(q)\le U
 \quad\Longrightarrow\quad
 \E_\gamma H_q\succeq c_UI,
\end{equation}
where
\[
 c_U
 =
 \frac{e^{-16c_0M_U-2M_U^2}}{2(1+c_0)},
 \qquad
 c_0=\sqrt{\pi/2},
 \qquad
 M_U=\max\{1,U\}.
\]
Consequently, under the same hypothesis,
\[
 \zeta_\gamma(q)\ge c_U^2.
\]
\end{lemma}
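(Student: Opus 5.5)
The two assertions of \zcref{lem:response-curvature} are of different natures, and I would prove them separately. Both are first established for step profiles with finitely many levels and then extended by the $L^1$ profile continuity and common Brownian coupling of \zcref{subsec:fn-approximation}. Those tools preserve pointwise Hessian bounds and averaged Hessian bounds in the limit.

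\emph{Pointwise bound \eqref{eq:shared-early-diagonal}.} Fix $J$ and a coordinate $i$. For a constant level $a\le A$ on a subinterval $[q,q']\subset[q,T)$, the Cole--Hopf step is $u(q,y)=a^{-1}\log\E_z e^{a u(q',y+\sqrt{r(q'-q)}z)}$. Differentiating twice gives
\[
H(q,y)=\E_{\mathrm{tilt}}[H(q',\cdot)]+a\Cov_{\mathrm{tilt}}(\nabla u(q',\cdot)).
\]
Iterating over the steps in $[q,T)$ and using $|\partial_iu|\le1$ bounds the accumulated covariance contributions to $H_{ii}$ by $A$, since the total conditional variance of a coordinate bounded by one is at most one.

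The terminal term $\E[H_{ii}(T,\cdot)]$ is not controlled directly, because $H_{ii}(T,\cdot)$ can be as large as $\beta$. Instead I would write $H_{ii}(q,y)=\partial_{y_i}m_i(q,y)$, with $m_i(q,y)$ the tilted average of $\partial_iu(T,\cdot)$. I would then move the $y_i$-derivative onto the Gaussian increment of total variance $r(T-q)$ by Gaussian integration by parts. This produces $[r(T-q)]^{-1/2}\,\E_{\mathrm{tilt}}[\partial_iu(T)\,z_i]$ plus tilt-correction terms, which are again covariances of bounded quantities and are absorbed into the $A$ term. The factor $|\E_{\mathrm{tilt}}[\partial_iu\,z_i]|\le1$ is where care is needed, since the tilted law of $z_i$ is not Gaussian. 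I would handle it by integrating by parts one level at a time and summing the increments of variance, in the manner of a Bernstein gradient estimate. As a fallback, I would use the one-dimensional comparison for $\partial_iu$ along the $i$-th coordinate, which is a bounded solution of a linear parabolic equation with drift.

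\emph{Averaged bound \eqref{eq:shared-averaged-curvature}.} This is the main obstacle, because $c_U$ must be uniform in $N$, $\beta\ge1$, $r$, $\lambda$ and $J$. I would follow the route announced in the introduction: differentiate the finite-spin recursion with respect to a terminal spin energy, and then apply a conditional Gaussian estimate.

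Concretely, $\E_\gamma H_q\succeq c_UI$ reduces by gauge symmetry to a lower bound on $\E_\gamma H_{ii,q}=\E_\gamma\partial_{y_i}m_{i,q}$. Writing $m_{i,q}=\E_q\sigma_i$, I would express $\partial_{y_i}m_{i,q}$ as a conditional variance of $\sigma_i$, plus a nonnegative propagated term coming from the levels $s>q$ where $\gamma(s)\ge1$; these levels exist because $\beta\ge1$ and $\gamma(1)=\beta$. The conditional variance of $\sigma_i$ is $1-m_{i,1}^2$ averaged along the path.

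The key inequality I would aim for is that the effective field $y_i$ acting on $\sigma_i$ satisfies $\Prob(|y_i|\le c)\gtrsim e^{-2M_U^2}$. This would come from a Gaussian tail estimate, using that the drift $r\gamma m$ is bounded by $M_U$ on the low-$\gamma$ portion and by an explicit Girsanov factor (which produces the $16c_0M_U$) thereafter. On that event, the variance $1-\tanh^2$ is bounded below, and the Gibbs conditional law of $\sigma_i$ given the other spins cannot concentrate too much.

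The final consequence $\zeta_\gamma(q)\ge c_U^2$ then follows from Cauchy--Schwarz:
\[
\zeta_\gamma(q)=\tfrac1N\E\Tr H_q^2\ge\bigl(\tfrac1N\Tr\E H_q\bigr)^2\ge c_U^2.
\]
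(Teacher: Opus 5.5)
\textbf{Part 1, the pointwise bound.} Your route to \eqref{eq:shared-early-diagonal} works, and the step you flag is the right one. To close it, do not bound moments of the tilted $z_i$. Instead, decompose the tilted increment through the SDE,
\[
 Y_T-Y_q=r\int_q^T\gamma(s)m_s\,\dd s+\sqrt r\,(B_T-B_q),
\]
where $B$ is a genuine Brownian motion under the drifted law. Monotonicity of $\gamma$ gives
\[
 r(T-q)H_{ii}(q,y)\le\Cov\bigl(m_{T,i},(Y_T-Y_q)_i\bigr),
\]
with equality on a single level. The drift part is at most $rA(T-q)$ because $\E(m_{s,i}-m_{q,i})^2\le1$. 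The Brownian part is at most $\sqrt{r(T-q)}$ by Cauchy--Schwarz. Together these give exactly $A+[r(T-q)]^{-1/2}$.

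The paper reaches the same bound differently. It averages $\E_{q,y}H_{ii}(s,Y_s)\ge H_{ii}(q,y)-A$ over $s\in[q,T]$ and uses the cross isometry
\[
 \sqrt r\int_q^T\E_{q,y}H_{ii}(s,Y_s)\,\dd s=\E_{q,y}\bigl[(m_{T,i}-m_{q,i})(B_{T,i}-B_{q,i})\bigr].
\]
Your fallbacks are not needed.

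\textbf{Part 2, the averaged bound: a genuine gap.} For \eqref{eq:shared-averaged-curvature}, your mechanism cannot give a constant uniform in $\beta\ge1$, $N$ and $\lambda$.
\begin{itemize}
\item The terminal magnetization $m_{i,1}$ responds to $\beta$ times the full local field $y_i+(J\sigma)_i$, where $\sigma$ is Gibbs-random. An event of the type $|{\rm field}|\le c$ therefore does not keep $1-m_{i,1}^2$ bounded below, because at large $\beta$ the terminal conditional law of $\sigma_i$ does concentrate.
\item To repair this you would need a density bound for the local field at scale $\beta^{-1}$, and nothing in the proposal supplies one.
\item A Girsanov factor over the part of the path where $\gamma$ rises toward $\beta$ involves drifts of size up to $r\beta$, so it is not uniformly bounded.
\item You also do not say how dimension-free control survives the outer tilt $Q_\gamma$, which acts on all couplings. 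The constants $16c_0M_U$ and $2M_U^2$ do not come from a Girsanov factor.
\end{itemize}

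\textbf{The missing ideas.} You need two things: stopping at an intermediate time, and an odd-symmetry anti-concentration argument.
\begin{itemize}
\item \emph{Stopping time.} Let $T\ge q$ be the first time with $\gamma(T)\ge1$, or $T=q$ if $\gamma(q)\ge1$. It exists since $\gamma(1)=\beta\ge1$, and $\E_\gamma H_q\succeq\E_\gamma H_T$.
\item \emph{Conditional Gaussian structure.} Condition on everything not incident to site $i$. The remaining coordinates $G$ (the couplings $g_{ij}$ and the $i$-th field increments up to $T$) are standard Gaussian. The conditional density of the $\gamma$-law is $e^{V_T}/\E_0e^{V_T}$, with
\[
 V_T=\gamma(T-)u(T,Y_T)-\int_{[0,T)}u\,(\dd\gamma-\lambda\delta_0).
\]
This $V_T$ contains the outer tilt, is even in $G$, and satisfies $\operatorname{Lip}(V_T)\le2M_U$ precisely because $\gamma(T-)\le M_U$.
\item \emph{Spin-offset bounds.} Differentiating the recursion in spin-label offsets gives $H_{ii}(T)\ge\gamma(T)(1-m_i(T)^2)\ge1-|m_i(T)|$ and $|\nabla_Gm_i(T)|\le H_{ii}(T)$.
\item \emph{Symmetry.} Flipping spin $i$ together with $G\mapsto-G$ makes $m_i(T)$ odd.
\item \emph{Anti-concentration.} Let $\chi$ be a cutoff to $|V_T-\E_0V_T|\le2R$, with $R=4c_0K_0$. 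Apply the Gaussian $L^1$-Poincar\'e inequality $\E_0|f-\E_0f|\le c_0\E_0|\nabla f|$ to $\chi m_i(T)$. This yields $\E_0[\chi H_{ii}(T)]\ge1/(2(1+c_0))$: either $|m_i(T)|$ stays away from one, or $m_i(T)$ must change sign and pay for it in gradient, i.e.\ in curvature.
\item \emph{Density lower bound.} On the support of $\chi$ the density is at least $e^{-8c_0K_0-K_0^2/2}$. Taking $K_0=2M_U$ gives $c_U$.
\end{itemize}
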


We prove the response and curvature estimates first for step profiles.
For such profiles, the required differentiations can be carried out on
each interval on which $\gamma$ is constant. We then establish bounds
uniform in the step partition, with $N$ and $\beta$ fixed, and pass to
general admissible profiles by approximation. The auxiliary analytic
bounds used in this passage may depend on $N$ and $\beta$, whereas the
constants displayed in the lemma are independent of the approximating
partition.

\subsection{Spin-label offsets and curvature bounds}

To compare the response to perturbations of the spin energies with
curvature in the field variables, we perturb the terminal energy
associated with each spin label separately. The resulting Hessian in
these offset variables has nonpositive off-diagonal entries and zero
row sums. These properties give the matrix and directional estimates
used below. The Gaussian increments after the current time remain
averaged through the usual Cole--Hopf recursion.

\begin{lemma}\zlabel{lem:spin-offset-hessian}
Let $\gamma$ be a step profile. For each
$\sigma\in\{-1,1\}^N$, replace the terminal energy $E_\sigma$ by
$E_\sigma+z_\sigma$. Fix $q$ and $y$, and let $U(z)$ denote the value
obtained after integrating the Gaussian increments between $q$ and the
terminal time. Set
\[
 a=\gamma(q),
\]
with $a=\beta$ when $q=1$, and define
\[
 p=\nabla_zU,
 \qquad
 K=D_z^2U.
\]
Then $p$ is a probability vector,
\[
 K\1=0,
\]
and
\begin{equation}\label{eq:spin-offset-cone}
 K_{\sigma\tau}\le-a p_\sigma p_\tau
 \quad(\sigma\ne\tau),
 \qquad
 a(\operatorname{diag}p-pp^T)\preceq K
 \preceq\beta(\operatorname{diag}p-pp^T).
\end{equation}
Moreover, if $s_\sigma\in\{-1,1\}$ and $|\xi_\sigma|\le1$ for every
$\sigma$, then
\begin{equation}\label{eq:spin-offset-direction}
 |s^TK\xi|\le s^TKs,
 \qquad
 a[1-(p\cdot s)^2]
 \le s^TKs
 \le\beta[1-(p\cdot s)^2].
\end{equation}
\end{lemma}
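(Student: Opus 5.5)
The plan is to prove the lemma by induction backward in time over the step partition, with one Cole--Hopf step per stage. The terminal time $q=1$ serves as the base case. At $q=1$ we have $U(z)=\beta^{-1}\log\sum_\sigma e^{\beta(E_\sigma+z_\sigma)}$, so $p$ is the Gibbs vector. The Hessian is $K=\beta(\operatorname{diag}p-pp^T)$, which gives \eqref{eq:spin-offset-cone} with $a=\beta$ and equality throughout. Translation invariance $U(z+c\1)=U(z)+c$ holds at every time, because the recursion commutes with adding constants. Hence $\sum_\sigma p_\sigma=1$ and $K\1=0$ hold at every stage.

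For the inductive step, suppose the claim holds at time $q'$ with $a'=\gamma(q')\ge a$, and that on $[q,q')$ the profile equals the constant $a$. Then
\[
 U_q(z)=\tfrac1a\log\E e^{aU_{q'}(z)},
\]
with the expectation taken over the Gaussian field increment. Differentiating gives two identities. First, $p=\E_a[p']$, where $\E_a$ is expectation under the tilted law with density $\propto e^{aU_{q'}}$; this shows $p$ is again a probability vector. Second,
\[
 K=\E_a[K']+a\Cov_a(p').
\]
Since $\Cov_a(p')=\E_a[p'p'^T]-pp^T$, the off-diagonal entries satisfy
\[
 K_{\sigma\tau}\le-a'\E_a[p'_\sigma p'_\tau]+a\E_a[p'_\sigma p'_\tau]-ap_\sigma p_\tau\le-ap_\sigma p_\tau,
\]
using $a'\ge a$. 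For the matrix cone, the inductive lower bound gives
\[
 K\succeq a'\E_a(\operatorname{diag}p'-p'p'^T)+a\E_a p'p'^T-app^T\succeq a(\operatorname{diag}p-pp^T).
\]
For the upper bound, the inductive hypothesis together with $a\le\beta$ gives
\[
 K\preceq\beta\E_a(\operatorname{diag}p'-p'p'^T)+\beta\Cov_a(p')=\beta(\operatorname{diag}p-pp^T).
\]
If $q$ is itself a jump point, only the value $a=\gamma(q)$ at $q$ enters the displayed cone, so we simply reapply the step.

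For \eqref{eq:spin-offset-direction}, the key observation is that $K$ is a weighted graph Laplacian. Its off-diagonal entries are nonpositive and its rows sum to zero, so
\[
 x^TKy=\tfrac12\sum_{\sigma\ne\tau}(-K_{\sigma\tau})(x_\sigma-x_\tau)(y_\sigma-y_\tau).
\]
Take $x=s\in\{-1,1\}$ and $y=\xi$ with $|\xi_\sigma|\le1$. Then $(s_\sigma-s_\tau)(\xi_\sigma-\xi_\tau)$ vanishes whenever $s_\sigma=s_\tau$. When $s_\sigma\ne s_\tau$ it is bounded in absolute value by $2\cdot 2=(s_\sigma-s_\tau)^2$. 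This proves $|s^TK\xi|\le s^TKs$. The two-sided bound on $s^TKs$ follows by evaluating the cone in \eqref{eq:spin-offset-cone} at $s$, using the identity $s^T(\operatorname{diag}p-pp^T)s=\sum p_\sigma s_\sigma^2-(p\cdot s)^2=1-(p\cdot s)^2$.

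The main obstacle I anticipate is rigor in the differentiation under the Gaussian expectation, together with the handling of the tilt-dependent law. Because $U$ is smooth, convex and $1$-Lipschitz in $z$ at each stage, with bounded derivatives of all orders, dominated convergence justifies exchanging derivative and expectation. Monotonicity $a\le a'$ is exactly what allows the covariance term to be absorbed in the inductive step, so I would check this inequality carefully at every jump.
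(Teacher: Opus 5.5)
Your proof is correct and follows the paper's argument. Both use backward induction through the Cole--Hopf step, with $p=\E_a p'$, $K=\E_aK'+a\Cov_a(p')$ and $a'\ge a$, followed by the weighted-Laplacian representation for the directional bounds. The one difference is that you carry the matrix lower bound $K\succeq a(\operatorname{diag}p-pp^T)$ through the induction directly, using $\operatorname{diag}p'-p'p'^T\succeq0$, whereas the paper derives it at each stage from the entrywise off-diagonal bound together with $K\1=0$; both routes work, and the case $a=0$ is just the ordinary-expectation limit of your formulas.
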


\begin{proof}
At the terminal time,
\[
 U(z)
 =
 \frac1\beta
 \log\sum_\sigma e^{\beta(E_\sigma+z_\sigma)},
\]
so
\[
 p_\sigma
 =
 \frac{e^{\beta(E_\sigma+z_\sigma)}}
 {\sum_\tau e^{\beta(E_\tau+z_\tau)}},
 \qquad
 K=\beta(\operatorname{diag}p-pp^T).
\]
Thus all assertions hold at the terminal time.

Consider one backward recursion step
\[
 U=\frac1a\log\E e^{aU'},
\]
and define
\[
 \E_a F
 =
 \frac{\E[Fe^{aU'}]}{\E e^{aU'}},
\]
where $\E$ integrates the Gaussian increment in this step. When
$a=0$, $\E_a$ denotes ordinary expectation. Differentiation gives
\[
 p=\E_a p',
 \qquad
 K=\E_aK'+a\Cov_a(p'),
\]
where
\[
 p'=\nabla_zU',
 \qquad
 K'=D_z^2U'.
\]
In particular, if $p'$ is a probability vector, then so is $p$.

Suppose the profile height at the next step is $b\ge a$. Using the
inductive bound
\[
 K'_{\sigma\tau}\le-bp'_\sigma p'_\tau
 \qquad(\sigma\ne\tau),
\]
we obtain
\[
 \begin{aligned}
 K_{\sigma\tau}
 &\le
 -b\,\E_a(p'_\sigma p'_\tau)
 +a\Bigl(
   \E_a(p'_\sigma p'_\tau)-p_\sigma p_\tau
 \Bigr) \\
 &=
 -(b-a)\E_a(p'_\sigma p'_\tau)
 -a p_\sigma p_\tau
 \le
 -a p_\sigma p_\tau.
 \end{aligned}
\]
Adding the same constant to every offset adds that constant to $U$.
Hence
\[
 K\1=0.
\]
Also, if $K'\succeq0$, then
\[
 K=\E_aK'+a\Cov_a(p')\succeq0.
\]

For the upper matrix bound, the inductive hypothesis gives
\[
 K'
 \preceq
 \beta(\operatorname{diag}p'-p'p'^T).
\]
Therefore
\[
 \begin{aligned}
 K
 &\preceq
 \beta\E_a(\operatorname{diag}p'-p'p'^T)
 +a\Cov_a(p') \\
 &=
 \beta(\operatorname{diag}p-pp^T)
 -(\beta-a)\Cov_a(p') \\
 &\preceq
 \beta(\operatorname{diag}p-pp^T).
 \end{aligned}
\]

Now set
\[
 w_{\sigma\tau}=-K_{\sigma\tau}\ge0
 \qquad(\sigma\ne\tau).
\]
Since $K\1=0$,
\[
 x^TKy
 =
 \frac12\sum_{\sigma\ne\tau}
 w_{\sigma\tau}
 (x_\sigma-x_\tau)(y_\sigma-y_\tau).
\]
The bound
\[
 w_{\sigma\tau}\ge a p_\sigma p_\tau
\]
then gives
\[
 K\succeq a(\operatorname{diag}p-pp^T),
\]
which proves \eqref{eq:spin-offset-cone}.

For the first inequality in \eqref{eq:spin-offset-direction}, only
pairs with $s_\sigma\ne s_\tau$ contribute, and for such pairs
\[
 |(s_\sigma-s_\tau)(\xi_\sigma-\xi_\tau)|
 \le4
 =(s_\sigma-s_\tau)^2.
\]
Hence
\[
 |s^TK\xi|\le s^TKs.
\]
Finally, applying the two matrix bounds in
\eqref{eq:spin-offset-cone} to $s$ and using
\[
 s^T(\operatorname{diag}p-pp^T)s
 =
 1-(p\cdot s)^2
\]
gives the remaining two inequalities.
\end{proof}

\begin{proof}[Proof of \zcref{lem:response-curvature}]
Changing the field coordinate $y_i$ adds the offset
$z_\sigma=y_i\sigma_i$ to the energy of spin $\sigma$. Hence, with
$s_\sigma=\sigma_i$, one has
\[
 H_{ii}=s^TKs,
 \qquad
 p\cdot s=m_i.
\]
The directional bounds in \zcref{lem:spin-offset-hessian} therefore give
\begin{equation}\label{eq:response-diagonal-basic}
 \gamma(q)(1-m_i^2)
 \le H_{ii}
 \le \beta(1-m_i^2)
 \le\beta.
\end{equation}

\emph{Early diagonal bound.}
Restart the diffusion from $(q,y)$ with $J$ fixed, and let
$\E_{q,y}$ denote expectation over this conditional diffusion. Along
the diffusion,
\[
 \dd m_s=\sqrt r\,H_s\,\dd B_s,
 \qquad
 \dd H_s=-r\gamma(s)H_s^2\,\dd s
          +\sqrt r\,D_yH_s\,\dd B_s.
\]
Thus, for $q\le s\le T$,
\[
 H_{ii}(q,y)-\E_{q,y}H_{ii}(s,Y_s)
 =
 r\int_q^s
 \gamma(a)\E_{q,y}(H_a^2)_{ii}\,\dd a.
\]
On the other hand, It\^o isometry gives
\[
 \E_{q,y}(m_{s,i}-m_{q,i})^2
 =
 r\int_q^s\E_{q,y}(H_a^2)_{ii}\,\dd a.
\]
If $\gamma\le A$ on $[q,T)$, then
\[
 H_{ii}(q,y)-\E_{q,y}H_{ii}(s,Y_s)
 \le
 A\,\E_{q,y}(m_{s,i}-m_{q,i})^2
 \le A,
\]
where the last inequality uses that $(m_{s,i})_{s\ge q}$ is a
martingale with values in $[-1,1]$.

The cross It\^o isometry also gives
\[
 \sqrt r\int_q^T\E_{q,y}H_{ii}(s,Y_s)\,\dd s
 =
 \E_{q,y}
 \bigl[(m_{T,i}-m_{q,i})(B_{T,i}-B_{q,i})\bigr].
\]
By Cauchy--Schwarz and the preceding martingale bound,
\[
 \sqrt r\int_q^T\E_{q,y}H_{ii}(s,Y_s)\,\dd s
 \le\sqrt{T-q}.
\]
Since
\[
 \E_{q,y}H_{ii}(s,Y_s)\ge H_{ii}(q,y)-A,
\]
we obtain
\[
 \sqrt r\,(T-q)\bigl(H_{ii}(q,y)-A\bigr)
 \le\sqrt{T-q},
\]
which proves \eqref{eq:shared-early-diagonal}.

\emph{Averaged curvature.}
Assume now that $\beta\ge1$ and fix $q<1$ with $\gamma(q)\le U$.
We first obtain a lower bound at a time $T\ge q$ for which
$\gamma(T)\ge1$. Refining the step partition if necessary, we may
take $q$ and $T$ to be partition points.

Let $s_j$ be the partition times up to $T$, and let $a_j$ be the
constant value of $\gamma$ on $[s_j,s_{j+1})$. Write
\[
 u_j=u(s_j,Y_{s_j};J).
\]
On such an interval, $e^{a_ju}$ solves the backward heat equation, so
the drifted transition has density
\[
 e^{a_j(u_{j+1}-u_j)}
\]
relative to the corresponding Gaussian increment. Multiplying these
transition densities with the matrix density $Q_\gamma$ gives, for
$T>0$, a density proportional to $e^{V_T}$, where
\begin{equation}\label{eq:response-prefix-left-height}
 V_T
 =
 \gamma(T-)u(T,Y_T;J)
 -
 \int_{[0,T)}
 u(s,Y_s;J)
 \bigl(\dd\gamma(s)-\lambda\delta_0(\dd s)\bigr).
\end{equation}
For $T=0$,
\[
 V_0=\lambda u(0,0;J).
\]
The left limit $\gamma(T-)$ appears because the transition ending at
$T$ uses the profile height immediately before $T$. Thus the explicit coefficient of $u(T,Y_T;J)$ is $\gamma(T-)$,
including when $\dd\gamma$ has an atom at $T$.

Fix a site $i$. Let $\mathcal H_{i,T}$ be generated by all matrix
coordinates not incident to $i$ and by all field increments through
time $T$ at sites other than $i$. Under the product Gaussian reference law, the matrix coordinates
incident to $i$, together with the independent standard Gaussian
variables generating the field increments at site $i$, form a
standard Gaussian vector $G$, independent of $\mathcal H_{i,T}$. Let $\E_0$ denote expectation over
$G$ with $\mathcal H_{i,T}$ fixed. The conditional density under the
$\gamma$-law is then
\[
 \rho_{\gamma,T}(G)
 =
 \frac{e^{V_T(G)}}{\E_0e^{V_T}}.
\]

For every $s\le T$, the coefficient vector of a fixed spin
configuration in the coordinates $G$ has squared norm
\[
 \frac{t(N-1)}N+rs\le1.
\]
Since the gradient with respect to the spin-label offsets is a
probability vector, the map
\[
 G\longmapsto u(s,Y_s;J)
\]
is therefore $1$-Lipschitz. The measure
$\dd\gamma-\lambda\delta_0$ has mass
$\gamma(T-)-\lambda$ on $[0,T)$, and hence
\begin{equation}\label{eq:response-V-lipschitz}
 \operatorname{Lip}(V_T)
 \le2\gamma(T-)-\lambda
 \quad(T>0),
 \qquad
 \operatorname{Lip}(V_0)\le\lambda.
\end{equation}

Negating the coordinates in $G$ and simultaneously flipping spin
$i$ leaves the conditioned variables unchanged. The Gaussian
increments after $T$ at site $i$ can be negated inside their
integrals. Consequently,
\[
 V_T(-G)=V_T(G),
 \qquad
 m_i(T,-G)=-m_i(T,G).
\]

We also need the dependence of $m_i(T)$ on $G$. For a matrix
coordinate $g_{ij}$, take
\[
 s_\sigma=\sigma_i,
 \qquad
 \xi_\sigma=\sigma_i\sigma_j
\]
in \eqref{eq:spin-offset-direction}. This gives
\[
 |\partial_{g_{ij}}m_i(T)|
 \le\sqrt{\frac tN}\,H_{ii}(T).
\]
For a field increment at site $i$ over an interval of length
$\Delta s$, the corresponding bound is
\[
 |\partial_Gm_i(T)|
 \le\sqrt{r\Delta s}\,H_{ii}(T).
\]
Summing over these coordinates yields
\begin{equation}\label{eq:response-gaussian-gradient}
 |\nabla_Gm_i(T)|^2
 \le
 \left[\frac{t(N-1)}N+rT\right]H_{ii}(T)^2
 \le H_{ii}(T)^2.
\end{equation}

For a Lipschitz function $f$ of a standard Gaussian vector,
\begin{equation}\label{eq:response-gaussian-l1}
 \E_0|f-\E_0f|
 \le c_0\E_0|\nabla_Gf|,
 \qquad
 c_0=\sqrt{\pi/2}.
\end{equation}
Indeed, interpolate between two independent Gaussian vectors by a
rotation through angle $\pi/2$. The expected absolute projection of a
standard Gaussian vector onto a fixed vector is
$\sqrt{2/\pi}$ times its norm, and integration over the rotation
angle gives \eqref{eq:response-gaussian-l1}.

Let $K_0>0$ satisfy
\[
 \operatorname{Lip}(V_T)\le K_0,
\]
and set
\[
 R=4c_0K_0.
\]
Choose an even Lipschitz function
$\eta:\mathbb R\to[0,1]$ such that
\[
 \eta=1\ \text{on }[-R,R],
 \qquad
 \eta=0\ \text{outside }[-2R,2R],
 \qquad
 \operatorname{Lip}(\eta)\le R^{-1},
\]
and put
\[
 \chi(G)=\eta\bigl(V_T(G)-\E_0V_T\bigr).
\]
Applying \eqref{eq:response-gaussian-l1} to $V_T$ and then Markov's
inequality gives
\[
 \E_0\chi\ge\frac34.
\]

Suppose now that $\gamma(T)\ge1$. By
\eqref{eq:response-diagonal-basic},
\[
 1-|m_i(T)|
 \le1-m_i(T)^2
 \le H_{ii}(T).
\]
Since $\chi$ is even in $G$ and $m_i(T)$ is odd,
\[
 \E_0[\chi m_i(T)]=0.
\]
Applying \eqref{eq:response-gaussian-l1} to $\chi m_i(T)$ and using
\eqref{eq:response-gaussian-gradient} gives
\[
 \begin{aligned}
 \E_0[\chi|m_i(T)|]
 &\le
 c_0\E_0|\nabla_G(\chi m_i(T))|\\
 &\le
 c_0\E_0[\chi H_{ii}(T)]
 +\frac{c_0K_0}{R}.
 \end{aligned}
\]
Therefore
\[
 \frac34
 \le\E_0\chi
 \le
 (1+c_0)\E_0[\chi H_{ii}(T)]
 +\frac{c_0K_0}{R}.
\]
Since $R=4c_0K_0$,
\begin{equation}\label{eq:response-cutoff-H}
 \E_0[\chi H_{ii}(T)]
 \ge\frac1{2(1+c_0)}.
\end{equation}

The Gaussian exponential-moment bound for a $K_0$-Lipschitz function
gives
\[
 \E_0e^{V_T-\E_0V_T}\le e^{K_0^2/2}.
\]
On the support of $\chi$,
\[
 V_T-\E_0V_T\ge-2R=-8c_0K_0,
\]
and hence
\[
 \rho_{\gamma,T}
 \ge
 e^{-8c_0K_0-K_0^2/2}.
\]
Combining this with \eqref{eq:response-cutoff-H} yields
\begin{equation}\label{eq:response-cutoff-curvature}
 \begin{aligned}
 \E_\gamma[H_{ii}(T)\mid\mathcal H_{i,T}]
 &=\E_0[\rho_{\gamma,T}H_{ii}(T)]\\
 &\ge
 e^{-8c_0K_0-K_0^2/2}
 \E_0[\chi H_{ii}(T)]\\
 &\ge
 \frac{e^{-8c_0K_0-K_0^2/2}}{2(1+c_0)}.
 \end{aligned}
\end{equation}

We now choose $T$. If $\gamma(q)\ge1$, take $T=q$. Since
$\gamma(q)\le U$,
\[
 \operatorname{Lip}(V_q)\le2M_U,
 \qquad
 M_U=\max\{1,U\},
\]
so \eqref{eq:response-cutoff-curvature} applies with
$K_0=2M_U$.

If $\gamma(q)<1$, let $T>q$ be the first subsequent partition time
such that
\[
 \gamma(T)\ge1.
\]
Such a time exists because $\gamma(1)=\beta\ge1$. By the choice of
$T$,
\[
 \gamma(T-)\le1,
\]
so \eqref{eq:response-V-lipschitz} gives
\[
 \operatorname{Lip}(V_T)\le2.
\]
Thus \eqref{eq:response-cutoff-curvature} applies with $K_0=2$. The
curvature bound at $T$ uses the right value $\gamma(T)\ge1$, whereas
the density estimate uses the left value $\gamma(T-)\le1$.

In both cases,
\[
 \E_\gamma H_{ii}(T)
 \ge
 c_U,
 \qquad
 c_U
 =
 \frac{e^{-16c_0M_U-2M_U^2}}{2(1+c_0)}.
\]
Finally, taking expectations in the Hessian evolution gives
\[
 \E_\gamma H_q
 =
 \E_\gamma H_T
 +
 r\int_q^T
 \gamma(s)\E_\gamma H_s^2\,\dd s
 \succeq
 \E_\gamma H_T.
\]
Flipping spin $i$ together with the signs of the incident couplings
and the $i$th field path preserves the joint law and changes the sign
of $H_{ij}$ for $j\ne i$. Hence the averaged off-diagonal entries
vanish. Since the diagonal lower bound holds for every site,
\[
 \E_\gamma H_q\succeq c_UI.
\]
Jensen's inequality then gives
\[
 \zeta_\gamma(q)
 =
 \frac1N\E_\gamma\Tr H_q^2
 \ge
 \frac1N\Tr\bigl[(\E_\gamma H_q)^2\bigr]
 \ge c_U^2.
\]

This proves the two bounds for step profiles. Their extension to
general admissible profiles is given in
\zcref{subsec:response-localization}.
\end{proof}

\subsection{Response and Fisher estimates}

We first derive the response identity that converts the profile
perturbation $v$ into a lower bound for the second variation. In the
magnetization variable, the profile-dependent term in the Legendre
equation is
\[
 \frac{r\gamma_\theta(q)}2|m|^2.
\]
Differentiating in $\theta$ and then twice in $m$ therefore produces
the term $rv(q)I$. We evaluate the resulting equations along the
magnetization process associated with $\gamma_\theta$. For matrices
$A$ and $C$ of the same size, write
\[
 A:C=\Tr(A^TC).
\]

\begin{proof}[Proof of \zcref{prop:gs-response-tools}]
At fixed $N,\beta,J$ and $q<1$, the Hessian
$D_y^2u_{\gamma_\theta}(q,y;J)$ is positive definite and
\[
 y\longmapsto \nabla_yu_{\gamma_\theta}(q,y;J)
\]
maps $\mathbb R^N$ onto $(-1,1)^N$. To see surjectivity, the control
formula gives
\[
 u_{\gamma_\theta}(q,y;J)-\sum_{i=1}^N|y_i|=O_{N,\beta,J,q}(1)
\]
uniformly in $y$. Hence, for every $m\in(-1,1)^N$,
\[
 y\longmapsto u_{\gamma_\theta}(q,y;J)-m\cdot y
\]
is coercive. Its unique minimizer satisfies
$m=\nabla_yu_{\gamma_\theta}(q,y;J)$ and defines the inverse gradient
map.

Differentiating the Legendre equation at fixed $m$ gives
\[
 \partial_q\Phi_\theta
 =
 \frac r2\Tr h+\frac{r\gamma_\theta}2|m|^2,
 \qquad
 \partial_\theta h=-hBh.
\]
Differentiating the first identity in $\theta$ and taking two
derivatives in $m$ yields
\[
 \partial_q B+\frac r2D_m^2\Tr(h^2B)=rvI.
\]
Thus the profile perturbation enters the Hessian response exactly
through the term $rvI$.

Set
\[
 z=\partial_\theta^2\Phi_\theta.
\]
Since
\[
 \partial_\theta^2h
 =
 2hBhBh-h(D_m^2z)h
\]
and $\partial_\theta^2\gamma_\theta=0$, differentiating the Legendre
equation twice in $\theta$ gives
\[
 \partial_q z+\frac r2h^2:D_m^2z
 =
 r\Tr(hBhBh),
 \qquad
 z(1,\cdot)=0.
\]
At $q=0$ and $y=0$, global spin-flip symmetry gives
\[
 m_0=\nabla_yu_{\gamma_\theta}(0,0;J)=0.
\]
Legendre duality therefore gives
\[
 z(0,0)
 =
 -\partial_\theta^2u_{\gamma_\theta}(0,0;J).
\]
Applying It\^o's formula to $z(q,m_q)$ under the law associated with
$\gamma_\theta$, whose generator is
\[
 \partial_q+\frac r2h^2:D_m^2,
\]
and using $z(1,\cdot)=0$, we obtain
\[
 \frac1N\E_{Q_{\gamma_\theta}}
 \partial_\theta^2u_{\gamma_\theta}(0,0;J)
 =
 r\int_0^1e_\theta(q)\,\dd q.
\]

For $\lambda>0$, differentiating the outer logarithmic moment twice
gives
\[
 \begin{aligned}
 \frac{\dd^2}{\dd\theta^2}
 \left[
   \frac1{N\lambda}
   \log\E_g e^{\lambda u_{\gamma_\theta}(0,0;J)}
 \right]
 &=
 \frac1N\E_{Q_{\gamma_\theta}}
 \partial_\theta^2u_{\gamma_\theta}(0,0;J)\\
 &\quad+
 \frac\lambda N
 \Var_{Q_{\gamma_\theta}}
 \left(
   \partial_\theta u_{\gamma_\theta}(0,0;J)
 \right).
 \end{aligned}
\]
The profile penalty in $\mathfrak F_t^c$ is affine in $\theta$, and
the diagonal-completion term is independent of $\theta$. Combining
these observations with the preceding identity proves
\eqref{eq:shared-response}. When $\lambda=0$, the outer logarithmic
moment is interpreted as expectation, giving the same identity without
the variance term.
\end{proof}

\begin{proof}[Proof of \zcref{lem:response-fisher}]
We first prove the pairing estimate. Set
\[
 \mathcal A(q)
 =
 \frac1N\E_\theta\Tr(h^2B),
 \qquad
 \mathcal L_m
 =
 \partial_q+\frac r2h^2:D_m^2.
\]
The response equation for $B$ gives
\[
 \begin{aligned}
 \mathcal L_m\Tr(h^2B)
 &=
 \Tr((\dot h h+h\dot h)B)
 +\Tr\left(
   h^2\left[
     \partial_qB+\frac r2D_m^2\Tr(h^2B)
   \right]
 \right)\\
 &=
 rv\Tr(h^2)
 +\Tr((\dot h h+h\dot h)B).
 \end{aligned}
\]
Dynkin's formula along the magnetization process therefore yields
\begin{equation}\label{eq:response-A-derivative}
 \mathcal A'
 =
 rv\zeta_\theta
 +
 \frac1N\E_\theta
 \Tr((\dot h h+h\dot h)B).
\end{equation}

To bound $\mathcal A$ and the final term in
\eqref{eq:response-A-derivative}, diagonalize $h$ with eigenvalues
$h_i>0$. Since $B$ and $\dot h$ are symmetric, 
\[
 \Tr(hBhBh)
 =
 \sum_{i,j}
 \frac{h_ih_j(h_i+h_j)}2\,B_{ij}^2,
\]
while
\[
 \Tr(\dot h\,h^{-1}\dot h)
 =
 \sum_{i,j}
 \frac{h_i+h_j}{2h_ih_j}\,\dot h_{ij}^2.
\]
Cauchy--Schwarz gives
\[
 |\Tr(h^2B)|^2
 \le
 \Tr(h)\Tr(hBhBh),
\]
and
\[
 \left|
 \Tr((\dot h h+h\dot h)B)
 \right|
 \le
 2
 \bigl[\Tr(hBhBh)\bigr]^{1/2}
 \bigl[\Tr(\dot h\,h^{-1}\dot h)\bigr]^{1/2}.
\]
Taking expectations and applying Cauchy--Schwarz once more gives
\begin{equation}\label{eq:response-A-bounds}
 |\mathcal A|^2
 \le b_\theta e_\theta,
 \qquad
 \left|
 \frac1N\E_\theta
 \Tr((\dot h h+h\dot h)B)
 \right|
 \le
 2\sqrt{\mathcal J_\theta e_\theta}.
\end{equation}

Multiply \eqref{eq:response-A-derivative} by $\varphi$ and integrate.
Since $\varphi$ is compactly supported in $(0,1)$,
\[
 r\int_0^1v\zeta_\theta\varphi\,\dd q
 =
 -\int_0^1\mathcal A\varphi'\,\dd q
 -
 \frac1N\int_0^1
 \varphi\,
 \E_\theta\Tr((\dot h h+h\dot h)B)\,\dd q.
\]
Using \eqref{eq:response-A-bounds} and Cauchy--Schwarz in $q$ yields
\[
 r\left|\int_0^1v\zeta_\theta\varphi\,\dd q\right|
 \le
 \left(\int_0^1e_\theta\,\dd q\right)^{1/2}
 \left[
  \left(\int_0^1b_\theta|\varphi'|^2\,\dd q\right)^{1/2}
  +
  2\left(\int_0^1\mathcal J_\theta\varphi^2\,\dd q\right)^{1/2}
 \right],
\]
which is \eqref{eq:shared-pairing}.

\emph{Fisher estimate.}
Let
\[
 n=\Tr h,
 \qquad
 \mathcal P(q)=\frac1N\E_\theta\Tr\dot h.
\]
The Legendre equation gives
\begin{align}
 \dot h
 &=
 -\frac r2
 h\bigl(D_m^2n+2\gamma_\theta I\bigr)h,
 \label{eq:response-hdot}\\
 \dot n+\frac r2h^2:D_m^2n
 &=
 -r\gamma_\theta\Tr(h^2).
 \label{eq:response-ndot}
\end{align}
For a step profile, $\gamma_\theta$ is constant between successive
jumps. Differentiating \eqref{eq:response-ndot} with respect to $q$
at fixed $m$ on such an interval gives
\[
 \begin{aligned}
 \mathcal L_m\dot n
 &=
 -\frac r2
 (\dot h h+h\dot h):
 \bigl(D_m^2n+2\gamma_\theta I\bigr)\\
 &=
 \Tr\bigl[
   (\dot h h+h\dot h)
   h^{-1}\dot h h^{-1}
 \bigr]\\
 &=
 2\Tr(\dot h\,h^{-1}\dot h),
 \end{aligned}
\]
where \eqref{eq:response-hdot} was used in the second line.
Dynkin's formula therefore gives
\[
 \mathcal P'=2\mathcal J_\theta
\]
between jumps.

At a jump of $\gamma_\theta$, the quantities $h$ and $D_m^2n$ are
continuous, while \eqref{eq:response-hdot} gives
\[
 [\dot h]
 =
 -r[\gamma_\theta]h^2.
\]
Combining the interval evolution with these jumps yields the
distributional identity
\begin{equation}\label{eq:response-P-balance}
 \dd\mathcal P
 =
 2\mathcal J_\theta\,\dd q
 -
 r\zeta_\theta\,\dd\gamma_\theta.
\end{equation}
Moreover,
\[
 |\Tr\dot h|^2
 \le
 \Tr(h)\Tr(\dot h\,h^{-1}\dot h),
\]
so Cauchy--Schwarz in expectation gives
\begin{equation}\label{eq:response-P-bound}
 |\mathcal P|^2
 \le
 b_\theta\mathcal J_\theta.
\end{equation}

Multiply \eqref{eq:response-P-balance} by $\varphi^2$ and integrate.
Compact support of $\varphi$ gives
\[
 2\int_0^1\varphi^2\mathcal J_\theta\,\dd q
 =
 -2\int_0^1\varphi\varphi'\mathcal P\,\dd q
 +
 r\int_{(0,1)}
 \varphi^2\zeta_\theta\,\dd\gamma_\theta.
\]
Set
\[
 X=\int_0^1\varphi^2\mathcal J_\theta\,\dd q,
 \qquad
 Y=\int_0^1|\varphi'|^2b_\theta\,\dd q.
\]
By \eqref{eq:response-P-bound},
\[
 \left|
 \int_0^1\varphi\varphi'\mathcal P\,\dd q
 \right|
 \le\sqrt{XY}.
\]
Hence
\[
 2X
 \le
 2\sqrt{XY}
 +
 r\int_{(0,1)}
 \varphi^2\zeta_\theta\,\dd\gamma_\theta
 \le
 X+Y
 +
 r\int_{(0,1)}
 \varphi^2\zeta_\theta\,\dd\gamma_\theta,
\]
which proves \eqref{eq:shared-fisher} for step profiles.

The approximation in \zcref{subsec:response-localization} extends
both estimates to general admissible profiles. 
\end{proof}

\subsection{Localization and general profiles}
\zlabel{subsec:response-localization}

We first justify the inverse-Hessian calculations by a bounded
localization argument and then pass the estimates proved for step
profiles to general admissible profiles. Both steps use bounds that
are uniform over the step partitions when $N$ and $\beta$ are fixed.

For the inverse Hessian, put
\[
 L(J,y)=\frac N2\|J\|_{\rm op}+\sqrt N\,|y|.
\]
Let $p_{J,y}(\sigma)$ be the terminal Gibbs probability of
$\sigma\in\{-1,1\}^N$. Since
\[
 p_{J,y}(\sigma)\ge 2^{-N}e^{-2\beta L(J,y)},
\]
for every $\xi\in\R^N$,
\[
 \begin{aligned}
 \xi^TH(1,y;J)\xi
 &=\beta\inf_{a\in\R}\sum_\sigma
       p_{J,y}(\sigma)(\xi\cdot\sigma-a)^2\\
 &\ge
 \beta e^{-2\beta L(J,y)}
 \inf_{a\in\R}2^{-N}\sum_\sigma(\xi\cdot\sigma-a)^2\\
 &=\beta e^{-2\beta L(J,y)}|\xi|^2.
 \end{aligned}
\]
Hence
\[
 H(1,y;J)\succeq
 \beta e^{-2\beta L(J,y)}I.
\]
The backward Hessian identity gives
\[
 H(q,y;J)\succeq\E_{q,y}H(1,Y_1;J).
\]
Since the drift of $Y$ has norm at most $\beta\sqrt N$,
\[
 |Y_1|
 \le
 |y|+\beta\sqrt N+\sqrt r\,|B_1-B_q|.
\]
Using Jensen's inequality and
$\E|B_1-B_q|\le\sqrt N$, we obtain
\[
 H(q,y;J)\succeq
 \beta\exp\left\{
 -\beta N\|J\|_{\rm op}
 -2\beta\sqrt N\,|y|
 -2N(\beta^2+\beta)
 \right\}I.
\]
Thus powers of $H^{-1}$ grow at most exponentially in
$\|J\|_{\rm op}+|y|$, uniformly over the step partition.

To control the remaining derivatives, write
\[
 \mathcal L_\theta
 =
 \partial_q+\frac r2\Delta_y
 +r\gamma_\theta m\cdot\nabla_y.
\]
For a step profile,
\[
 w=\partial_\theta u,
 \qquad
 k=\partial_\theta^2u
\]
satisfy
\[
 \mathcal L_\theta w
 =
 -\frac r2v|m|^2,
 \qquad
 \mathcal L_\theta k
 =
 -2rv\,m\cdot\nabla_yw
 -r\gamma_\theta|\nabla_yw|^2,
 \qquad
 w(1)=k(1)=0.
\]
The terminal spin cumulants are bounded at fixed $N,\beta$, while
\[
 |m|\le\sqrt N,
 \qquad
 \|H\|_{\rm op}\le N\beta.
\]
Successive spatial differentiation of these equations and backward
Gronwall therefore give partition-uniform bounds for every spatial
derivative of $\nabla_yu$, $w$, and $k$ used above. Combining these
bounds with the lower bound on $H$ shows that the corresponding
derivatives in magnetization variables are bounded by
\[
 C\exp\bigl\{C(\|J\|_{\rm op}+|y|)\bigr\},
 \qquad
 C=C_{N,\beta}.
\]

The terminal comparison also gives
\[
 |u_\gamma(0,0;J)-u_\gamma(0,0;0)|
 \le
 \frac N2\|J\|_{\rm op},
\]
and hence
\[
 \frac{\dd Q_\gamma}{\dd P_g}
 \le
 C_{N,\beta}
 e^{(\beta N/2)\|J\|_{\rm op}}.
\]
The Gaussian exponential moments of the matrix and Brownian
supremum then make the preceding derivative bounds integrable.
They provide the domination needed to remove the localization and
justify the differentiations under expectation used above.

We next approximate general profiles. Choose common step
approximations $\gamma_{0,n}$ and $\gamma_{1,n}$ that preserve the
outer tilt and endpoint total mass, and set
\[
 \gamma_{\theta,n}
 =
 (1-\theta)\gamma_{0,n}+\theta\gamma_{1,n},
 \qquad
 v_n=\gamma_{1,n}-\gamma_{0,n},
\]
and
\[
 \delta_n
 =
 \|\gamma_{0,n}-\gamma_0\|_1
 +
 \|\gamma_{1,n}-\gamma_1\|_1.
\]
Then
\[
 \sup_{\theta\in[0,1]}
 \|\gamma_{\theta,n}-\gamma_\theta\|_1
 +
 \|v_n-v\|_1
 \le2\delta_n.
\]
The difference equations for $u,w,k$, together with the same spatial
estimates, give, for every derivative order $j$ needed above,
\[
 \sup_{\theta,q,J}
 \left(
 \|u_n-u\|_{C_y^j}
 +\|w_n-w\|_{C_y^j}
 +\|k_n-k\|_{C_y^j}
 \right)
 \le
 C_{j,N,\beta}\delta_n.
\]
Here $C_y^j$ denotes the supremum norm of spatial derivatives through
order $j$. The source terms in the corresponding difference equations
have time $L^1$ norm at most $C\delta_n$, which gives the displayed
bound by backward Gronwall. The uniform convergence of $w_n$ and
$k_n$ then identifies them with the first and second
$\theta$-derivatives of the limiting solution.

It remains to pass the probability laws to the limit. For fixed
$\theta$, write
\[
 \gamma_n=\gamma_{\theta,n},
 \qquad
 \gamma=\gamma_\theta,
\]
and set
\[
 \Delta u_n(J)
 =
 u_{\gamma_n}(0,0;J)-u_\gamma(0,0;J),
 \qquad
 d_n
 =
 \frac{rN}{2}\|\gamma_n-\gamma\|_1.
\]
The profile variation formula gives
\[
 |\Delta u_n(J)|\le d_n,
\]
and therefore
\[
 \frac{\dd Q_{\gamma_n}}{\dd Q_\gamma}(g)
 =
 \frac{e^{\lambda\Delta u_n(J)}}
 {\E_{Q_\gamma}e^{\lambda\Delta u_n(J)}}
 \in
 [e^{-2\lambda d_n},e^{2\lambda d_n}].
\]
Thus the density ratios $\dd Q_{\gamma_n}/\dd Q_\gamma$ converge
uniformly to one. Coupling the
conditional diffusions with the same Brownian motion and using the
spatial convergence above gives convergence of the corresponding
joint laws.

The same convergence yields
\[
 B_n\longrightarrow B
\]
and, at almost every $q$,
\[
 \dot h_n\longrightarrow\dot h,
\]
since monotonicity gives
$\gamma_{\theta,n}(q)\to\gamma_\theta(q)$ at every continuity point
of $\gamma_\theta$. The exponential bounds above provide an
integrable majorant for the products appearing in the response and
Fisher identities. Hence the response identity
\eqref{eq:shared-response}, the pairing estimate
\eqref{eq:shared-pairing}, and the distributional Fisher identity
pass to the limit.

For the Stieltjes term, $\zeta_n$ converges uniformly in $q$, while
$\dd\gamma_{\theta,n}$ converges weakly with fixed total mass.
Therefore
\[
 \int\varphi^2\zeta_{\theta,n}\,\dd\gamma_{\theta,n}
 \longrightarrow
 \int\varphi^2\zeta_\theta\,\dd\gamma_\theta
\]
for the Lipschitz functions $\varphi$ used in
\zcref{lem:response-fisher}. Finally, to pass the averaged-curvature
bound at a fixed $q$, choose the approximating partitions to contain
$q$ and to satisfy
\[
 \gamma_{\theta,n}(q)=\gamma_\theta(q).
\]
Bounded convergence then gives
\eqref{eq:shared-averaged-curvature} for the limiting profile.

\bibliographystyle{plain}
\bibliography{references}

\end{document}